\numberwithin{equation}{section}
\def\@tocline#1#2#3#4#5#6#7{\relax
  \ifnum #1>\c@tocdepth 
  \else
    \par \addpenalty\@secpenalty\addvspace{#2}%
    \begingroup \hyphenpenalty\@M
    \@ifempty{#4}{%
      \@tempdima\csname r@tocindent\number#1\endcsname\relax
    }{%
      \@tempdima#4\relax
    }%
    \parindent\z@ \leftskip#3\relax \advance\leftskip\@tempdima\relax
    \rightskip\@pnumwidth plus4em \parfillskip-\@pnumwidth
    #5\leavevmode\hskip-\@tempdima
      \ifcase #1
       \or\or \hskip 1em \or \hskip 2em \else \hskip 3em \fi%
      #6\nobreak\relax
    \dotfill\hbox to\@pnumwidth{\@tocpagenum{#7}}\par
    \nobreak
    \endgroup
  \fi}
\renewcommand{\Re}{\operatorname{Re}}
\renewcommand{\Im}{\operatorname{Im}}
\newcommand{\E}{\mathbb E }
\newcommand{\R}{\mathbb{R}}
\newcommand{\N}{\mathbb{N}}
\newcommand{\C}{\mathbb{C}}
\newcommand{\Z}{\mathbb{Z}}
\renewcommand{\P}{\mathbb{P}}
\newcommand{\bmB}{{\bm{B}}}
\newcommand{\bmrho}{{\bm{\rho}}}
\newcommand{\bmphi}{{\bm{\phi}}}
\DeclareMathOperator{\Beta}{B}
\newcommand{\Stirone}[2]{{\rm Stir}1[#1;\,#2]}
\newcommand{\Stirtwo}[2]{{\rm Stir}2[#1;\,#2]}
\newcommand{\StirSibuya}[2]{{\rm StirSibuya}[#1;\,#2]}
\newcommand{\rStirone}[3]{#3\text{-}{\rm Stir}1[#1;\,#2]}
\newcommand{\rStirtwo}[3]{#3\text{-}{\rm Stir}2[#1;\,#2]}
\newcommand{\rStirSibuya}[3]{#3\text{-}{\rm StirSibuya}[#1;\,#2]}
\newcommand{\Mult}[2]{{\rm Mult}[#1;\,(#2)]}
\newcommand{\MultNP}[2]{{\rm Mult}[#1;\,#2]}
\newcommand{\MultStirone}[3]{{\rm MultiStir}1[#1;\,#2;\,(#3)]}
\newcommand{\Ewens}[2]{{\rm Ewens}[#1;\,#2]}
\newcommand{\RRT}[1]{{\rm RRT}[#1]}
\newcommand{\Hoppe}[2]{{\rm Hoppe}[#1;\,#2]}
\newcommand{\MultiHoppe}[3]{{\rm MultiHoppe}[#1;\,#2;\,(#3)]}
\newcommand{\MultiEwens}[3]{{\rm MultiEwens}[#1;\,#2;\,(#3)]}
\newcommand{\GibbsPart}[3]{#3\text{-}{\rm GibbsPart}[#1;\,#2]}
\newcommand{\rEwens}[3]{#3\text{-}{\rm Ewens}[#1;\,#2]}
\newcommand{\rHoppe}[3]{#3\text{-}{\rm Hoppe}[#1;\,#2]}
\newcommand{\Poi}[1]{{\rm Poi}[#1]}
\newcommand{\Gammadistr}[2]{{\rm Gamma}[#1;\,#2]}
\newcommand{\Betadistr}[2]{{\rm Beta}[#1;\,#2]}
\newcommand{\Bin}[2]{{\rm Bin}[#1;\,#2]}
\newcommand{\Bern}[1]{{\rm Bern}[#1]}
\newcommand{\NBin}[2]{{\rm NBin}[#1;\,#2]}
\newcommand{\Geo}[1]{{\rm Geom}[#1]}
\newcommand{\Exp}[1]{{\rm Exp}[#1]}
\newcommand{\Normal}[2]{{\rm N}\left[#1;\,#2\right]}
\newcommand{\Dir}[1]{{\rm Dir}[#1]}
\newcommand{\MDir}[2]{{\rm DirMult}[#1;\,#2]}
\newcommand{\Lah}[4]{{\rm Lah}[#1,\,#2]_{#3,\,#4}}
\newcommand{\Bell}{\mathfrak{B}}
\DeclareMathOperator{\cycles}{{\sf cycles}}
\DeclareMathOperator{\blocks}{{\sf blocks}}
\DeclareMathOperator{\type}{{\sf type}}
\DeclareMathOperator{\rt}{{\sf root}}
\DeclareMathOperator{\close}{{\sf close}}
\newcommand{\Var}{\mathop{\mathrm{Var}}\nolimits}
\newcommand{\eee}{{\rm e}}
\newcommand{\todistr}{\overset{{\rm d}}{\underset{n\to\infty}\longrightarrow}}
\newcommand{\todistrk}{\overset{{\rm d}}{\underset{n,k\to\infty}\longrightarrow}}
\newcommand{\todistrnk}{\overset{{\rm d}}{\underset{n,k\to\infty}\longrightarrow}}
\newcommand{\ton}{\overset{}{\underset{n\to\infty}\longrightarrow}}
\newcommand{\bsl}{\backslash}
\newcommand{\1}{\mathbf{1}}
\newcommand{\stirling}[2]{\genfrac{[}{]}{0pt}{}{#1}{#2}}
\newcommand{\stirlingsec}[2]{\genfrac{\{}{\}}{0pt}{}{#1}{#2}}
\newcommand{\EulerNum}[2]{\genfrac{\langle}{\rangle}{0pt}{}{#1}{#2}}
\theoremstyle{plain}
\newtheorem{theorem}{Theorem}[section]
\newtheorem{proposition}[theorem]{Proposition}
\newtheorem{prop}[theorem]{Proposition}
\newtheorem{lemma}[theorem]{Lemma}
\newtheorem{corollary}[theorem]{Corollary}
\newtheorem{cor}[theorem]{Corollary}
\theoremstyle{definition}
\newtheorem{definition}[theorem]{Definition}
\newtheorem{example}[theorem]{Example}
\theoremstyle{remark}
\newtheorem{remark}[theorem]{Remark}
\begin{document}

\title[Multinomial random combinatorial structures and $r$-versions of Stirling, Eulerian and Lah numbers]{Multinomial random combinatorial structures and $r$-versions of Stirling, Eulerian and Lah numbers}

\begin{abstract}

We introduce multinomial and $r$-variants of several classic objects of combinatorial probability, such as the random recursive and Hoppe trees, random set partitions and compositions, the Chinese restaurant process, Feller's coupling, and some others.
Just as various classic combinatorial numbers~--~like
Stirling, Eulerian 
and Lah numbers~--~emerge as essential ingredients defining the distributions of the mentioned processes, the so-called $r$-versions of these numbers appear in exact distributional formulas for the multinomial and $r$-counterparts. This approach allows us to offer a concise probabilistic interpretation for various identities involving $r$-versions of these combinatorial numbers, which were either unavailable or meaningful only for specific values of the parameter $r$. We analyze the derived distributions for fixed-size structures and establish distributional limit theorems as the size tends to infinity. Utilizing the aforementioned generalized Stirling numbers of both kinds, we define and analyze $(r,s)$-Lah distributions, which have arisen 
in the existing literature on combinatorial probability in various contexts.
\end{abstract}

\author{Alexander Iksanov, Zakhar Kabluchko, Alexander Marynych, Vitali Wachtel}

\keywords{$r$-Stirling numbers, $r$-Lah numbers, central limit theorem, Chinese restaurant process, Hoppe trees, random compositions, random permutations, random recursive trees, random set partitions}
\subjclass[2020]{Primary: 60C05, 11B73;  Secondary: 60E05, 60F05}
\maketitle

\tableofcontents

\section{Introduction}

\subsection{\texorpdfstring{$r$}{r}-Stirling numbers}

The usual Stirling numbers of both kinds play a pervasive role in combinatorics and discrete probability, often appearing prominently in diverse counting problems as an indispensable element. The list of quintessential examples begins with the enumeration of cycles in permutations, records in random samples, and blocks in set partitions. It extends by enumerating alignments and surjections, progressing to the counting of recursive trees with various profile restrictions. This list could have potentially been
indefinitely supplemented. We refer to the excellent treatises
\cite{charalambides_book_enum_combin,graham_etal_book,mansour_book,mansour_schork_book_commutation_relations,mezo_book,Stanley_book} for comprehensive information on the classic Stirling numbers.  A thorough historic account and an extensive bibliography can be found in the books~\cite{mansour_book} and~\cite{mansour_schork_book_commutation_relations}.

As detailed in the aforementioned references, there exist numerous generalizations of the Stirling numbers. In the focus of the present paper are the so-called $r$-Stirling numbers which depend on an additional parameter $r\in \R$. These numbers have been intensively investigated
in the combinatorics literature. Given that the usual Stirling numbers play a central role in combinatorial probability, it is natural to ask whether there exist natural $r$-deformations of the classic discrete random structures (such as random permutations and partitions, or random recursive trees) whose distributional properties are described by the $r$-Stirling numbers. One of the purposes
of the present paper is to construct these $r$-deformed structures and explore
their properties.

Perhaps, the easiest way to define the $r$-Stirling numbers $\stirling{n}{k}_r$ and $\stirlingsec{n}{k}_r$ is by using the finite differences calculus. Let $\C[r]$ be the vector space of polynomials in the variable $r$ with coefficients from $\C$.
Let $D$ be the differentiation operator and $\Delta$ the finite difference operator acting on $\C[r]$ as follows:
$$
D f(r) = f'(r),
\qquad
\Delta f(r) = f(r+1) - f(r).
$$
Then, using the notation $r^{n\uparrow}=r(r+1)\cdots(r+n-1)$ for the rising
factorial,
\begin{equation}\label{eq:r_stirling_def_finite_difference}
\stirling{n}{k}_r := \frac {D^k} {k!}  r^{n\uparrow},
\qquad
\stirlingsec{n}{k}_r := \frac {\Delta^k} {k!}  r^{n},
\qquad
n\in \N,
\quad
k\in \{0,1,\dots, n\}.
\end{equation}
For example, $\stirling{n}{0}_r = r^{n\uparrow}$ and $\stirlingsec{n}{0}_r = r^{n}$.
The usual Stirling numbers $\stirling{n}{k}$ and $\stirlingsec{n}{k}$ are recovered by evaluating the expressions at $r=0$, that is,
$$
\stirling{n}{k} = \stirling{n}{k}_0 = \left(\frac {D^k} {k!}  r^{n\uparrow}\right)\Big|_{r=0},
\qquad
\stirlingsec{n}{k} = \stirlingsec{n}{k}_0 = \left(\frac {\Delta^k} {k!}  r^{n}\right)\Big|_{r=0}.
$$
In general, the parameter $r$ may take arbitrary real (or complex) values. If we fix $n$ and $k$, then $\stirling{n}{k}_r$ and $\stirlingsec{n}{k}_r$ are polynomials in $r$, as directly follows from~\eqref{eq:r_stirling_def_finite_difference}.  The above definitions can be naturally extended to arbitrary $n\in \N_0:=\N\cup\{0\}$ and $k\in \Z$ by putting
$$
\stirling{n}{k}_r:=\stirlingsec{n}{k}_r:=0
\text{ for }
n\in \N_0
\text{ and }
k\notin\{0,\dots,n\},
\qquad
\stirling{0}{0}_r:=\stirlingsec{0}{0}_r:=1.
$$

The first systematic investigation of the
$r$-Stirling numbers (also called the `weighted Stirling numbers' and 'non-central Stirling numbers') is due to
Carlitz~\cite{Carlitz:1980-1,Carlitz:1980-2}. Some follow-up papers are~\cite{broder:1984,charalambides_koutras_diff_gen_factorials,koutras,shanmugam_r_stirling}.
A clear exposition of
properties of the $r$-Stirling numbers can be found in the paper of Broder~\cite{broder:1984} (where the term `$r$-Stirling numbers' appears
for the first time) and in the books of Mez\H{o}~\cite[Chapter~8]{mezo_book} and Charalambides~\cite[\S~8.5]{charalambides_book_enum_combin}, \cite[Chapter~3]{charalambides_book_combinatorial_methods}. The paper~\cite{nyul:2015}, whose notation we follow, contains a nice overview of the identities satisfied by the $r$-Stirling numbers. We note in passing that our notation corresponds to Carlitz's notation $R(n,k,r)=\stirlingsec{n}{k}_r$ and $R_1(n,k,r)=\stirling{n}{k}_r$ and what we denote by $\stirling{n}{k}_r$ and $\stirlingsec{n}{k}_r$ corresponds to $\stirling{n+r}{k+r}_r$ and $\stirlingsec{n+r}{k+r}_r$ in~\cite{broder:1984,mezo_book}. In particular, the second definition in~\eqref{eq:r_stirling_def_finite_difference} can be read from Eq.~(3.8) in~\cite{Carlitz:1980-1}.

While Carlitz's approach is purely analytic and hinges on manipulations with generating functions, Broder's starting point is a combinatorial definition, see Eq.~(1) and (2) in~\cite{broder:1984}. Adapting
to our notation and assuming that $r$ is a non-negative integer, this is
\begin{definition}\label{def:broder} For $r\in\N_0$, $n\in\N$ and $k\in\{0,1,\dots,n\}$, $\stirling{n}{k}_r$ is the number of  permutations of the set $\{1,2,\dots,n+r\}$ having $k+r$ cycles such that the numbers $1,2,\dots,r$ are in different cycles; $\stirlingsec{n}{k}_r$ is the number of partitions of the set $\{1,2,\dots,n+r\}$ into $k+r$ disjoint subsets such that the numbers $1,2,\dots,r$ are in distinct blocks.
\end{definition}
While being combinatorially appealing Definition \ref{def:broder}
has a drawback of being only valid
for integer $r$. In contrast, Carlitz's method is free of this shortcoming.
Unless stated otherwise, in all subsequent considerations we stick to Carlitz's viewpoint and do not assume that $r$ is integer.

The properties of the $r$-Stirling numbers are parallel to those of the usual Stirling numbers. For example, see Eq.~(3.4) and (5.8) in~\cite{Carlitz:1980-1} or Eq.~(48) and (50) in~\cite{broder:1984},
\begin{align}\label{eq:stirling_r_rising_factorials}
(x+r)^{n\uparrow} =  \sum_{k=0}^n \stirling{n}{k}_r x^k,
\qquad
(x+r)^n = \sum_{k=0}^n \stirlingsec{n}{k}_r x^{k\downarrow},
\end{align}
where $x^{k\downarrow}=x(x-1)\cdots(x-k+1)$ denotes the falling factorial. The (vertical) exponential generating functions of the $r$-Stirling numbers are given by
\begin{align}
\sum_{n=k}^\infty \stirling{n}{k}_r \frac{x^n}{n!}
&=
\frac{1}{k!} \left( \log\left( (1-x)^{-1} \right) \right)^k (1-x)^{-r}
,\label{eq:def_r_stir_first}
\\
\sum_{n=k}^\infty \stirlingsec{n}{k}_r \frac{x^n}{n!}
&=
\frac{1}{k!} \left(\eee^x-1\right)^k \eee^{rx}, \label{eq:def_r_stir_second}
\end{align}
see Eq.~(3.9) in~\cite{Carlitz:1980-1} and Eq.~(36) in~\cite{broder:1984}. It is known (see, e.g.\ p.~1661 in~\cite{nyul:2015}) that
\begin{align}
\stirling{n}{k}_r
&=
\sum_{j=k}^n \stirling{n}{j} \binom{j}{k} r^{j-k}
=
\sum_{j=k}^n \binom{n}{j} \stirling{j}{k} r^{(n-j)\uparrow},  \label{eq:stir1_as_poly}
\\
\stirlingsec{n}{k}_r
&=
\sum_{j=k}^n \binom{n}{j} \stirlingsec{j}{k} r^{n-j}
=
\sum_{j=k}^n \stirlingsec{n}{j} \binom{j}{k} r^{(j-k)\downarrow}. \label{eq:stir2_as_poly}
\end{align}
These formulas demonstrate once again that, for fixed $n\in \N_0$, $k\in \{0,\dots, n\}$, the $r$-Stirling numbers $\stirling{n}{k}_r$ and $\stirlingsec{n}{k}_r$ are polynomials in $r$.

\subsection{Three families of probability distributions related to the \texorpdfstring{$r$}{r}-Stirling numbers}
There are various probability distributions related to usual Stirling numbers, see~\cite{charalambides_book_combinatorial_methods,charalambides_singh_stirling_review,nishimura_sibuya,pinsky_a_view,Pitman_book}. The most prominent example is the so-called \emph{Stirling distribution of the first kind} (or a \emph{Stirling--Karamata} distribution) $\Stirone{n}{\theta}$ with parameters $n\in \N$ and $\theta>0$, which is a discrete probability measure on $\{0,\dots, n\}$ given by
\begin{equation}\label{def:stirling-karamata}
\Stirone{n}{\theta}(\{k\})=\stirling{n}{k} \frac{\theta^k}{\theta^{n\uparrow}}, \qquad k\in \{0,\dots, n\}.
\end{equation}
If $\theta = 1$, $\Stirone{n}{1}$ is the distribution of the number of cycles in a random permutation picked uniformly at random from the symmetric group of $n$ elements. More generally, $\Stirone{n}{\theta}$ is the distribution of the number of cycles in a random Ewens permutation with parameter $\theta$. This distribution also pops up as the number of occupied tables in the Chinese restaurant process, see Eq.~(3.11) in~\cite{Pitman_book}, or the root degree in the random Hoppe tree~\cite{Hoppe:1986}. For a review of the related concept of the Ewens sampling formula and similar topics we refer to~\cite{crane_ewens_formula}. We will discuss these connections later on in a larger 
generality.

Related to the Stirling numbers of the second kind is the {\em Stirling--Sibuya} distribution defined by
\begin{equation}\label{def:stirling-sibuya}
\StirSibuya{n}{\theta}(\{k\})=\stirlingsec{n}{k} \frac{\theta^{k\downarrow}}{\theta^{n}}, \qquad k\in \{0,\dots, n\},\quad n\in\N.
\end{equation}
Here,
$\theta$ is either a positive integer or a real number larger than $n-1$, so that the falling factorial is nonnegative. If $\theta$ is a positive integer, $\StirSibuya{n}{\theta}$ is the distribution of the number of occupied boxes after allocating $n$ balls over $\theta$ urns equiprobably. The fact that both~\eqref{def:stirling-karamata} and~\eqref{def:stirling-sibuya} do define
probability distributions
follows from equalities~\eqref{eq:stirling_r_rising_factorials} with $r=0$. Yet another instance is the \emph{Stirling distribution of the second kind} $\Stirtwo{n}{\theta}$. To define it, let $T_n$ be the Touchard polynomial given by
$$
T_n(z)=\sum_{k=0}^{n}\stirlingsec{n}{k}z^k,\quad z\in\C,\quad n\in\N.
$$
Then, for a parameter $\theta >0$,
$$
\Stirtwo{n}{\theta}(\{k\})=\frac{1}{T_n(\theta)}\stirlingsec{n}{k}\theta^k, \qquad k\in \{0,\dots, n\},\quad n\in\N.
$$
This is the distribution of the number of blocks in a Gibbs (uniform, for $\theta=1$) random partition of $\{1,\dots,n\}$. We refer to~\cite{charalambides_book_combinatorial_methods,charalambides_singh_stirling_review,Kabluchko+Marynych+Pitters:2024} and references therein for further properties and applications of the aforementioned distributions.

From a purely analytic viewpoint one can introduce the $r$-versions of the above distributions by simply using~\eqref{eq:stirling_r_rising_factorials} and the $r$-Touchard polynomials.
\begin{definition}[\cite{nishimura_sibuya,sibuya_stirling_family,sibuya_nishimura_breaking}]\label{def:rStirling1}
Let $n\in\N$, $\theta>0$ and $r\geq 0$. The \emph{$r$-Stirling distribution of the first kind} $\rStirone{n}{\theta}{r}$ is a discrete
probability measure on $\{0,\dots, n\}$ given by
\begin{equation}\label{def:r_stirling-first}
\rStirone{n}{\theta}{r}(\{k\})=\stirling{n}{k}_r \frac{\theta^k}{(\theta+r)^{n\uparrow}}, \qquad k\in \{0,\dots, n\}.
\end{equation}
\end{definition}
The distribution defined by~\eqref{def:r_stirling-first} appears
in~\cite{nishimura_sibuya}, see also~\cite[Example~2.1]{charalambides_book_combinatorial_methods} and a survey~\cite{sibuya_stirling_family}. Also, in~\cite{sibuya_nishimura_breaking} it pops
up as the distribution of a random variable $X_{n+1}-1$, where $X_{n+1}$ is the number of records in a sample of size $n+1$ of independent non-identically distributed random variables (Nevzorov's model).
Our notation $\rStirone{n}{\theta}{r}$ corresponds to $\mathsf{Str1F}(n,\theta,r)$ in~\cite{nishimura_sibuya} and~\cite{sibuya_nishimura_breaking}.

For $r\geq 0$ and $n\in\N$, let the \emph{$r$-Touchard polynomial}, also called the \emph{$r$-Bell polynomial} in~\cite[\S~8.3]{mezo_book} and~\cite{mezoe_r_bell}, be defined by
$$
T_{n,r}(z):=\sum_{k=0}^{n}\stirlingsec{n}{k}_r z^k,\quad z\in\C.
$$
\begin{definition}
Let $n\in\N$, $\theta>0$ and $r\geq 0$. The \emph{$r$-Stirling distribution of the second kind} $\rStirtwo{n}{\theta}{r}$ is a discrete
probability measure on $\{0,\dots, n\}$ given by
\begin{equation}\label{def:r_stirling-second}
\rStirtwo{n}{\theta}{r}(\{k\})=\stirlingsec{n}{k}_r \frac{\theta^k}{T_{n,r}(\theta)}, \qquad k\in \{0,\dots, n\}.
\end{equation}
\end{definition}
\begin{definition}[\cite{nishimura_sibuya,sibuya_stirling_family}]
Assume that $n\in\N$, $r\geq 0$ and either $\theta\in\N$ or $\theta>n-1$. The \emph{$r$-Stirling--Sibuya distribution} $\rStirSibuya{n}{\theta}{r}$ is a discrete
probability measure on $\{0,\dots, n\}$ given by
\begin{equation}\label{def:r_stirling-sibuya}
\rStirSibuya{n}{\theta}{r}(\{k\})=\stirlingsec{n}{k}_r \frac{\theta^{k\downarrow}}{(\theta+r)^n}, \qquad k\in \{0,\dots, n\}.
\end{equation}
\end{definition}

The $r$-Stirling--Sibuya distribution $\rStirSibuya{n}{\theta}{r}$ appears
in~\cite{nishimura_sibuya}, see Section 5 therein, as the distribution of a position at time $n$ of a certain time-inhomogeneous random walk on $\N_0$; see also~\cite[Example 2.6]{charalambides_book_combinatorial_methods}. An urn model leading to the $\rStirSibuya{n}{\theta}{r}$ distribution is also discussed in~\cite{nishimura_sibuya} and will be important in our considerations of random $r$-partitions in Section~\ref{sec:r_versions2} below.  Our notation $\rStirSibuya{n}{\theta}{r}$ corresponds to $\mathsf{Str2F}(n,\theta,r)$ in~\cite{nishimura_sibuya}.

Distributions related to further generalizations of Stirling numbers are discussed in~\cite{charalambides_book_combinatorial_methods}, \cite[Example~3.2.3]{Pitman_book} and~\cite{huillet_occupancy_problems_generalized_stirling}.

In this work, we introduce a general concept of multinomial combinatorial structures which we find interesting in their own. Multinomial structures are obtained by taking the classic random structures (such as random permutations, partitions or compositions) and coloring their cycles or blocks into $d\in\N$ colors. It will become clear that the
$r$-distributions defined above (and some other) occur in the special case $d=2$.

\vspace{2mm}

The rest of the paper is organized as follows. In Section~\ref{sec:multinomial} we introduce and investigate
the multinomial versions of some classic combinatorial stochastic processes. In Sections~\ref{sec:r_versions1} we show how the $r$-distributions
arise as the distributions of some functionals defined on the
multinomial structures of Section~\ref{sec:multinomial}. Section~\ref{sec:r_versions2} is devoted to the analysis of random incomplete partitions and an accompanying notion of random $r$-partitions. A similar analysis of incomplete compositions and random $r$-compositions is carried out in Section~\ref{sec:r_versions3}. In Section~\ref{sec:Lah_distribution} we discuss yet another class of combinatorial numbers, called $r$-Lah numbers, and introduce
generalized Lah distributions. Specific occurrences of these distributions have already received some attention
in the literature on combinatorial probability across diverse contexts. Various limit theorems for these distributions are discussed in the concluding Section~\ref{sec:Lah_limits}.

\subsection{Notation}
Besides the already introduced notation, we also use the following abbreviations and conventions. We put $[n]:=\{1,2,\dots,n\}$. For the asymptotic equivalence we use the symbol $\simeq$. Thus, $a_n\simeq b_n$ if, and only if, $a_n/b_n\to 1$ as $n\to\infty$. 
We use the standard notation $x_+=\max(x, 0)$ and $x_-=\max(-x,0)$ for $x\in\mathbb{R}$. The cardinality of a finite set $A$ is denoted by $\#A$. For a formal power series $f(x)=\sum_{n=0}^{\infty}f_n x^n$, we stipulate that $[x^n]f(x)=f_n$. If $X$ is a random element taking values in some space $S$ and $\mu$ is a probability distribution on $S$, we write
$X\sim \mu$ to denote that $X$ has distribution $\mu$. If $(\mu[\theta])_{\theta\in P}$ is a family of probability measures depending on a parameter $\theta$ and $\mathfrak{M}$ is a probability measure on $P$, we denote by $\mu[\mathfrak{M}]$ the mixture of $(\mu[\theta])_{\theta\in P}$. Thus,
\begin{equation}\label{eq:mixtures_convention}
(\mu[\mathfrak{M}])(\cdot):=\int_P \mu[\theta](\cdot)\mathfrak{M}({\rm d}\theta).
\end{equation}
In addition to
the already introduced probability measures related to the Stirling numbers, many standard distributions show up throughout the text.
We denote by $\Bin{n}{p}$ the binomial distribution with parameters $n$ and $p$ and by $\Bern{p}$ the Bernoulli distribution with parameter $p$, that is, $\Bern{p}(\{1\})=p=1-\Bern{p}(\{0\})$. Furthermore,
$\Poi{\lambda}$ denotes
the Poisson distribution with parameter $\lambda$ and $\Mult{n}{p_1,\dots,p_d}$ denotes
the multinomial distribution with parameters $n$ and $p_1,\dots,p_d$. Notation for other distributions will be introduced in the
text just before the first appearance. Keeping in mind Eq.~\eqref{eq:mixtures_convention} we ubiquitously use
formulas like $\Bin{\Poi{\lambda}}{p}=\Poi{\lambda p}$ involving the mixtures of distributions.
With an abuse of notation, $\Bern{p}$ can denote either
the distribution or
a random variable with this distribution (implicitly assumed to be defined on some probability space). The same disclaimer also applies to the other distributions.

\section{Multinomial structures}\label{sec:multinomial}
In this section we introduce
$r$-analogues of some classic random structures. Examples include random Ewens permutations, random recursive (and Hoppe) trees and the Chinese restaurant process. The $r$-versions will be defined as special cases of more general, multinomial versions of these structures.

\subsection{Multinomial Stirling numbers}
The \emph{multinomial Stirling cycle number} $\stirling{n}{k_1,\dots, k_d}$ is defined as the number of permutations of $[n]$ with $k_1+\dots+k_d$ cycles which
are colored in $d$ possible colors $1,\dots, d$ such that the number of cycles with color $j$ is $k_j$, for all $j=1,\dots, d$. Clearly,
$$
\stirling{n}{k_1, \dots, k_d} = \stirling{n}{k_1+ \dots + k_d} \binom{k_1 + \dots + k_d}{k_1,\dots, k_d}.
$$
Similarly, the \emph{multinomial Stirling partition number} $\stirlingsec{n}{k_1,\dots, k_d}$ is defined as the number of partitions of $[n]$ into $k_1+\dots + k_d$ nonempty blocks which are colored in $d$ possible colors such that $k_j$ is the number of blocks with color $j$, for all $j=1,\dots, d$. Plainly,
$$
\stirlingsec{n}{k_1, \dots, k_d} = \stirlingsec{n}{k_1+ \dots + k_d} \binom{k_1 + \dots + k_d}{k_1,\dots, k_d}.
$$
In both definitions, the parameters $n\in \N_0$ and $k_1,\dots,k_d\in \N_0$ satisfy $k_1+\dots + k_d\leq n$ with the convention that $\stirling{0}{0,\dots, 0} = 1$. Additionally, if $n\in \N_0$ and  $k_1, \dots, k_d \in \Z$ fail to satisfy the above restrictions (meaning that one of the $k_j$'s is negative or their sum is larger than $n$), we define the corresponding multinomial Stirling numbers to be $0$. The properties of the multinomial Stirling numbers are reviewed in~\cite[Section~4]{belkhir_multivar_lah}. Here we only need
the following result.
\begin{proposition}
With
$x_1,\dots, x_d$ denoting
variables,
\begin{align}
&(x_1 + \dots + x_d)^{n\uparrow} = \sum_{\substack{k_1,\dots, k_d\in \N_0\\ k_1+\dots+ k_d\leq n}} \stirling{n}{k_1,\dots, k_d} x_1^{k_1}\cdots x_d^{k_d}, \label{eq:stirling1_multi_genfunct}
\\
&(x_1 + \dots + x_d)^{n} = \sum_{\substack{k_1,\dots, k_d\in \N_0\\ k_1+\dots+ k_d\leq n}} \stirlingsec{n}{k_1,\dots, k_d} x_1^{k_1\downarrow}\cdots x_d^{k_d\downarrow}.  \label{eq:stirling2_multi_genfunct}
\end{align}
\end{proposition}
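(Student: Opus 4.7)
The plan is to reduce both multivariate identities to the classical univariate identities $x^{n\uparrow}=\sum_{k=0}^n\stirling{n}{k}x^k$ and $x^n=\sum_{k=0}^n\stirlingsec{n}{k}x^{k\downarrow}$ (the $r=0$ special case of~\eqref{eq:stirling_r_rising_factorials}) by substituting $x=x_1+\cdots+x_d$ and expanding the resulting power of the sum. The two definitions $\stirling{n}{k_1,\dots,k_d}=\stirling{n}{k_1+\cdots+k_d}\binom{k_1+\cdots+k_d}{k_1,\dots,k_d}$ and its partition analogue are then tailored precisely so that the resulting coefficients match after a regrouping of terms.

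For \eqref{eq:stirling1_multi_genfunct} I would substitute $x=x_1+\cdots+x_d$ in $x^{n\uparrow}=\sum_k\stirling{n}{k}x^k$ and apply the multinomial theorem to $(x_1+\cdots+x_d)^k$. Re-indexing the double sum by the composition $(k_1,\dots,k_d)$ with $k_1+\cdots+k_d=k\leq n$ gives
$$
(x_1+\cdots+x_d)^{n\uparrow}=\sum_{k_1+\cdots+k_d\leq n}\stirling{n}{k_1+\cdots+k_d}\binom{k_1+\cdots+k_d}{k_1,\dots,k_d}x_1^{k_1}\cdots x_d^{k_d},
$$
and the very definition of $\stirling{n}{k_1,\dots,k_d}$ closes the argument.

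For \eqref{eq:stirling2_multi_genfunct} the same substitution in $x^n=\sum_k\stirlingsec{n}{k}x^{k\downarrow}$ calls for a falling-factorial analogue of the multinomial theorem,
$$
(x_1+\cdots+x_d)^{k\downarrow}=\sum_{k_1+\cdots+k_d=k}\binom{k}{k_1,\dots,k_d}x_1^{k_1\downarrow}\cdots x_d^{k_d\downarrow},
$$
which is the only non-routine ingredient. I would obtain it by induction on $d$ from the classical two-variable Chu--Vandermonde identity $(x+y)^{k\downarrow}=\sum_{j=0}^k\binom{k}{j}x^{j\downarrow}y^{(k-j)\downarrow}$; the latter is a polynomial identity that can be verified at positive integer arguments via the bijection between injections $[k]\hookrightarrow A\sqcup B$ and the choice of which $j$ elements of $[k]$ are sent to $A$. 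Substituting and regrouping as in the first part, and using the partition-analogue $\stirlingsec{n}{k_1,\dots,k_d}=\stirlingsec{n}{k_1+\cdots+k_d}\binom{k_1+\cdots+k_d}{k_1,\dots,k_d}$, finishes the proof. Alternatively, one can give a transparent combinatorial proof valid for $x_i\in\N_0$ by counting pairs of a permutation (resp.\ partition) of $[n]$ together with an assignment of its cycles (resp.\ blocks) to labels drawn, respectively, from the sets $[x_1],\dots,[x_d]$ (arbitrarily for cycles, injectively for blocks), and then invoke the polynomial identity principle to remove the integrality assumption on the $x_i$'s.
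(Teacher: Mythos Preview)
Your proposal is correct and follows essentially the same route as the paper: substitute $x=x_1+\cdots+x_d$ into the univariate identities, expand via the (ordinary, respectively falling-factorial) multinomial theorem, and recognize the definition of the multinomial Stirling numbers. The paper invokes the falling-factorial Vandermonde identity without proof, so your added justification via Chu--Vandermonde and induction on $d$ is a welcome extra detail rather than a deviation.
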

\begin{proof}
To prove~\eqref{eq:stirling1_multi_genfunct}, we use the formula $x^{n\uparrow} = \sum_{k=0}^n \stirling{n}{k} x^k$ with $x= x_1+\dots+ x_d$ and then the multinomial theorem:
\begin{align*}
(x_1 + \dots + x_d)^{n\uparrow}
&=
\sum_{k=0}^n \stirling{n}{k} (x_1+\dots + x_d)^k
=
\sum_{k=0}^n \stirling{n}{k}  \sum_{\substack{k_1,\dots, k_d\in \N_0\\ k_1+\dots+ k_d = k}} \binom{k}{k_1,\dots, k_d}x_1^{k_1}\cdots x_d^{k_d}
\\
&=
\sum_{\substack{k_1,\dots, k_d\in \N_0\\ k_1+\dots+ k_d\leq n}} \stirling{n}{k} \binom{k}{k_1,\dots, k_d} x_1^{k_1}\cdots x_d^{k_d}
=
\sum_{\substack{k_1,\dots, k_d\in \N_0\\ k_1+\dots+ k_d\leq n}} \stirling{n}{k_1,\dots, k_d} x_1^{k_1}\cdots x_d^{k_d}.
\end{align*}
Similarly, to prove~\eqref{eq:stirling2_multi_genfunct}, we use the formula $x^n = \sum_{k=0}^{n} \stirlingsec{n}{k}x^{k \downarrow}$ together with the analogue of the multinomial theorem for falling/rising factorials:
\begin{multline*}
(x_1 + \dots + x_d)^{n}=
\sum_{k=0}^n \stirlingsec{n}{k} (x_1+\dots + x_d)^{k\downarrow}=
\sum_{k=0}^n \stirlingsec{n}{k}  \sum_{\substack{k_1,\dots, k_d\in \N_0\\ k_1+\dots+ k_d = k}} \binom{k}{k_1,\dots, k_d} x_1^{k_1\downarrow}\cdots x_d^{k_d\downarrow}
\\
=\sum_{\substack{k_1,\dots, k_d\in \N_0\\ k_1+\dots+ k_d\leq n}} \stirlingsec{n}{k} \binom{k}{k_1,\dots, k_d} x_1^{k_1\downarrow}\cdots x_d^{k_d\downarrow}
=\sum_{\substack{k_1,\dots, k_d\in \N_0\\ k_1+\dots+ k_d\leq n}} \stirlingsec{n}{k_1,\dots, k_d} x_1^{k_1\downarrow}\dots x_d^{k_d\downarrow}.
\end{multline*}
The proof is complete.
\end{proof}

\subsection{Multinomial Ewens permutation}
Let $\mathfrak{S}_n$ be the set of all permutations of the set $[n]$. Let $\cycles (\rho)$ denote the set of cycles of a permutation $\rho\in \mathfrak{S}_n$ and $\#\cycles(\rho)$ denote the number of cycles.

A random \textit{Ewens permutation} with parameter $\theta>0$ is a random element $\bmrho$ of $\mathfrak{S}_n$ such that for each 
deterministic permutation $\rho \in \mathfrak{S}_n$
$$
\P[\bmrho = \rho] = \frac{\theta^{\# \cycles(\rho)}}{\theta^{n\uparrow}},
$$
see Example 2.19 and Chapter 5 in~\cite{Arratia+Barbour+Tavare:2003}, and also~\cite[Section 3.2]{Pitman_book}. We 
denote this
distribution by $\Ewens{n}{\theta}$, so that $\bmrho\sim \Ewens{n}{\theta}$. Note that the special case $\theta=1$ corresponds to a random uniform permutation. It is important for what follows
that the Ewens distribution can also be defined for $\theta=0$. In this case it coincides with the uniform distribution on the subset of $\mathfrak{S}_n$ consisting of permutations with exactly one cycle. The total number of such permutations is $(n-1)!$.

The number of cycles of $\bmrho$ follows the Stirling distribution of the first kind with parameter $\theta$:
$$
\P[\# \cycles(\bmrho) = k] = \stirling{n}{k} \frac{\theta^k}{\theta^{n\uparrow}}, \qquad k\in \{0,\dots, n\}
$$
(in short, $\# \cycles (\bmrho) \sim \Stirone{n}{\theta}$).
Indeed, $\stirling{n}{k}$ is the number of permutations of $[n]$ with $k$ cycles, and each
such a permutation has probability $\theta^k/\theta^{n\uparrow}$ under the Ewens distribution $\Ewens{n}{\theta}$.

Now we introduce the multinomial analogue of the Ewens distribution. A \emph{colored permutation} is a permutation of $[n]$ in which each
cycle has one of $d$ possible colors. The number of colors $d\in \N$ is fixed once and for all. Formally, a colored permutation is a  pair $(\rho, \phi)$, where $\rho\in \mathfrak{S}_n$ is a permutation and $\phi: \cycles(\rho) \to \{1,\dots, d\}$ is a map assigning colors to cycles of $\rho$. A \emph{multinomial random Ewens permutation} is obtained by taking a Ewens permutation and coloring the cycles independently of each other, with $p_j$ denoting the probability of assigning color $j\in \{1,\dots, d\}$ to a cycle.  More precisely, we fix parameters $\theta>0$ and $p_1,\dots, p_d\geq 0$ such that $p_1+\dots + p_d = 1$, define $\theta_j := \theta p_j$ and adopt the following
\begin{definition}
A random colored permutation $(\bmrho, \bmphi)$ has a \emph{multinomial Ewens distribution} $\MultiEwens{n}{\theta}{p_1,\dots,p_d}$ if
\begin{equation}\label{eq:multinomialEwensDistr}
\P[(\bmrho, \bmphi) = (\rho, \phi)] = \frac{\theta^{\# \cycles (\rho)}}{\theta^{n\uparrow}} p_1^{\# \cycles_1(\rho, \phi)} \dots p_d^{\# \cycles_d(\rho, \phi)}
=
\frac{\theta_1^{\# \cycles_1(\rho, \phi)} \dots \theta_d^{\# \cycles_d(\rho, \phi)}}{\theta^{n\uparrow}}
\end{equation}
for each 
deterministic colored permutation $(\rho, \phi)$. Here, $\cycles_j(\rho, \phi)$ denotes the set of cycles of color $j$ in the colored permutation $(\rho, \phi)$.
\end{definition}
From the definition of multinomial Stirling numbers it follows that the cycle counts  of such $(\bmrho, \bmphi)$ are distributed as follows:
$$
\P[\#\cycles_1(\bmrho, \bmphi) = k_1,\dots, \#\cycles_d(\bmrho, \bmphi) = k_d]
=
\stirling{n}{k_1,\dots, k_d}
\frac{\theta^{k_1+\dots+ k_d}}{\theta^{n\uparrow}} p_1^{k_1}\cdots p_d^{k_d} 
$$
for all $k_1,\dots, k_d\in \N_0$ subject to $k_1+\dots + k_d \leq n$. This motivates the following
\begin{definition}
A random vector $(K_1,\dots, K_d)$ with values in $\N_0^d$ has a \emph{multinomial Stirling distribution} of the first kind $\MultStirone{n}{\theta}{p_1,\dots, p_d}$ if
$$
\P[K_1 = k_1,\dots, K_d = k_d]
=
\stirling{n}{k_1,\dots, k_d}
\frac{\theta^{k_1+\dots+ k_d}}{\theta^{n\uparrow}} p_1^{k_1}\cdots p_d^{k_d}
=
\stirling{n}{k_1,\dots, k_d}
\frac{\theta_1^{k_1}\cdots \theta_d^{k_d}}{\theta^{n\uparrow}} 
$$
for all $k_1,\dots, k_d\in \N_0$ satisfying $k_1+\dots + k_d \leq n$, where $\theta_j=\theta p_j$ for $j\in\{1,\dots,d\}$.
\end{definition}
Thus,
the cycle counts of $(\bmrho, \bmphi)\sim \MultiEwens{n}{\theta}{p_1,\dots, p_d}$ satisfy
$$
(\#\cycles_1(\bmrho, \bmphi),\dots, \#\cycles_d(\bmrho, \bmphi))~\sim~\MultStirone{n}{\theta}{p_1,\dots, p_d}.
$$
\begin{proposition}\label{prop:rep_mult_stir_sum_indicators}
The multivariate generating function of the random vector $(K_1,\dots, K_d)$ that follows $\MultStirone{n}{\theta}{p_1,\dots, p_d}$ distribution is given by
\begin{equation}\label{eq:multistir_gen_funct}
\E [t_1^{K_1}\cdots t_d^{K_d}] = \frac{(\theta_1 t_1 + \dots + \theta_d t_d)^{n\uparrow}}{(\theta_1 + \dots + \theta_d)^{n\uparrow}},
\end{equation}
where $\theta_j= \theta p_j$ for $i\in\{1,\dots, d\}$.  Moreover, if $I_1,\dots, I_n$ are independent random vectors in $\N_0^d$ with
\begin{equation}\label{eq:rep_multistir_indicator_distr}
\P[I_\ell = \eee_j] = \frac{\theta_j}{\theta + \ell - 1},
\qquad
j\in \{1,\dots, d\},
\qquad
\P[I_\ell =  {\bf 0}] = \frac{\ell-1}{\theta + \ell - 1},
\end{equation}
where $\eee_1,\dots, \eee_d$ is the standard basis of $\R^d$ and ${\bf 0}$ is the zero vector in $\R^d$, then $(K_1,\dots, K_d)$ has the same distribution as $I_1+\dots+ I_n$. In particular, $K_j \sim \rStirone{n}{\theta_j}{(\theta-\theta_j)}$.
\end{proposition}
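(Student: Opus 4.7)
The plan is to verify all three assertions by computing the multivariate probability generating function in two different ways. First, I would establish~\eqref{eq:multistir_gen_funct} directly from the definition of the multinomial Stirling distribution of the first kind: substitute $x_j = \theta_j t_j$ into identity~\eqref{eq:stirling1_multi_genfunct} to obtain
\[
(\theta_1 t_1 + \dots + \theta_d t_d)^{n\uparrow} = \sum_{\substack{k_1,\dots, k_d\in \N_0\\ k_1+\dots+ k_d\leq n}} \stirling{n}{k_1,\dots, k_d} \theta_1^{k_1}\cdots \theta_d^{k_d}\, t_1^{k_1}\cdots t_d^{k_d}.
\]
Dividing by $\theta^{n\uparrow}=(\theta_1+\dots+\theta_d)^{n\uparrow}$ and recognizing the coefficients as $\P[K_1=k_1,\dots, K_d=k_d]$ yields the claimed closed form.

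Next, for the decomposition as a sum of independent vectors, I would compute the generating function of a single $I_\ell$ with distribution~\eqref{eq:rep_multistir_indicator_distr}. A direct calculation gives
\[
\E[t_1^{I_\ell^{(1)}}\cdots t_d^{I_\ell^{(d)}}] = \frac{\theta_1 t_1 + \dots + \theta_d t_d + (\ell-1)}{\theta + \ell-1}.
\]
By independence, the generating function of $I_1+\dots+I_n$ is the product of these fractions. The denominators telescope into $\theta^{n\uparrow}$, while the numerators telescope into $(\theta_1 t_1 + \dots + \theta_d t_d)^{n\uparrow}$, thus matching~\eqref{eq:multistir_gen_funct}. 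Since a probability distribution on $\N_0^d$ is determined by its generating function, the equality in distribution $(K_1,\dots,K_d)\eqdistr I_1+\dots+I_n$ follows.

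Finally, for the marginal, I would specialize the joint generating function by setting $t_j=t$ and $t_i=1$ for $i\neq j$. This gives
\[
\E[t^{K_j}] = \frac{(\theta_j t + (\theta-\theta_j))^{n\uparrow}}{\theta^{n\uparrow}},
\]
and applying the first formula in~\eqref{eq:stirling_r_rising_factorials} with $x=\theta_j t$ and $r=\theta-\theta_j$ expands the numerator as $\sum_{k=0}^{n}\stirling{n}{k}_{\theta-\theta_j}\theta_j^k t^k$. Comparing coefficients with~\eqref{def:r_stirling-first} shows that $K_j\sim \rStirone{n}{\theta_j}{\theta-\theta_j}$.

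None of the three steps presents a serious obstacle — the only place requiring a small computational insight is the telescoping in the product $\prod_{\ell=1}^{n}\bigl(\theta_1 t_1+\dots+\theta_d t_d + \ell-1\bigr)/(\theta+\ell-1)$, which is immediate once one rewrites the rising factorial in the product form $a^{n\uparrow}=\prod_{\ell=1}^{n}(a+\ell-1)$. Everything else reduces to substitution into previously established identities.
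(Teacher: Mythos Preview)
Your proposal is correct and follows essentially the same approach as the paper: both compute the generating function via~\eqref{eq:stirling1_multi_genfunct}, match it to the product of the $\E[t_1^{I_\ell^{(1)}}\cdots t_d^{I_\ell^{(d)}}]$, and extract the marginal via~\eqref{eq:stirling_r_rising_factorials}. You simply spell out the telescoping product a bit more explicitly than the paper does.
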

\begin{proof}
By definition of the multinomial Stirling distribution,
\begin{align*}
\E [t_1^{K_1}\cdots t_d^{K_d}]
=
\frac{1}{\theta^{n\uparrow}}
\sum_{\substack{k_1,\dots, k_d\in \N_0\\ k_1+\dots + k_d \leq n}}
\stirling{n}{k_1,\dots, k_d}
 (\theta_1 t_1)^{k_1}\dots (\theta_d t_d)^{k_d}
&=
\frac{(\theta_1 t_1 + \dots + \theta_d t_d)^{n\uparrow}}{\theta^{n\uparrow}},
\end{align*}
where the second equality  follows from~\eqref{eq:stirling1_multi_genfunct}. It remains to note that the generating function of the random vector $I_1 + \dots + I_n$ is the same. To identify the distribution of $K_j$, observe that~\eqref{eq:multistir_gen_funct} implies
$$
\E [t_j^{K_j}] = \frac{(\theta_j t_j  + \theta - \theta_j)^{n\uparrow}}{\theta^{n\uparrow}}
\overset{\eqref{eq:stirling_r_rising_factorials}}{=}
\sum_{k=0}^n \stirling{n}{k}_{\theta-\theta_j} \frac{\theta_j^k t_j^k}{(\theta_j+ \theta - \theta_j)^{n\uparrow}},
$$
which proves that $K_j \sim \rStirone{n}{\theta_j}{(\theta-\theta_j)}$, see Definition~\ref{def:rStirling1}.
\end{proof}

The following aggregation property follows directly from the definition:

\begin{proposition}
If $(K_1,\dots, K_d) \sim \MultStirone{n}{\theta}{p_1,p_2,\dots, p_d}$, then
$$
(K_1 + K_2, K_3,\dots, K_d)~\sim~\MultStirone{n}{\theta}{p_1+p_2,p_3,\dots, p_d}.
$$
\end{proposition}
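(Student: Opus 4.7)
The plan is to derive the result in two essentially equivalent ways, both of which are short once the multinomial generating function~\eqref{eq:multistir_gen_funct} is in hand. The analytic route is the fastest: if $(K_1,\dots,K_d)\sim\MultStirone{n}{\theta}{p_1,\dots,p_d}$ then, by Proposition~\ref{prop:rep_mult_stir_sum_indicators}, its joint generating function is the rising factorial $(\theta_1 t_1+\dots+\theta_d t_d)^{n\uparrow}/\theta^{n\uparrow}$ with $\theta_j=\theta p_j$. Specializing $t_2=t_1$ computes $\E[t_1^{K_1+K_2}t_3^{K_3}\cdots t_d^{K_d}]$ and collapses the first two summands into $\theta(p_1+p_2)t_1$; the result is precisely the generating function of $\MultStirone{n}{\theta}{p_1+p_2,p_3,\dots,p_d}$. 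Since generating functions determine distributions on $\N_0^d$, the claim follows.

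For readers who prefer a bijective/probabilistic viewpoint, the alternative route uses the multinomial Ewens model from~\eqref{eq:multinomialEwensDistr}. In a colored permutation $(\bmrho,\bmphi)\sim\MultiEwens{n}{\theta}{p_1,\dots,p_d}$ each cycle is colored independently with probability vector $(p_1,\dots,p_d)$. Relabel colors $1$ and $2$ by a common symbol; the relabeled permutation is a multinomial Ewens permutation with parameters $(\theta;p_1+p_2,p_3,\dots,p_d)$, and its cycle-count vector is $(K_1+K_2,K_3,\dots,K_d)$. If a purely algebraic derivation is desired, one can instead sum the defining mass function over $k_1+k_2=m$ and use the factorization
\[
\stirling{n}{k_1,k_2,k_3,\dots,k_d}=\stirling{n}{k_1+\dots+k_d}\binom{k_1+\dots+k_d}{k_1,k_2,k_3,\dots,k_d}
\]
together with $\binom{k_1+\dots+k_d}{k_1,k_2,k_3,\dots,k_d}=\binom{k_1+\dots+k_d}{m,k_3,\dots,k_d}\binom{m}{k_1}$ and the binomial theorem $\sum_{k_1=0}^{m}\binom{m}{k_1}p_1^{k_1}p_2^{m-k_1}=(p_1+p_2)^m$ to recover $\stirling{n}{m,k_3,\dots,k_d}(p_1+p_2)^m p_3^{k_3}\cdots p_d^{k_d}\,\theta^{m+k_3+\dots+k_d}/\theta^{n\uparrow}$.

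I do not anticipate any real obstacle: the identity is a straightforward consequence of the multinomial theorem applied to a single variable of the generating function, and the author's remark that it ``follows directly from the definition'' is accurate. The only minor care needed is bookkeeping of the parameters $\theta_j=\theta p_j$, which must be merged into $\theta(p_1+p_2)$ while $\theta$ itself remains unchanged.
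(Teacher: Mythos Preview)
Your proposal is correct; the paper gives no proof beyond the remark that the aggregation property ``follows directly from the definition,'' and each of your three arguments (generating-function specialization $t_2=t_1$, color-merging in the multinomial Ewens model, and direct summation via the multinomial-coefficient factorization and the binomial theorem) is a valid way to unpack that remark. The generating-function route is closest in spirit to the paper, since it immediately precedes the proposition and is the tool the paper has just developed; the color-merging argument is perhaps the most literal reading of ``from the definition,'' given that $\stirling{n}{k_1,\dots,k_d}$ is defined combinatorially via colored cycles.
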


Recalling our convention~\eqref{eq:mixtures_convention} concerning the
mixtures of distributions we
write
\begin{equation}\label{MultStirone_as_mixed_mult}
\MultStirone{n}{\theta}{p_1,\dots,p_d} = \Mult{\Stirone{n}{\theta}}{p_1,\dots, p_d}.
\end{equation}
This provides
another way to prove
Proposition~\ref{prop:rep_mult_stir_sum_indicators}.

\subsection{Multinomial Chinese restaurant and the colored Feller coupling}
Fix parameters $\theta_1,\dots, \theta_d\geq 0$ and suppose that $\theta := \theta_1 + \dots + \theta_d>0$.
Customers labeled by $1,2,\dots,n$ arrive in discrete time and take seats
at round tables. Each table is colored in one of $d$ colors.
Each customer $\ell$, after arrival, either creates a new table of color $j\in \{1,\dots, d\}$ with probability $\theta_j/(\theta + \ell-1)$ or takes a seat
at some already existing table next to
customer $i$ (in the counterclockwise direction) with probability $1/(\theta + \ell-1)$, for each customer $i=1,\dots, \ell-1$.  After arrival of $n$ customers, this process generates a random permutation of $[n]$, if we regard tables (read counterclockwise) as cycles.

\begin{proposition}\label{prop:multinomialCRP}
The colored permutation generated by the multinomial Chinese restaurant process has the distribution
$$
\MultiEwens{n}{\theta}{\theta_1/\theta,\dots, \theta_d/\theta}.
$$
In particular, if $C_{n;j}$ denotes the number of tables (cycles) of color $j\in \{1,\dots, d\}$, then
$$
(C_{n;1},\dots, C_{n;d})~\sim~\MultStirone{n}{\theta}{\theta_1/\theta,\dots, \theta_d/\theta}.
$$
\end{proposition}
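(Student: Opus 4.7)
My plan is to compute directly the probability that the multinomial CRP outputs a prescribed colored permutation $(\rho,\phi)$ and match it against~\eqref{eq:multinomialEwensDistr} with $p_j=\theta_j/\theta$. The process is sequential: at step $\ell\in[n]$, customer $\ell$ makes exactly one of $d+(\ell-1)$ elementary moves, namely ``open a new table of color $j$'' for some $j\in[d]$, or ``sit counterclockwise next to customer $i$'' for some $i\in[\ell-1]$. Thus the trajectories of the process are in bijection with sequences of moves of length $n$. Counting such trajectories gives $\prod_{\ell=1}^{n}(d+\ell-1)=d^{n\uparrow}$, which, by specializing~\eqref{eq:stirling1_multi_genfunct} at $x_1=\dots=x_d=1$, equals the total number of colored permutations of $[n]$, so the map from trajectories to colored permutations of $[n]$ is a bijection.

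Next, I would fix $(\rho,\phi)$ and identify the unique sequence $(a_1,\dots,a_n)$ producing it: $a_\ell$ opens a new table of color $\phi(C)$ whenever $\ell$ is the minimum of its cycle $C$ in $\rho$; otherwise $a_\ell$ specifies sitting next to the immediate counterclockwise neighbour of $\ell$ in the cyclic order on $C$ restricted to $\{1,\dots,\ell\}$. Under the multinomial CRP, the probability of the move $a_\ell$ equals $\theta_j/(\theta+\ell-1)$ in the new-table-of-color-$j$ case and $1/(\theta+\ell-1)$ in the sit-next-to case. Multiplying over $\ell$, the denominators give $\theta^{n\uparrow}$, and the numerators yield $\prod_{j=1}^{d}\theta_j^{\#\cycles_j(\rho,\phi)}$, because the number of new-table-of-color-$j$ moves in the sequence equals $\#\cycles_j(\rho,\phi)$. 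This matches~\eqref{eq:multinomialEwensDistr} exactly and proves the first assertion; the statement on the cycle-count vector is then immediate from the definition of $\MultStirone{n}{\theta}{\cdot}$.

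The one point requiring some care is the well-definedness of the inverse map $(\rho,\phi)\mapsto (a_1,\dots,a_n)$, i.e., verifying that the prescription above indeed reconstructs $(\rho,\phi)$ when the moves are replayed. This is the classical Dubins--Pitman-type encoding underlying the Feller coupling, and I expect it to present no real obstacle beyond careful bookkeeping with cyclic orders.
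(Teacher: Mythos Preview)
Your argument is correct. One small presentational point: equal cardinality alone does not give a bijection; you need the injectivity/surjectivity that comes from the explicit inverse you construct in the second paragraph. As written, the first paragraph ends with a conclusion that is only justified after the second paragraph, so you should either reverse the order or phrase the counting as a consistency check rather than a proof of bijectivity.

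The paper proceeds differently. Its main proof is a one-line reduction: forgetting colors, the multinomial CRP \emph{is} the ordinary CRP with parameter $\theta$, which is already known to produce $\Ewens{n}{\theta}$; the colors are then assigned to cycles independently with probabilities $\theta_j/\theta$, which is exactly the definition of the multinomial Ewens distribution. A second proof uses a colored Feller coupling, encoding the process via variables $D_n,D_{n-1},\dots,D_1$ that build the permutation in ordered-cycle notation. Your approach is more self-contained: rather than invoking the classical CRP--Ewens identification as a black box, you effectively re-derive it (with colors carried along) by exhibiting the trajectory $\leftrightarrow$ colored permutation bijection explicitly and reading off the probability of each trajectory. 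This buys independence from the cited reference at the cost of having to verify the inverse map carefully --- which, as you note, is a standard bookkeeping exercise in cyclic orders.
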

\begin{proof}[First proof]
Consider the multinomial Chinese restaurant process and remove the colors of the tables. This is the usual Chinese restaurant process with parameter $\theta$. It is 
known, see Example 2.19 in~\cite{Arratia+Barbour+Tavare:2003}, that the process
generates a random permutation which is distributed according to $\Ewens{n}{\theta}$. Now, recall that in the colored version of the process each new table receives color $j$ with probability $\theta_j/\theta$. By definition, this gives the multinomial Ewens distribution.
\end{proof}

\begin{proof}[Second proof]
Another approach hinges on a colored version of the classic Feller coupling for Ewens' permutations, see pp.~95-96 in~\cite{Arratia+Barbour+Tavare:2003}. Let $(D_i)_{i\in\mathbb{N}}$, be independent random variables such that $D_i$ takes values in the set
$$
\{\close_1, \dots, \close_d\}\cup \{2,3,\dots,i\}
$$
(the meaning of `$\close_j$' will be explained a bit later) and has the distribution
$$
\P[D_i=\close_j]=\frac{\theta_j}{\theta+i-1},\quad j=1,\dots,d,\quad \P[D_i=k]=\frac{1}{\theta+i-1},\quad k=2,\dots,i,\quad i\in\mathbb{N}.
$$
These variables are then used to generate a colored random permutation in the ordered cycle notation as follows. Start the first cycle with `$(1$' and make a $(n+d-1)$-choice using the variable $D_n$. If $D_n=k$, then  $k$ is added to the first cycle transforming `$(1$' to `$(1\; k$'. If $D_n=\close_j$, then one closes
the first cycle, paints it using the color $j$ and opens the next cycle. Continuing in this way and using $D_{n-1},\dots,D_1$ produces a colored random permutation, say $(\bmrho, \bmphi)$, in the ordered cycle notation. Thus, the value $D_i=\close_j$ corresponds to closure of the current cycle, painting it using the color $j$ and starting a new cycle with the smallest unused integer. The total number of cycles in $(\bmrho, \bmphi)$ is equal to
$$
C_n=\sum_{i=1}^{n}\1[D_i\in \{\close_1,\ldots, \close_d\}]
$$
and, since $\P[D_i\in \{\close_1,\ldots, \close_d\}]=\theta/(\theta+i-1)$, follows the Stirling distribution of the first kind $\Stirone{n}{\theta}$. Furthermore, the probability of seeing any fixed (uncolored) permutation $\rho\in\mathfrak{S}_n$, is equal to
$$
\P[\bmrho =\rho]
=
\frac{\theta^{\#{\cycles}(\rho)}}{\theta^{n\uparrow}}.
$$
Thus, erasing the colors in $(\bmrho, \bmphi)$ produces Ewens' permutation. The number of cycles of color $j$ is
$$
C_{n;j}=\sum_{i=1}^{n}\1[D_i=\close_j],\quad j=1,\dots,d.
$$
Using that, independently of $i=1,\dots,n$,
$$
\P[D_i=\close_j|D_i\in \{\close_1,\dots,\close_d\}]=\frac{\theta_j}{\theta},\quad j=1,\dots,d,
$$
we conclude
that $(\bmrho, \bmphi)\sim \MultiEwens{n}{\theta}{\theta_1/\theta,\dots, \theta_d/\theta}$ and $(C_{n;1},\dots, C_{n;d})$ has the  multinomial Stirling distribution $\MultStirone{n}{\theta}{\theta_1/\theta,\dots, \theta_d/\theta}$.
\end{proof}

Since the cycles are painted independently, the following is a simple consequence of the marking theorem for Poisson processes in conjunction with Theorem 5.1 in~\cite{Arratia+Barbour+Tavare:2003}.
\begin{proposition}\label{prop:cycle_counts}
Let $(\bmrho, \bmphi)$ be the multinomial Ewens permutation and let $C_{n;j;k}$ be the number of cycles of color $j\in \{1,\dots,d\}$ and length $k\in\N$. Then,
$$
(C_{n;j;k}:\;j\in \{1,\dots,d\},\;k\in\N)~\todistr~(\Poi{\theta_j/k}:\;j\in \{1,\dots,d\},\;k\in\N)
$$
in the product topology on $\mathbb{N}_0^{d\times \N}$, where the limiting Poisson variables are mutually independent.
\end{proposition}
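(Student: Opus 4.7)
The plan is to reduce the statement to the classical Poisson limit theorem for cycle counts of an Ewens permutation and then apply a multinomial thinning argument.

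First I would discard the colors and note that $\bmrho$ alone is $\Ewens{n}{\theta}$-distributed, which follows from~\eqref{eq:multinomialEwensDistr} by summing over all colorings $\phi$. Writing $C_{n;k} := \sum_{j=1}^d C_{n;j;k}$ for the total number of (uncolored) cycles of length $k$, Theorem~5.1 of~\cite{Arratia+Barbour+Tavare:2003} yields
$$
(C_{n;k})_{k\in \N}\todistr (\Poi{\theta/k})_{k\in \N}
$$
in the product topology, with the limit coordinates mutually independent.

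Next, by the very construction of the multinomial Ewens permutation, conditionally on $\bmrho$ each cycle is painted with an independent color, color $j$ occurring with probability $p_j=\theta_j/\theta$. I would realize this coloring on an enlarged probability space through an i.i.d.\ family $(\xi_{k,i})_{k,i\in\N}$ with $\P[\xi_{k,i}=j]=p_j$, independent of $\bmrho$, so that
$$
C_{n;j;k}=\sum_{i=1}^{C_{n;k}}\1[\xi_{k,i}=j].
$$
This realizes the coloring as a fixed, measurable function of the uncolored cycle counts and the extraneous randomness, and it automatically makes the multinomial vectors $(C_{n;1;k},\dots,C_{n;d;k})$, $k\in\N$, conditionally independent across $k$ given $(C_{n;k})_{k\in\N}$.

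Finally, I would invoke the classical Poisson thinning identity: if $N\sim\Poi{\lambda}$ and $(\xi_i)_{i\in\N}$ are i.i.d.\ $\Mult{1}{p_1,\dots,p_d}$, independent of $N$, then $\bigl(\sum_{i=1}^N\1[\xi_i=j]\bigr)_{j=1}^d$ are mutually independent with $\Poi{\lambda p_j}$ marginals. Applied length-by-length and combined with the continuous mapping theorem for the product topology (which reduces matters to any finite collection of indices $k$, on which the relevant counting map is coordinatewise continuous), this upgrades the product-Poisson limit of the first step into the desired joint limit of $(C_{n;j;k})_{j,k}$ consisting of mutually independent $\Poi{\theta p_j/k}=\Poi{\theta_j/k}$ variables. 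I do not anticipate a serious obstacle: the substantive content is the Arratia--Barbour--Tavar\'e theorem, and the coloring contributes only the routine bookkeeping described above.
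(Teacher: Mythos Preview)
Your proposal is correct and follows essentially the same approach as the paper, which simply states that the result is ``a simple consequence of the marking theorem for Poisson processes in conjunction with Theorem~5.1 in~\cite{Arratia+Barbour+Tavare:2003}.'' You have merely spelled out in detail the thinning argument that the paper leaves implicit.
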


\subsection{Multinomial Hoppe forests}\label{sec:multi_hoppe}
The classic sequential construction of a \emph{random recursive tree} $\RRT{n}$ with $n$ nodes $1, \dots, n$  and the root labeled by $0$ proceeds by recursively attaching node $\ell\in \{1,\dots,n\}$ to one of the existing nodes $0,1,\dots,\ell-1$ picked uniformly at random and independently of the previous choices; see, e.g.,~\cite{smythe_mahmoud_survey_RRT}. The \emph{Hoppe tree} $\Hoppe{n}{\theta}$ is defined similarly, see~\cite{Leckey+Neininger:2013}, but now the probability of attaching the node $\ell$ to the root is $\theta/(\theta+\ell-1)$, while the probability of attaching $\ell$ to any other node $1,\dots, \ell-1$ is $1/(\theta + \ell-1)$. Here, $\theta>0$ is a parameter representing the `weight' of the root, while the `weight' of any other node is $1$. In the special case $\theta=1$ one recovers the random recursive tree $\RRT{n}$. The name `Hoppe tree' originates from the corresponding Hoppe's urn model introduced in~\cite{hoppe_urns_ewens,Hoppe:1986,hoppe_sampling} in the context of the Ewens sampling formula.

We now define a multinomial generalization of Hoppe trees, called \emph{multinomial Hoppe forest} and denoted by $\MultiHoppe{n}{\theta}{\theta_1/\theta,\dots, \theta_d/\theta}$.   Fix parameters $\theta_1,\dots, \theta_d\geq 0$ and suppose that $\theta := \theta_1 + \dots + \theta_d>0$.
Start at time $0$ with $d$ roots, labeled $\rt_1,\dots, \rt_d$,  which have weights $\theta_1,\dots, \theta_d$. Nodes, labeled by $1,2,\dots, n$, arrive in discrete time and are added as children to the roots (with probabilities proportional to the weights $\theta_1, \dots, \theta_d$) or to the nodes already present in the forest (with probability proportional to $1$ for each node). More precisely, suppose that the nodes $1,\dots, \ell-1$ have already arrived, where $\ell\in \{1,\dots,n\}$. Then,
the next node, with label $\ell$, arrives at time $\ell$ and is attached to $\rt_j$ with probability $\theta_j/(\theta + \ell-1)$,  for each $j\in \{1,\dots, d\}$,  or to any of the already existing
nodes $1,\dots, \ell-1$, with probability $1/(\theta + \ell-1)$ for each node.
The usual Hoppe trees are recovered by taking $d=1$.

Recall that a random vector $(X_1,\dots,X_d)$ has the
\emph{Dirichlet multinomial distribution}, denoted hereafter by $\MDir{n}{\theta_1,\dots,\theta_d}$, if
\begin{multline}\label{eq:MDir_def}
\P[(X_1,\dots,X_d)=(k_1,\dots,k_d)]\\
=\frac{\Gamma(n+1)\Gamma(\theta_1+\dots+\theta_d)}{\Gamma(n+\theta_1+\dots+\theta_d)}
\prod_{i=1}^{d}\frac{\Gamma(k_i+\theta_i)}{\Gamma(\theta_i)\Gamma(k_i+1)},\quad k_i\geq 0,\quad \sum_{i=1}^{d} k_i=n,
\end{multline}
see Eq.~(35.151) in~\cite{Johnson+Kotz+Balakrishnan} with the convention that $\P[X_i=0]=1$ if $\theta_i=0$, for some $i\in\{1,\dots,d\}$.

Let $\Betadistr{\alpha}{\beta}$ denote the beta distribution with parameters $\alpha,\beta>0$ and having the density $x\mapsto x^{\alpha-1}(1-x)^{\beta-1}/\Beta(\alpha,\beta)$ for $x\in (0,1)$. If $\alpha=0$ (respectively $\beta=0$), we stipulate that $\Betadistr{\alpha}{\beta}$ is the degenerate distribution at $0$ (respectively, $1$). The following result is standard, see~\cite[Chapter 4.5.1]{Johnson+Kotz:UrnModels}.
\begin{proposition}\label{prop:multihoppe_sizes}
The joint distribution of the sizes of the connected components of $\rt_1,\dots, \rt_d$ in
$\MultiHoppe{n}{\theta}{\theta_1/\theta,\dots, \theta_d/\theta}$ (not counting the roots themselves) is $\MDir{n}{\theta_1,\dots,\theta_d}$. In particular, the size of the connected component of $\rt_j$ (not counting $\rt_j$ itself) has the so-called beta-binomial distribution $\Bin{n}{\Betadistr{\theta_j}{\theta-\theta_j}}$, for all $j=1,\dots,d$. 
\end{proposition}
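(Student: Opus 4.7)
The plan is to recognize the subtree sizes as the composition vector of a classical Pólya urn. Let $N_\ell^{(j)}$ denote the size (not counting $\rt_j$ itself) of the connected component of $\rt_j$ after the first $\ell$ nodes $1,\dots,\ell$ have arrived, and set $N_0^{(j)}=0$. Key observation: the total weight carried by the component of $\rt_j$ at time $\ell-1$ equals $\theta_j+N_{\ell-1}^{(j)}$, because the root contributes $\theta_j$ and each of the $N_{\ell-1}^{(j)}$ already attached non-root nodes contributes $1$. Since node $\ell$ attaches to any fixed vertex with probability proportional to its weight, it joins the component of $\rt_j$ (transitively, by attaching to any vertex in it) with probability
\[
\frac{\theta_j+N_{\ell-1}^{(j)}}{\theta+\ell-1},\qquad j\in\{1,\dots,d\}.
\]
This is precisely a $d$-color Pólya urn with initial composition $(\theta_1,\dots,\theta_d)$ and identity replacement matrix, run for $n$ steps, so the vector $(N_n^{(1)},\dots,N_n^{(d)})$ is exactly the composition increment of such an urn.

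Next I would quote (or briefly re-derive) the classical result that the composition increment of this urn after $n$ draws follows the Dirichlet multinomial law $\MDir{n}{\theta_1,\dots,\theta_d}$; see, e.g., \cite[Chapter 4.5.1]{Johnson+Kotz:UrnModels}. If one prefers a self-contained derivation, one can compute the probability of any fixed sequence of colors $(j_1,\dots,j_n)$ producing the count vector $(k_1,\dots,k_d)$ directly from the attachment rule above, obtaining the value
\[
\frac{1}{(\theta)^{n\uparrow}}\prod_{i=1}^{d}(\theta_i)^{k_i\uparrow},
\]
and then sum over the $\binom{n}{k_1,\dots,k_d}$ orderings and rewrite rising factorials via the Gamma function to match \eqref{eq:MDir_def}. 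The formula is symmetric in the positions, which makes the exchangeability of the urn transparent and the counting routine.

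For the marginal statement I would invoke the obvious aggregation property of the multinomial Hoppe forest: merging the roots $\rt_2,\dots,\rt_d$ into a single root of weight $\theta-\theta_j$ produces a two-color process of the same type. Hence $N_n^{(j)}\sim\MDir{n}{\theta_j,\theta-\theta_j}$. To identify this marginal with the beta-binomial $\Bin{n}{\Betadistr{\theta_j}{\theta-\theta_j}}$, I would integrate: if $P\sim\Betadistr{\theta_j}{\theta-\theta_j}$ and $X\mid P\sim\Bin{n}{P}$, then
\[
\P[X=k]=\binom{n}{k}\frac{1}{\Beta(\theta_j,\theta-\theta_j)}\int_0^1 p^{k+\theta_j-1}(1-p)^{n-k+\theta-\theta_j-1}\dd p,
\]
which after using $\Beta(a,b)=\Gamma(a)\Gamma(b)/\Gamma(a+b)$ coincides with the $d=2$ specialization of \eqref{eq:MDir_def}.

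The only genuine subtlety I foresee is handling the degenerate cases $\theta_i=0$, where $\rt_i$ receives no children almost surely; these are covered by the conventions already stated after \eqref{eq:MDir_def} and in the definition of the Beta distribution for zero parameter, so no separate argument is required. Otherwise the proof is essentially a translation between the Hoppe-forest dynamics and the Pólya-urn dynamics, with the main (but standard) ingredient being the explicit Dirichlet multinomial formula for Pólya urn compositions.
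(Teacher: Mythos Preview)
Your proposal is correct and follows exactly the standard Pólya-urn argument that the paper's cited reference \cite[Chapter~4.5.1]{Johnson+Kotz:UrnModels} contains; the paper itself gives no proof beyond that citation, so you have in fact supplied more detail than the paper does. The identification of the attachment dynamics with a $d$-color Pólya urn with initial weights $(\theta_1,\dots,\theta_d)$ and unit reinforcement is the whole content of the result, and your explicit path-probability computation $\prod_i \theta_i^{k_i\uparrow}/\theta^{n\uparrow}$ is the cleanest way to confirm the $\MDir{n}{\theta_1,\dots,\theta_d}$ law.
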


\begin{proposition}\label{prop:root_degree_Hoppe}
For each 
$j\in \{1,\dots, d\}$, let $D_{n;j}$ be the degree of $\rt_j$ in the random forest $\MultiHoppe{n}{\theta}{\theta_1/\theta,\dots, \theta_d/\theta}$. Then,
$$
(D_{n;1},\dots, D_{n;d})~\sim~\MultStirone{n}{\theta}{\theta_1/\theta,\dots,\theta_d/\theta}.
$$
\end{proposition}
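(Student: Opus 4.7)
The plan is to identify, for each incoming node $\ell\in\{1,\dots,n\}$, an indicator random vector $I_\ell\in\{{\bf 0},\eee_1,\dots,\eee_d\}$ that records whether node $\ell$ attaches to one of the roots (and if so, which one) or to a non-root node. By the very definition of the multinomial Hoppe forest, the attachment probabilities at step $\ell$ are deterministic functions of $\ell$, so the $I_\ell$'s are \emph{independent}, with
$$
\P[I_\ell=\eee_j]=\frac{\theta_j}{\theta+\ell-1},\qquad \P[I_\ell={\bf 0}]=\frac{\ell-1}{\theta+\ell-1},
$$
which is precisely the distribution~\eqref{eq:rep_multistir_indicator_distr} appearing in Proposition~\ref{prop:rep_mult_stir_sum_indicators}.

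Next, I would observe that the degree of $\rt_j$ increases by one exactly when a new node is attached directly to $\rt_j$, and does not change otherwise. Therefore
$$
(D_{n;1},\dots,D_{n;d})=\sum_{\ell=1}^{n} I_\ell,
$$
as an identity of random vectors in $\N_0^d$. Invoking the representation of $\MultStirone{n}{\theta}{\theta_1/\theta,\dots,\theta_d/\theta}$ as the law of $I_1+\dots+I_n$ from Proposition~\ref{prop:rep_mult_stir_sum_indicators}, this concludes the argument.

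No step is really an obstacle: the only thing to verify carefully is that the recorded indicators are genuinely independent, which follows because the probability of attaching node $\ell$ to $\rt_j$ is $\theta_j/(\theta+\ell-1)$ regardless of the past structure of the forest, whereas the probability of attaching to some non-root node is $(\ell-1)/(\theta+\ell-1)$, which depends only on $\ell$ and not on \emph{which} non-root node is chosen. Hence the marginal law of $I_\ell$ is the one displayed above and the full family is independent by the sequential nature of the construction.
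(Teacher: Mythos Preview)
Your proof is correct and follows essentially the same approach as the paper: define the indicator vectors $I_\ell$ recording which root (if any) node $\ell$ attaches to, observe that they are independent with the distribution~\eqref{eq:rep_multistir_indicator_distr}, write $(D_{n;1},\dots,D_{n;d})=I_1+\dots+I_n$, and invoke Proposition~\ref{prop:rep_mult_stir_sum_indicators}. Your added remark on why the $I_\ell$'s are genuinely independent is a welcome clarification but does not depart from the paper's argument.
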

\begin{proof}
For $\ell\in \{1,\dots, n\}$ let  $I_\ell$ be a random vector in $\N_0^d$ with distribution~\eqref{eq:rep_multistir_indicator_distr} and assume that $I_1,\dots,I_n$ are mutually independent. We interpret the event $\{I_\ell=\eee_j\}$ as $\{$the node $\ell$ was attached to $\rt_j\}$ and the event $\{I_\ell=0\}$ as $\{\ell$ was attached to one of the nodes $1,\dots, \ell-1\}$. Then, $(D_{n;1},\dots, D_{n;d})$ has the same distribution as $I_1+\dots + I_n$. It remains to apply Proposition~\ref{prop:rep_mult_stir_sum_indicators}.
\end{proof}

\subsubsection{Expected profiles of multinomial Hoppe forests}
The expected number of nodes in an $\RRT{n}$ having distance $k$ to the root is known to be $\frac 1{n!}\stirling{n+1}{k+1}$, for all $k\in \{1,\dots, n\}$; see~\cite[Lemma 6.16]{Drmota_book}
or~\cite[Theorem~1]{smythe_mahmoud_survey_RRT} (actually both references
attribute this formula to~\cite{Dondajewski+Szymanski:1982}). The next theorem generalizes this result by identifying the expected profile of the multinomial Hoppe trees.
Interestingly, even for $d=1$ and arbitrary $\theta>0$, the next formula contains the $\theta$-Stirling numbers.
\begin{theorem}
Consider $\MultiHoppe{n}{\theta}{\theta_1/\theta,\dots, \theta_d/\theta}$ and let $L_{n; j}(k)$ be the number of nodes in the connected component of the root $j\in \{1,\dots, d\}$ which have distance $k$ to the root $j$. Then,
\begin{equation}\label{eq:rrrt_expected_profile}
\E  [L_{n;j}(k)] = \frac{\theta_j}{\theta^{n\uparrow}}\stirling{n}{k}_{\theta},\quad k\in \{1,\dots,n\},\quad j\in \{1,\dots, d\}.
\end{equation}
\end{theorem}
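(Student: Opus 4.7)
The plan is to write $L_{n;j}(k)$ as a sum of indicators over the nodes $\ell=k,\dots,n$, derive a recursion in $n$ for the expectation, and verify that the stated right-hand side satisfies the same recursion by induction. Specifically, for $\ell \in \{k,\dots,n\}$ let $A_{\ell,k,j}$ be the event that node $\ell$ lies in the component of $\rt_j$ at graph distance exactly $k$. Because the distance of $\ell$ to its root is frozen at the moment of attachment, $\P[A_{\ell,k,j}]$ depends only on the evolution up to time $\ell$. By the construction of the multinomial Hoppe forest, when node $\ell$ arrives it joins $\rt_j$ directly with probability $\theta_j/(\theta+\ell-1)$ (this fires $A_{\ell,1,j}$) or it joins an existing node $i\in\{1,\dots,\ell-1\}$ with probability $1/(\theta+\ell-1)$ (this fires $A_{\ell,k,j}$ with $k\geq 2$ iff $i$ was at distance $k-1$ from $\rt_j$). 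Writing $M_n(k):=\E[L_{n;j}(k)]$ and summing these probabilities over all nodes gives the two recursions
\[
M_n(k)=M_{n-1}(k)+\frac{M_{n-1}(k-1)}{\theta+n-1}\quad(k\geq 2),\qquad M_n(1)=M_{n-1}(1)+\frac{\theta_j}{\theta+n-1},
\]
with initial data $M_0(k)=0$ for all $k\geq 1$.

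Next, set $R_n(k):=\frac{\theta_j}{\theta^{n\uparrow}}\stirling{n}{k}_{\theta}$ and verify by induction on $n$ that $M_n(k)=R_n(k)$ for every $k\in\{1,\dots,n\}$. The base case $n=1$ is immediate, since $\stirling{1}{1}_\theta=1$. The key algebraic tool is the triangular recursion
\[
\stirling{n}{k}_\theta=\stirling{n-1}{k-1}_\theta+(\theta+n-1)\stirling{n-1}{k}_\theta,
\]
which follows by reading off the coefficient of $x^k$ on both sides of $(x+\theta)^{n\uparrow}=(x+\theta+n-1)(x+\theta)^{(n-1)\uparrow}$ via~\eqref{eq:stirling_r_rising_factorials}. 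Substituting the inductive hypothesis into the $k\geq 2$ recursion for $M_n(k)$ and clearing the denominator $(\theta+n-1)\theta^{(n-1)\uparrow}=\theta^{n\uparrow}$ instantly reproduces $R_n(k)$.

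The only mild subtlety is the $k=1$ step, whose recursion has the inhomogeneous constant $\theta_j/(\theta+n-1)$ rather than a term $M_{n-1}(0)/(\theta+n-1)$. The cleanest way around this is to observe that, from the definition~\eqref{eq:r_stirling_def_finite_difference}, $\stirling{n-1}{0}_\theta=\theta^{(n-1)\uparrow}$, so formally extending the $k\geq 2$ recursion to $k=1$ with the convention $M_{n-1}(0):=\theta_j$ produces exactly the required constant term; the same extended recursion matches the one for $R_n(1)$. Alternatively, $M_n(1)=\theta_j\sum_{\ell=1}^n(\theta+\ell-1)^{-1}$ can be summed directly and compared to $R_n(1)$ using the identity $\stirling{n}{1}_\theta=\theta^{n\uparrow}\sum_{\ell=1}^n(\theta+\ell-1)^{-1}$ (again from the factorization of $(x+\theta)^{n\uparrow}$). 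The main obstacle is really only this bookkeeping at $k=1$; everything else is an almost mechanical induction once the right recursion for $\stirling{n}{k}_\theta$ is in hand.
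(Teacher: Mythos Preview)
Your proof is correct and shares the same starting point as the paper's: both derive the recursion
\[
M_n(k)=M_{n-1}(k)+\frac{M_{n-1}(k-1)}{\theta+n-1}\quad(k\ge 2),\qquad M_n(1)=M_{n-1}(1)+\frac{\theta_j}{\theta+n-1},
\]
by conditioning on the attachment of node $n$. The difference is only in how the recursion is solved. You verify by induction on $n$ that the claimed expression satisfies the recursion, using the triangular identity $\stirling{n}{k}_\theta=\stirling{n-1}{k-1}_\theta+(\theta+n-1)\stirling{n-1}{k}_\theta$ together with $\stirling{n-1}{0}_\theta=\theta^{(n-1)\uparrow}$ to handle $k=1$. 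The paper instead packages all $k$ into the generating polynomial $\phi_{n;j}(z)=\theta_j+\sum_{k\ge 1}M_n(k)z^k$, observes that the two recursions combine into $\phi_{n;j}(z)=(1+z/(n+\theta-1))\phi_{n-1;j}(z)$, and telescopes to $\theta_j(\theta+z)^{n\uparrow}/\theta^{n\uparrow}$, from which the result is read off via~\eqref{eq:stirling_r_rising_factorials}. The generating-function route absorbs the $k=1$ boundary automatically through the constant term $\theta_j$, whereas your induction needs a separate remark there; otherwise the two arguments are equivalent reformulations of the same computation.
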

\begin{proof}
Denote the left-hand side of~\eqref{eq:rrrt_expected_profile} by $a_j(n,k):= \E  [L_{n;j}(k)]$. For $n\in\N_0$, let $\mathcal{F}_{n}$ be the $\sigma$-algebra generated by the multinomial Hoppe forest, which starts with $d$ roots, after adding $n$ new nodes. Then $a_j(0,k)=0$ for $k\in\N$ and, for $k\geq 2$,
\begin{multline}\label{eq:hoppe_forest_proof1}
a_j(n,k)=\E  [L_{n;j}(k)]=\E  [\E [L_{n;j}(k)]|\mathcal{F}_{n-1}]=\E \left[L_{n-1;j}(k)+\frac{L_{n-1;j}(k-1)}{n+\theta-1}\right]\\
=a_j(n-1,k)+\frac{a_j(n-1,k-1)}{n+\theta-1},\quad n\in\N,\quad j=1,\dots,d,
\end{multline}
where $1/(n+\theta-1)$ is the probability that
the node $n$ joins
a particular node at
the level $k-1$. For $k=1$, the recursion is slightly different accounting to a special weight of the root $j$:
\begin{equation}\label{eq:hoppe_forest_proof2}
a_j(n,1)=a_j(n-1,1)+\frac{\theta_j}{n+\theta-1},\quad n\in\N,\quad j=1,\dots,d,
\end{equation}
where $\theta_j/(n+\theta-1)$ is the probability of joining the root $j$. Introduce the generating functions
$$
\phi_{n;j}(z):=\theta_j+\sum_{k=1}^{\infty}a_j(n,k)z^k, \quad n\in\mathbb{N}_0, \quad  j=1,\dots,d.
$$
Then~\eqref{eq:hoppe_forest_proof1} and~\eqref{eq:hoppe_forest_proof2} imply
$$
\phi_{0;j}(z)=\theta_j,\quad \phi_{n;j}(z)=\left(1+\frac{z}{n+\theta-1}\right)\phi_{n-1;j}(z),\quad n\in\N.
$$
Thus,
$$
\phi_{n;j}(z)=\theta_j\prod_{k=0}^{n-1}\left(1+\frac{z}{k+\theta}\right)=\theta_j\frac{(\theta+z)^{n\uparrow}}{\theta^{n\uparrow}}=\frac{\theta_j}{\theta^{n\uparrow}}\sum_{k=0}^{n}\stirling{n}{k}_{\theta}z^k,\quad n\in\N_0,
$$
and~\eqref{eq:rrrt_expected_profile} follows.
\end{proof}
\begin{remark}
Observe that the expected number of nodes in the $j$-th tree (not counting the root) is
$$
\sum_{k=1}^n \E  [L_{n;j}(k)] = \frac{\theta_j}{\theta^{n\uparrow}}\sum_{k=1}^n\stirling{n}{k}_{\theta} \overset{\eqref{eq:stirling_r_rising_factorials}}{=} \frac{\theta_j}{\theta^{n\uparrow}} ((1+\theta)^{n\uparrow} - \theta^{n\uparrow}) = n\theta_j/\theta,
$$
as it should be. We have used here that $\stirling{n}{0}_\theta = \theta^{n\uparrow}$.
\end{remark}

\subsubsection{Leaves in multinomial Hoppe forests and generalized Eulerian numbers}
Let $T_n$ be the number of leaves in a random recursive tree $\RRT{n}$ with one root and $n$ other nodes. According to a result of Najock and Heyde~\cite{najock_heyde} (alternatively, see Eq.~(6.15) in~\cite{Drmota_book}):
\begin{equation}\label{eq:leaves_drmota}
\P[T_n=k]=\frac{1}{n!}\EulerNum{n}{k-1},\quad k=1,\dots,n, \quad n\in \N,
\end{equation}
where the second factor is the standard Eulerian number, see~\cite[Chapter 6.2]{graham_etal_book}
and~\cite{petersen_book_eulerian_numbers}. For our purposes the following recursive definition of the Eulerian number serves best:
\begin{equation}\label{eq:Euler_numbers_st_def}
\EulerNum{n}{k}=(n-k)\EulerNum{n-1}{k-1}+(k+1)\EulerNum{n-1}{k},\quad 0\leq k<n,\quad n\in\N
\end{equation}
with the boundary conditions $\EulerNum{n}{0}=1$, $n\in\N_0$, $\EulerNum{n}{n}=0$, $n\in\N$.  In the following we 
generalize~\eqref{eq:leaves_drmota} first to Hoppe trees and then to Hoppe forests. Put $A(r,s):=\EulerNum{r+s+1}{s}$ and note that~\eqref{eq:Euler_numbers_st_def} implies
$$
A(r,s)=(r+1)A(r,s-1)+(s+1)A(r-1,s),\quad r,s\in\N_0,\quad r+s>0,
$$
with $A(0,0)=1$ and $A(r,s)=0$ if $r=-1$ or $s=-1$. This formula suggests the following generalization of Eulerian numbers, proposed by Carlitz and Scoville~\cite{Carlitz+Scoville:1974}. Let $\alpha,\beta$ be arbitrary real parameters. Put
\begin{equation}\label{eq:Euler_numbers_gen_def}
A(r,s|\alpha,\beta)=(r+\beta)A(r,s-1|\alpha,\beta)+(s+\alpha)A(r-1,s|\alpha,\beta),\quad r,s\in\N_0,\quad r+s>0,
\end{equation}
and $A(0,0|\alpha,\beta)=1$ and $A(r,s|\alpha,\beta)=0$ if $r=-1$ or $s=-1$. Note that $A(r,s)=A(r,s|1,1)$. According to Eq.~(1.8) in~\cite{Carlitz+Scoville:1974},
$$
\sum_{r,s=0}^{\infty}A(r,s|\alpha,\beta)\frac{u^r v^s}{(r+s)!}=(1+uF(u,v))^{\alpha}(1+vF(u,v))^{\beta},
$$
where
$$
F(u,v)=\frac{\eee^{u}-\eee^{v}}{u\eee^{v}-v\eee^{u}}.
$$
It turns out that the generalized Eulerian numbers defined by~\eqref{eq:Euler_numbers_gen_def} are an indispensable ingredient of the version of~\eqref{eq:leaves_drmota} for Hoppe trees.

\begin{theorem}[Leaves in Hoppe trees]\label{theo:leaves_hoppe_tree}
Let $T_{n}$ be the number of leaves in a random tree $\Hoppe{n}{\theta}$ with $\theta>0$. Then,
\begin{equation}\label{eq:rrrt_leaves_distribution}
\P[T_{n}=k]= \frac{A(n-k,k|0,\theta)}{\theta^{n\uparrow}},\quad k=1,\dots,n, \quad n\in \N.
\end{equation}
\end{theorem}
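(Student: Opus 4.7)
The plan is to prove~\eqref{eq:rrrt_leaves_distribution} by induction on $n$, guided by the recursion~\eqref{eq:Euler_numbers_gen_def} for the generalized Eulerian numbers specialized to $\alpha=0$, $\beta=\theta$. The base case $n=1$ is immediate: $T_1=1$ almost surely, and $A(0,1|0,\theta)=(0+\theta)\,A(0,0|0,\theta)=\theta=\theta^{1\uparrow}$.

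For the inductive step I would derive a one-step recursion for the distribution of $T_n$ by examining how a leaf count evolves under one arrival. When the node labeled $n$ arrives, it attaches either to the root (with probability $\theta/(\theta+n-1)$) or to one of the $n-1$ already-present nodes (with probability $1/(\theta+n-1)$ each). Since the root is not counted as a leaf, attachment to the root creates a new leaf and so $T_n=T_{n-1}+1$; attachment to an existing internal node also gives $T_n=T_{n-1}+1$; attachment to one of the $T_{n-1}$ existing leaves leaves the leaf count unchanged, $T_n=T_{n-1}$. Conditioning on $T_{n-1}$ therefore yields
\begin{equation*}
\P[T_n=k]=\frac{k}{\theta+n-1}\,\P[T_{n-1}=k]+\frac{\theta+n-k}{\theta+n-1}\,\P[T_{n-1}=k-1].
\end{equation*}

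Substituting the inductive hypothesis $\P[T_{n-1}=k]=A(n-1-k,k|0,\theta)/\theta^{(n-1)\uparrow}$ and using $\theta^{n\uparrow}=(\theta+n-1)\theta^{(n-1)\uparrow}$, the target identity reduces to
\begin{equation*}
A(n-k,k|0,\theta)=k\,A(n-1-k,k|0,\theta)+(\theta+n-k)\,A(n-k,k-1|0,\theta),
\end{equation*}
which is precisely~\eqref{eq:Euler_numbers_gen_def} with $r=n-k$, $s=k$, $\alpha=0$, $\beta=\theta$. Boundary values $k=1$ and $k=n$ are absorbed by the conventions $A(r,-1|0,\theta)=A(-1,s|0,\theta)=0$, so no separate verification is needed.

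There is no real obstacle here: the argument is a clean induction whose only subtlety is the bookkeeping for the three possible attachment targets (root, existing leaf, existing internal node) and the reminder that the root is excluded from the leaf count. Once this trichotomy is set up correctly, the probabilistic recursion lines up verbatim with the Carlitz--Scoville recursion for $A(r,s|0,\theta)$ and the proof closes by one reference to~\eqref{eq:Euler_numbers_gen_def}.
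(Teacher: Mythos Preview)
Your proof is correct and follows essentially the same approach as the paper: both derive the one-step recursion $p_n(k)=\frac{k}{\theta+n-1}p_{n-1}(k)+\frac{n-k+\theta}{\theta+n-1}p_{n-1}(k-1)$ from the three attachment cases and then match it to the Carlitz--Scoville recursion~\eqref{eq:Euler_numbers_gen_def} with $(\alpha,\beta)=(0,\theta)$. The only cosmetic difference is that the paper introduces the substitution $Q_{n;\ell}:=\theta^{(n+\ell)\uparrow}p_{n+\ell}(\ell)$ to identify the recursion directly, whereas you run an explicit induction on $n$; the content is the same.
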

\begin{proof}
Denote by $p_{n}(k):= \P[T_{n}=k]$ the left-hand side of~\eqref{eq:rrrt_leaves_distribution} and let $\mathcal{F}_{n}$ denote the $\sigma$-algebra generated by the Hoppe tree upon adding $n$ nodes. Note that the root cannot be a leaf if $n\geq 1$ and it is also convenient not to regard it as a leaf if $n=0$, thus we put $p_{0}(0)=1$. Then,
\begin{multline*}
p_{n}(k)=\E [\P[T_{n}=k|\mathcal{F}_{n-1}]]
=\frac{k}{n+\theta-1}p_{n-1}(k)+\frac{n-k+\theta}{n+\theta-1}p_{n-1}(k-1),\quad k=0,\dots,n,\quad n\in\N,
\end{multline*}
where we stipulate that $p_{n}(k)=0$, for $k>n$, and $p_{n}(-1)=0$. Put $Q_{n;\ell}:=\theta^{(n+\ell)\uparrow}p_{n+\ell}(\ell)$ and note that the above recursion transforms into
$$
Q_{n;\ell}=\ell Q_{n-1;\ell}+(n+\theta) Q_{n;\ell-1},\quad n\in\N_0,\quad \ell\in\N_0,\quad n+\ell>0
$$
with the initial values
$$
Q_{0,0}=1,\quad Q_{-1;\ell}=Q_{n;-1}=0,\quad n,\ell\in\N_0.
$$
Thus, $Q_{n;\ell}=A(n,\ell|0,\theta)$ and~\eqref{eq:rrrt_leaves_distribution} follows.
\end{proof}

\begin{example}
Note that the RRT result~\eqref{eq:leaves_drmota} is a particular case when $\theta=1$, since
$A(n-k,k|0,1)=\EulerNum{n}{k-1}$ which, in turn, is a consequence of $A(r,s|0,1)=\EulerNum{r+s}{s-1}$.
\end{example}

The generalized Eulerian distribution with probability mass function $k\mapsto A(k,n-k|\alpha, \beta)/(\alpha+\beta)^{n\uparrow}$ for $k\in \{0,\dots, n\}$ is investigated
in~\cite{charalambides_eulerian_generalized,Janardan:1988,janardan:1993}  in the context of Morisita's work on mutual repulsive behavior of ant lions~\cite{Morisita:1971}. Among other results, Charalambides~\cite{charalambides_eulerian_generalized} computes
the factorial moments of this distribution and proves
a central limit theorem.

\begin{remark}
The $r$-analogues of Eulerian numbers appearing in~\cite[\S~8.5--8.7]{mezo_book} and~\cite{mezo_recent_developments_stirling} and denoted there by $\EulerNum{n}{k}_{r}$ satisfy the same recurrence relation, namely, $\EulerNum{n}{k}_r = (n-k+r)\EulerNum{n-1}{k-1}_r + (k+1)\EulerNum{n-1}{k}_r$ (see relation~(8.30) on p.~218 in~\cite{mezo_book}) as the numbers  $A(n-k-1,k+1|0,r+1)$ (see~\eqref{eq:Euler_numbers_gen_def}). However, the boundary conditions imposed on these arrays are different; see~\cite[p.~216]{mezo_book}.  Related generalizations of Eulerian numbers can be found in~\cite{janson_euler_frobenius_rounding} and~\cite{maier_triangular_recurrences}.
\end{remark}
\begin{theorem}[Leaves in Hoppe forests]\label{theo:leaves_multihoppe}
Let $T_{n;j}$ be the number of leaves in the connected component of the root $j\in\{1,\dots, d\}$ in the multinomial Hoppe forest $\MultiHoppe{n}{\theta}{\theta_1/\theta,\dots, \theta_d/\theta}$ (if the root $j$ has no descendants, we stipulate that $T_{n,j} = 0$).  Then,
$$
\P[T_{n;j} = k] = \frac 1 {\theta^{n\uparrow}}\sum_{m=k}^n \binom nm (\theta - \theta_j)^{(n-m)\uparrow} A(m-k,k|0,\theta_j).
$$
\end{theorem}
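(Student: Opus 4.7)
The plan is to condition on the size $M_j$ of the connected component of $\rt_j$ (not including $\rt_j$ itself) and then reduce to Theorem~\ref{theo:leaves_hoppe_tree}. The first ingredient is the marginal distribution of $M_j$: by Proposition~\ref{prop:multihoppe_sizes}, $M_j \sim \Bin{n}{\Betadistr{\theta_j}{\theta-\theta_j}}$, and integrating out the beta mixing density together with the standard identity $\Gamma(x+k)/\Gamma(x) = x^{k\uparrow}$ yields
$$
\P[M_j = m] = \binom{n}{m}\,\frac{\theta_j^{m\uparrow}(\theta-\theta_j)^{(n-m)\uparrow}}{\theta^{n\uparrow}}, \qquad m \in \{0,1,\ldots,n\}.
$$

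The key intermediate claim, and the step that actually requires an argument, is that conditional on $M_j = m$ the subtree rooted at $\rt_j$ is distributed as $\Hoppe{m}{\theta_j}$. To see this, let $\tau_1 < \tau_2 < \cdots < \tau_m$ denote the arrival times of the nodes joining this component. A direct Bayes computation from the multinomial Hoppe forest dynamics shows that, given the event that node $\tau_i$ joins component $j$ and given the internal history of that component just before time $\tau_i$, the node $\tau_i$ attaches to $\rt_j$ with conditional probability $\theta_j/(\theta_j + i - 1)$ and to any particular previously attached node with conditional probability $1/(\theta_j + i - 1)$. These are precisely the transition probabilities defining $\Hoppe{m}{\theta_j}$. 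Since the number of leaves depends only on the rooted tree isomorphism class, this establishes the required conditional distribution of $T_{n;j}$.

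Combining the two ingredients via the total probability formula and invoking Theorem~\ref{theo:leaves_hoppe_tree} (with the boundary value $A(0,0\mid 0,\theta_j) = 1$ taking care of the $m = 0$ term, for which $T_{n;j} = 0$ by convention) gives
$$
\P[T_{n;j} = k] = \sum_{m=k}^{n} \binom{n}{m}\,\frac{\theta_j^{m\uparrow}(\theta-\theta_j)^{(n-m)\uparrow}}{\theta^{n\uparrow}}\cdot\frac{A(m-k,k\mid 0,\theta_j)}{\theta_j^{m\uparrow}},
$$
and the factors $\theta_j^{m\uparrow}$ cancel to produce the stated formula. The main obstacle is cleanly verifying the conditional distribution claim of the middle paragraph; once that is in place, the remainder is a mechanical summation.
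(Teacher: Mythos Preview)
Your proof is correct and follows essentially the same route as the paper's: condition on the component size via the beta-binomial marginal from Proposition~\ref{prop:multihoppe_sizes}, identify the conditional subtree as $\Hoppe{m}{\theta_j}$, and apply Theorem~\ref{theo:leaves_hoppe_tree} together with the total probability formula. If anything, you give a more explicit justification of the conditional $\Hoppe{m}{\theta_j}$ structure than the paper does, which simply asserts it.
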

\begin{proof}
The number of nodes in the connected component of $\rt_j$ (not counting the root itself) is $S_{n;j}\sim \Bin{n}{\Betadistr{\theta_j}{\theta-\theta_j}}$, see Proposition~\ref{prop:multihoppe_sizes}.  Conditionally on $S_{n;j} = m$, the component of $\rt_j$ is distributed as $\Hoppe{m}{\theta_j}$. By the total probability formula
and Theorem~\ref{theo:leaves_hoppe_tree}, the number of leaves in the connected component of $\rt_j$ satisfies
\begin{align*}
\P[T_{n;j} = k]
&=
\sum_{m=k}^n \P[S_{n;j}= m] \cdot  \frac{A(m-k,k|0,\theta_j)}{\theta_j^{m\uparrow}}\\
&=
\sum_{m=k}^n \binom nm \frac{\Gamma(m+\theta_j)\Gamma(n-m +\theta-\theta_j)\Gamma(\theta)}{\Gamma(n+\theta)\Gamma(\theta - \theta_j)\Gamma(\theta_j)} \cdot \frac {\Gamma(\theta_j)}{\Gamma(m+\theta_j)}A(m-k,k|0,\theta_j) \\
&=
\frac 1 {\theta^{n\uparrow}}\sum_{m=k}^n \binom nm (\theta - \theta_j)^{(n-m)\uparrow} A(m-k,k|0,\theta_j),
\end{align*}
which proves the claim.
\end{proof}

\begin{corollary}[Subtrees of the $\ell$-th node in a Hoppe tree]\label{ref:subtree_hoppe}
Consider a $\Hoppe{n}{\theta}$-tree and let $\ell\in \{1,\dots, n\}$ be a node. If $T_n^{(\ell)}$ denotes the number of leaves in the subtree rooted at the node $\ell$, then
$$
\P[T_n^{(\ell)} = k]
=
\frac 1 {(\theta+\ell)^{(n-\ell)\uparrow}}  \sum_{m=k}^{n-\ell} \binom {n-\ell} m  (\ell + \theta - 1)^{(n-\ell-m)\uparrow} \EulerNum{m}{k-1}
\qquad
k =1,\dots, n-\ell
$$
and
$\P[T_n^{(\ell)} = 0] = (\ell - 1 + \theta)/(n-1+\theta)$, which is the probability that the subtree only consists
of $\ell$.
\end{corollary}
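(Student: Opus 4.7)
The plan is to exploit the same decomposition used to prove Theorem~\ref{theo:leaves_multihoppe}, applied to the subtree rooted at node $\ell$. Building the Hoppe tree sequentially, I would stop just after node $\ell$ is placed: at that moment $\ell$ forms a singleton subtree of weight $1$, while the remaining structure (the original root together with nodes $1,\dots,\ell-1$) has aggregated weight $\ell-1+\theta$. Once this weight split is fixed, the further evolution only depends on the two aggregate weights: each subsequent node $\ell+1,\dots,n$ joins $\ell$'s subtree (respectively, its complement) with probability proportional to the current aggregated weight of the corresponding part. Hence the sizes of the two parts after the remaining $n-\ell$ nodes are inserted are distributed exactly as in a multinomial Hoppe forest with roots of weights $\theta_1=1$ and $\theta_2=\ell-1+\theta$ with $n-\ell$ added nodes.

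With this identification, Proposition~\ref{prop:multihoppe_sizes} yields that the size $S^{(\ell)}$ of the subtree rooted at $\ell$, not counting $\ell$ itself, satisfies $S^{(\ell)} \sim \Bin{n-\ell}{\Betadistr{1}{\ell-1+\theta}}$. Moreover, conditionally on $S^{(\ell)}=m$ the subtree rooted at $\ell$ is distributed as $\Hoppe{m}{1}=\RRT{m}$, since within this subtree all nodes (including $\ell$) carry weight $1$ under the original attachment rule; this is exactly the conditional statement used in the proof of Theorem~\ref{theo:leaves_multihoppe} specialized to $\theta_j=1$.

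Next I would invoke the Najock--Heyde identity~\eqref{eq:leaves_drmota}, which gives $\P[T_n^{(\ell)}=k\mid S^{(\ell)}=m]=\EulerNum{m}{k-1}/m!$ for $m\ge k\ge 1$. Summing over $m$ by the total probability formula and using the beta-binomial mass function in the form
\[
\P[S^{(\ell)}=m]=\binom{n-\ell}{m}\,m!\,\frac{\Gamma(\ell+\theta)\,\Gamma(n-m-1+\theta)}{\Gamma(n+\theta)\,\Gamma(\ell-1+\theta)},
\]
together with the identities $\Gamma(n+\theta)/\Gamma(\ell+\theta)=(\theta+\ell)^{(n-\ell)\uparrow}$ and $\Gamma(n-m-1+\theta)/\Gamma(\ell-1+\theta)=(\ell+\theta-1)^{(n-\ell-m)\uparrow}$, produces the claimed formula after cancellation of $m!$. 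The boundary case $k=0$ corresponds to $S^{(\ell)}=0$, and evaluating the beta-binomial pmf at $m=0$ gives $(\ell-1+\theta)/(n-1+\theta)$, matching the stated value of $\P[T_n^{(\ell)}=0]$.

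The only genuinely non-routine step is the first one: justifying that the subtree of $\ell$ and its complement in the Hoppe tree jointly evolve as a two-component multinomial Hoppe forest with root weights $1$ and $\ell-1+\theta$. Once that identification is in hand, the remaining work mirrors the proof of Theorem~\ref{theo:leaves_multihoppe} with the parameter specialization $d=2$, $\theta_j=1$, $\theta-\theta_j=\ell-1+\theta$, and $n$ replaced by $n-\ell$, and reduces to routine gamma-function bookkeeping.
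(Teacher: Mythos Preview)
Your proposal is correct and follows essentially the same route as the paper. The paper's proof is more compressed: it directly invokes Theorem~\ref{theo:leaves_multihoppe} with $d=2$, $\theta_1=1$, $\theta_2=\ell-1+\theta$, and $n$ replaced by $n-\ell$, together with the identity $A(m-k,k|0,1)=\EulerNum{m}{k-1}$, whereas you unpack that invocation into its components (Proposition~\ref{prop:multihoppe_sizes} plus the Najock--Heyde formula~\eqref{eq:leaves_drmota}) and carry out the gamma-function bookkeeping by hand. Both hinge on the same identification of the post-$\ell$ evolution with a two-root multinomial Hoppe forest.
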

\begin{proof}
Indeed, after the node $\ell$ has arrived, we can consider it as $\rt_1$ with weight $\theta_1 = 1$ and combine the nodes $1,\dots, \ell-1$ and the old root to form $\rt_2$ with weight $\theta_2 = \ell-1+\theta$. The remaining nodes $\ell+1,\dots, n$ behave as in the corresponding multinomial Hoppe tree with $d=2$. Applying Theorem~\ref{theo:leaves_multihoppe} and recalling that $A(m-k,k|0,1)= \EulerNum{m}{k-1}$ gives the claim.
\end{proof}

In the case of RRT (that is, for $\theta = 1$) our formula is equivalent to the formula of Mahmoud and Smythe~\cite[p.~410]{mahmoud_smythe}. Note, however, that they stipulate a tree consisting of a single root to have one
leaf, whereas we declare it to have no
leaves. Summarizing,
their formula gives the distribution of $\max(T_{n-1}^{(\ell)},1)$.

\section{\texorpdfstring{$r$}{r}-Ewens permutations, random \texorpdfstring{$r$}{r}-recursive trees}\label{sec:r_versions1}
Now we are going to define the $r$-analogues of the random structures introduced above. The general procedure is as follows. Fix parameters $\tau=\theta_1\geq 0$ and $r=\theta_2\geq 0$ such that $\theta=\tau+r>0$. Consider some multinomial random structure with $d=2$ colors having probabilities $p_1 := \tau/(\tau+r)$ and $p_2:=r/(\tau+r)$ and discard all elements having color $2$. The remaining elements of color $1$ form a random structure on some  subset $B\subseteq [n]$ (which is also random and, moreover, can be empty with positive probability). This random structure, defined on a random subset, is the $r$-analogue we are searching for.

\subsection{\texorpdfstring{$r$}{r}-Ewens permutations}

The $r$-Ewens distribution $\rEwens{n}{\tau}{r}$ is informally defined as follows. Consider a random permutation following the $\Ewens{n}{\tau+r}$ distribution. Independently remove each cycle of this permutation with probability $r/(\tau+r)$. The remaining cycles define a random permutation of some subset $B$ of $[n]$,  which we call the $r$-Ewens permutation and whose distribution is $\rEwens{n}{\tau}{r}$. We now give a 
precise definition.
\begin{definition}
An \emph{incomplete permutation} on $[n]$ is a pair $(\rho, B)$, where $B\subseteq [n]$ is a subset of $[n]$, which might be empty, and $\rho:B\to B$ is a bijection. The set $B$ is called the \emph{domain} of $\rho$ and its elements is
colored white. The complement of $B$ is
denoted by $B_0:= [n]\bsl B$ and its elements are
colored red.
\end{definition}
The set of incomplete permutations of $[n]$ is
denoted by $\mathfrak{S}_n^{\mathrm{inc}} = \{(\rho, B): B\subseteq [n], \rho \in \mathfrak{S}_B\}$, where $\mathfrak{S}_B$ is the symmetric group of $B$.

\begin{definition}\label{eq:r-permutation_distribution}
An \emph{$r$-Ewens permutation} with parameters $\tau\geq 0$ and $r\geq 0$ such that $\tau+r>0$ is a random variable $(\bmrho, \bmB)$ taking values in $\mathfrak{S}_n^{\mathrm{inc}}$ with the distribution
\begin{equation}\label{eq:rEwensDistr}
\P[(\bmrho, \bmB)=(\rho, B)] = r^{(n - \# B)\uparrow}\cdot  \frac{\tau^{\# \cycles (\rho)}}{(\tau + r)^{n\uparrow}},\quad
(\rho, B) \in\mathfrak{S}_n^{\mathrm{inc}},
\end{equation}
which is called the $\rEwens{n}{\tau}{r}$ distribution. If $\tau=1$, then we say that $(\bmrho, \bmB)$ is an \emph{$r$-uniform random permutation}.
\end{definition}

\begin{proposition}\label{prop:rEwens_andCRP}
Consider a multinomial Chinese restaurant process with $d=2$ colors having weights $\theta_1= \tau$ and $\theta_2=r$. Let $(\bmrho, \bm B)$ be an incomplete permutation whose cycles are the tables of color $1$ and whose domain $\bm B$ is the set of all customers sitting at tables of color $1$. Let $\bm B_0:= [n]\bsl \bmB$ be the set of all customers sitting at tables of color $2$. Then,  $(\bmrho, \bmB) \sim \rEwens{n}{\tau}{r}$.
\end{proposition}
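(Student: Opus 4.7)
The plan is to compute $\P[(\bmrho,\bm B)=(\rho, B)]$ directly from the colored Chinese restaurant description and match it with~\eqref{eq:rEwensDistr}.

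First I would invoke Proposition~\ref{prop:multinomialCRP} (with $d=2$, $\theta_1=\tau$, $\theta_2=r$, and $\theta=\tau+r$) to conclude that the colored permutation $(\bmrho',\bmphi)$ generated by the multinomial CRP has distribution $\MultiEwens{n}{\tau+r}{\tau/(\tau+r),r/(\tau+r)}$, i.e.,
$$\P[(\bmrho',\bmphi)=(\rho',\phi)] = \frac{\tau^{\#\cycles_1(\rho',\phi)}\, r^{\#\cycles_2(\rho',\phi)}}{(\tau+r)^{n\uparrow}}$$
for every colored permutation $(\rho',\phi)$ on $[n]$.

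Next I would observe that the event $\{(\bmrho,\bmB)=(\rho,B)\}$ is exactly the event that the color-$1$ cycles of $(\bmrho',\bmphi)$ form the permutation $\rho$ of $B$, while the color-$2$ cycles form an arbitrary permutation $\sigma$ of $B_0=[n]\bsl B$, with each of its cycles carrying color $2$. Summing over all such completions yields
$$\P[(\bmrho,\bmB)=(\rho,B)] = \frac{\tau^{\#\cycles(\rho)}}{(\tau+r)^{n\uparrow}}\sum_{\sigma\in\mathfrak{S}_{B_0}} r^{\#\cycles(\sigma)}.$$

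Finally I would apply the classical identity $\sum_{\sigma\in\mathfrak{S}_m} r^{\#\cycles(\sigma)} = \sum_{k=0}^m \stirling{m}{k} r^k = r^{m\uparrow}$ with $m=n-\#B$ (which is the $r=0$ case of~\eqref{eq:stirling_r_rising_factorials}) to identify the inner sum as $r^{(n-\#B)\uparrow}$, obtaining exactly~\eqref{eq:rEwensDistr}. There is no real obstacle here: the argument is a straightforward marginalization of the multinomial Ewens distribution over the color-$2$ component, and the only nontrivial ingredient is the classical enumeration $r^{m\uparrow}=\sum_k \stirling{m}{k}r^k$ which is already recorded in the paper.
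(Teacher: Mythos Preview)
Your proof is correct and follows essentially the same route as the paper's: both invoke Proposition~\ref{prop:multinomialCRP}, sum the multinomial Ewens probabilities over all colored permutations whose color-$1$ part equals $(\rho,B)$, and then evaluate the remaining sum over the color-$2$ part. The only cosmetic difference is that the paper phrases the last step as ``the summands form the probability distribution $\Ewens{n-\#B}{r}$ and hence sum to $1$,'' whereas you invoke the equivalent identity $\sum_{\sigma\in\mathfrak{S}_m} r^{\#\cycles(\sigma)} = r^{m\uparrow}$ directly.
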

\begin{proof}
Fix $(\rho, B) \in\mathfrak{S}_n^{\mathrm{inc}}$ and let $\Omega_{\rho,B}$ be the set of colored permutations of $[n]$ (with two colors) such that the elements belonging to cycles with color $2$ are precisely the elements of the set $B_0$ and cycles of color $1$ form permutation $\rho$ of $B$. According to Proposition~\ref{prop:multinomialCRP} we need to check that the sum of the right-hand sides of~\eqref{eq:multinomialEwensDistr} over the set $\Omega_{\rho,B}$ is equal to the right-hand side of~\eqref{eq:rEwensDistr}. Indeed,
$$
\sum_{(\rho^{\prime},\phi)\in\Omega_{\rho,B}}\frac{\tau^{\# \cycles_1(\rho^{\prime}, \phi)}r^{\# \cycles_2(\rho^{\prime},\phi)}}{(t+r)^{n\uparrow}}=\frac{\tau^{\# \cycles(\rho)}}{(t+r)^{n\uparrow}}r^{(n-\#B)\uparrow}\sum_{(\rho^{\prime},\phi)\in\Omega_{\rho,B}}\frac{r^{\# \cycles_2(\rho^{\prime},\phi)}}{r^{(n-\#B)\uparrow}}.
$$
The sum is equal to $1$, since the summands form a probability distribution $\Ewens{n-\#B}{r}$ and the summation is taken over all possible cyclic structures on $[n]\setminus B$.
\end{proof}

\begin{proposition}
Let $A_{n,r}(b_0,b_1,\dots,b_k)$ be the event that $(\bmrho, \bmB)\sim \rEwens{n}{\tau}{r}$ has a red set of size $b_0\in \N_0$ and $k$ white cycles of sizes $b_1,\dots,b_k\in \N$, where $b_0+b_1+\dots+ b_k = n$.  Then,
\begin{equation}\label{eq:r-permutation_cycles_distribution}
\P[A_{n,r}(b_0,b_1,\dots,b_k)]
=\frac{n!}{k!}\frac{\tau^k}{(\tau + r)^{n\uparrow}} \cdot \frac{r^{b_0\uparrow} }{b_0!} \cdot \frac{1}{b_1\cdots  b_k}.
\end{equation}
\end{proposition}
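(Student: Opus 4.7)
My plan is a direct combinatorial count. By Definition~\ref{eq:r-permutation_distribution} (equation~\eqref{eq:rEwensDistr}), every $(\rho, B) \in \mathfrak{S}_n^{\mathrm{inc}}$ with $\#B_0 = b_0$ and $\#\cycles(\rho) = k$ carries the \emph{same} weight $r^{b_0\uparrow}\tau^k/(\tau+r)^{n\uparrow}$ under $\rEwens{n}{\tau}{r}$. Consequently, the identity reduces to counting the incomplete permutations of the prescribed type.

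First, I would pick the red set $B_0 \subseteq [n]$ of size $b_0$ in $\binom{n}{b_0}$ ways, and then construct on $B = [n]\setminus B_0$ an ordered list of disjoint cycles $(C_1, \dots, C_k)$ with $|C_i| = b_i$: the multinomial coefficient $\binom{n-b_0}{b_1,\dots,b_k}$ counts ordered block partitions, each size-$b_i$ block admits $(b_i - 1)!$ cyclic structures, and the product collapses to $(n-b_0)!/(b_1 \cdots b_k)$ ordered cycle sequences on $B$. Altogether this yields $n!/(b_0!\, b_1 \cdots b_k)$ ordered configurations. Passing from ordered to unordered cycles divides by $k!$ (each unordered $\rho$ admits $k!$ orderings of its cycles), and multiplying the remaining count by the per-permutation weight $r^{b_0\uparrow}\tau^k/(\tau+r)^{n\uparrow}$ gives~\eqref{eq:r-permutation_cycles_distribution}.

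The only delicate point is the $k!$-division when $(b_1, \dots, b_k)$ has repeated entries, since then the map from $k!$ orderings of cycles onto unordered structures is only $\prod_j \alpha_j!$-to-one (where $\alpha_j = \#\{i : b_i = j\}$). The cleanest reading of the statement is therefore to regard $A_{n,r}(b_0, b_1, \dots, b_k)$ as an ordered-tuple event: equivalently, \eqref{eq:r-permutation_cycles_distribution} is the probability on the enriched sample space in which the white cycles carry a uniformly random labeling by $\{1,\dots,k\}$ and the $i$-th labeled cycle has size $b_i$. A useful sanity check of this reading is that summing~\eqref{eq:r-permutation_cycles_distribution} over $k \in \N_0$ and ordered positive compositions $(b_1, \dots, b_k)$ of $n - b_0$ equals $1$: the identity $\stirling{m}{k} = (m!/k!) \sum 1/(b_1 \cdots b_k)$ over such compositions, combined with $\sum_k \stirling{m}{k}\tau^k = \tau^{m\uparrow}$, collapses the total mass to $(\tau+r)^{-n\uparrow}\sum_{b_0 = 0}^n \binom{n}{b_0} r^{b_0\uparrow} \tau^{(n-b_0)\uparrow}$, which equals $1$ by the Chu--Vandermonde identity for rising factorials.
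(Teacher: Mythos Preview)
Your proof is correct and follows exactly the same route as the paper's: note that the $\rEwens{n}{\tau}{r}$ weight depends only on $\#B_0$ and $\#\cycles(\rho)$, then count incomplete permutations of the given type via the multinomial split $\binom{n}{b_0,\dots,b_k}$, the $(b_i-1)!$ cyclic structures on each block, and the $1/k!$ for unordering the cycles. Your explicit discussion of the $1/k!$ versus $1/\prod_j \alpha_j!$ discrepancy when the $b_i$ repeat, together with the ordered-tuple reading and the sanity check via Chu--Vandermonde, is a genuine clarification that the paper's one-line proof leaves implicit (the paper only ever uses the formula by summing over ordered tuples, where the distinction disappears).
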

\begin{proof}
Equality~\eqref{eq:r-permutation_cycles_distribution} follows immediately from~\eqref{eq:rEwensDistr} upon noticing that $\binom{n}{b_0,\dots,b_k}\frac{1}{k!}$ counts the number of partitions of $[n]$ into blocks $B_0,B_1,\dots,B_k$ of sizes $b_0\in\N_0,b_1,\dots,b_k\in\N$, respectively, whereas $(b_j-1)!$ is the number of ways to organize a cycle on $b_j$ elements, $j=1,\dots,k$.
\end{proof}

\begin{proposition}\label{prop:rEwens_cycles}
If $(\bmrho, \bmB)$ is an $\rEwens{n}{\tau}{r}$-distributed incomplete permutation, then the number of cycles in $\bmrho$  has the $\rStirone{n}{\tau}{r}$ distribution, that is,
\begin{equation}\label{eq:r-permutation_number_of_cycles_distribution}
\P[\# \cycles (\bmrho) = k] = \stirling{n}{k}_r \frac{\tau^k}{(\tau+r)^{n\uparrow}},\qquad k\in \{0,\dots,n\}.
\end{equation}
\end{proposition}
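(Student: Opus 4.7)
The plan is to derive the distribution of $\#\cycles(\bmrho)$ in two equivalent ways, either of which settles the claim; I will describe both briefly and indicate which I consider the cleanest.

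The most economical approach is to reduce to already established facts. By Proposition~\ref{prop:rEwens_andCRP}, the $\rEwens{n}{\tau}{r}$ permutation $(\bmrho, \bmB)$ can be realized as the ``color $1$'' part of a multinomial Chinese restaurant process with weights $\theta_1 = \tau$ and $\theta_2 = r$, so that $\#\cycles(\bmrho)$ is exactly the number $C_{n;1}$ of tables of color $1$. Proposition~\ref{prop:multinomialCRP} then identifies $(C_{n;1}, C_{n;2}) \sim \MultStirone{n}{\tau+r}{\tau/(\tau+r), r/(\tau+r)}$, and applying the marginal identification in Proposition~\ref{prop:rep_mult_stir_sum_indicators} (which asserts $K_j \sim \rStirone{n}{\theta_j}{\theta - \theta_j}$) with $j=1$ immediately yields $\#\cycles(\bmrho) \sim \rStirone{n}{\tau}{r}$, which is~\eqref{eq:r-permutation_number_of_cycles_distribution}.

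Alternatively, one can prove~\eqref{eq:r-permutation_number_of_cycles_distribution} directly from~\eqref{eq:r-permutation_cycles_distribution} by summing over cycle structures. I would fix $k$ and sum over the size $b_0 \in \{0, 1, \dots, n-k\}$ of the red set and, for each $b_0$, over positive compositions $b_1 + \dots + b_k = n - b_0$, obtaining
\begin{equation*}
\P[\#\cycles(\bmrho) = k] = \frac{\tau^k}{(\tau+r)^{n\uparrow}} \sum_{b_0=0}^{n-k} \binom{n}{b_0} r^{b_0\uparrow} \cdot \frac{(n-b_0)!}{k!} \sum_{\substack{b_1+\dots+b_k = n-b_0 \\ b_i \geq 1}} \frac{1}{b_1 \cdots b_k}.
\end{equation*}
The inner composition sum equals $\stirling{n-b_0}{k}/(n-b_0)! \cdot k!$ (from the standard representation of unsigned Stirling numbers of the first kind in terms of cycle types), so the right-hand side reduces to
\begin{equation*}
\frac{\tau^k}{(\tau+r)^{n\uparrow}} \sum_{b_0=0}^{n-k} \binom{n}{b_0} \stirling{n-b_0}{k} r^{b_0\uparrow}.
\end{equation*}
Substituting $j = n - b_0$ turns this sum into $\sum_{j=k}^{n}\binom{n}{j}\stirling{j}{k} r^{(n-j)\uparrow}$, which is exactly the right-hand side of identity~\eqref{eq:stir1_as_poly}, hence equals $\stirling{n}{k}_r$, giving~\eqref{eq:r-permutation_number_of_cycles_distribution}.

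There is no genuine obstacle here: both routes rely only on machinery already developed in the paper. The first route is preferable as it is essentially a one-line deduction, while the direct route serves as an independent combinatorial verification and has the pedagogical virtue of displaying exactly how the $r$-Stirling polynomial identity~\eqref{eq:stir1_as_poly} emerges from marginalizing out the red-set size. I would write the proof along the first route and remark that the second route provides a direct computational check.
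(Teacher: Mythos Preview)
Your proposal is correct, and both routes you describe coincide with proofs the paper itself gives: the first route is essentially the paper's second proof (the coloring/CRP interpretation, with the marginal identification already packaged in Proposition~\ref{prop:rep_mult_stir_sum_indicators}), while your second route is the paper's third proof via the identity $\stirling{n}{k}_r=\sum_{j=k}^n \binom{n}{j}\stirling{j}{k} r^{(n-j)\uparrow}$, reached after collapsing the inner cycle-type sum. The paper also offers a variant (its first proof) that stays with the full sum over $(b_0,b_1,\dots,b_k)$ and invokes~\eqref{eq:stirling12_r_as_sum_1_over} directly, but this is just a repackaging of the same computation.
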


Before proceeding with a proof of Proposition~\ref{prop:rEwens_cycles} we note that upon multiplying Taylor expansions~\eqref{eq:def_r_stir_first} and~\eqref{eq:def_r_stir_second}, one obtains
\begin{equation}\label{eq:stirling12_r_as_sum_1_over}
\stirling{n}{k}_r = \frac{n!}{k!}\sum_{(b_0,b_1,\dots, b_k)} \frac{1}{b_1\cdots b_k} \cdot \frac{r^{b_0\uparrow}}{b_0!},
\qquad
\stirlingsec{n}{k}_r = \frac{n!}{k!}\sum_{(b_0,b_1,\dots, b_k)} \frac{1}{b_1!\cdots b_k!} \cdot \frac{r^{b_0}}{b_0!},
\end{equation}
where the sums are taken over all $(k+1)$-tuples $(b_0,b_1,\dots, b_k)$ such that $b_0\in\N_0$, $b_1,\dots,b_k\in\N$ and $b_0+b_1+\dots+b_k=n$.

We 
give three elementary proofs of Proposition~\ref{prop:rEwens_cycles} with the aim of clarifying the probabilistic meaning of the first equality in~\eqref{eq:stirling12_r_as_sum_1_over} (the second will be explained later on, see Proposition~\ref{prop:rStirling-Sibuya}) and also of the formulas
\begin{equation}\label{eq:r-stirling-sum-of-stirling}
\stirling{n}{k}_r
=
\sum_{j=k}^n \stirling{n}{j} \binom{j}{k} r^{j-k}
=
\sum_{j=k}^n \binom{n}{j} \stirling{j}{k} r^{(n-j)\uparrow},
\end{equation}
see Eq.~\eqref{eq:stir1_as_poly} in the introduction.

\begin{proof}[First proof]
Formula~\eqref{eq:r-permutation_number_of_cycles_distribution} for the distribution of the number of cycles follows from the first equality in~\eqref{eq:stirling12_r_as_sum_1_over} upon summation of the right-hand sides of~\eqref{eq:r-permutation_cycles_distribution} over all $b_0\in\N_0$ and $b_1,\dots,b_k\in\N$ satisfying $b_0+\dots+b_k=n$.
\end{proof}

\begin{proof}[Second proof]
We use the first equality in~\eqref{eq:r-stirling-sum-of-stirling}.  Since the total number of cycles of both colors has the $\Stirone{n}{\tau+r}$ distribution and the probability for each particular cycle to be of color $1$ is $\tau/(\tau+r)$ independently of the other cycles,
$$
\P[\# \cycles (\bmrho) = k]=\Bin{\Stirone{n}{\tau+r}}{\tau/(\tau+r)}(\{k\}),\quad k\in \{0,\dots,n\}.
$$
Thus, we need to check that
\begin{equation}\label{eq:rEwens_is_mixture_of_Bin}
\rStirone{n}{\tau}{r}=\Bin{\Stirone{n}{\tau+r}}{\tau/(\tau+r)}.
\end{equation}
This equality follows from
\begin{multline*}
\Bin{\Stirone{n}{\tau+r}}{\tau/(\tau+r)}(\{k\})=\sum_{j=k}^{n}\Stirone{n}{\tau+r}(\{j\})\Bin{j}{\tau/(\tau+r)}(\{k\})\\
=\sum_{j=k}^{n}\stirling{n}{j}\frac{(\tau+r)^j}{(\tau+r)^{n\uparrow}}\binom{j}{k}\frac{\tau^k r^{j-k}}{(\tau+r)^j}\overset{\eqref{eq:r-stirling-sum-of-stirling}}{=}\stirling{n}{k}_r \frac{\tau^k}{(\tau+r)^{n\uparrow}}=\rStirone{n}{\tau}{r}(\{k\})
\end{multline*}
for each 
$k\in\{0,1,\dots,n\}$.
\end{proof}

\begin{proof}[Third proof]
In this proof we exploit the second equality in~\eqref{eq:r-stirling-sum-of-stirling}. Using definition~\eqref{eq:rEwensDistr} of the $r$-Ewens distribution we obtain
$$
\P[\# \cycles (\bmrho) = k]=\sum_{j=k}^{n}\P[\# \cycles (\bmrho) = k,\#B=j]\overset{\eqref{eq:rEwensDistr}}{=}\sum_{j=k}^{n}\frac{r^{(n-j)\uparrow}\tau^k}{(\tau+r)^{n\uparrow}}\binom{n}{j}\stirling{j}{k},\quad k\in \{0,\dots,n\},
$$
where $\binom{n}{j}$ counts the number of ways to choose $j$ elements of $[n]$ and $\stirling{j}{k}$ counts the number of ways to organize $k$ cycles on the chosen elements. By the second equality in~\eqref{eq:r-stirling-sum-of-stirling} the right-hand side of the last centered formula
is equal to $\rStirone{n}{\tau}{r}(\{k\})$.
\end{proof}

Similarly to
the classic scenario, the number of cycles in the $r$-Ewens permutation
$(\bmrho,{\bf B})$ can be represented as the sum of independent indicators. This follows from the factorization
$$
\E [s^{\# \cycles (\bmrho)}]=\frac{1}{(r+\tau)^{n\uparrow}}\sum_{k=0}^{n}\stirling{n}{k}_r (\tau s)^k=\frac{(r+\tau s)^{n\uparrow}}{(r+\tau)^{n\uparrow}}=\prod_{j=1}^{n}\left(\frac{r+j-1}{r+\tau+j-1}+\frac{\tau s}{r+\tau+j-1}\right).
$$
This representation immediately yields a central limit theorem for $\# \cycles (\bmrho)$ and, therefore, for the $r$-Stirling distribution of the first kind. Interestingly, the limit theorem does not depend on $r$ at all!

\begin{corollary}
Assume that $\tau,r\geq 0$, $\tau+r>0$ are fixed and let $(\bmrho,{\bf B})$ be the $r$-Ewens permutation. Then,
$$
\frac{\# \cycles (\bmrho)-\tau\log n}{\sqrt{\tau\log n}}~\todistr~\Normal{0}{1}.
$$
\end{corollary}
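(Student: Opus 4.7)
The plan is to exploit the factorization of the probability generating function displayed just before the corollary. That factorization shows that $\# \cycles(\bmrho)$ has the same distribution as a sum $X_1+\dots+X_n$ of independent Bernoulli random variables, where $X_j \sim \Bern{p_j}$ with
$$
p_j = \frac{\tau}{r+\tau+j-1},\qquad j=1,\dots,n.
$$
So the problem reduces to a standard central limit theorem for row-independent arrays of Bernoulli variables.

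The second step is to compute the asymptotics of the mean and the variance. Since $\sum_{j=1}^n \frac{1}{r+\tau+j-1} = \log n + O(1)$ as $n\to\infty$ (with the $O(1)$ depending on $\tau$ and $r$), the mean equals
$$
\mu_n := \sum_{j=1}^n p_j = \tau \log n + O(1).
$$
For the variance, using $p_j(1-p_j)=p_j - p_j^2$ and noting $\sum_{j\geq 1} p_j^2 < \infty$, one gets
$$
\sigma_n^2 := \sum_{j=1}^n p_j(1-p_j) = \tau \log n + O(1).
$$
In particular $\mu_n/\sqrt{\log n} \to \infty$ and $\sigma_n^2 \simeq \tau \log n$, and the two centering constants $\mu_n$ and $\tau \log n$ differ by $O(1) = o(\sqrt{\log n})$, so replacing one with the other in the statement is harmless.

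The third step is to verify a Lyapunov or Lindeberg condition. Since the summands $X_j$ are $\{0,1\}$-valued, they are uniformly bounded, so the Lyapunov condition with exponent $3$ reduces to checking that $\sum_{j=1}^n \E |X_j - p_j|^3 = O(\sigma_n^2) = O(\log n)$, which is trivial as $|X_j-p_j|\leq 1$ and the third absolute moment is bounded by $p_j(1-p_j)$. Consequently, Lyapunov's CLT (or equivalently the Lindeberg CLT) yields
$$
\frac{\#\cycles(\bmrho) - \mu_n}{\sigma_n} \todistr \Normal{0}{1},
$$
and replacing $\mu_n$ by $\tau \log n$ and $\sigma_n$ by $\sqrt{\tau \log n}$ (using $\mu_n - \tau\log n = o(\sqrt{\log n})$ and $\sigma_n/\sqrt{\tau\log n}\to 1$) delivers the stated conclusion.

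No real obstacle is expected here; the only cosmetic subtlety is controlling the $O(1)$ differences between $\mu_n$ and $\tau\log n$ and between $\sigma_n^2$ and $\tau\log n$, which is routine. The point worth emphasizing in the write-up is precisely why the parameter $r$ disappears in the limit: it contributes only to the $O(1)$ correction in both the mean and the variance, which is negligible on the $\sqrt{\log n}$-scale.
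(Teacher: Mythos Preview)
Your proposal is correct and takes essentially the same approach as the paper: the paper simply remarks that the factorization of the probability generating function exhibits $\#\cycles(\bmrho)$ as a sum of independent Bernoulli variables and states that the CLT follows immediately, while you have filled in the routine details (mean and variance asymptotics, Lyapunov condition, and the Slutsky-type replacement of $\mu_n,\sigma_n$ by $\tau\log n,\sqrt{\tau\log n}$). One cosmetic point: the statement as written allows $\tau=0$, in which case the normalization $\sqrt{\tau\log n}$ vanishes and the assertion is vacuous; you may want to note that the CLT is only meaningful for $\tau>0$.
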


For the ease of reference we collect various representations for the $r$-Stirling distribution of the first kind derived above in a single
\begin{proposition}\label{prop:r_stirl_repres}
Fix $r\geq 0, \tau >0$. The $r$-Stirling distribution of the first kind $\rStirone{n}{\tau}{r}$ admits
the following representations:
\begin{itemize}
\item[(i)] $\sum_{j=1}^n \Bern{\tau/(\tau + r + j - 1)}\sim \rStirone{n}{\tau}{r}$, 
where the random variables on the left-hand side are independent;
\item[(ii)] $\rStirone{n}{\tau}{r}=\Bin{\Stirone{n}{\tau+r}}{\tau/(\tau+r)}$;
\item[(iii)] $\rStirone{n}{\tau}{r}=\Stirone{\Bin{n}{\Betadistr{\tau}{r}}}{\tau}$, where $\Betadistr{\tau}{r}$ is the beta-distribution.
\end{itemize}
\end{proposition}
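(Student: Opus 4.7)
All three representations can be harvested from identities already established in the excerpt; my plan is to dispatch them in turn, reusing the probability generating function computation and the two decompositions of $\stirling{n}{k}_r$ from~\eqref{eq:stir1_as_poly}.

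For part (i), the display immediately preceding the corollary gives the factorization
\[
\E[s^{\#\cycles(\bmrho)}]=\prod_{j=1}^{n}\left(\frac{r+j-1}{r+\tau+j-1}+\frac{\tau s}{r+\tau+j-1}\right),
\]
which I recognize as the product of the PGFs of independent Bernoulli variables with success probabilities $\tau/(\tau+r+j-1)$ for $j=1,\dots,n$. Since $\# \cycles(\bmrho)\sim \rStirone{n}{\tau}{r}$ by Proposition~\ref{prop:rEwens_cycles}, part (i) follows from the uniqueness of the distribution from its PGF.

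Part (ii) is precisely~\eqref{eq:rEwens_is_mixture_of_Bin}, which was verified in the second proof of Proposition~\ref{prop:rEwens_cycles} by combining the definition of $\Stirone{n}{\tau+r}$, the binomial thinning of cycles, and the first form of identity~\eqref{eq:r-stirling-sum-of-stirling}. I will simply cite that computation.

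Part (iii) is the new piece of work. The beta-binomial mixture $N\sim \Bin{n}{\Betadistr{\tau}{r}}$ has the explicit probability mass function
\[
\P[N=j]=\binom{n}{j}\frac{\Beta(\tau+j,r+n-j)}{\Beta(\tau,r)}=\binom{n}{j}\frac{\tau^{j\uparrow}\,r^{(n-j)\uparrow}}{(\tau+r)^{n\uparrow}},\quad j\in\{0,\dots,n\}.
\]
Conditionally on $N=j$, the law $\Stirone{j}{\tau}$ assigns mass $\stirling{j}{k}\tau^k/\tau^{j\uparrow}$ to $k\in\{0,\dots,j\}$, so for a random variable $K$ with the mixed distribution $\Stirone{\Bin{n}{\Betadistr{\tau}{r}}}{\tau}$ I compute
\[
\P[K=k]=\sum_{j=k}^{n}\binom{n}{j}\frac{\tau^{j\uparrow}\,r^{(n-j)\uparrow}}{(\tau+r)^{n\uparrow}}\cdot\stirling{j}{k}\frac{\tau^{k}}{\tau^{j\uparrow}}
=\frac{\tau^{k}}{(\tau+r)^{n\uparrow}}\sum_{j=k}^{n}\binom{n}{j}\stirling{j}{k}r^{(n-j)\uparrow}.
\]
The cancellation $\tau^{j\uparrow}$ in the middle step is the key algebraic observation, and the remaining sum is exactly the second decomposition of $\stirling{n}{k}_{r}$ recorded in~\eqref{eq:stir1_as_poly}. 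Substituting that identity yields $\P[K=k]=\stirling{n}{k}_{r}\,\tau^{k}/(\tau+r)^{n\uparrow}$, which is the pmf of $\rStirone{n}{\tau}{r}$, proving (iii).

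The only mild obstacle is bookkeeping: choosing the correct form of~\eqref{eq:stir1_as_poly} for each representation (the first form for (ii), the second form for (iii)). Nothing beyond this routine matching is required, since the hard combinatorial content is already encoded in the two decompositions of $\stirling{n}{k}_{r}$ and in the PGF computation above.
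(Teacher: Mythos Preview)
Your proposal is correct and follows essentially the same route as the paper: part~(i) via the generating-function factorization, part~(ii) by citing~\eqref{eq:rEwens_is_mixture_of_Bin}, and part~(iii) by writing out the beta-binomial pmf, cancelling $\tau^{j\uparrow}$, and invoking the second form of~\eqref{eq:stir1_as_poly}. Your presentation of~(iii) is in fact slightly cleaner than the paper's, since you express the beta-binomial mass directly with rising factorials rather than Gamma functions, which makes the cancellation immediate.
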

\begin{proof}
Part~(i) can be verified by comparing generating functions. Alternatively, it is just the last claim of Proposition~\ref{prop:rep_mult_stir_sum_indicators} with $d=2$, $j=1$, $\theta_1 = \tau$, $\theta_2 = r$. Part (ii) is formula~\eqref{eq:rEwens_is_mixture_of_Bin}. For the proof of part (iii) note that, for $k\in\{0,1,\dots,n\}$,
\begin{align*}
\Stirone{\Bin{n}{\Betadistr{\tau}{r}}}{\tau}(\{k\})&=\sum_{m=0}^{n}\Bin{n}{\Betadistr{\tau}{r}}(\{m\})\Stirone{m}{\tau}(\{k\})\\
&=\sum_{m=0}^{n}\binom{n}{m}\frac{\Gamma(m+\tau)\Gamma(n-m+r)}{\Gamma(n+\tau+r)}\frac{\Gamma(\tau+r)}{\Gamma(\tau)\Gamma(r)}\stirling{m}{k}\frac{\tau^k}{\tau^{m\uparrow}}\\
&=\frac{\Gamma(\tau+r)}{\Gamma(n+\tau+r)}\tau^k\sum_{m=0}^{n}\binom{n}{m}\stirling{m}{k}r^{(n-m)\uparrow}\frac{\Gamma(m+\tau)}{\Gamma(\tau)\tau^{m\uparrow}}\\
&\overset{\eqref{eq:stir1_as_poly}}{=}\frac{\tau^k}{(\tau+r)^{n\uparrow}}\stirling{n}{k}_r=\rStirone{n}{\tau}{r}(\{k\}).
\end{align*}
The proof is complete.
\end{proof}

\begin{remark}
The recent article~\cite{huillet_moehle_bernoulli} investigates properties of the Bernoulli trials with unequal harmonic-like success probabilities as in part (i) of Proposition~\ref{prop:r_stirl_repres}. 
\end{remark}

We close the discussion of $r$-Ewens permutations by noting a combinatorial construction of the $r$-uniform permutations which works for integer $r\in\N_0$ and establishes connections to Broder's Definition~\ref{def:broder}. Call the elements $1,2,\dots,n$ of the set $[n+r]$ {\em white} and elements $n+1,\dots,n+r$ {\em red}. Let $\mathfrak{S}_{n,r}$ be the set of all permutations of $[n+r]$ such that red elements are in different cycles and let $\mathfrak{d}_{{\rm perm}}:\mathfrak{S}_{n,r}\mapsto \mathfrak{S}_n^{\mathrm{inc}}$ be the mapping defined as follows. For $\sigma\in \mathfrak{S}_{n,r}$, the image $\mathfrak{d}_{{\rm perm}}(\sigma)$ is an incomplete permutation $(\rho,B)\in\mathfrak{S}_n^{\mathrm{inc}}$ such that the red set $B_0=[n]\setminus B$ is obtained by merging $r$ cycles of $\sigma$ containing red elements, removing these red elements from the union, and keeping other cycles unchanged.

\begin{proposition}\label{prop:deletion_r_permutation}
Fix $n\in\N$, $r\in\N_0$. Let $\widehat{\sigma}_{n,r}$ be a permutation of $[n+r]$ picked uniformly at random from $\mathfrak{S}_{n,r}$. Then $\mathfrak{d}_{{\rm perm}}(\widehat{\sigma}_{n,r})$ is the $r$-uniform random permutation ($r$-Ewens permutation with $\tau=1$).
\end{proposition}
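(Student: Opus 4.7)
The plan is to compute the push-forward distribution of the uniform measure on $\mathfrak{S}_{n,r}$ under $\mathfrak{d}_{\rm perm}$ directly by counting fibers, and then to compare with formula \eqref{eq:rEwensDistr} at $\tau=1$. First, I would determine $\#\mathfrak{S}_{n,r}$. By Broder's combinatorial Definition~\ref{def:broder} (applied after the cosmetic relabeling of red elements $\{n+1,\dots,n+r\}$ as $\{1,\dots,r\}$), the number of $\sigma\in\mathfrak{S}_{n,r}$ with exactly $k+r$ cycles equals $\stirling{n}{k}_r$. Summing over $k\in\{0,\dots,n\}$ and substituting $x=1$ in \eqref{eq:stirling_r_rising_factorials} yields
$$
\#\mathfrak{S}_{n,r}=\sum_{k=0}^{n}\stirling{n}{k}_r=(1+r)^{n\uparrow}.
$$

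Second, I would fix $(\rho,B)\in\mathfrak{S}_n^{\mathrm{inc}}$ with $b_0:=n-\#B$ and count the fiber $\mathfrak{d}_{\rm perm}^{-1}(\rho,B)$. By the definition of $\mathfrak{d}_{\rm perm}$, every $\sigma$ in this fiber retains the cycles of $\rho$ unchanged as its all-white cycles, while the red elements together with the elements of $B_0$ must form exactly $r$ additional cycles, each containing precisely one red element. The number of such cyclic arrangements can be computed by a Feller-type sequential insertion: start from $r$ singleton cycles consisting of the red elements and insert the $b_0$ white elements of $B_0$ one by one, placing each new element immediately to the right of some already present element. At the moment of inserting the $k$-th white, there are $r+k-1$ such positions, so the total count is
$$
r(r+1)\cdots(r+b_0-1)=r^{b_0\uparrow},
$$
independently of the structure of $\rho$.

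Combining the two counts, the uniform distribution on $\mathfrak{S}_{n,r}$ pushes forward to
$$
\P\bigl[\mathfrak{d}_{\rm perm}(\widehat{\sigma}_{n,r})=(\rho,B)\bigr]=\frac{r^{b_0\uparrow}}{(1+r)^{n\uparrow}}=\frac{r^{(n-\#B)\uparrow}\cdot 1^{\#\cycles(\rho)}}{(1+r)^{n\uparrow}},
$$
which matches \eqref{eq:rEwensDistr} at $\tau=1$ and proves the claim. The only conceptual step is the fiber count; its pleasant feature is that the answer $r^{b_0\uparrow}$ depends on $(\rho,B)$ solely through $b_0=\#B_0$, so the push-forward measure assigns the correct weight to each individual pair $(\rho,B)$, not merely to isomorphism classes. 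The main (minor) obstacle is recognizing that the sequential insertion count decouples from $\rho$; this follows because the cycles of $\rho$ are frozen by the construction of $\mathfrak{d}_{\rm perm}^{-1}$.
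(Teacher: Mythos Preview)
Your proof is correct and follows essentially the same fiber-counting strategy as the paper: compute $\#\mathfrak{S}_{n,r}$, compute the size $r^{(n-\#B)\uparrow}$ of each fiber $\mathfrak{d}_{\rm perm}^{-1}(\rho,B)$, and take the ratio. The fiber count via sequential insertion is exactly the argument the paper has in mind when it says ``distributing $n-\#B$ elements of the block $[n]\setminus B$ among $r$ cycles of $\sigma$ containing $n+1,\dots,n+r$ red elements''; you have made this step more explicit.

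The one small methodological difference is in how you obtain $\#\mathfrak{S}_{n,r}=(1+r)^{n\uparrow}$. You invoke Broder's Definition~\ref{def:broder} to recognize the cycle-stratified counts as $\stirling{n}{k}_r$ and then evaluate \eqref{eq:stirling_r_rising_factorials} at $x=1$. The paper instead reuses the fiber count it has just established, summing $r^{(n-j)\uparrow}$ against the number $\binom{n}{j}j!$ of incomplete permutations with $\#B=j$ and applying the binomial identity for rising factorials. The paper's route is slightly more self-contained (it does not appeal to an external characterization of the $r$-Stirling numbers), while yours makes the connection to Broder's definition transparent. Both are perfectly valid.
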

\begin{proof}
Each 
$\sigma\in \mathfrak{S}_{n,r}$ can be constructed from an incomplete permutation $(\sigma_1,B)\in\mathfrak{S}_n^{{\rm inc}}$ of $[n]$, by distributing $n-\# B$ elements of the block $[n]\setminus B$ among $r$ cycles of $\sigma$ containing $n+1,\dots,n+r$ red elements. Thus,
\begin{multline*}
\#\mathfrak{S}_{n,r}=\sum_{j=0}^{n}\#\{(\sigma_1,B)\in\mathfrak{S}_n^{{\rm inc}}:\# B=j\}r^{(n-j)\uparrow}\\
=\sum_{j=0}^{n}\binom{n}{j} j! r^{(n-j)\uparrow}=\sum_{j=0}^{n}\binom{n}{j} 1^{j\uparrow } r^{(n-j)\uparrow}=(1+r)^{n\uparrow},
\end{multline*}
where the last equality follows from the binomial theorem for raising factorials. By the same reasoning, for each 
incomplete permutation $(\sigma_1,B)\in\mathfrak{S}_n^{{\rm inc}}$,
$$
\#\{\sigma\in \mathfrak{S}_{n,r}:\mathfrak{d}_{{\rm perm}}(\sigma)=(\sigma_1,B)\}=r^{(n-\#B)\uparrow}.
$$
The proof concludes by taking the ratio.
\end{proof}

\subsection{Random \texorpdfstring{$r$}{r}-recursive trees and \texorpdfstring{$r$}{r}-Hoppe trees}
The construction is very similar to the construction of the $r$-Ewens permutations.

\begin{definition}
Fix $n\in\N_0$. An \emph{incomplete recursive tree} with the root $0$ and $n$ additional nodes is a pair $(T,B)$, where $B\subseteq [n]$ and $T$ is a recursive tree with the root $0$ and $\#B$ nodes labeled by the elements of $B$ and colored white. The elements of $[n]\setminus B$ are called red nodes  (these do not belong to the tree).
\end{definition}

Denote by $\mathcal{T}_n$ the set of all incomplete recursive trees with the root $0$ and $n$ additional nodes.

\begin{definition}
Fix parameters $r\geq 0$ and $\tau>0$.
A \emph{random $r$-Hoppe tree} $\rHoppe{n}{\tau}{r}$ with  the root $0$ and $n$ additional nodes is a random element $({\bf  T},{\bf B})$ of $\mathcal{T}_n$ with the distribution
$$
\P[({\bf  T},{\bf B})=(T,B)]=\frac{r^{(n-\# B)\uparrow}\tau^{\deg_T 0}}{(r+\tau)^{n\uparrow}},\quad (T,B)\in\mathcal{T}_n,
$$
where $\deg_T 0$ is the degree of the root $0$ in $T$.
A \emph{random $r$-recursive tree} is a special case corresponding to $\tau=1$.
\end{definition}


Note that this definition is correct in a sense that the quantities on the right-hand side sum up to one. Indeed,
\begin{align*}
\sum_{(T,B)\in\mathcal{T}_n}\frac{r^{(n-\#B)\uparrow}\tau^{\deg 0}}{(r+\tau)^{n\uparrow}}
&=
\sum_{j=0}^{n}\sum_{k=0}^j \frac{r^{(n-j)\uparrow}\tau^{k}}{(r+\tau)^{n\uparrow}}
\sum_{\substack{(T,B)\in\mathcal{T}_n\\ \#B=j,\; \deg 0 =k}} 1
=
\frac{\sum_{j=0}^{n} r^{(n-j)\uparrow} \binom{n}{j} \sum_{k=0}^j \stirling jk \tau^k}{(r+\tau)^{n\uparrow}}
\\
&=
\frac{\sum_{j=0}^{n} \binom{n}{j} r^{(n-j)\uparrow} \tau^{j\uparrow} }{(r+\tau)^{n\uparrow}}
=1,
\end{align*}
where the penultimate equality is a consequence of the fact that there are $\binom{n}{j}$ ways to pick $j$ white nodes and $\stirling{j}{k}$ ways to construct a recursive tree with the root $0$ and $j$ additional white nodes such that the $\deg 0 = k$. The last equality follows from the binomial-type formula for the rising factorial.

The next proposition provides a sequential construction of the $r$-Hoppe trees.
\begin{proposition}\label{prop:r-recursive-tree-Hoppe}
Consider a Hoppe forest $\MultiHoppe{n}{\tau+r}{\tau/(\tau+r),r/(\tau+r)}$ with $d=2$ roots $\rt_1:=0$ and $\rt_2$ having weights $\theta_1=\tau>0$ and $\theta_2=r\geq 0$, and with $n$ additional nodes. Let ${\bf T}$ be the connected component of $\rt_1$ and ${\bf B}$ 
the set of labels of the nodes of ${\bf T}$. Then, $({\bf T},{\bf B})$ is distributed as a random tree $\rHoppe{n}{\tau}{r}$.
\end{proposition}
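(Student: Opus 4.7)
The plan is to compute the probability that the multinomial Hoppe forest of the proposition projects onto a specified incomplete recursive tree $(T,B)\in\mathcal{T}_n$, and to verify that this probability matches the definition of $\rHoppe{n}{\tau}{r}$. The sequential construction of the forest says that the probability of a given attachment history $(a_1,\dots,a_n)$ (where $a_\ell$ is the parent chosen by node $\ell$, taking values in $\{\rt_1,\rt_2,1,\dots,\ell-1\}$) equals
\begin{equation*}
\prod_{\ell=1}^{n}\frac{w(a_\ell)}{\tau+r+\ell-1}, \qquad w(\rt_1)=\tau,\ w(\rt_2)=r,\ w(i)=1\text{ for }i\in[n].
\end{equation*}
The overall denominator $\prod_{\ell=1}^n(\tau+r+\ell-1)=(\tau+r)^{n\uparrow}$ is fixed, so it suffices to sum the numerator over all histories that project onto $(T,B)$.

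I would split the bookkeeping into a white part (nodes $\ell\in B$) and a red part (nodes $\ell\in B_0:=[n]\setminus B$). For the history to project to $(T,B)$ every white node $\ell$ must have $a_\ell$ equal to its parent in $T$: if that parent is $0$ then $a_\ell=\rt_1$ and contributes weight $\tau$, otherwise $a_\ell$ is a previously arrived white node and contributes weight $1$. The white contribution to the numerator is therefore deterministic and equals $\tau^{\deg_T 0}$. For the red part, each $\ell\in B_0$ must attach either to $\rt_2$ (weight $r$) or to an earlier red node (weight $1$); attaching to a white predecessor or to $\rt_1$ would immediately recolor $\ell$ as white and is thus excluded.

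Enumerating the red attachments in the time order of the labels of $B_0$ yields a product-of-sums: when the $i$-th red node arrives ($i=1,\dots,n-\#B$) it has one available parent of weight $r$ (namely $\rt_2$) and $i-1$ available parents of weight $1$ (the earlier red nodes). Hence
\begin{equation*}
\sum_{\text{admissible red structures on }B_0}\ \prod_{\ell\in B_0}w(a_\ell)\ =\ \prod_{i=1}^{n-\#B}(r+i-1)\ =\ r^{(n-\#B)\uparrow},
\end{equation*}
which is the standard partition function of a Hoppe tree on $n-\#B$ nodes rooted at $\rt_2$. Combining the two parts with the common denominator gives
\begin{equation*}
\P[({\bf T},{\bf B})=(T,B)]=\frac{r^{(n-\#B)\uparrow}\,\tau^{\deg_T 0}}{(\tau+r)^{n\uparrow}},
\end{equation*}
which is exactly the $\rHoppe{n}{\tau}{r}$ distribution from the preceding definition. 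I do not anticipate a genuine obstacle: the only subtlety is to argue cleanly that attaching a red-to-be node to a white predecessor is forbidden, so that the inner sum is truly over recursive trees on $B_0$ rooted at $\rt_2$.
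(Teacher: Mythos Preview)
Your proof is correct and follows essentially the same approach as the paper: both decompose the event $\{({\bf T},{\bf B})=(T,B)\}$ into a white part (nodes in $B$ must reproduce $T$, contributing $\tau^{\deg_T 0}$) and a red part (nodes in $B_0$ must stay in the $\rt_2$-subtree, contributing $r^{(n-\#B)\uparrow}$ after summing over all admissible red attachments), with the common denominator $(\tau+r)^{n\uparrow}$. Your write-up is slightly more explicit about the sum over histories, but the decomposition and the computation are the same.
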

\begin{proof}
Let $(T,B)$ be a fixed incomplete recursive tree. The event $\{({\bf T},{\bf B})=(T,B)\}$ occurs if, and only if, the nodes with labels in $[n]\setminus B$ were attached to the subtree rooted at $\rt_2$ and the nodes of $B$ formed a particular tree $T$ rooted at $\rt_1$.  By definition of the Hoppe forest, the probability of the former event is
$$
r^{(n-\# B)\uparrow}\prod_{j\in[n]\setminus B}\frac{1}{j-1 + \tau + r},
$$
whereas the probability of the latter (given the former) is
$$
\tau^{\deg_T 0} \prod_{j\in B}\frac{1}{j-1 + \tau + r}.
$$
Thus, $\P[({\bf T},{\bf B})=(T,B)]= r^{(n-\# B)\uparrow}\tau^{\deg_T 0}/(r+\tau)^{n\uparrow}$. The proof is complete.
\end{proof}

Observe that if $r=0$, then in the setting of Proposition~\ref{prop:r-recursive-tree-Hoppe} no nodes are attached to $\rt_2$ (that is, $\P[\#{\bf B}=n]=1$) and  ${\bf T}$ becomes the usual $\Hoppe{n}{\tau}$ tree. If, additionally, $\tau=1$, we recover $\RRT{n}$.


The following is a specialization of the results on multinomial Hoppe forests proved in Section~\ref{sec:multi_hoppe}.

\begin{proposition}
Consider a random $r$-Hoppe tree $({\bf T},{\bf B})\sim \rHoppe{n}{\tau}{r}$.
\begin{itemize}
\item[(a)] The root degree  of ${\bf T}$ satisfies $\deg \rt_1 \sim \rStirone{n}{\tau}{r}$, that is, $\P[\deg \rt_1=k]=\frac{\tau^k}{(\tau +r)^{n\uparrow}}\stirling{n}{k}_r$ for all $k=0,\dots,n$.
\item[(b)] The expected number of nodes in ${\bf T}$ at distance $k$ from the $\rt_1$ is $\tau \stirling{n}{k}_{\tau + r} / (\tau + r)^{n\uparrow}$.
\item[(c)] The number of leaves of $\bf T$, denoted by $T_{n;1}$ has the following distribution:
$$
\P[T_{n;1}=k] = \frac{1}{(\tau + r)^{n\uparrow}}\sum_{m=k}^n \binom{n}{m} r^{(n-m)\uparrow} A(m-k,k|0,\tau),\quad k=1,\dots,n.
$$
\end{itemize}
\end{proposition}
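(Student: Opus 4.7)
The plan is to reduce all three statements to already established facts about the multinomial Hoppe forest by invoking Proposition~\ref{prop:r-recursive-tree-Hoppe}, which represents $({\bf T}, {\bf B})$ as the connected component of the root $\rt_1$ in a forest $\MultiHoppe{n}{\tau+r}{\tau/(\tau+r),r/(\tau+r)}$ with $d=2$ roots of weights $\theta_1 = \tau$ and $\theta_2 = r$ (so $\theta = \tau + r$). Once this identification is in place, each of (a), (b), (c) is a direct specialization of a previously proved result.

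For (a), I would apply Proposition~\ref{prop:root_degree_Hoppe} with $d=2$: the joint distribution of the root degrees is $\MultStirone{n}{\tau+r}{\tau/(\tau+r), r/(\tau+r)}$. The marginal distribution of the first coordinate, computed via the last sentence of Proposition~\ref{prop:rep_mult_stir_sum_indicators} with $\theta_1 = \tau$ and $\theta - \theta_1 = r$, is precisely $\rStirone{n}{\tau}{r}$. This immediately gives the claimed point masses $\tau^k \stirling{n}{k}_r / (\tau+r)^{n\uparrow}$ for $k \in \{0,\dots,n\}$.

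For (b), the formula
$$
\E[L_{n;j}(k)] = \frac{\theta_j}{\theta^{n\uparrow}} \stirling{n}{k}_\theta
$$
established earlier (see~\eqref{eq:rrrt_expected_profile}) yields, upon substituting $j=1$, $\theta_j = \tau$ and $\theta = \tau + r$, exactly $\tau \stirling{n}{k}_{\tau+r}/(\tau+r)^{n\uparrow}$.

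For (c), I would apply Theorem~\ref{theo:leaves_multihoppe} with the same identification $j = 1$, $\theta_j = \tau$, $\theta - \theta_j = r$. The general formula
$$
\P[T_{n;j} = k] = \frac{1}{\theta^{n\uparrow}} \sum_{m=k}^n \binom{n}{m} (\theta - \theta_j)^{(n-m)\uparrow} A(m-k, k \,|\, 0, \theta_j)
$$
specializes at once to the claimed expression. No additional obstacle is expected, since all the combinatorial and probabilistic work is already contained in the multinomial Hoppe forest results; the only thing to verify is that the parameter matching between the $r$-Hoppe tree and the two-color multinomial Hoppe forest is correct, which is exactly what Proposition~\ref{prop:r-recursive-tree-Hoppe} provides.
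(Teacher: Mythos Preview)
Your proposal is correct and follows exactly the approach the paper takes: the paper states this proposition immediately after Proposition~\ref{prop:r-recursive-tree-Hoppe} with the single sentence ``The following is a specialization of the results on multinomial Hoppe forests proved in Section~\ref{sec:multi_hoppe}'' and gives no further proof. Your write-up simply makes that specialization explicit, citing the correct results (Propositions~\ref{prop:root_degree_Hoppe} and~\ref{prop:rep_mult_stir_sum_indicators} for (a), equation~\eqref{eq:rrrt_expected_profile} for (b), and Theorem~\ref{theo:leaves_multihoppe} for (c)) with the right parameter identifications.
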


\section{Random \texorpdfstring{$r$}{r}-partitions}\label{sec:r_versions2}



Recall that a \emph{partition} of $[n]$ is a collection $B_1,B_2,\dots,B_k$ of pairwise disjoint, nonempty subsets $B_j\subseteq [n]$ called blocks, whose union is $[n]$. The number $k$ of blocks may vary from $1$ for the trivial one-block partition, to $n$ for the finest partition into singletons. Let $\Pi_n$ be the set of all partitions of $[n]$.

A natural way to generate a partition of $[n]$ is via an urn scheme. Let $N\in\N$ be a fixed integer. Drop $n$ balls labeled by the elements of $[n]$ into $N$ urns uniformly at random and independently. Define a random partition ${\bm \lambda}_{n,N}$ of $[n]$ by declaring two balls to be in the same block if, and only if, they fall into the same urn. As has already been mentioned in the introduction, the number $\#\blocks({\bm \lambda}_{n,N})$ of blocks in the resulting partition ${\bm \lambda}_{n,N}$ has the Stirling--Sibuya  distribution $\StirSibuya{n}{N}$ defined by Eq.~\eqref{def:stirling-sibuya}:
\begin{equation}\label{eq:stirling_sibuya_section_r}
\P[\#\blocks({\bm \lambda}_{n,N})=k]=\stirlingsec{n}{k}\frac{N^{k\downarrow}}{N^n}=\StirSibuya{n}{N}(\{k\}),\quad k\in\{1,\dots,\min(n,N)\}.
\end{equation}

\subsection{Incomplete partitions derived from urn schemes}
Next, we 
introduce an $r$-version of~\eqref{eq:stirling_sibuya_section_r}. To this end, it is necessary to replace partitions by incomplete partitions.
\begin{definition}\label{def:B-partitions}
An \emph{incomplete partition} of $[n]$ is a collection $(B_0,\{B_1,B_2,\dots,B_k\})$ of pairwise disjoint subsets $B_j\subseteq [n]$, whose union is $[n]$ and $B_j\neq \varnothing$ for all $j\in \{1,\dots,k\}$. Note that $B_0$, called the \emph{red block}, is allowed to be empty. The blocks $B_1,\dots, B_k$, called the \emph{white blocks}, are non-empty. Let $\Pi_n^{\mathrm{inc}}$ be the set of all incomplete partitions of $[n]$.
\end{definition}

The already constructed $r$-versions of Ewens permutations and random recursive trees suggest the following way to generate a random incomplete partition leading to an $\rStirSibuya{n}{N}{r}$-distribution. We consider an urn scheme with $N+1$ urns $0,1,\dots,N$. The urn $0$ is red and has frequency $r/(N+r)\geq 0$, whereas urns $1,2,\dots,N$ are white and have frequencies $1/(N+r)$. Drop $n$ balls into the urns and define an incomplete partition ${\bm \lambda}_{n,N,r}$ of $[n]$ by declaring the balls in the urn $0$ to form the red block $B_0$ whereas the balls in those urns $1,\dots,N$ that are non-empty to form white blocks as before.
\begin{proposition}\label{prop:incomplete_partition_urn_scheme}
Let $N\in\N$ be an integer and $r\geq 0$. Then,
\begin{equation}\label{eq:r-partition-distribution}
\P[{\bm \lambda}_{n,N,r}=(B_0,\{B_1,\dots,B_k\})]=\frac{r^{\# B_0}N^{k\downarrow}}{(N+r)^n},\quad (B_0,\{B_1,\dots,B_k\})\in \Pi_n^{\mathrm{inc}}.
\end{equation}
\end{proposition}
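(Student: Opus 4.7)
The plan is to compute $\mathbb{P}[{\bm \lambda}_{n,N,r}=(B_0,\{B_1,\dots,B_k\})]$ directly from the independence of the $n$ ball-placements. Each ball independently lands in urn $j\in\{0,1,\dots,N\}$ with probability $r/(N+r)$ if $j=0$ and $1/(N+r)$ if $j\in\{1,\dots,N\}$. I therefore decompose the event under consideration into three conditions: (i) every ball labeled by an element of $B_0$ goes to urn $0$; (ii) for each $j\in\{1,\dots,k\}$, all balls in $B_j$ share the same white urn; (iii) different white blocks go to different white urns.

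For the event in (i), independence of the placements gives the factor $(r/(N+r))^{\#B_0}$. For (ii) and (iii) jointly, I would sum over all injections $\sigma\colon\{B_1,\dots,B_k\}\to\{1,\dots,N\}$ that describe which white urn each white block occupies. There are exactly $N^{k\downarrow}=N(N-1)\cdots(N-k+1)$ such injections, because the white blocks, although written as an unordered set, are distinguishable by their contents, so that any two distinct injections produce disjoint placement events. For each fixed injection the contribution is $\prod_{j=1}^{k}(1/(N+r))^{\#B_j}=(1/(N+r))^{n-\#B_0}$. Combining these gives
\begin{equation*}
\mathbb{P}[{\bm \lambda}_{n,N,r}=(B_0,\{B_1,\dots,B_k\})]
=\left(\frac{r}{N+r}\right)^{\#B_0}\!\cdot N^{k\downarrow}\cdot\left(\frac{1}{N+r}\right)^{n-\#B_0}
=\frac{r^{\#B_0}N^{k\downarrow}}{(N+r)^{n}},
\end{equation*}
which is the asserted formula.

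There is no real obstacle here: the only subtlety is the counting step, where one must be careful not to divide by $k!$. Even though the white blocks are presented as an unordered collection $\{B_1,\dots,B_k\}$, they are pairwise distinct as subsets of $[n]$, so each bijective labeling by white urns produces a different and disjoint elementary outcome in the urn model; hence the multiplicity is $N^{k\downarrow}$ and not $\binom{N}{k}$. As a sanity check one can note that summing the proposed formula over $\Pi_n^{\mathrm{inc}}$ yields $\sum_{b=0}^{n}\binom{n}{b}r^{b}\sum_{k}\stirlingsec{n-b}{k}N^{k\downarrow}/(N+r)^{n}=\sum_{b=0}^{n}\binom{n}{b}r^{b}N^{n-b}/(N+r)^{n}=1$ by the binomial theorem and the classical identity $\sum_k\stirlingsec{m}{k}x^{k\downarrow}=x^{m}$, confirming both the formula and that ${\bm \lambda}_{n,N,r}$ is well defined.
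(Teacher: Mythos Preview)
Your proof is correct and follows essentially the same approach as the paper: both factor the probability as $(r/(N+r))^{\#B_0}$ times the probability that the remaining balls form the partition $\{B_1,\dots,B_k\}$ of $[n]\setminus B_0$ in the white urns. Your version is more explicit than the paper's in spelling out why the latter factor equals $N^{k\downarrow}(N+r)^{-(n-\#B_0)}$ via the sum over injections from blocks to urns, and your remark on why the count is $N^{k\downarrow}$ rather than $\binom{N}{k}$ is a helpful clarification.
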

\begin{proof}
The formula follows upon noticing that
$$
\frac{r^{\# B_0}N^{k\downarrow}}{(N+r)^n}=\left(\frac{r}{N+r}\right)^{\# B_0}\left(\frac{N^{k\downarrow}}{(N+r)^{n-\# B_0}}\right).
$$
The first factor on the right-hand side is the probability for balls with labels in $B_0$ to fall in the urn $0$. The second factor is the probability for remaining balls to form a partition $B_1,\dots,B_k$ of $[n]\setminus B_0$.
\end{proof}

\begin{remark}
For $r=0$, the right-hand side of~\eqref{eq:r-partition-distribution} vanishes whenever $\# B_0\neq 0$. Thus, neglecting the (empty) red block in ${\bm \lambda}_{n,N,0}$ we obtain the usual partition ${\bm \lambda}_{n,N}$ of $[n]$.
\end{remark}

Let $\type$ be the mapping which sends an incomplete partition $\pi\in\Pi_n^{\mathrm{inc}}$ to the multiset $\mathsf{type}(\pi)$ of block sizes of $\pi$ with a distinguished part $b_0\in\N_0$. Formally, an incomplete partition $\pi=(B_0,\{B_1,\dots,B_k\})$ has type $\mathsf{type}(\pi)=(b_0,\{b_1,\dots,b_k\})$, where $b_0\in\N_0$ and $b_1,\dots,b_k\in\N$ if, and only if, $\# B_0=b_0$ and $\{b_1,\dots,b_k\}$ is the multiset of block sizes of $B_1,\dots,B_k$.

From Proposition~\ref{prop:incomplete_partition_urn_scheme} we immediately conclude the following.

\begin{corollary}\label{cor:incomplete_partition_urn_scheme_type}
Let $N\in\N$ be an integer and $r\geq 0$. Then,
\begin{equation}\label{eq:r-partition_type_distribution}
\P[\type({\bm \lambda}_{n,N,r})=(b_0,\{b_1,\dots,b_k\})]=\binom{n}{b_0,\dots,b_k}\frac{1}{k!}\frac{r^{b_0}N^{k\downarrow}}{(N+r)^n}
\end{equation}
for each 
collection $(b_0,\{b_1,\dots, b_k\})$ of integers such that $b_0\in\N_0$, $b_1,\dots,b_k\in\N$ and $b_0+b_1+\dots+b_k=n$.
\end{corollary}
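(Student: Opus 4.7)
The plan is to apply Proposition~\ref{prop:incomplete_partition_urn_scheme} termwise and combine it with an elementary combinatorial count. By~\eqref{eq:r-partition-distribution}, every incomplete partition $(B_0,\{B_1,\ldots,B_k\})\in\Pi_n^{\mathrm{inc}}$ with $\#B_0=b_0$ and exactly $k$ white blocks has the same probability $r^{b_0}N^{k\downarrow}/(N+r)^n$, since that expression depends on the type only through $b_0$ and $k$. Consequently, the probability on the left-hand side of~\eqref{eq:r-partition_type_distribution} equals this common per-partition value multiplied by the number of incomplete partitions of $[n]$ having type $(b_0,\{b_1,\ldots,b_k\})$.

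It then remains to perform the enumeration. First, select an ordered tuple $(B_0,B_1,\ldots,B_k)$ of pairwise disjoint subsets of $[n]$ with prescribed sizes $b_0,b_1,\ldots,b_k$; the multinomial coefficient $\binom{n}{b_0,b_1,\ldots,b_k}$ counts such tuples. Since the red block $B_0$ is distinguished whereas the white blocks $B_1,\ldots,B_k$ form an unordered collection in Definition~\ref{def:B-partitions}, one divides by $k!$, obtaining $\binom{n}{b_0,b_1,\ldots,b_k}/k!$ incomplete partitions of the prescribed type. Substituting into the previous display gives~\eqref{eq:r-partition_type_distribution}.

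There is no genuinely hard step: the result is an immediate consequence of Proposition~\ref{prop:incomplete_partition_urn_scheme} and a one-line combinatorial count. The only point requiring a little care is the standard multinomial-coefficient-divided-by-$k!$ convention used to enumerate the unordered collection $\{B_1,\ldots,B_k\}$ of white blocks.
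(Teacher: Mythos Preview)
Your proof is correct and follows exactly the paper's approach. The paper itself does not spell out a proof beyond ``From Proposition~\ref{prop:incomplete_partition_urn_scheme} we immediately conclude the following,'' and your expansion---observing that the per-partition probability in~\eqref{eq:r-partition-distribution} depends only on $b_0$ and $k$, then multiplying by the count $\binom{n}{b_0,\dots,b_k}/k!$---is precisely the argument the paper writes out for the analogous cycle-size formula~\eqref{eq:r-permutation_cycles_distribution}.
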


For an incomplete partition $\pi\in\Pi_n^{\mathrm{inc}}$ let $\#\blocks(\pi)$ denote the number of {\em white} blocks.

\begin{proposition}\label{prop:rStirling-Sibuya}
Let $N,n\in\N$ be integers and $r\geq 0$. The number of white blocks in ${\bm \lambda}_{n,N,r}$ has the $\rStirSibuya{n}{N}{r}$ distribution, that is,
\begin{equation}\label{eq:r-partition-sibuya}
\P[\# \blocks ({\bm \lambda}_{n,N,r}) = k] = \stirlingsec{n}{k}_r \frac{N^{k\downarrow}}{(N+r)^{n}},\qquad k\in \{0,1,\dots,\min(n,N)\}.
\end{equation}
\end{proposition}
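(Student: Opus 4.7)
The plan is to give two short proofs modelled on the three proofs of Proposition~\ref{prop:rEwens_cycles}; both rely only on tools already developed in the excerpt.

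First, I would sum the type probability from Corollary~\ref{cor:incomplete_partition_urn_scheme_type} over all types $(b_0,\{b_1,\dots,b_k\})$ with exactly $k$ white blocks. Pulling out the factor $N^{k\downarrow}/(N+r)^n$, the remaining sum is $\frac{n!}{k!}\sum_{(b_0,\dots,b_k)} \frac{r^{b_0}}{b_0!\,b_1!\cdots b_k!}$, which matches the right-hand side of the second identity in~\eqref{eq:stirling12_r_as_sum_1_over} and therefore equals $\stirlingsec{n}{k}_r$. This also delivers the probabilistic interpretation of the second equality of~\eqref{eq:stirling12_r_as_sum_1_over} that was promised after that display.

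Second, I would condition on the size of the red block. Each ball independently lands in the red urn with probability $r/(N+r)$, so $J := n - \#B_0 \sim \Bin{n}{N/(N+r)}$; and given $J = j$ the $j$ white balls are i.i.d.\ uniform on the $N$ white urns, so by~\eqref{eq:stirling_sibuya_section_r} the conditional law of $\#\blocks({\bm\lambda}_{n,N,r})$ is $\StirSibuya{j}{N}$. Summing over $j$ yields $\tfrac{N^{k\downarrow}}{(N+r)^n}\sum_{j=k}^n \binom{n}{j}r^{n-j}\stirlingsec{j}{k}$, which collapses to $\stirlingsec{n}{k}_r N^{k\downarrow}/(N+r)^n$ by the first equality of~\eqref{eq:stir2_as_poly}. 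Viewed as a mixture identity, this argument establishes $\rStirSibuya{n}{N}{r} = \StirSibuya{\Bin{n}{N/(N+r)}}{N}$, a direct analogue of Proposition~\ref{prop:r_stirl_repres}(ii).

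I do not anticipate any real obstacle: both proofs reduce to invoking a combinatorial identity from the introduction after routine bookkeeping, the only subtle point being the factor $\binom{n}{b_0,\dots,b_k}/k!$ in~\eqref{eq:r-partition_type_distribution}, which accounts for the unordered nature of the white blocks. For completeness one could add a third proof valid for integer $r$: enlarge the urn scheme to $N+r$ equiprobable urns (colouring the last $r$ red), use that the number of occupied urns has law $\StirSibuya{n}{N+r}$, and count the occupied white urns among them via the hypergeometric distribution; this route realises the second form of~\eqref{eq:stir2_as_poly} and extends to arbitrary $r \geq 0$ by polynomiality in $r$.
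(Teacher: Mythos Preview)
Your proposal is correct and matches the paper's treatment almost exactly: the paper gives precisely these three proofs, your first two being identical to theirs, and your sketched third (via $N+r$ equiprobable urns for integer $r$) being the paper's third proof, which counts directly rather than invoking the hypergeometric distribution but arrives at the same sum $\sum_{j=k}^{n}\stirlingsec{n}{j}\binom{j}{k}r^{(j-k)\downarrow}$ and the second equality of~\eqref{eq:stir2_as_poly}. The only minor addition on your side is the remark that polynomiality in $r$ extends the third argument to all $r\geq 0$, which the paper does not spell out.
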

Similarly to Proposition~\ref{prop:rEwens_cycles} we 
give three simple proofs of this assertion exploiting various representations of $\stirlingsec{n}{k}_r$.
\begin{proof}[First proof]
This proof uses the second equality in~\eqref{eq:stirling12_r_as_sum_1_over}. One just needs to sum the right-hand sides of~\eqref{eq:r-partition_type_distribution} over all $(k+1)$-tuples $(b_0,b_1,\dots, b_k)$ such that $b_0\in\N_0$, $b_1,\dots,b_k\in\N$ and $b_0+b_1+\dots+b_k=n$. Formula~\eqref{eq:r-partition-sibuya} follows then from~\eqref{eq:stirling12_r_as_sum_1_over}.
\end{proof}
\begin{proof}[Second proof]
This proof hinges on an equality in~\eqref{eq:stir2_as_poly}, namely, $\stirlingsec{n}{k}_r=\sum_{j=k}^n \binom{n}{j} \stirlingsec{j}{k} r^{n-j}$. Conditioning on the number of balls that do not fall in the urn $0$ and using~\eqref{eq:stirling_sibuya_section_r}, we obtain
\begin{multline*}
\P[\# \blocks ({\bm \lambda}_{n,N,r}) = k]=\StirSibuya{\Bin{n}{{N/(N+r)}}}{N}(\{k\})\\
=\sum_{j=k}^{n}\binom{n}{j}\left(\frac{r}{N+r}\right)^{n-j}\left(\frac{N}{N+r}\right)^{j}\stirlingsec{j}{k}\frac{N^{k\downarrow}}{N^j}
\overset{\eqref{eq:stir2_as_poly}}{=}\stirlingsec{n}{k}_r \frac{N^{k\downarrow}}{(N+r)^{n}}
\end{multline*}
for each 
$k\in\{0,1,\dots,\min(n,N)\}$.
\end{proof}
\begin{proof}[Third proof]
In this proof we suppose that $r\in \N$ is integer and exploit the formula  $\stirlingsec{n}{k}_r=\sum_{j=k}^{n}\stirlingsec{n}{j}\binom{j}{k}r^{(j-k)\downarrow}$, see~\eqref{eq:stir2_as_poly}.
Replace the red urn with frequency $r/(N+r)$ by $r$ urns with frequency $1/(N+r)$, so that we allocate $n$ balls in $N+r$ equiprobable urns ($N$ white ones and $r$ red ones). In total, there are $(N+r)^n$ possible allocations. Then,
\begin{align*}
\P[\# \blocks ({\bm \lambda}_{n,N,r}) = k]
&=
\sum_{j=k}^n \P[k \text{ white urns and }  j-k \text{ red urns are non-empty}]
\\
&=
\frac{1}{(N+r)^n}\sum_{j=k}^n \stirlingsec{n}{j} \binom jk N^{k\downarrow} r^{(j-k)\downarrow}
=
\stirlingsec{n}{k}_r \frac{N^{k\downarrow}}{(N+r)^{n}},
\end{align*}
where $\stirlingsec{n}{j}$ is the number of ways to decompose $n$ balls into $j$ blocks, $\binom jk$ is the number of ways to choose $k$ blocks to be placed into white urns, and, finally, $N^{k\downarrow}$, respectively $r^{(j-k)\downarrow}$,  is the number of ways to choose white, respectively red, urns to be filled.
\end{proof}
\begin{remark}
Proposition~\ref{prop:rStirling-Sibuya} can also be found 
in~\cite[Section 5]{nishimura_sibuya}.
\end{remark}

\subsection{Random uniform \texorpdfstring{$r$}{r}-partitions and random Gibbs \texorpdfstring{$r$}{r}-partitions}

The cardinality of the set $\Pi_n$ of all partitions of $[n]$ is the Bell number $\Bell_n$. Let ${\bm \pi}$ be a random uniform  partition of $[n]$, that is, a random element with values in $\Pi_n$ and such that $\P[{\bm \pi} = \pi] = 1/\Bell_n$ for each 
partition $\pi\in \Pi_n$. Stam~\cite{stam_random_partition} discovered a randomization of the number of urns $N$ in the above scheme which leads to a random partition with the uniform distribution on $\Pi_n$. Let $M$ be a random variable with
$$
\P[M=m] = \frac{1}{\eee \cdot \Bell_n} \frac{m^n}{m!}, \qquad m\in \N.
$$
The numbers on the right-hand side sum up to $1$ by the Dobi\'nski formula. Stam showed that the distribution of ${\bm \lambda}_{n,M}$ is uniform on $\Pi_n$, where $M$ is assumed independent of everything else. The proof is rather simple. From~\eqref{eq:r-partition-distribution} applied with $r=0$ we conclude that, for each 
$\{B_1,\dots,B_k\}\in\Pi_n$,
$$
\P[{\bm\lambda}_{n,M}=\{B_1,\dots,B_k\}]=\frac{1}{\eee \cdot \Bell_n}\sum_{m=k}^{\infty} \frac{m^n}{m!}\frac{m^{k\downarrow}}{m^n}=\frac{1}{\eee \cdot \Bell_n}\sum_{m=k}^{\infty} \frac{1}{(m-k)!}=\frac{1}{\Bell_n}.
$$
Moreover, Stam showed that the number of empty boxes is Poisson distributed with parameter $1$ and 
independent of the partition.

We now focus on a two-fold generalization of Stam's construction. This leads 
us to the $r$-Stirling distributions of the second kind $\rStirtwo{n}{\theta}{r}$.

\begin{definition}\label{def:Gibbs-partitions}
Fix some $\theta>0$ and $r\geq 0$. A random variable ${\bm \pi}_{\theta,r}$ with values in $\Pi_n^{\mathrm{inc}}$ is called a {\em random Gibbs $r$-partition} if it has the distribution
$$
\P[{\bm \pi}_{\theta,r}=(B_0,\{B_1,\dots, B_k\})]=\frac{\theta^k r^{\# B_0}}{T_{n,r}(\theta)},\quad (B_0,\{B_1,\dots,B_k\})\in \Pi_n^{\mathrm{inc}},
$$
where we recall the definition
$$
T_{n,r}(\theta) = \sum_{k=0}^{n}\stirlingsec{n}{k}_r \theta^k
$$
of the $r$-Touchard polynomial. This distribution is denoted by $\GibbsPart{n}{\theta}{r}$. If $\theta=1$, an incomplete partition ${\bm \pi}_{1,r}$ is called {\em uniform $r$-partition}. The number $T_{n,r}(1)=:\Bell_{n,r}$ is called the $n$-th \emph{$r$-Bell number}, see~\cite[\S~8.3]{mezo_book} and~\cite{mezoe_r_bell}.
\end{definition}
\begin{remark}
In the literature, see, for example,~\cite{Pitman_book} and~\cite{Gnedin+Pitman:2005}, it is more common to use the term `Gibbs partition' in a more general sense when the weight of a block also depends on its size. In Definition~\ref{def:Gibbs-partitions} we use this terminology in a narrow sense, when all white blocks have the same (but arbitrary) weight $\theta>0$.
\end{remark}

Note that the above distribution resembles~\eqref{eq:r-partition-distribution} with $N$ replaced by $\theta$. The change of the factor $\theta^{k\downarrow}$ to $\theta^k$ results in a more sophisticated normalizing factor $T_{n,r}(\theta)$ in place of $(\theta+r)^n$ in~\eqref{eq:r-partition-distribution}.

The correctness of the definition follows from the chain of equalities (there are $\binom{n}{j}$ choices for a red block $B_0$ containing $n-j$ elements and $\stirlingsec{n}{k}$ choices to partition the remaining $j$ elements into $k$ white blocks $B_1,\dots, B_k$):
\begin{align*}
\sum_{\pi\in \Pi_n^{\mathrm{inc}}} \P[{\bm \pi}_{\theta,r}=\pi]
&=
\frac{1}{T_{n,r}(\theta)} \sum_{j=0}^{n}\sum_{k=0}^{j}  \theta^k  r^{n-j}\binom{n}{j}\stirlingsec{j}{k}
=
\frac{1}{T_{n,r}(\theta)} \sum_{k=0}^{n} \theta^k \sum_{j=k}^{n} r^{n-j} \binom{n}{j}\stirlingsec{j}{k}
\\
&\overset{\eqref{eq:stir2_as_poly}}{=}
\frac{1}{T_{n,r}(\theta)} \sum_{k=0}^{n}\stirlingsec{n}{k}_r \theta^k = 1.
\end{align*}


\begin{proposition}
Let ${\bm \pi}_{\theta,r}\sim \GibbsPart{n}{\theta}{r}$. Then
\begin{equation}\label{eq:r-gibbs-partition_type_distribution}
\P[\type({\bm \pi}_{\theta,r})=(b_0,\{b_1,\dots,b_k\})]
=
\frac{r^{b_0}\theta^k}{T_{n,r}(\theta)}\binom{n}{b_0,\dots,b_k}\frac{1}{k!}
=
\frac{n!}{k!}\frac{r^{b_0}\theta^k}{b_0!\cdots b_k!}\frac{1}{T_{n,r}(\theta)}.
\end{equation}
Furthermore, the number of white blocks follows the $r$-Stirling distribution of the second kind:
\begin{equation}\label{eq:r-partition_number_blocks_distribution}
\P[\#\blocks({\bm \pi}_{\theta,r}) = k]=\frac{\theta^k}{T_{n,r}(\theta)}\stirlingsec{n}{k}_r,\quad k=0,\dots,n.
\end{equation}
\end{proposition}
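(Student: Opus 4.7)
The plan is to prove both identities by reducing them to the definition of $\GibbsPart{n}{\theta}{r}$ (Definition~\ref{def:Gibbs-partitions}) combined with a direct enumeration. The key observation is that the weight $\theta^k r^{\#B_0}/T_{n,r}(\theta)$ assigned to an incomplete partition $(B_0,\{B_1,\dots,B_k\})$ depends only on the number $k$ of white blocks and on the size $b_0 = \#B_0$ of the red block, not on the specific sets involved. Consequently the probability of an event determined purely by block sizes reduces to counting incomplete partitions of a given type and multiplying by the common weight.

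For formula~\eqref{eq:r-gibbs-partition_type_distribution}, I would fix a type $(b_0,\{b_1,\dots,b_k\})$ with $b_0\in\N_0$, $b_1,\dots,b_k\in\N$ and $b_0+b_1+\dots+b_k=n$, and enumerate incomplete partitions realizing it. As in the completely parallel Corollary~\ref{cor:incomplete_partition_urn_scheme_type}, the number of such objects equals $\binom{n}{b_0,b_1,\dots,b_k}/k!$: the multinomial coefficient counts ways to split $[n]$ into ordered blocks of the prescribed sizes, and dividing by $k!$ accounts for the unorderedness of the white blocks. Multiplying by the uniform weight $\theta^k r^{b_0}/T_{n,r}(\theta)$ immediately produces both expressions on the right-hand side of~\eqref{eq:r-gibbs-partition_type_distribution}.

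For formula~\eqref{eq:r-partition_number_blocks_distribution}, I would sum~\eqref{eq:r-gibbs-partition_type_distribution} over all admissible $(k+1)$-tuples $(b_0,b_1,\dots,b_k)$, factoring out what does not depend on them:
$$
\P[\#\blocks({\bm\pi}_{\theta,r})=k]
= \frac{\theta^k}{T_{n,r}(\theta)} \cdot \frac{n!}{k!}
\sum_{\substack{b_0\in\N_0,\,b_1,\dots,b_k\in\N\\ b_0+b_1+\dots+b_k=n}}
\frac{1}{b_1!\cdots b_k!} \cdot \frac{r^{b_0}}{b_0!}.
$$
The inner sum is recognized as precisely the second series representation of $\stirlingsec{n}{k}_r$ recorded in~\eqref{eq:stirling12_r_as_sum_1_over}, whence the right-hand side simplifies to $\stirlingsec{n}{k}_r\theta^k/T_{n,r}(\theta)$. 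This mirrors the first proof of Proposition~\ref{prop:rStirling-Sibuya}, where exactly the same identity was used to pass from the type distribution of ${\bm\lambda}_{n,N,r}$ to the distribution of its number of white blocks.

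There is no real obstacle here; the only mildly delicate point is the enumeration involving the factor $1/k!$, whose interpretation is inherited verbatim from Corollary~\ref{cor:incomplete_partition_urn_scheme_type} and is consistent because the summation in step two is taken over \emph{ordered} tuples $(b_0,b_1,\dots,b_k)$, so any redundancy arising from repeated block sizes is absorbed cleanly into the identity~\eqref{eq:stirling12_r_as_sum_1_over}.
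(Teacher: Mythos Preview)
Your proof is correct and follows essentially the same approach as the paper: the paper declares \eqref{eq:r-gibbs-partition_type_distribution} ``obvious'' (your counting argument via $\binom{n}{b_0,\dots,b_k}/k!$ is the natural unpacking of that word), and derives \eqref{eq:r-partition_number_blocks_distribution} by summing the type distribution over admissible tuples and invoking the second identity in~\eqref{eq:stirling12_r_as_sum_1_over}, exactly as you do.
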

\begin{proof}
Equality~\eqref{eq:r-gibbs-partition_type_distribution} is obvious. Formula~\eqref{eq:r-partition_number_blocks_distribution} follows from the second equality in~\eqref{eq:stirling12_r_as_sum_1_over} upon summing the right-hand side 
of~\eqref{eq:r-partition_type_distribution} over all $b_0\in\N_0$ and $b_1,\dots,b_k\in\N$ satisfying $b_0+\dots+b_k=n$.
\end{proof}

Next, in the spirit of Stam's construction, we show that ${\bm \pi}_{\theta,r}$ can be realized by randomizing $N$ in the incomplete partition ${\bm \lambda}_{n,N,r}$ defined at 
the beginning of this section.
Consider a random variable $M_{\theta,r}$ with the following distribution:
\begin{equation}\label{eq:r_dobinski_distr_for_stam}
\P[M_{\theta,r} = m] = \frac{1}{\eee^{\theta} T_{n,r}(\theta)}\frac{(r+m)^n \theta^m}{m!},
\qquad
m\in \N_0.
\end{equation}

We first check that this is indeed a probability distribution. The next lemma is an $r$-analogue of the Dobi\'nski formula, see~\cite[Theorem~5.1]{mezoe_r_bell}. For completeness, we provide a proof.

\begin{lemma}
For each 
$\theta>0$ and $r\geq 0$,
$$
\sum_{m=0}^{\infty}\frac{(r+m)^n \theta^m}{m!}=\eee^{\theta} T_{n,r}(\theta).
$$
\end{lemma}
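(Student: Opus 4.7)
The plan is to apply the polynomial identity $(x+r)^n = \sum_{k=0}^n \stirlingsec{n}{k}_r x^{k\downarrow}$ from~\eqref{eq:stirling_r_rising_factorials} with $x=m$, substitute this into the sum on the left-hand side, and then swap the order of summation. This reduces the $r$-Dobi\'nski formula to the classical Dobi\'nski-type identity $\sum_{m\geq k} \theta^m / (m-k)! = \theta^k e^\theta$, and the coefficient of $\theta^k$ that survives is precisely $\stirlingsec{n}{k}_r$, so the right-hand side assembles into $e^\theta T_{n,r}(\theta)$ by the very definition of the $r$-Touchard polynomial.

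More concretely, I would first note that since $m^{k\downarrow}=0$ for $m<k$, the expansion $(r+m)^n = \sum_{k=0}^n \stirlingsec{n}{k}_r m^{k\downarrow}$ lets us write
\begin{equation*}
\sum_{m=0}^{\infty}\frac{(r+m)^n \theta^m}{m!}
=\sum_{k=0}^{n}\stirlingsec{n}{k}_r\sum_{m=k}^{\infty}\frac{m^{k\downarrow}\theta^m}{m!}
=\sum_{k=0}^{n}\stirlingsec{n}{k}_r\sum_{m=k}^{\infty}\frac{\theta^m}{(m-k)!}.
\end{equation*}
Reindexing the inner sum by $j=m-k$ yields $\theta^k e^\theta$, and summing over $k$ gives $e^\theta \sum_{k=0}^n \stirlingsec{n}{k}_r \theta^k = e^\theta T_{n,r}(\theta)$, as required.

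The only subtle point is justifying the interchange of the two summations, but this is immediate because the inner sum has only finitely many $k$ (namely $k=0,1,\dots,n$) while the outer is absolutely convergent term-by-term (each series $\sum_m m^{k\downarrow}\theta^m/m!$ converges). Thus there is no real obstacle; the entire argument is essentially a one-line manipulation once the $r$-Stirling expansion~\eqref{eq:stirling_r_rising_factorials} is invoked. This mirrors the proof of the classical Dobi\'nski formula (recovered at $r=0$), with the $r$-Stirling numbers of the second kind playing the role of the ordinary Stirling numbers.
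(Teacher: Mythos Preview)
Your proof is correct and essentially identical to the paper's: both expand $(r+m)^n$ via the second identity in~\eqref{eq:stirling_r_rising_factorials}, swap the finite $k$-sum with the $m$-series, and reduce to $\sum_{m\geq k}\theta^m/(m-k)!=\theta^k\eee^\theta$.
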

\begin{proof}
Using the second formula in~\eqref{eq:stirling_r_rising_factorials} yields
\begin{multline*}
\sum_{m=0}^{\infty}\frac{(r+m)^n \theta^m}{m!}=\sum_{m=0}^{\infty}\frac{\theta^m}{m!}\sum_{k=0}^{n}\stirlingsec{n}{k}_r m^{k\downarrow}=\sum_{k=0}^{n}\stirlingsec{n}{k}_r \sum_{m=0}^{\infty}\frac{\theta^m}{m!}m^{k\downarrow}\\
=\sum_{k=0}^{n}\stirlingsec{n}{k}_r \theta^k \sum_{m=k}^{\infty}\frac{\theta^{m-k}}{(m-k)!}=\left(\sum_{k=0}^{n}\stirlingsec{n}{k}_r \theta^k\right) \left(\sum_{m=0}^{\infty}\frac{\theta^{m}}{m!}\right)=T_{n,r}(\theta)\eee^{\theta} .
\end{multline*}
\end{proof}

Here is a version of Stam's result including the claim that the number of empty urns is independent of the incomplete partition and has a Poisson distribution. Naturally, $M_{\theta,r}$ is taken independent of everything else.

\begin{proposition}
For each 
$\theta>0$ and $r\geq 0$,
$$
\left({\bm \lambda}_{n,M_{\theta,r},r},M_{\theta,r}-\#\blocks({\bm \lambda}_{n,M_{\theta,r},r})\right)~\sim~\GibbsPart{n}{\theta}{r}\otimes \Poi{\theta}.
$$
\end{proposition}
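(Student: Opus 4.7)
The plan is to compute the joint probability of the event
$\{{\bm \lambda}_{n,M_{\theta,r},r}=\pi,\ M_{\theta,r}-\#\blocks({\bm \lambda}_{n,M_{\theta,r},r})=j\}$
for an arbitrary $\pi=(B_0,\{B_1,\dots,B_k\})\in\Pi_n^{\mathrm{inc}}$ and $j\in\N_0$, and check that it factorizes as $\P[{\bm \pi}_{\theta,r}=\pi]\cdot\Poi{\theta}(\{j\})$. This simultaneously identifies both marginals and establishes the claimed independence, so no separate verification is needed.

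Concretely, I would first note that on $\{M_{\theta,r}=m\}$ the incomplete partition ${\bm \lambda}_{n,m,r}$ has exactly $\#\blocks$ equal to the number of non-empty white urns, hence $M_{\theta,r}-\#\blocks({\bm \lambda}_{n,M_{\theta,r},r})$ is the number of empty white urns; in particular, on $\{{\bm \lambda}_{n,M_{\theta,r},r}=\pi\}$ the constraint $M_{\theta,r}-\#\blocks=j$ forces $M_{\theta,r}=k+j$. Using independence of $M_{\theta,r}$ from the urn scheme together with~\eqref{eq:r-partition-distribution} and~\eqref{eq:r_dobinski_distr_for_stam} one obtains
\begin{align*}
\P[{\bm \lambda}_{n,M_{\theta,r},r}=\pi,\ M_{\theta,r}=k+j]
&=\P[M_{\theta,r}=k+j]\cdot\P[{\bm \lambda}_{n,k+j,r}=\pi]\\
&=\frac{(r+k+j)^n\theta^{k+j}}{(k+j)!\,\eee^{\theta}T_{n,r}(\theta)}\cdot\frac{r^{\#B_0}(k+j)^{k\downarrow}}{(k+j+r)^n}.
\end{align*}
The factors $(r+k+j)^n$ cancel, and the identity $(k+j)^{k\downarrow}/(k+j)!=1/j!$ (which is the key algebraic collapse engineered by the choice~\eqref{eq:r_dobinski_distr_for_stam} of the distribution of $M_{\theta,r}$) reduces the right-hand side to
\[
\frac{\theta^k r^{\#B_0}}{T_{n,r}(\theta)}\cdot\frac{\theta^{j}\eee^{-\theta}}{j!}.
\]
The first factor is precisely $\P[{\bm \pi}_{\theta,r}=\pi]$ by Definition~\ref{def:Gibbs-partitions}, and the second is $\Poi{\theta}(\{j\})$.

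There is no serious obstacle: the whole argument rests on the exact cancellation of $(r+m)^n$ between the Dobi\'nski-type weight defining $M_{\theta,r}$ and the denominator in~\eqref{eq:r-partition-distribution}, followed by the elementary identity $m^{k\downarrow}/m!=1/(m-k)!$. The only conceptual point worth highlighting is that the biasing by $(r+m)^n$ in~\eqref{eq:r_dobinski_distr_for_stam} is the natural generalization of Stam's size-biasing by $m^n$, tailored so as to absorb the $(N+r)^n$ normalization of the incomplete-partition distribution~\eqref{eq:r-partition-distribution}.
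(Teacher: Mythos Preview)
The proposal is correct and essentially identical to the paper's own proof: both condition on $M_{\theta,r}=k+j$ (the paper writes $v$ for your $j$), multiply the density~\eqref{eq:r_dobinski_distr_for_stam} by the formula~\eqref{eq:r-partition-distribution}, and observe the cancellation of $(r+k+j)^n$ together with $(k+j)^{k\downarrow}/(k+j)! = 1/j!$ to obtain the product form.
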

\begin{proof}
Fix $v\in\N_0$ and an incomplete partition $(B_0,\{B_1 \dots, B_k\})\in\Pi_n^{{\rm inc}}$. Then, by~\eqref{eq:r_dobinski_distr_for_stam} and Proposition~\ref{prop:incomplete_partition_urn_scheme},
\begin{align*}
\lefteqn{
\P[{\bm \lambda}_{n,M_{\theta,r},r} = (B_0,\{B_1 \dots, B_k\}), M_{\theta,r} = k+v]
}\\
&=
\P[M_{\theta,r} = k+v] \cdot \P[{\bm \lambda}_{n,M_{\theta,r},r} = (B_0,\{B_1 \dots, B_k\})|M_{\theta,r} = k+v]\\
&=
\P[M_{\theta,r} = k+v] \cdot \P[{\bm \lambda}_{n,k+v,r} = (B_0,\{B_1 \dots, B_k\})]\\
&=
\frac  1 {\eee^{\theta} T_{n,r}(\theta)}\frac{(r+k+v)^n \theta^{k+v}}{(k+v)!} \cdot
\frac{r^{\# B_0}(k+v)^{k\downarrow}}{(k+v+r)^{n}}\\
&=
\frac{\theta^k r^{\# B_0}}{T_{n,r}(\theta)} \cdot \eee^{-\theta}\frac{\theta^{v}}{v!}\\
&=
\GibbsPart{n}{\theta}{r}((B_0,\{B_1 \dots, B_k\}))\cdot \Poi{\theta}(\{v\}),
\end{align*}
where in the last line we have used Definition~\ref{def:Gibbs-partitions}.
\end{proof}

As we did for $r$-permutations, we conclude the discussion of $r$-partitions with a remark on Broder's definition of the $r$-Stirling numbers of the second kind given in the introduction. If $r\in\N_0$ is an integer, then the $r$-uniform partition can be constructed combinatorially as follows. Consider the set $[n+r]$ in which elements $1,2,\dots,n$ are {\em white} and $n+1,\dots,n+r$ are {\em red}. Let $\Pi_{n,r}$ be the set of all partitions of $[n+r]$ such that red elements are in different blocks and let $\mathfrak{d}_{{\rm part}}:\Pi_{n,r}\mapsto \Pi_n^{{\rm inc}}$ be the mapping defined as follows. For $\pi\in \Pi_{n,r}$, the image $\mathfrak{d}_{{\rm part}}(\pi)$ is an incomplete partition $(\widehat{B}_0,\{\widehat{B}_1,\dots,\widehat{B}_k\})$ of $[n]$ such that the red block $\widehat{B}_0$ is obtained by merging $r$ blocks of $\pi$ containing red elements, removing these red elements from the union, and keeping white blocks $\widehat{B}_1,\dots,\widehat{B}_k$ unchanged. Let $\widehat{\pi}_{n,r}$ be a partition of $[n+r]$ picked uniformly at random from $\Pi_{n,r}$.
\begin{proposition}\label{prop:deletion_r_partition}
For each 
$n\in\N$ and $r\in\N_0$,
$$
\mathfrak{d}_{{\rm part}}(\widehat{\pi}_{n,r})~\sim~\GibbsPart{n}{1}{r},
$$
that is, $\mathfrak{d}_{{\rm part}}(\widehat{\pi}_{n,r})$ is the uniform $r$-partition.
\end{proposition}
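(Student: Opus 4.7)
The plan is to mirror the double-counting argument used in the proof of Proposition~\ref{prop:deletion_r_permutation}. Since $\widehat{\pi}_{n,r}$ is uniform on $\Pi_{n,r}$, once I know the cardinality $\#\Pi_{n,r}$ and the size of each fiber $\mathfrak{d}_{\rm part}^{-1}(\pi_1)$ for an incomplete partition $\pi_1 \in \Pi_n^{\rm inc}$, the distribution of $\mathfrak{d}_{\rm part}(\widehat{\pi}_{n,r})$ drops out immediately as the ratio of the former to the latter.

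I would first count the fibers. Fix $(B_0,\{B_1,\dots,B_k\}) \in \Pi_n^{\rm inc}$ with $b_0 := \# B_0$. A preimage $\pi \in \Pi_{n,r}$ is built by keeping $B_1,\dots,B_k$ as blocks of $\pi$ and then distributing each of the $b_0$ elements of $B_0$ among the $r$ blocks of $\pi$ containing, respectively, the red labels $n+1,\dots,n+r$ (a red block is allowed to consist only of its red element). Since the choices for the $b_0$ elements are independent, the fiber has exactly $r^{b_0}$ elements.

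Next, summing this over incomplete partitions of $[n]$ and grouping by $b_0$, I would obtain
\[
\#\Pi_{n,r} \;=\; \sum_{b_0=0}^n \binom{n}{b_0}\, r^{b_0}\, \Bell_{n-b_0},
\]
because there are $\binom{n}{b_0} \Bell_{n-b_0}$ incomplete partitions with $\#B_0=b_0$. To identify this sum with the $r$-Bell number $\Bell_{n,r} = T_{n,r}(1)$, I would apply the identity $\stirlingsec{n}{k}_r = \sum_{j=k}^n \binom{n}{j} \stirlingsec{j}{k} r^{n-j}$ from~\eqref{eq:stir2_as_poly}, sum over $k\in\{0,\dots,n\}$, swap the order of summation, and use $\sum_{k=0}^j \stirlingsec{j}{k} = \Bell_j$.

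Combining the two counts, uniformity of $\widehat{\pi}_{n,r}$ gives
\[
\P\bigl[\mathfrak{d}_{\rm part}(\widehat{\pi}_{n,r}) = (B_0,\{B_1,\dots,B_k\})\bigr] \;=\; \frac{r^{\# B_0}}{T_{n,r}(1)} \;=\; \frac{1^{k}\, r^{\# B_0}}{T_{n,r}(1)},
\]
which is exactly the $\GibbsPart{n}{1}{r}$ probability of Definition~\ref{def:Gibbs-partitions} with $\theta=1$. The only real subtlety is the fiber-counting step: one must observe that red blocks are permitted to be singletons, so that each element of $B_0$ independently selects one of $r$ red-indexed blocks without any constraint, yielding $r^{b_0}$ preimages without need for inclusion-exclusion.
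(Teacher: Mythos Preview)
The proposal is correct and follows essentially the same approach as the paper: count each fiber $\mathfrak{d}_{\rm part}^{-1}(\pi_1)$ as $r^{b_0}$, sum over incomplete partitions to get $\#\Pi_{n,r}=\sum_{b_0}\binom{n}{b_0}r^{b_0}\Bell_{n-b_0}=T_{n,r}(1)$ via~\eqref{eq:stir2_as_poly}, and take the ratio. Your write-up is slightly more explicit than the paper's in spelling out the swap of summation used to identify the sum with $\Bell_{n,r}$, but the argument is the same.
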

\begin{proof}
Each 
$\pi\in \Pi_{n,r}$ can be constructed from an incomplete partition $\pi_1:=(B_0,\{B_1,\dots,B_k\})\in\Pi_n^{{\rm inc}}$ of $[n]$ by distributing $b_0=\# B_0$ elements of the block $B_0$ among $r$ blocks of $\pi$ containing $r$ red elements (there is an $r$-fold choice, for each $x\in B_0$). Thus,
$$
\#\Pi_{n,r}=\sum_{j=0}^{n}\#\{\pi_1\in\Pi_n^{{\rm inc}}:\# B_0=j\}r^j=\sum_{j=0}^{n}\binom{n}{j}\Bell_{n-j} r^j=\Bell_{n,r}=T_{n,r}(1),
$$
where the penultimate inequality follows from~\eqref{eq:stir2_as_poly}. Furthermore, for each 
incomplete partition $\pi_1\in \Pi_n^{{\rm inc}}$ with red block $B_0$ of size $b_0$,
$$
\#\{\pi\in \Pi_{n,r}:\mathfrak{d}_{{\rm part}}(\pi)=\pi_1\}=r^{b_0}.
$$
The proof concludes by taking the ratio.
\end{proof}

\section{Random \texorpdfstring{$r$}{r}-compositions}\label{sec:r_versions3}
A \emph{composition} of $n$ into $k$ summands is a tuple $(b_1,\dots, b_k)$ with $b_1,\dots, b_k \in \N$ and $b_1+\dots+b_k = n$. Using stars-and-bars argument, one can check that the number of compositions of $n$ into $k$ summands is $\binom{n-1}{k-1}$. Picking one of these compositions at random, we obtain a \emph{uniform random composition} of $n$ into $k$ summands. In the following we 
introduce an $r$-analogue of this notion.

\begin{definition}\label{def:B-compositions}
An \emph{incomplete composition} of $n$ is a tuple $(b_0,b_1,\dots, b_k)$ such that $b_0 \in \N_0$, $b_1,\dots, b_k\in \N$ and $b_0 + b_1 + \dots + b_k = n$. Note that $b_0$ is allowed to be $0$, while $b_1,\dots, b_k$ are not.
\end{definition}

The number of such incomplete compositions is $\binom{n}{k}$ by stars-and-bars argument. Incomplete compositions are closely related to Weyl chambers of type $B$.  Consider Weyl chambers of types $A$ and $B$ which are polyhedral cones in $\R^n$ given by $A^{(n)}:= \{(x_1,\dots,x_n)\in\mathbb{R}^n:\,x_1\leq \dots \leq x_n\}$ and $B^{(n)} := \{(x_1,\dots,x_n)\in\mathbb{R}^n:\,0 \leq x_1 \leq \dots \leq x_n\}$. Then, the $k$-dimensional faces of the cone $A^{(n)}$ are in bijective correspondence with compositions of $n$ into $k$ summands and have the form
$$
\{x_{1}= \dots = x_{b_1} \leq x_{b_1+1} = \dots = x_{b_1+b_2} \leq \dots \leq x_{b_1+\dots+ b_{k-1}+1} = \dots = x_n\}.
$$
Similarly, $k$-dimensional faces of $B^{(n)}$ are in bijective correspondence with the incomplete compositions
$(b_0,b_1,\dots, b_k)$ and have the form
\begin{multline*}
\{0 = x_1 = \dots = x_{b_0} \leq x_{b_0+1}= \dots = x_{b_0+b_1} \leq x_{b_0+b_1+1} = \dots\\ = x_{b_0+b_1+b_2} \leq \dots \leq x_{b_0+ b_1+\dots+ b_{k-1}+1} = \dots = x_n\}.
\end{multline*}

\begin{definition}\label{def:r_composition}
Fix $r\geq 0$. A \emph{random $r$-composition} of $n$ is a random vector $(b_0^{(n,k)}, b_1^{(n,k)}, \dots, b_k^{(n,k)})$ such that, for each 
incomplete composition $(b_0, b_1,\dots, b_k)$ of $n$,
\begin{equation}\label{eq:def_r_composition}
\P[b_0^{(n,k)} = b_0, b_1^{(n,k)} = b_1,\dots, b_k^{(n,k)} = b_k]
=
\frac{r^{b_0\uparrow}/b_0!}{\binom{n+r-1}{k+r-1}}
=
\frac{\binom {b_0 + r - 1}{b_0}}{\binom{n+r-1}{k+r-1}}.
\end{equation}

\end{definition}

\begin{example}
For $r=0$, we understand the right-hand side as $1/ \binom{n-1}{k-1}$ for $b_0=0$ and as $0$ for $b_0 \geq 1$. So, $b_0^{(n,k)} = 0$ with probability $1$ and, discarding the $0$-th entry, we recover the uniform distribution on the set of compositions of $n$.
\end{example}

\begin{example}
For $r= 1$, 
each incomplete composition $(b_0, b_1,\dots, b_k)$ is equally likely, with the corresponding probability being 
$1/ \binom{n}{k}$. Thus, we recover the uniform distribution on the set of all incomplete compositions which are in bijective correspondence with the $k$-faces of the Weyl chamber $B^{(n)}$. Note in passing that it would be possible to give a similar interpretation for the $r$-uniform \emph{partitions}  with $r=1/2$ by considering uniformly distributed elements of the subspace lattice generated by the type $B$ reflection arrangement, see~\cite[\S~6.3, 6.7]{aguiar_mahajan_book_hyperplane_arrangements} for a description of the correspondence between (incomplete) partitions and flats in reflection arrangements.
\end{example}

With the help of the next lemma we will show that Definition~\ref{def:r_composition} indeed defines a probability distribution.
\begin{lemma}\label{lem:identity_bin_coeff_r_lah}
Let $r\geq 0$. Then, for each 
$m\in \N$ and $\ell \in \{0,1,\dots, m\}$,
\begin{equation}\label{eq:identity_bin_coeff_for_r_lah}
\sum_{b_0=0}^{m-\ell} \binom{b_0 + r - 1}{b_0} \binom{m-b_0}{\ell} = \sum_{b_0=0}^{m-\ell} \binom{b_0 + r - 1}{r-1} \binom{m-b_0}{\ell}= \binom{m+r}{\ell + r}.
\end{equation}
\end{lemma}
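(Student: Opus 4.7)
The plan is to prove the two equalities separately. The first one is essentially trivial: the symmetry $\binom{b_0+r-1}{b_0} = \binom{b_0+r-1}{r-1}$ holds by the standard identity for binomial coefficients when $r$ is a positive integer, and more generally both quantities agree as polynomial expressions in $r$, since they can both be written as the generalized binomial coefficient $r^{b_0\uparrow}/b_0!$. So this step is a one-liner.

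The main content is the identity
\begin{equation*}
\sum_{b_0=0}^{m-\ell} \binom{b_0+r-1}{r-1}\binom{m-b_0}{\ell} = \binom{m+r}{\ell+r},
\end{equation*}
which I would establish via generating functions. By the generalized binomial theorem,
\begin{equation*}
(1-x)^{-r} = \sum_{b_0\geq 0}\binom{b_0+r-1}{r-1} x^{b_0},
\end{equation*}
and a routine differentiation of the geometric series gives
\begin{equation*}
\sum_{c\geq 0}\binom{c}{\ell}x^c = \frac{x^\ell}{(1-x)^{\ell+1}}.
\end{equation*}
The Cauchy product of these two series equals $x^\ell/(1-x)^{r+\ell+1}$, and extracting the coefficient of $x^m$ yields, on the one hand, the left-hand side of the identity, and on the other hand $[x^{m-\ell}](1-x)^{-(r+\ell+1)} = \binom{m-\ell+r+\ell}{r+\ell} = \binom{m+r}{r+\ell}$. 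This concludes the proof.

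There is no substantial obstacle; the only subtlety to flag is that $r$ is not assumed to be a positive integer, so one must interpret $\binom{b_0+r-1}{r-1}$ via the generalized binomial coefficient and invoke the generalized binomial series rather than its integer version. Alternatively, one could note that both sides are polynomials in $r$ (of degree $\ell+r$... strictly, the right-hand side is a polynomial in $r$ of degree $m-\ell$, and each term on the left is polynomial in $r$), so checking the identity for $r\in\mathbb{N}$ suffices. For integer $r$ a purely combinatorial proof is also available: $\binom{m+r}{\ell+r}$ counts $(\ell+r)$-subsets of $\{1,\ldots,m+r\}$, and conditioning on the index $b_0+r$ of the $r$-th smallest element of the chosen subset gives the terms on the left. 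I would include the generating-function argument as the main proof and perhaps mention the combinatorial interpretation as a remark.
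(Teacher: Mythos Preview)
Your proposal is correct and follows essentially the same route as the paper: the first equality is dismissed as trivial, and the second is obtained by multiplying the generating functions $(1-x)^{-r}$ and $x^\ell/(1-x)^{\ell+1}$ and reading off the coefficient of $x^m$. The paper also mentions the polynomial-in-$r$ argument (and cites \cite[Eq.~(5.26)]{graham_etal_book} for integer $r$), so even your aside matches the paper's presentation.
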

\begin{proof}
The first equality is trivial and the second one is Eq.~(5.26) in~\cite{graham_etal_book}, provided that $r\in\N$. The latter constraint is superfluous (indeed, both sides are polynomials in $r$). For the reader's convenience we give below yet another proof of~\eqref{eq:identity_bin_coeff_for_r_lah} valid for all real $r\geq 0$. We use the Taylor expansions
$$
\sum_{b_0=0}^{\infty} \binom{b_0 + r - 1}{b_0} x^{b_0} = (1-x)^{-r}
\quad
\text{ and }
\quad
\sum_{j=\ell}^\infty \binom j\ell x^j = \frac{x^\ell}{(1-x)^{\ell+1}}.
$$
Multiplying these Taylor series and evaluating the coefficient of $x^m$, we arrive at 
the left-hand side of~\eqref{eq:identity_bin_coeff_for_r_lah}. On the other hand, the same coefficient can be computed as follows:
$$
[x^m] \left(\frac{(1-x)^{-r}  x^\ell}{(1-x)^{\ell+1}}\right) = [x^{m-\ell}] (1-x)^{-\ell - r - 1} = [x^{m-\ell}] \sum_{j=0}^\infty \frac{x^j}{j!} (\ell + r + 1)^{j\uparrow} = \frac{(\ell + r + 1)^{j\uparrow}}{(m-\ell)!} = \binom{m+r}{\ell+r}.
$$
Comparing these results completes the proof.
\end{proof}
\begin{remark}
Now we can show that~\eqref{eq:def_r_composition} indeed defines a probability distribution on the set of incomplete compositions of $n$. The number of  incomplete compositions with a given value of $b_0\in \{0,\dots, n-k\}$ is $\binom{n-b_0-1}{k-1}$. Each such an incomplete composition has probability given by the right-hand side of~\eqref{eq:def_r_composition}. The sum of all such probabilities is
$$
\frac {1}{\binom{n+r-1}{k+r-1}} \sum_{b_0=0}^{n-k} \binom{n-b_0-1}{k-1} \binom {b_0 + r - 1}{b_0} =  1
$$
by Lemma~\ref{lem:identity_bin_coeff_r_lah} with $m=n-1$ and $\ell = k-1$.

\end{remark}

\subsection{Couplings of random \texorpdfstring{$r$}{r}-compositions and the Dirichlet multinomial distribution}\label{subsec:couplings}

In this subsection we aim at constructing 
an $r$-composition via a certain urn scheme, in a way similar to that 
used for uniform $r$-partitions in Section~\ref{sec:r_versions2}. As a result, we provide 
several couplings of $r$-compositions which are consistent either in $n$ or in $k$. It
turns out that the random $r$-composition can be constructed using an urn scheme with random frequencies.

Formula~\eqref{eq:def_r_composition} implies that the random vector $(b_0^{(n,k)}, b_1^{(n,k)}-1, \dots, b_k^{(n,k)}-1)$ has the Dirichlet multinomial distribution
\begin{equation}\label{eq:mdir_rcompositions}
\MDir{n-k}{r,\underbrace{1,\dots,1}_{k\text{ times}}},
\end{equation}
see definition~\eqref{eq:MDir_def}. It is 
known that the probability measure $\MDir{n}{\alpha_0,\alpha_1,\dots,\alpha_k}$ can be regarded as a mixture of multinomial distributions. More precisely,
$$
\MDir{n}{\alpha_0,\alpha_1,\dots,\alpha_k}=\MultNP{n}{\Dir{\alpha_0,\alpha_1,\dots,\alpha_k}},
$$
where the Dirichlet distribution $\Dir{\alpha_0,\alpha_1,\dots,\alpha_k}$ is defined by the density
\begin{equation}\label{eq:dirichlet}
(x_0,\dots,x_k)\mapsto \frac{\Gamma(\alpha_0)\Gamma(\alpha_1)\cdots\Gamma(\alpha_k)}{\Gamma(\alpha_0+\alpha_1+\dots+\alpha_k)}x_0^{\alpha_0-1}x_1^{\alpha_1-1}\cdots x_k^{\alpha_k-1},\quad \sum_{i=0}^{k}x_i=1,\quad x_i>0.
\end{equation}

The above interpretation suggests a coupling of random $r$-compositions which is consistent in $n\geq k$ if $k$ is fixed. The coupling works as follows. Take a {\it random} partition $\mathcal{P}$ of $[0,1)$ into $k+1$ intervals (urns)
$$
I_{k,0}:=[0,P_0),\quad I_{k,1}:=[P_0,P_0+P_1),\quad \dots\quad I_{k,k}:=[P_0+\dots+P_{k-1},1),
$$
where $(P_0,P_1,\dots,P_k)$ follows $\Dir{r,1,\dots,1}$ distribution. In a role of balls take an independent of $\mathcal{P}$ sample $U_1,U_2,\dots,U_n,\dots$ from the uniform distribution on $[0,1]$. Let $Z_{n,i}$, $i=0,\dots,k$ be the number of balls among $U_1,\dots,U_n$ in the urn $I_{k,i}$, that is,
$$
Z_{n,i}:=\#\{1\leq j\leq n: U_j\in I_{k,i}\},\quad 0\leq i\leq k.
$$
The following trivially holds true.
\begin{proposition}\label{prop:coupling1}
Fix $k\in\N$. For each 
$n\geq k$, the random vectors $(Z_{n-k,0},Z_{n-k,1},\dots,Z_{n-k,k})$ and $(b_0^{(n,k)}, b_1^{(n,k)}-1, \dots, b_k^{(n,k)}-1)$ have the same Dirichlet multinomial distribution~\eqref{eq:mdir_rcompositions}.
\end{proposition}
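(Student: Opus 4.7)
The plan is to verify that both random vectors have the common Dirichlet--multinomial distribution $\MDir{n-k}{r,1,\dots,1}$ displayed in~\eqref{eq:mdir_rcompositions}; once this is done the two vectors are equal in law by definition.

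First I would handle the $r$-composition side. Setting $c_0 := b_0^{(n,k)}$ and $c_i := b_i^{(n,k)} - 1$ for $i=1,\dots,k$, the constraints $b_0\geq 0$, $b_i\geq 1$ and $\sum b_i = n$ become $c_i\geq 0$ and $c_0+c_1+\dots+c_k = n-k$, so the range of $(c_0,\dots,c_k)$ is exactly the support of $\MDir{n-k}{r,1,\dots,1}$. Substituting into~\eqref{eq:def_r_composition} I get
$$
\P[(c_0,\dots,c_k) = (c_0,\dots,c_k)] = \frac{1}{\binom{n+r-1}{k+r-1}}\binom{c_0+r-1}{c_0}.
$$
Rewriting binomial coefficients through Gamma functions, $\binom{c_0+r-1}{c_0} = \Gamma(c_0+r)/(\Gamma(r)\,c_0!)$ and $1/\binom{n+r-1}{k+r-1} = \Gamma(k+r)\,(n-k)!/\Gamma(n+r)$, and using that $\Gamma(c_i+1)/(\Gamma(1)\Gamma(c_i+1)) = 1$ for the parameters equal to $1$, a direct comparison with~\eqref{eq:MDir_def} (with $n$ there replaced by $n-k$ and parameters $(\alpha_0,\alpha_1,\dots,\alpha_k) = (r,1,\dots,1)$) shows the two expressions agree. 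This proves $(b_0^{(n,k)}, b_1^{(n,k)}-1,\dots,b_k^{(n,k)}-1) \sim \MDir{n-k}{r,1,\dots,1}$.

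Second, for the urn-scheme side I would argue by conditioning on $\mathcal{P}$. Given $(P_0,\dots,P_k)$, the variables $U_1,\dots,U_{n-k}$ are i.i.d.\ uniform on $[0,1]$, so the counts $(Z_{n-k,0},\dots,Z_{n-k,k})$ of those falling in $I_{k,0},\dots,I_{k,k}$ form a multinomial vector with parameters $n-k$ and $(P_0,\dots,P_k)$. Since $(P_0,\dots,P_k) \sim \Dir{r,1,\dots,1}$, the mixture identity $\MDir{n}{\alpha_0,\dots,\alpha_k} = \MultNP{n}{\Dir{\alpha_0,\dots,\alpha_k}}$ recalled just before the proposition yields $(Z_{n-k,0},\dots,Z_{n-k,k}) \sim \MDir{n-k}{r,1,\dots,1}$, matching the first vector.

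There is no real obstacle here; the only mildly delicate point is bookkeeping the shift $c_i = b_i - 1$ and the resulting replacement of total mass $n$ by $n-k$, which is why the author labels the result ``trivially'' true. Everything else is the standard Gamma-function manipulation plus the conditional multinomial/Dirichlet mixture already introduced in the paper.
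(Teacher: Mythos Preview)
Your argument is correct and is precisely the reasoning the paper has in mind: it records just before the proposition that $(b_0^{(n,k)}, b_1^{(n,k)}-1,\dots,b_k^{(n,k)}-1)\sim\MDir{n-k}{r,1,\dots,1}$ and that $\MDir{n}{\alpha_0,\dots,\alpha_k}=\MultNP{n}{\Dir{\alpha_0,\dots,\alpha_k}}$, and then states that the proposition ``trivially holds true'' without further proof. You have simply written out those two verifications explicitly.
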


The aggregation property of the Dirichlet distribution asserts that if
$$
(X_0,X_1,\dots,X_k)~\sim~\Dir{\alpha_0,\alpha_1,\dots,\alpha_k},
$$
then
$$
(X_0+X_1,X_2,\dots,X_k)~\sim~\Dir{\alpha_0+\alpha_1,\alpha_2,\dots,\alpha_k}.
$$
This property suggests another coupling of $r$-compositions which is consistent in both parameters $n$ and $k$ but works only when $r$ is an integer. This coupling hinges on a simple observation that the collection of gaps between consecutive order statistics from the uniform distribution on $[0,1]$ has a symmetric Dirichlet distribution $\Dir{1,1,\dots,1}$. More precisely, let $V_1,V_2,\dots$ be a sample from the uniform distribution on $(0,1)$. For each 
$M\in\N$, let
$$
0<V_{M,1}<V_{M,2}<\dots<V_{M,M}<1
$$
be the order statistics of the first $M$ points of the 
sample. Then
$$
(V_{M,1},V_{M,2}-V_{M,1},\dots,V_{M,M}-V_{M,M-1},1-V_{M,M})~\sim~\Dir{\underbrace{1,\dots,1}_{M+1\text{ times}}}.
$$
By the aggregation property, if $r$ is an integer
$$
(V_{k+r,r},V_{k+r,r+1}-V_{k+r,r},\dots,V_{k+r,k+r}-V_{k+r,k+r-1},1-V_{k+r,k+r})~\sim~\Dir{r,\underbrace{1,\dots,1}_{k+1\text{ times}}}.
$$
Thus, a family $(I_{k,0},I_{k,1},\dots,I_{k,k})$, $k\in\N$ of nested partitions of $[0,1)$ such that the lengths of $(I_{k,0},I_{k,1},\dots,I_{k,k})$ have the 
$\Dir{r,1,\dots,1}$ distribution can be sequentially constructed as follows. Start with a sample $V_1,\dots,V_{r}$ of size $r$ from the uniform distribution on $[0,1]$. This sample yields a partition of $[0,1)$ into two intervals $I_{1,0}:=[0,V_{r,r})$ and $I_{1,1}:=[V_{r,r},1)$, whose lengths follow the $\Dir{r,1}$ distribution. Add a new point $V_{r+1}$. If $V_{r+1}$ lands into 
$I_{1,1}$, then it splits $I_{1,1}$ 
into two subintervals $I_{2,1}:=[V_{r,r},V_{r+1})$ and $I_{2,2}:=[V_{r+1},1)$. In this case, put also $I_{2,0}:=[0,V_{r,r})$. On the other hand, if $V_{r+1}$ lands into 
$I_{1,0}$, the splitting is $I_{2,0}:=[0,V_{r+1,r})$ and $I_{2,1}:=[V_{r+1,r},V_{r,r})$. In this case, put also $I_{2,2}:=I_{1,1}$. More generally, if the partition $(I_{k,0},I_{k,1},\dots,I_{k,k})$ has already been constructed, adding a point $V_{r+k}$ yields a partition $(I_{k+1,0},I_{k+1,1},\dots,I_{k+1,k+1})$ depending on whether $V_{r+k}\in I_{k,0}$ or $V_{r+k}\notin I_{k,0}$. Formally, the construction is given by the equations
\begin{multline}\label{eq:nested_intervals}
I_{k+1,0}:=[0,V_{r+k,r}),\quad I_{k+1,j}:=[V_{r+k,r+j-1},V_{r+k,r+j}),\quad j=1,\dots,k,\\I_{k+1,k+1}=[V_{r+k,r+k},1).
\end{multline}
We used the word `nested' to highlight that the partition $(I_{k+1,0},I_{k+1,1},\dots,I_{k+1,k+1})$ is obtained from the partition $(I_{k,0},I_{k,1},\dots,I_{k,k})$ by splitting one of the intervals $I_{k,j}$ into two subintervals. By taking samples $(V_i)$ and $(U_i)$ independent and appealing to Proposition~\ref{prop:coupling1} we obtain (in the case $r\in\N$) a coupling of $r$-compositions with the following properties:
\begin{itemize}[leftmargin=*]
\item an $r$-partition $(b_0^{(n,k+1)},b_1^{(n,k+1)},\dots,b_{k+1}^{(n,k+1)})$ is derived from an $r$-partition $(b_0^{(n,k)},b_1^{(n,k)},\dots,b_{k}^{(n,k)})$ by splitting one of blocks $b_j^{(n,k)}$ into two parts;
\item an $r$-partition $(b_0^{(n+1,k)},b_1^{(n+1,k)},\dots,b_{k}^{(n+1,k)})$ is derived from $(b_0^{(n,k)},b_1^{(n,k)},\dots,b_{k}^{(n,k)})$ by increasing one of blocks by one without changing other blocks.
\end{itemize}

Proposition~\ref{prop:coupling1} is a de Finetti-type result, which provides 
a realization of $(b_0^{(n,k)}, b_1^{(n,k)}, \dots, b_k^{(n,k)})$ via a randomized multinomial allocation scheme. Given next is a Markovian construction that gives a realization of the process $(b_0^{(n,k)}, b_1^{(n,k)}, \dots, b_k^{(n,k)})_{n\geq k}$, with $k$ fixed, 
as consecutive values of a certain Markov chain.
\begin{proposition}\label{prop:coupling2}
Fix $k\in\N$. Let $(X(l))_{l\in\N_0}$ be a Markov chain on $[0,\infty)\times \N^k$ with the initial state
$X(0)=(r,1,\dots,1)$ and transition probabilities
\begin{multline*}
\P[X(l+1)=(i_0,i_1,\dots,i_k)+\eee_j|X(l)=(i_0,i_1,\dots,i_k)]=\frac{i_j}{i_0+i_1+\dots+i_k},\\
j=0,\dots,k,\quad l\in\N_0,
\end{multline*}
where $\eee_j$ is the $j$-th unit vector in $\R^{k+1}$. For each 
fixed $n\geq k$, $X(n-k)-r\eee_0$ has the same distribution as $(b_0^{(n,k)}, b_1^{(n,k)}, \dots, b_k^{(n,k)})$.
\end{proposition}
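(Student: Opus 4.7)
The plan is to recognize $X$ as the classical Pólya urn with $k+1$ colors of initial weights $r,1,\dots,1$ and to compute the distribution of $X(n-k)$ trajectory by trajectory. Since every transition increases the total mass by one, after $n-k$ steps the total mass equals $r+k+(n-k)=n+r$, and we may write $X(n-k)=(r+a_0,\,1+a_1,\dots,1+a_k)$ with $a_j\in\N_0$ and $a_0+\dots+a_k=n-k$. Thus $X(n-k)-r\eee_0=(a_0,\,1+a_1,\dots,1+a_k)$, so, setting $a_0=b_0$ and $a_j=b_j-1$ for $j\geq 1$, it suffices to show
$$
\P[a_0=c_0,\,a_1=c_1,\dots,a_k=c_k]=\frac{(n-k)!\,r^{c_0\uparrow}}{c_0!\,(r+k)^{(n-k)\uparrow}}
$$
for any $(c_0,\dots,c_k)\in\N_0^{k+1}$ with $c_0+\dots+c_k=n-k$; the elementary identities $r^{c_0\uparrow}/c_0!=\binom{c_0+r-1}{c_0}$ and $(r+k)^{(n-k)\uparrow}/(n-k)!=\binom{n+r-1}{k+r-1}$ then reduce this to the right-hand side of~\eqref{eq:def_r_composition}.

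To prove the displayed formula, I would fix a specific trajectory, i.e.\ a sequence $(j_1,\dots,j_{n-k})\in\{0,1,\dots,k\}^{n-k}$ of colors incremented at each step, with color $j$ appearing exactly $c_j$ times, and compute the probability that $X$ follows this trajectory as a product of transition probabilities. At step $\ell\in\{0,\dots,n-k-1\}$ the total mass is $r+k+\ell$, so the product of denominators telescopes to $\prod_{\ell=0}^{n-k-1}(r+k+\ell)=(r+k)^{(n-k)\uparrow}$. By the Pólya urn structure, each time color $j$ is added its current count equals its initial weight plus the number of previous additions of that color; hence the product of numerators contributed by color $0$ is $r(r+1)\cdots(r+c_0-1)=r^{c_0\uparrow}$, while for each $j\geq 1$ it is $1\cdot 2\cdots c_j=c_j!$. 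Crucially, this product depends only on the endpoint $(c_0,\dots,c_k)$ and not on the order of the increments, which is the classical exchangeability feature of the Pólya urn.

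Finally, the number of trajectories ending at the specified vector is the multinomial coefficient $\binom{n-k}{c_0,c_1,\dots,c_k}$. Multiplying and simplifying,
$$
\P[X(n-k)=(r+c_0,1+c_1,\dots,1+c_k)]
=\binom{n-k}{c_0,c_1,\dots,c_k}\cdot\frac{r^{c_0\uparrow}\,c_1!\cdots c_k!}{(r+k)^{(n-k)\uparrow}}
=\frac{(n-k)!\,r^{c_0\uparrow}}{c_0!\,(r+k)^{(n-k)\uparrow}},
$$
which, as noted, matches~\eqref{eq:def_r_composition}. There is no serious obstacle: the only nontrivial point is the trajectory-independence of the probability, which is the standard Pólya urn calculation. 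As an alternative (or sanity check), one could observe that the displayed joint law is precisely $\MDir{n-k}{r,1,\dots,1}$ and invoke the well-known identification of this Dirichlet–multinomial with the Pólya urn marginals, reconnecting with Proposition~\ref{prop:coupling1} and the discussion around~\eqref{eq:mdir_rcompositions}.
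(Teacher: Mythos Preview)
Your proof is correct and follows exactly the approach the paper has in mind: the paper does not give a formal proof of this proposition but simply remarks that the Markov chain is a P\'{o}lya urn with initial weights $r,1,\dots,1$ and treats the identification with the Dirichlet--multinomial $\MDir{n-k}{r,1,\dots,1}$ (hence with the $r$-composition via~\eqref{eq:mdir_rcompositions}) as well known. Your trajectory-by-trajectory computation is the standard verification of this fact and fills in precisely the details the paper omits.
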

The Markov chain constructed in Proposition~\ref{prop:coupling2} is nothing else but a P\'{o}lya's urn. Initially, the urn contains $r$ balls of color $0$ and one ball of each of other colors $1,2,\dots,k$. At 
each step, a ball is drawn uniformly at random from the urn, its color is observed, and the ball is returned to the urn with an additional ball of the same color. The vector $X(n)$ encodes the colors of balls in the urn after $n$ steps.

For later needs, we formulate a lemma.

\begin{lemma}\label{lem:assoc}
Fix $n,k\in\N$. The random vector $(X_0^{(n)},\dots, X_k^{(n)})$ with the $\MDir{n}{\alpha_0,\alpha_1,\dots,\alpha_k}$ distribution is negatively associated, that is, for each pair of disjoint subsets $A_1,A_2$ of $\{0,1,\dots, k\}$ and all nondecreasing in each coordinate functions $f:\R^{|A_1|}\to\R$ and $g:\R^{|A_2|}\to\R$ $$\E[f(X_i^{(n)},\,i\in A_1)g(X_j^{(n)},\,j\in A_2)]\leq \E[f(X_i^{(n)},\,i\in A_1)]\E[g(X_j^{(n)},\,j\in A_2)].$$
\end{lemma}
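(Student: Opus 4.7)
The plan is to reduce the claim to the classical theorem of Joag-Dev and Proschan that independent random variables with log-concave probability mass functions, when conditioned on their sum, are negatively associated. The strategy is to realize $\MDir{n}{\alpha_0,\alpha_1,\dots,\alpha_k}$ as exactly such a conditional distribution. Specifically, fix any $p\in(0,1)$ and take independent $Y_0,\dots,Y_k$ with $Y_j\sim \NBin{\alpha_j}{p}$, that is $\P[Y_j=y]=\binom{y+\alpha_j-1}{y}p^{\alpha_j}(1-p)^y$ for $y\in\mathbb{N}_0$. Using the convolution identity $Y_0+\dots+Y_k\sim \NBin{\alpha_0+\dots+\alpha_k}{p}$ together with the explicit form of the negative binomial probabilities, a short computation in which all $p$-dependence cancels shows that the conditional law of $(Y_0,\dots,Y_k)$ given $Y_0+\dots+Y_k=n$ equals the distribution given in~\eqref{eq:MDir_def}, so it suffices to establish NA of this conditional distribution.

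For $\alpha_0,\dots,\alpha_k\geq 1$ the pmf of each $\NBin{\alpha_j}{p}$ is log-concave: the ratio $\P[Y_j=y+1]/\P[Y_j=y]=(y+\alpha_j)(1-p)/(y+1)$ is non-increasing in $y\in\mathbb{N}_0$ precisely under this condition, so the Joag-Dev--Proschan theorem applies and yields NA at once. When the $\alpha_j$'s are all positive integers, one may alternatively split $Y_j$ into a sum of $\alpha_j$ i.i.d.~geometric random variables, apply Joag-Dev--Proschan to this enlarged independent family of log-concave variables conditioned on total sum $n$, and then collapse back to $(Y_0,\dots,Y_k)$ using that NA is preserved under coordinate-wise non-decreasing functions of disjoint groups of coordinates.

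The main obstacle is the remaining case in which some $\alpha_j\in(0,1)$: then $\NBin{\alpha_j}{p}$ becomes log-convex, Joag-Dev--Proschan does not apply directly, and a naive approximation by integer parameters is impossible since the integers are not dense in $(0,1)$. To handle this case I would argue instead from the P\'olya-urn construction of the Dirichlet-multinomial, of which Proposition~\ref{prop:coupling2} is a special case: the vector of balls added after $n$ steps to a P\'olya urn initially containing $\alpha_j>0$ balls of color $j$ has distribution $\MDir{n}{\alpha_0,\dots,\alpha_k}$. One then verifies by induction on $n$ that the one-step transition, which adds $\eee_j$ with probability proportional to the current count $\alpha_j+X_j^{(n)}$, preserves negative association, starting from the trivially NA deterministic state $(0,\dots,0)$ at $n=0$.
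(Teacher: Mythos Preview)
Your reduction to a conditional law of independent negative binomials is exactly what the paper does, and your treatment of the cases $\alpha_j\ge 1$ and $\alpha_j\in\N$ via log-concavity is correct. The genuine gap is the case $\alpha_j\in(0,1)$. There you only write ``one then verifies by induction on $n$ that the one-step transition \dots\ preserves negative association,'' but this verification is never carried out and is not at all routine. The P\'olya step adds $\eee_J$ with $\P[J=j\mid X^{(n)}]$ proportional to $\alpha_j+X_j^{(n)}$; the increment probabilities depend on the current state, so you cannot simply combine ``$X^{(n)}$ is NA'' with ``a single multinomial draw is NA'' to conclude NA of the sum. None of the standard closure properties of NA (independent unions, coordinatewise increasing functions of disjoint blocks) covers this dependent mixture, and a direct inductive computation of $\E[f(X^{(n+1)}_{A_1})g(X^{(n+1)}_{A_2})]$ does not obviously reduce to the inductive hypothesis. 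As written, the case $\alpha_j\in(0,1)$ is unproved.

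The paper avoids this case split altogether. After the same negative-binomial conditioning, it invokes a \emph{different} result from Joag-Dev and Proschan (their Theorem~2.6 rather than the log-concavity criterion): to get NA of $(Y_0,\dots,Y_k)$ given $\sum_j Y_j=n$, it suffices that for every subset $A$ and every coordinatewise nondecreasing $f$, the map
\[
n\ \longmapsto\ \E\!\left[f(X_i^{(n)},\,i\in A)\right]
\]
is nondecreasing. This monotonicity is then obtained in one line from the urn/Dirichlet coupling you already mention: realizing $X_i^{(n)}$ as occupancy counts when $n$ balls are thrown into boxes with $\Dir{\alpha_0,\dots,\alpha_k}$ frequencies, one has $X_i^{(n)}\le X_i^{(n+1)}$ a.s., hence the required monotonicity. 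This argument is uniform in the $\alpha_j>0$ and does not need log-concavity at all. If you want to salvage your approach, the cleanest fix is to replace the log-concavity route and the unfinished induction by this monotonicity criterion plus the coupling.
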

\begin{proof}
This claim is given on p.~292 in~\cite{JoagDev+Proschan:1983} without proof. We 
now give a proof. Let $Y_0$, $Y_1,\dots, Y_k$ be independent random variables such that, for $i\in\{0,\dots, k\}$, $Y_i$ has a negative binomial distribution with parameters $\alpha_i$ and $\beta\in (0,1)$, that is,
$$
\P[Y_i=k]=\NBin{\alpha_i}{\beta}(\{k\})=\frac{\alpha_i^{k\uparrow}}{k!}\beta^{\alpha_i}(1-\beta)^k,\quad k\in\N_0.
$$
We claim that the distribution of $(X_0^{(n)},\dots, X_k^{(n)})$ coincides with the conditional distribution of $(Y_0,\dots, Y_k)$ given $\sum_{i=0}^k Y_i=n$.  Indeed, for each 
$(b_0,b_1,\dots,b_k)\in \N_0^{k+1}$ summing up to  $n$,
\begin{multline*}
\P\left[Y_0=b_0,Y_1=b_1,\dots,Y_k=b_k\Big|\sum_{i=0}^k Y_i=n\right]=\frac{\P[Y_0=b_0]\P[Y_1=b_1]\cdots\P[Y_k=b_k]}{\P[\sum_{i=0}^k Y_i=n]}\\
=\left(\prod_{i=0}^{k}\NBin{\alpha_i}{\beta}(\{b_i\})\right)/\NBin{\alpha_0+\dots+\alpha_k}{\beta}(\{n\}).
\end{multline*}
After cancellations this becomes the right-hand side of~\eqref{eq:MDir_def} with $d=k+1$, $k_i=b_i$ and $\theta_i=\alpha_i$, for $i=0,\dots,k$. In view of the above representation we may use \cite[Theorem 2.6]{JoagDev+Proschan:1983}, 
which states it is sufficient to show that for each subset $A$ of $\{0,1,\dots, k\}$ and all nondecreasing in each coordinate functions $f:\R^{|A|}\to\R$ the sequence $$n\mapsto \E\Big[f(Y_i,\,i\in A)\Big|\sum_{j=0}^k Y_j=n\Big]=\E[f(X_i^{(n)},\,i\in A)]$$ is nondecreasing. This property is secured by the aforementioned coupling, which enables us to think of $X_i^{(n)}$ as the occupancy count of the box $i$ corresponding to $(P_0,P_1,\dots,P_k)$ having the $\Dir{\alpha_0,\alpha_1,\dots,\alpha_k}$ distribution. Throwing a new ball does not decrease an occupancy count, that is, $X_i^{(n)}\leq X_i^{(n+1)}$ a.s.
\end{proof}

Lemma~\ref{lem:assoc} in combination with Proposition~\ref{prop:coupling1} immediately implies a result to be used later 
in the proof of Lemma~\ref{lem:meanvar}.
\begin{cor}\label{cor:assoc}
The random $r$-composition $(b_0^{(n,k)},b_1^{(n,k)},\dots, b_{k}^{(n,k)})$ is negatively associated.
\end{cor}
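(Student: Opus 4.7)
The plan is to deduce the corollary directly from Proposition~\ref{prop:coupling1} and Lemma~\ref{lem:assoc}. By Proposition~\ref{prop:coupling1}, the random vector $(b_0^{(n,k)}, b_1^{(n,k)}-1, \dots, b_k^{(n,k)}-1)$ is equal in distribution to $(Z_{n-k,0}, Z_{n-k,1}, \dots, Z_{n-k,k})$, which has the Dirichlet multinomial distribution $\MDir{n-k}{r,1,\dots,1}$. Hence, by Lemma~\ref{lem:assoc}, the shifted vector $(b_0^{(n,k)}, b_1^{(n,k)}-1, \dots, b_k^{(n,k)}-1)$ is negatively associated.

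The only remaining point is to check that negative association is preserved under deterministic coordinatewise shifts by constants. This is immediate from the definition: if $A_1,A_2$ are disjoint subsets of $\{0,1,\dots,k\}$ and $f,g$ are nondecreasing in each coordinate, then so are $\tilde f(x_i,\,i\in A_1):=f(x_i+c_i,\,i\in A_1)$ and $\tilde g(x_j,\,j\in A_2):=g(x_j+c_j,\,j\in A_2)$ for any constants $c_0,\dots,c_k$. Applying the negative association inequality for $(b_0^{(n,k)}, b_1^{(n,k)}-1, \dots, b_k^{(n,k)}-1)$ to $\tilde f$ and $\tilde g$ with $c_0=0$ and $c_1=\dots=c_k=1$ yields the inequality for $(b_0^{(n,k)}, b_1^{(n,k)}, \dots, b_k^{(n,k)})$ with the original $f,g$. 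This completes the argument.

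There is no real obstacle here; the whole proof is a short chain of citations. The only subtlety is the observation about shifts by constants, which is entirely routine. The proof will therefore be just a couple of sentences.
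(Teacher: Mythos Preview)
Your proof is correct and follows exactly the paper's approach: combine Proposition~\ref{prop:coupling1} with Lemma~\ref{lem:assoc}. Your explicit verification that negative association is preserved under coordinatewise shifts is a routine detail the paper leaves implicit.
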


\subsection{Marginal distributions of \texorpdfstring{$r$}{r}-compositions and their limits}
\begin{proposition}\label{prop:r_composition_marginal_distr_exact}
The marginal distributions of the random $r$-composition $(b_0^{(n,k)}, b_1^{(n,k)}, \dots, b_k^{(n,k)})$ are given by
\begin{align}
\P\left[b_0^{(n,k)} = b_0\right]
&=
\frac{\binom{n-b_0-1}{k-1}\binom{b_0+r-1}{b_0}}{\binom{n+r-1}{k+r-1}},
\qquad
b_0 \in \{0,1,\dots, n-k\},
\label{eq:r_composition_marginal_distr_b_0}
\\
\P\left[b_j^{(n,k)} = b_j\right]
&=
\frac{\binom{n-b_j+r-1}{k+r-2}}{\binom{n+r-1}{k+r-1}},
\qquad
b_j\in \{1,\dots, n-k+1\}.
\label{eq:r_composition_marginal_distr_b_j}
\end{align}
Moreover, for $0<i<j\leq k$, the bivariate distributions are given by
\begin{align}
\P\left[b_i^{(n,k)} = b_i, b_j^{(n,k)} = b_j\right]
&=
\frac{\binom{n+r-b_i-b_j-1}{k+r-3}}{\binom{n+r-1}{k+r-1}},\quad b_i, b_j\in \{1,\dots, n-k+1\},\quad b_i+b_j\leq n-k+2.
\label{eq:r_composition_joint distr_b_j}
\end{align}
\end{proposition}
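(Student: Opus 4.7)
All three formulas follow by summing the explicit joint probability mass function~\eqref{eq:def_r_composition} against the appropriate constraint sets, then invoking the binomial identity from Lemma~\ref{lem:identity_bin_coeff_r_lah}. Note that the joint pmf depends on the coordinates only through $b_0$, so the distribution of $(b_1^{(n,k)},\dots,b_k^{(n,k)})$ is exchangeable; hence it suffices to treat a single $j\in\{1,\dots,k\}$ and a single pair $1\le i<j\le k$.

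For~\eqref{eq:r_composition_marginal_distr_b_0}, fix $b_0\in\{0,1,\dots,n-k\}$ and sum the joint pmf over all $(b_1,\dots,b_k)\in\N^k$ with $b_1+\dots+b_k=n-b_0$. Since the summand does not depend on $(b_1,\dots,b_k)$, the result is the number of such tuples, which by the standard stars-and-bars formula equals $\binom{n-b_0-1}{k-1}$. Dividing by $\binom{n+r-1}{k+r-1}$ yields~\eqref{eq:r_composition_marginal_distr_b_0}.

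For~\eqref{eq:r_composition_marginal_distr_b_j}, fix $j\in\{1,\dots,k\}$ and $b_j\in\{1,\dots,n-k+1\}$. First sum over the other positive coordinates $(b_i)_{i\ne j,\,i\ge 1}$ with prescribed total $n-b_j-b_0$ (producing $\binom{n-b_j-b_0-1}{k-2}$ tuples for each fixed $b_0$ by stars-and-bars), and then sum the resulting expression over $b_0\in\N_0$. Ignoring the denominator this gives
\begin{equation*}
\sum_{b_0=0}^{n-b_j-k+1}\binom{b_0+r-1}{b_0}\binom{n-b_j-1-b_0}{k-2}
=\binom{n-b_j+r-1}{k+r-2},
\end{equation*}
where the equality is Lemma~\ref{lem:identity_bin_coeff_r_lah} applied with $m=n-b_j-1$ and $\ell=k-2$. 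Dividing by $\binom{n+r-1}{k+r-1}$ yields~\eqref{eq:r_composition_marginal_distr_b_j}.

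For~\eqref{eq:r_composition_joint distr_b_j}, proceed in the same fashion: fix $b_i,b_j$, carry out stars-and-bars on the remaining $k-2$ positive coordinates $(b_\ell)_{\ell\ne i,j,\,\ell\ge 1}$ for each fixed $b_0$ (obtaining $\binom{n-b_i-b_j-b_0-1}{k-3}$ tuples), and then apply Lemma~\ref{lem:identity_bin_coeff_r_lah} with $m=n-b_i-b_j-1$ and $\ell=k-3$ to evaluate the sum over $b_0$ as $\binom{n-b_i-b_j+r-1}{k+r-3}$. Division by $\binom{n+r-1}{k+r-1}$ finishes the proof. There is no substantive obstacle: the only ingredient beyond routine bookkeeping is the identity in Lemma~\ref{lem:identity_bin_coeff_r_lah}, which is already available.
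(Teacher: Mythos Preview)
Your approach to~\eqref{eq:r_composition_marginal_distr_b_0} and~\eqref{eq:r_composition_marginal_distr_b_j} is exactly the paper's primary argument. One small bookkeeping point: Lemma~\ref{lem:identity_bin_coeff_r_lah} is stated for $\ell\in\{0,1,\dots,m\}$, so your use of $\ell=k-2$ in~\eqref{eq:r_composition_marginal_distr_b_j} requires $k\ge 2$; the paper treats $k=1$ separately (reading off~\eqref{eq:r_composition_marginal_distr_b_j} directly from~\eqref{eq:r_composition_marginal_distr_b_0} via $b_1=n-b_0$). The same remark applies to your argument for~\eqref{eq:r_composition_joint distr_b_j} with $\ell=k-3$ and $k=2$, though again the boundary case is immediate.

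For~\eqref{eq:r_composition_joint distr_b_j} your route genuinely differs from the paper's. You stay combinatorial: stars-and-bars on the remaining $k-2$ positive coordinates, then Lemma~\ref{lem:identity_bin_coeff_r_lah} to collapse the $b_0$-sum. The paper instead exploits the Dirichlet--multinomial representation (Proposition~\ref{prop:coupling1}) and the aggregation property of the Dirichlet distribution to reduce $(b_i^{(n,k)}-1,b_j^{(n,k)}-1)$ to a three-block $\MDir{n-k}{r+k-2,1,1}$ distribution, then expands the bivariate generating function via the multinomial theorem. Your argument is more elementary and self-contained, requiring nothing beyond the explicit pmf and the single binomial identity already used for the marginals; the paper's argument, while heavier, showcases the Dirichlet machinery that is reused later (e.g., in Section~\ref{subsec:couplings}). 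Both are correct.
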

\begin{proof}
For~\eqref{eq:r_composition_marginal_distr_b_0}, just observe that the number of incomplete compositions with fixed $b_0$ is $\binom{n-b_0-1}{k-1}$ and the probability of each such an incomplete composition is given by the right-hand side of~\eqref{eq:def_r_composition}. For~\eqref{eq:r_composition_marginal_distr_b_j}, we can take $j=1$ without loss of generality. In the case $k=1$, \eqref{eq:r_composition_marginal_distr_b_0} secures
$$
\P\left[b_1^{(n,k)} = b_1\right]=\P\left[b_0^{(n,k)} = n-b_1\right]=\frac{\binom{n-b_1+r-1}{n-b_1}}{\binom{n+r-1}{k+r-1}}=\frac{\binom{n-b_1+r-1}{r-1}}{\binom{n+r-1}{k+r-1}}.
$$
Assume now that $k\geq 2$. Observe that the number of incomplete compositions with fixed $b_0$ and $b_1$ is given by $\binom{n-b_0-b_1-1}{k-2}$. Since the probability of each such an incomplete composition is given by the right-hand side of~\eqref{eq:def_r_composition}, we obtain
$$
\P\left[b_1^{(n,k)} = b_1\right]
=
\sum_{b_0=0}^{n-b_1- k+1} \binom{n-b_0-b_1-1}{k-2} \frac{\binom {b_0 + r - 1}{b_0}}{\binom{n+r-1}{k+r-1}}
=
\frac{\binom{n-b_1-1+r}{k-2+r}}{\binom{n+r-1}{k+r-1}},
$$
where we evaluated the sum  using Lemma~\ref{lem:identity_bin_coeff_r_lah} with $m:= n-b_1-1$ and $\ell := k-2$.

Alternatively, one can use a 
known property of the Dirichlet distribution $\Dir{\alpha_0,\alpha_1,\dots,\alpha_k}$, which asserts that its $i$-th component has the beta distribution $
\Betadistr{\alpha_i}{\alpha-\alpha_i}$, where $\alpha:=\sum_{j=0}^k\alpha_j$. Then, by Proposition~\ref{prop:coupling1},
\begin{equation}\label{eq:b0_and_bj_via_betabinomial}
b_0^{(n,k)}~\sim~\Bin{n-k}{\Betadistr{r}{k}}\quad\text{and}\quad b_j^{(n,k)}~\sim~1+\Bin{n-k}{\Betadistr{1}{k+r-1}}.
\end{equation}
Formulas~\eqref{eq:r_composition_marginal_distr_b_0} and~\eqref{eq:r_composition_marginal_distr_b_j} follow by simple manipulations with beta-integrals.

For the proof of~\eqref{eq:r_composition_joint distr_b_j}, note that by the aggregation property of the Dirichlet distribution in combination with 
Proposition~\ref{prop:coupling1}, 
$$
\left(b_0^{(n,k)}+\sum_{\ell\notin \{0,i,j\}}(b_{\ell}^{(n,k)}-1),b_i^{(n,k)}-1,b_j^{(n,k)}-1\right)~\sim~\MDir{n-k}{\Dir{r+k-2,1,1}}
$$
for each pair of indices $0<i<j\leq k$. Thus,
$$
\E [t^{b_i^{(n,k)}-1}s^{b_j^{(n,k)}-1}]=\frac{\Gamma(r+k)}{\Gamma(r+k-2)}\iiint (x+ty+sz)^{n-k}x^{r+k-3}{\rm d}x{\rm d}y{\rm d}z,\quad t,s\in\C,
$$
where the integration is taken over the simplex $x+y+z=1$, $x,y,z\geq 0$. By expanding $(x+ty+sz)^{n-k}$ with the aid of the multinomial theorem and equating the coefficients, we conclude that
$$
\P\left[b_i^{(n,k)} = b_i, b_j^{(n,k)} = b_j\right]=\frac{\Gamma(r+k)}{\Gamma(r+k-2)}\frac{(n-k)!}{(n-k+2-b_i-b_j)!}\frac{\Gamma(n+r-b_i-b_j)}{\Gamma(n+r)}.
$$
This yields~\eqref{eq:r_composition_joint distr_b_j} after elementary manipulations.
\end{proof}

\begin{corollary}\label{cor:without_b0_uniform}
The conditional distribution of $(b_1^{(n,k)},\dots,b_k^{(n,k)})$ given $b_0^{(n,k)}$ is uniform on the set of all (usual) compositions of $n-b_0^{(n,k)}$ into $k$ blocks.
\end{corollary}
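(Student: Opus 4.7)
The plan is to prove this by a direct conditional probability computation, using only Definition~\ref{def:r_composition} and the marginal law of $b_0^{(n,k)}$ established in Proposition~\ref{prop:r_composition_marginal_distr_exact}.

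First, I would fix a value $b_0 \in \{0,1,\dots,n-k\}$ with positive probability and any composition $(b_1,\dots,b_k)$ of $n-b_0$ into $k$ positive parts, and compute
$$
\P\bigl[b_1^{(n,k)} = b_1, \dots, b_k^{(n,k)} = b_k \,\bigm|\, b_0^{(n,k)} = b_0\bigr]
= \frac{\P[b_0^{(n,k)} = b_0,\, b_1^{(n,k)} = b_1, \dots, b_k^{(n,k)} = b_k]}{\P[b_0^{(n,k)} = b_0]}.
$$
The numerator is $\binom{b_0+r-1}{b_0}/\binom{n+r-1}{k+r-1}$ by \eqref{eq:def_r_composition} (after rewriting $r^{b_0\uparrow}/b_0! = \binom{b_0+r-1}{b_0}$), and crucially it does not depend on the particular values $b_1,\dots,b_k$. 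The denominator equals $\binom{n-b_0-1}{k-1}\binom{b_0+r-1}{b_0}/\binom{n+r-1}{k+r-1}$ by \eqref{eq:r_composition_marginal_distr_b_0}.

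Taking the ratio, the factors $\binom{b_0+r-1}{b_0}$ and $\binom{n+r-1}{k+r-1}$ cancel, leaving
$$
\P\bigl[b_1^{(n,k)} = b_1, \dots, b_k^{(n,k)} = b_k \,\bigm|\, b_0^{(n,k)} = b_0\bigr]
= \frac{1}{\binom{n-b_0-1}{k-1}}.
$$
Since $\binom{n-b_0-1}{k-1}$ is precisely the number of compositions of $n-b_0$ into $k$ positive parts (standard stars-and-bars), this probability is uniform on that set, which is what we wanted.

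There is essentially no obstacle here: the result is a one-line cancellation once both \eqref{eq:def_r_composition} and \eqref{eq:r_composition_marginal_distr_b_0} are in hand. The only minor point worth noting explicitly is the identity $r^{b_0\uparrow}/b_0! = \binom{b_0+r-1}{b_0}$, which makes the two expressions align cleanly.
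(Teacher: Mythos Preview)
Your proof is correct and is exactly the computation the paper has in mind: the corollary is stated without proof because it follows immediately from the fact that the joint probability~\eqref{eq:def_r_composition} depends only on $b_0$, together with the marginal~\eqref{eq:r_composition_marginal_distr_b_0}. Your write-up simply makes this cancellation explicit.
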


\begin{remark}\label{rem:expectation_r_compositions}
For $r=0$, $\P[b_j^{(n,k)} = b_j] =\binom{n-b_j-1}{k-2}/\binom{n-1}{k-1}$ for all $j\in \{1,\dots, k\}$. Now observe that the formula for the marginal distribution of $b_j^{(n,k)}$ for general $r\geq 0$ can be obtained from the $r=0$ case if we replace $n$ and $k$ by $n+r$ and $k+r$, respectively. This observation has several implications given below.
\end{remark}

\begin{proposition}
For $r\geq 0$,
\begin{equation}\label{eq:r_composition_expectation}
\E [b_j^{(n,k)}] = \frac{n+r}{k+r},
\qquad
 j \in \{1,\dots, k\},
\qquad
\E [b_0^{(n,k)}] = \frac{r(n-k)}{k+r}.
\end{equation}
\end{proposition}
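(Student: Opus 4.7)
The cleanest route is to exploit the representation \eqref{eq:b0_and_bj_via_betabinomial} which was derived in the proof of Proposition~\ref{prop:r_composition_marginal_distr_exact}. Recall that $b_0^{(n,k)} \sim \Bin{n-k}{\Betadistr{r}{k}}$ and $b_j^{(n,k)}\sim 1 + \Bin{n-k}{\Betadistr{1}{k+r-1}}$ for every $j\in\{1,\dots,k\}$. Using the well-known formulas for the mean of a beta-binomial distribution (that is, $\E\Bin{m}{\Betadistr{\alpha}{\beta}} = m\alpha/(\alpha+\beta)$, which follows by conditioning on the beta mixing variable), the computation is immediate:
\[
\E [b_0^{(n,k)}] = (n-k) \cdot \frac{r}{r+k} = \frac{r(n-k)}{k+r},
\qquad
\E [b_j^{(n,k)}] = 1 + (n-k) \cdot \frac{1}{k+r} = \frac{n+r}{k+r}.
\]

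Alternatively, one can read the first identity off directly from the marginal formula~\eqref{eq:r_composition_marginal_distr_b_0} via the Dirichlet coupling of Proposition~\ref{prop:coupling1}, and then obtain the second identity for free from the deterministic constraint $b_0^{(n,k)} + b_1^{(n,k)} + \dots + b_k^{(n,k)} = n$ combined with the exchangeability of $(b_1^{(n,k)},\dots,b_k^{(n,k)})$ (which also follows from Proposition~\ref{prop:coupling1} and the symmetry of the Dirichlet parameters $1,\dots,1$). This gives $k\,\E[b_1^{(n,k)}] = n - \E[b_0^{(n,k)}] = n - r(n-k)/(k+r) = k(n+r)/(k+r)$, and dividing by $k$ recovers the claim.

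There is no serious obstacle here; the only thing to verify is that the reference to~\eqref{eq:b0_and_bj_via_betabinomial} is legitimate in advance of its first use. Since that representation is established in the proof of Proposition~\ref{prop:r_composition_marginal_distr_exact} via the aggregation property of the Dirichlet distribution applied to Proposition~\ref{prop:coupling1}, it is available at this point and a single displayed line suffices.
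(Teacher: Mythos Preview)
Your proof is correct. The paper takes a slightly different (but equally short) route: it first observes that for $r=0$ exchangeability gives $\E[b_j^{(n,k)}]=n/k$, then invokes Remark~\ref{rem:expectation_r_compositions} (the marginal of $b_j^{(n,k)}$ for general $r$ is obtained from the $r=0$ formula by the substitution $n\mapsto n+r$, $k\mapsto k+r$) to get $(n+r)/(k+r)$, and finally deduces $\E[b_0^{(n,k)}]$ from the constraint $b_0^{(n,k)}+b_1^{(n,k)}+\dots+b_k^{(n,k)}=n$. Your primary argument instead reads both expectations off directly from the beta-binomial representations~\eqref{eq:b0_and_bj_via_betabinomial}, which is arguably more self-contained since it does not appeal to the substitution remark; your alternative argument is essentially the paper's, but with the order reversed (first $b_0$, then $b_j$ via exchangeability). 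Either way, the content is the same one-line computation.
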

\begin{proof}
For $r=0$ it is clear, by exchangeability, that $\E [b_j^{(n,k)}] = n/k$ for all $j\in \{1,\dots, k\}$. According to Remark~\ref{rem:expectation_r_compositions}, for general $r\geq 0$, we have to replace $n$ and $k$ by $n+r$ and $k+r$, which gives the first formula in~\eqref{eq:r_composition_expectation}. The second formula follows from $b_0^{(n,k)} = n - b_1^{(n,k)} - \dots -  b_k^{(n,k)}$.
\end{proof}

A strong law of large numbers for the multinomial distribution immediately implies a limit theorem for $r$-compositions with $k$ being fixed.

\begin{proposition}\label{prop:r_composition_marginal_convergence_fixed_k}
Let $k\in\N$ be a fixed integer and $r\geq 0$. Then
$$
\frac{1}{n}(b_0^{(n,k)}, b_1^{(n,k)}, \dots, b_k^{(n,k)})\todistr \Dir{r,\underbrace{1,\dots,1}_{k\text{ times}}}.
$$
\end{proposition}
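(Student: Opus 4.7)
The plan is to read off the limit immediately from the hierarchical representation of the Dirichlet multinomial distribution that was already established in Subsection~\ref{subsec:couplings}. Recall that by~\eqref{eq:mdir_rcompositions} the random vector
$$
Y^{(n)} := (b_0^{(n,k)}, b_1^{(n,k)} - 1, \dots, b_k^{(n,k)} - 1)
$$
has distribution $\MDir{n-k}{r, 1, \dots, 1}$, which we have already noted is the mixture $\MultNP{n-k}{\Dir{r,1,\dots,1}}$. Therefore, on a suitable probability space we may realize $Y^{(n)}$ for all $n\geq k$ simultaneously: draw once and for all a random vector $(P_0, P_1, \dots, P_k) \sim \Dir{r, 1, \dots, 1}$ (the case $r=0$ being degenerate at $P_0=0$), then, independently, pick an i.i.d.\ sequence $\xi_1, \xi_2, \dots$ of $\{0,1,\dots,k\}$-valued random variables with $\P[\xi_\ell = i \mid P_0,\dots, P_k] = P_i$, and set $Y^{(n)}_i := \#\{1 \leq \ell \leq n-k : \xi_\ell = i\}$ for $i \in \{0,\dots,k\}$.

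Conditionally on $(P_0,\dots, P_k)$, the strong law of large numbers gives
$$
\frac{Y^{(n)}_i}{n-k} \; \xrightarrow[n\to\infty]{\text{a.s.}} \; P_i, \qquad i \in \{0,\dots, k\}.
$$
Since this holds a.s.\ on the (full-probability) event $\{(P_0,\dots, P_k) \in \Delta\}$ where $\Delta$ is the open simplex, it holds unconditionally a.s. Consequently $Y^{(n)}/(n-k) \to (P_0, \dots, P_k)$ almost surely, hence in distribution.

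To conclude, note that
$$
\frac{1}{n}(b_0^{(n,k)}, b_1^{(n,k)}, \dots, b_k^{(n,k)})
=
\frac{n-k}{n} \cdot \frac{Y^{(n)}}{n-k} + \frac{1}{n}(0, 1, \dots, 1),
$$
where the prefactor $(n-k)/n$ tends to $1$ and the additive term tends to $0$. By Slutsky's lemma the limiting distribution is unchanged, so
$$
\frac{1}{n}(b_0^{(n,k)}, b_1^{(n,k)}, \dots, b_k^{(n,k)}) \todistr (P_0, P_1, \dots, P_k) \sim \Dir{r, 1, \dots, 1}.
$$
There is no genuine obstacle here; the main point is to recognize that once Proposition~\ref{prop:coupling1} is available, the statement reduces to the (conditional) SLLN for multinomial counts with random cell probabilities, and the shift by $1$ and the renormalization by $n$ rather than $n-k$ are both absorbed trivially.
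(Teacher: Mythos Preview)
Your argument is correct and is precisely the approach the paper has in mind: the paper's ``proof'' is the single sentence preceding the proposition (``A strong law of large numbers for the multinomial distribution immediately implies a limit theorem for $r$-compositions with $k$ being fixed''), and your write-up simply makes this explicit via the de~Finetti representation from Proposition~\ref{prop:coupling1} followed by the conditional SLLN and a Slutsky correction. One cosmetic quibble: the parenthetical about the open simplex $\Delta$ is slightly off when $r=0$ (then $P_0=0$ a.s.), but since the conditional SLLN holds regardless this does not affect the argument.
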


\begin{proposition}\label{prop:r_composition_marginal_distr_asymptotic}
Fix some $r\geq 0$ and let $n\to\infty$, $k\to\infty$ such that $k/n \to \alpha$ for some $\alpha \in (0,1]$. Then, for a random $r$-composition  $(b_0^{(n,k)}, b_1^{(n,k)}, \dots, b_k^{(n,k)})$,
\begin{align*}
\lim_{n\to\infty} \P\left[b_0^{(n,k)} = b_0\right]
&=
\binom{b_0+r-1}{b_0} (1-\alpha)^{b_0}\alpha^{r},
\qquad
b_0 \in \{0,1,\dots\},\\
\lim_{n\to\infty} \P\left[b_j^{(n,k)} 
= b_1\right]
&=
\alpha (1-\alpha)^{b_1-1},
\qquad
b_1\in \{1,2,\dots\},~~j\in\{1,\dots, k\}.
\end{align*}
That is to say, $b_0^{(n,k)}$ converges weakly to the 
negative binomial distribution $\NBin{r}{\alpha}$, while $b_j^{(n,k)}$ 
converges weakly to the geometric distribution $\Geo{\alpha}$.
\end{proposition}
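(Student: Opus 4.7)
The plan is to read off both marginal limits directly from the exact distributional formulas given in Proposition~\ref{prop:r_composition_marginal_distr_exact} via elementary asymptotic analysis of binomial coefficient ratios, using the standard fact that $\Gamma(x+a)/\Gamma(x)\sim x^{a}$ as $x\to\infty$ for any fixed $a\in\R$. No probabilistic machinery beyond the formulas~\eqref{eq:r_composition_marginal_distr_b_0} and~\eqref{eq:r_composition_marginal_distr_b_j} is needed.

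For the first claim, I would rewrite
$$
\P[b_0^{(n,k)}=b_0]=\binom{b_0+r-1}{b_0}\cdot\frac{\Gamma(n-b_0)}{\Gamma(n+r)}\cdot\frac{\Gamma(k+r)}{\Gamma(k)}\cdot\frac{\Gamma(n-k+1)}{\Gamma(n-k-b_0+1)},
$$
keep $b_0,r$ fixed, and let $n,k\to\infty$ with $k/n\to\alpha$. Applying $\Gamma(x+a)/\Gamma(x)\sim x^{a}$ to each of the three ratios gives $n^{-b_0-r}$, $k^{r}$, and $(n-k)^{b_0}$, respectively (when $\alpha<1$, all three arguments tend to infinity; when $\alpha=1$ the last ratio is handled separately as noted below). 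Multiplying,
$$
\frac{k^{r}(n-k)^{b_0}}{n^{b_0+r}}=\Bigl(\frac{k}{n}\Bigr)^{r}\Bigl(\frac{n-k}{n}\Bigr)^{b_0}\longrightarrow\alpha^{r}(1-\alpha)^{b_0},
$$
so $\P[b_0^{(n,k)}=b_0]\to\binom{b_0+r-1}{b_0}\alpha^{r}(1-\alpha)^{b_0}$, which is the pmf of $\NBin{r}{\alpha}$.

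For the second claim, the same procedure applied to
$$
\P[b_j^{(n,k)}=b_1]=(k+r-1)\cdot\frac{\Gamma(n-b_1+r)}{\Gamma(n+r)}\cdot\frac{\Gamma(n-k+1)}{\Gamma(n-b_1-k+2)}
$$
yields the asymptotic equivalent $k\cdot n^{-b_1}\cdot(n-k)^{b_1-1}=(k/n)(1-k/n)^{b_1-1}\to\alpha(1-\alpha)^{b_1-1}$, the pmf of $\Geo{\alpha}$.

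The only mildly delicate point, which is the nearest thing to an obstacle, is the boundary case $\alpha=1$: the ratio $\Gamma(n-k+1)/\Gamma(n-k-b_0+1)$ only admits the bound $\sim (n-k)^{b_0}$ when $n-k\to\infty$, and if $n-k$ fails to tend to infinity (or if $b_0>n-k$, in which case the probability is $0$) the argument must be adjusted. In that regime, however, the factor $((n-k)/n)^{b_0}$ simply tends to $0$ for $b_0\geq 1$ and to $1$ for $b_0=0$, so $b_0^{(n,k)}\to 0$ in distribution, which correctly matches the degenerate distribution $\NBin{r}{1}$; the analogous remark handles $b_j^{(n,k)}$, whose limit is the degenerate $\Geo{1}$ concentrated at $1$. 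Thus the proof reduces in all cases to the single asymptotic identity above.
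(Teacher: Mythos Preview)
Your proposal is correct and takes essentially the same approach as the paper: the paper's entire proof is the one-line remark ``Follows from Proposition~\ref{prop:r_composition_marginal_distr_exact} after letting $n\to\infty$,'' which is precisely what you carry out in detail via the gamma-ratio asymptotics. Your explicit treatment of the boundary case $\alpha=1$ is more careful than what the paper records.
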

\begin{proof}
Follows from Proposition~\ref{prop:r_composition_marginal_distr_exact} after letting $n\to\infty$.
\end{proof}

\begin{proposition}\label{prop:r_composition_marginal_distr_asymptotic_alpha_0}
Fix some $r\geq 0$ and let $n\to\infty$, $k\to\infty$ such that $k/n \to 0$.  Then, for a random $r$-composition  $(b_0^{(n,k)}, b_1^{(n,k)}, \dots, b_k^{(n,k)})$,
\begin{equation}\label{eq:r_composition_marginal_distr_asymptotic_alpha_0}
\frac{b_0^{(n,k)}}{n/k}~\todistrk~\Gammadistr{r}{1}\quad\text{and}\quad \frac{b_1^{(n,k)}}{n/k}~\todistrk~\Exp{1},
\end{equation}
where $\Gammadistr{r}{1}$ is the gamma distribution with parameters $r$ and $1$ and $\Exp{1}$ is the standard exponential distribution.
\end{proposition}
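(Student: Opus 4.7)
The plan is to exploit the beta-binomial representation~\eqref{eq:b0_and_bj_via_betabinomial}: conditional on $P \sim \Betadistr{r}{k}$, one has $b_0^{(n,k)} \sim \Bin{n-k}{P}$, and conditional on $Q \sim \Betadistr{1}{k+r-1}$, one has $b_1^{(n,k)}-1 \sim \Bin{n-k}{Q}$. The proof reduces to the convergence of the mixing beta distributions combined with conditional concentration of the binomial around its mean.

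\textbf{Step 1: convergence of the mixing laws.} I would first show that $kP \todistr \Gammadistr{r}{1}$ and $kQ \todistr \Exp{1}$ as $k \to \infty$. This is a direct computation via Stirling's formula: the density of $kP$ at $y \in (0,k)$ equals
\begin{equation*}
\frac{\Gamma(r+k)}{\Gamma(r)\Gamma(k)\,k^{r}}\, y^{r-1}\left(1-\frac{y}{k}\right)^{k-1},
\end{equation*}
which converges pointwise to the gamma density $y^{r-1} e^{-y}/\Gamma(r)$; Scheff\'e's lemma upgrades this to convergence in total variation. The argument for $kQ$ is analogous with $r$ replaced by $1$.

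\textbf{Step 2: conditional concentration.} Next, I would show that $b_0^{(n,k)}/(n/k) - kP \to 0$ in probability. Conditional on $P$, the random variable $b_0^{(n,k)}\cdot k/n$ has mean $(n-k)P\cdot k/n = kP - k^2P/n$ and variance at most $(n-k)P(k/n)^2 \leq (kP)(k/n)$. By Step 1 the family $\{kP\}$ is tight, so for every $\eps>0$ one can choose $M$ with $\P[kP\leq M] \geq 1-\eps$; on this event, the conditional variance is at most $M\cdot(k/n) \to 0$ and the mean offset $k^2 P/n \leq Mk/n \to 0$. A conditional Chebyshev inequality yields $b_0^{(n,k)}/(n/k) - kP \to 0$ in probability, and Slutsky's theorem combined with Step 1 gives the first claim of~\eqref{eq:r_composition_marginal_distr_asymptotic_alpha_0}. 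The argument for $b_1^{(n,k)}/(n/k)$ is identical (with $r$ replaced by $1$), the additive shift by $1$ contributing a negligible $k/n\to 0$.

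The main subtlety is the need to make the conditional variance bound uniform in the random parameter $P$, which a priori can be anywhere in $[0,1]$. This is handled cleanly by the tightness of $kP$ from Step 1, which restricts attention to $P \in [0,M/k]$ with probability close to $1$; on this event, the crude bound $(n-k)P(1-P) \leq (kP)(n/k)$ is uniform and sufficient. An alternative route works directly with characteristic functions: one computes $\E[\exp(it\,b_0^{(n,k)}/(n/k))] = \E[(1+P(e^{itk/n}-1))^{n-k}]$, uses the pointwise bound $|1+P(e^{itk/n}-1)|\leq 1$ (the interior being a convex combination of two points on the unit circle) to apply dominated convergence, and recovers the gamma characteristic function $(1-it)^{-r}$.
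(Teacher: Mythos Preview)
Your proof is correct. Both you and the paper start from the beta-binomial representation~\eqref{eq:b0_and_bj_via_betabinomial}, but the subsequent routes diverge. The paper realizes $\Betadistr{r}{k}$ as the ratio $\gamma(r)/\gamma(r+k)$ of values of a standard gamma subordinator, then invokes the strong law $\gamma(r+k)/k\to 1$ together with a concentration statement $(k/n)\Bin{n-k}{\lambda/k}\overset{\P}{\to}\lambda$ for fixed $\lambda$; combining these with $\gamma(r)\sim\Gammadistr{r}{1}$ yields the claim. Your argument avoids the subordinator entirely: you prove $kP\todistr\Gammadistr{r}{1}$ directly by pointwise density convergence plus Scheff\'e, and then close the gap $b_0^{(n,k)}k/n-kP\overset{\P}{\to}0$ via a conditional Chebyshev bound made uniform by the tightness of $kP$. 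This is more elementary and fully self-contained; the paper's version is slightly slicker once one is willing to import the gamma-subordinator coupling, and its almost-sure convergence $\gamma(r+k)/k\to 1$ makes the conditioning step implicit. One minor point: your density argument in Step~1 tacitly assumes $r>0$; the case $r=0$ is trivial since then $P=0$ a.s.\ by the paper's convention on $\Betadistr{0}{k}$, so $b_0^{(n,k)}=0$ and the limit is the degenerate $\Gammadistr{0}{1}$.
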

\begin{proof}
The easiest way to prove this is to use representations~\eqref{eq:b0_and_bj_via_betabinomial}. The first claim in~\eqref{eq:r_composition_marginal_distr_asymptotic_alpha_0} follows from the next three facts. First, with $(\gamma(t))_{t\geq 0}$ being the standard gamma-subordinator,
$$
\frac{\gamma(r)}{\gamma(r+k)}~\sim~\Betadistr{r}{k},
$$
see~\cite[Section 9.1]{Kingman}.
Second, for each 
fixed $\lambda>0$,
$$
\frac{k}{n}\Bin{n-k}{\lambda/k}~\overset{\P}{\to}\lambda,\quad k,n\to\infty,\quad k/n\to 0,
$$
as can be 
checked by using Chebyshev's inequality. Third, by the strong law of large numbers
$$
\frac{\gamma(r+k)}{k}~\overset{a.s.}{\underset{k\to\infty}\longrightarrow}~1.
$$
It remains to note that $\gamma(r)~\sim~\Gammadistr{r}{1}$.

The second claim in~\eqref{eq:r_composition_marginal_distr_asymptotic_alpha_0} follows by the same reasoning with the help of 
the second relation in~\eqref{eq:b0_and_bj_via_betabinomial}. The proof is complete.
\end{proof}

\section{The \texorpdfstring{$(r,s)$}{(r,s)}-Lah distribution}\label{sec:Lah_distribution}

\subsection{Motivation}
In this section we investigate 
the so-called $(r,s)$-Lah distribution which is defined in terms of the $r$-Stirling numbers of both kinds. Special cases of this distribution appear 
in~\cite{van_der_hofstad_etal_shortest_path_trees} in the context of random recursive trees (and the closely related shortest path trees on the complete graph with exponentially distributed weights), and in~\cite{kabluchko_marynych_lah,kabluchko_steigenberger_r_lah}. In the last two articles, 
limit theorems for this distribution are 
applied to obtain 
asymptotics of the face numbers of random walk convex (and positive) hulls~\cite{godlandschlaefli:2022,godland:2022,KVZ17,kabluchko:2016}. All these papers investigate the $(r,s)$-Lah distribution 
for certain special values of parameters, for example, the distribution appearing in~\cite{van_der_hofstad_etal_shortest_path_trees} corresponds to $(r,s) = (1,0)$, the distributions in~\cite{kabluchko_marynych_lah,KVZ17,kabluchko:2016} and~\cite{godland:2022} correspond to $(0,0)$ and $(1/2,1/2)$, respectively, while~\cite{kabluchko_steigenberger_r_lah} is concerned with 
the case $r=s$.

Our purpose is to introduce a general distributional family unifying all these special cases. We 
relate these general $(r,s)$-Lah distributions to the 
$(r+s)$-versions of the classic combinatorial probability structures 
defined in the previous sections. 
In particular, we 
derive several stochastic representations of the $(r,s)$-Lah distribution in terms of random 
$(r+s)$-compositions and (multinomial) Hoppe trees. In Section~\ref{sec:Lah_limits} we use 
these representations when proving 
limit theorems for the $(r,s)$-Lah distribution.

\subsection{The \texorpdfstring{$r$}{r}-Lah numbers}
We start by reviewing some properties of the $r$-Lah numbers $L(n,k)_r$; see~\cite{nyul:2015} as well as~\cite{belbachir:2013,cheon:2012,shattuck:2016} for more information. For $n\in \N_0$, $k\in \{0,\dots, n\}$ and $r\in \R$, these numbers may be defined~\cite[Theorem~3.10]{nyul:2015} by their exponential generating function
\begin{equation}\label{eq:r_Lah_exp_gener}
\sum_{n=k}^\infty L(n,k)_r \frac {x^n}{n!} = \frac 1 {k!} \left(\frac x {1-x}\right)^k (1-x)^{-2r}.
\end{equation}
We extend this definition by putting $L(n,k)_r:=0$ for $n\in \N_0$ and $k\notin\{0,\dots,n\}$.
The Taylor expansion of the right-hand side of~\eqref{eq:r_Lah_exp_gener} gives~\cite[Theorem~3.7]{nyul:2015}
\begin{align}\label{eq:r_Lah_def1}
L(n,k)_r
=
\binom{n+2r-1}{k+2r-1} \frac{n!}{k!}.
\end{align}
As a consequence,  $L(n,k)_r$ is a polynomial of $r$, for fixed $n$ and $k$. The following formula~\cite[Theorem~3.2]{nyul:2015} complements~\eqref{eq:stirling_r_rising_factorials}:
\begin{equation}\label{eq:lah_r_rising_falling_factorials}
(x+r)^{n\uparrow} =  \sum_{k=0}^n L(n,k)_r (x-r)^{k\downarrow}.
\end{equation}
Using~\eqref{eq:stirling_r_rising_factorials} and~\eqref{eq:lah_r_rising_falling_factorials} it can be shown~\cite[Theorem~3.11]{nyul:2015} that
\begin{align}\label{eq:r_Lah_def2}
L(n,k)_r
=
\sum_{j=k}^n \stirling{n}{j}_r \stirlingsec{j}{k}_r.
\end{align}
More generally, from the same reference it is known  that
\begin{equation}\label{eq:r_lah_stirling_r_s}
L(n,k)_{\frac{r+s}{2}}
= \sum_{j=k}^n \stirling{n}{j}_r \stirlingsec{j}{k}_s.
\end{equation}
The cited paper~\cite{nyul:2015} only considers nonnegative integer $r$ and requires that $r$ and $s$ have the same parity. In fact, since both sides of the identity are polynomials in $r$ and $s$, it holds for arbitrary $r,s\in \R$.

From Eq.~\eqref{eq:def_r_composition} in the definition of $r$-compositions we conclude that
$$
L(n,k)_r = \frac{n!}{k!}\sum_{\substack{\ell_0\in \N_0, \ell_1,\dots, \ell_k\in \N\\ \ell_0+\ell_1 + \dots + \ell_k = n}} \frac{(2r)^{\ell_0\uparrow}}{\ell_0!},
$$
which already indicates a connection of the $r$-Lah 
numbers with the random $2r$-compositions.

\subsection{Definition of the \texorpdfstring{$(r,s)$}{(r,s)}-Lah distribution}

\begin{definition} \label{DfnrLah}
An \emph{$(r,s)$-Lah distribution} $\Lah{n}{k}{r}{s}$ with parameters $n \in \N$, $k\in \{0,\dots, n\}$ and $r,s \in [0,\infty)$ (where the case $k=r=s=0$ is always excluded) is a discrete probability measure defined by
\begin{align}\label{eq:Lah_distr_def}
\Lah{n}{k}{r}{s}(\{j\}) = \frac{1}{L(n,k)_{\frac{r+s}{2}}} \stirling{n}{j}_r \stirlingsec{j}{k}_s, \qquad  j \in \lbrace k , k+1, \dotsc, n \rbrace.
\end{align}
\end{definition}

\begin{definition}\label{DfnrParamLah}
We call  $(n,k,r,s)$ an \emph{admissible quadruple} of parameters for the $(r,s)$-Lah distribution if  $n \in \N$, $k\in \{0,\dots, n\}$, $r,s \in [0,\infty)$ and $\max\{k,r,s\} > 0$.
\end{definition}
We exclude the case $k=r=s=0$ in which $L(n,0)_0 = 0$, $n\in \N$,  since  $(-1)! = \infty$. Equation~\eqref{eq:Lah_distr_def} indeed defines a probability distribution due to 
identity~\eqref{eq:r_lah_stirling_r_s}.

Now we 
review several particular cases of the $(r,s)$-Lah distribution that have already appeared in the literature.
\begin{example}\label{ex:lah_distr_simple_special_cases}
If $k=0$ and $s\geq 0$, then in view of 
$\stirlingsec{j}{0}_s = s^j$, the distribution takes the form
$$
\Lah{n}{0}{r}{s}(\{j\}) = \frac{1}{(r+s)^{n\uparrow}} \stirling{n}{j}_r s^j, \qquad  j \in \lbrace 0 , 1, \dots, n \rbrace.
$$
Thus, $\Lah{n}{0}{r}{s}$ is the $r$-Stirling distribution of the first kind $\rStirone{n}{s}{r}$. In particular, if also $s=0$, then $\Lah{n}{0}{r}{0}$ is the degenerate at $0$ distribution for each 
$r>0$.
\end{example}

\begin{example}\label{ex:lah_distr_simple_special_cases_k=1}
Let now $k=1$ and $s=0$. Note that, for $j\in \{1,\dots,n\}$,
$$
\Lah{n}{1}{r}{0}(\{j\})\overset{\eqref{eq:r_Lah_def1}}{=}\frac{1}{n!\binom{n+r-1}{r}}\stirling{n}{j}_r\stirlingsec{j}{1}_0\overset{\eqref{eq:r_stirling_def_finite_difference}}{=}\frac{1}{n!\binom{n+r-1}{r}}\stirling{n}{j}_r=\frac{n+r}{n}\frac{1}{(r+1)^{n\uparrow}}\stirling{n}{j}_r.
$$
Thus, if $Z\sim \rStirone{n}{1}{r}$, then $\Lah{n}{1}{r}{0}$ is the conditional distribution of $Z$ given $\{Z>0\}$.
\end{example}

\begin{example}
Motivated by~\cite{KVZ17,kabluchko:2016} the $(r,s)$-Lah distribution with $r=s=0$ was investigated in \cite{kabluchko_marynych_lah}. This distribution
has a combinatorial interpretation in terms of fragmented permutations. A fragmented permutation of $n$ elements with $k$ fragments is a partition of the set $\{1,\dots,n\}$ into $k$ nonempty blocks together with the structure of permutation on each 
block. The number of fragmented permutations with $k$ blocks is the Lah number $L(n,k)_0$. Consider a random fragmented permutation with $k$ blocks sampled uniformly from the set of all such fragmented permutations. Then, the number of cycles of this fragmented permutation has the $(r,s)$-Lah distribution with parameters $r=s=0$.
\end{example}

\begin{example}\label{ex:hofstad}
Consider a complete graph on $n+1$ nodes in which the edges have i.i.d.\ unit exponential weights. The union of shortest paths from some node of this graph to the $k$ uniformly chosen nodes is a random tree. The number of edges in this tree, denoted by $H_{n+1}(k)$, is analyzed 
in~\cite{van_der_hofstad_etal_shortest_path_trees}. According to Theorem~2.1 of that paper 
$$
\P[H_{n+1}(k) = j] = \frac{k!}{n! \binom {n}{k}} \stirling{n+1}{j+1}_0 \stirlingsec{j}{k}_0,
\qquad
j\in \{k,k+1,\dots, n\}.
$$
Note that $\stirling{n}{j}_1 = \stirling{n+1}{j+1}_0$. This means that $H_{n+1}(k)$ has the same distribution as $\Lah{n}{k}{1}{0}$. We will return to this example in Section~\ref{subsec:lah_distr_rep_using_rrt}.
\end{example}

\subsection{Generating polynomial of the \texorpdfstring{$(r,s)$}{(r,s)}-Lah distribution and a recursion for the probability mass function}
The next result identifies the generating polynomial of the $(r,s)$-Lah distribution (up to a normalizing factor). It is a generalization of~\cite[Lemma~3.3 or Lemma~3.6]{kabluchko_steigenberger_r_lah}.
\begin{lemma}\label{lem:r_lah_generating_poly}
For any $n\in \N$, $k\in \{0,\dots, n\}$, $r,s \geq 0$ and $t\in\C$,
$$
\sum_{j=k}^{n}   \stirling{n}{j}_r \stirlingsec{j}{k}_s t^j = \frac{n!}{k!} [x^n] \left(\left((1-x)^{-t}-1\right)^k (1-x)^{-(st+r)}\right).
$$
\end{lemma}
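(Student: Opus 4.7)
The plan is to prove the identity by passing to the exponential generating function in the variable $n$ and composing the known generating functions of the $r$-Stirling numbers of both kinds.

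More precisely, I will first multiply the left-hand side by $x^n/n!$ and sum over $n\geq k$. Swapping the two summations (which is valid at the level of formal power series, since for each fixed $j$ the inner sum over $n\geq j$ is an honest power series), I obtain
\[
\sum_{n\geq k} \left(\sum_{j=k}^{n} \stirling{n}{j}_r \stirlingsec{j}{k}_s t^j \right)\frac{x^n}{n!}
= \sum_{j\geq k} \stirlingsec{j}{k}_s t^j \sum_{n\geq j} \stirling{n}{j}_r \frac{x^n}{n!}.
\]
Next I apply the exponential generating function~\eqref{eq:def_r_stir_first} of the $r$-Stirling numbers of the first kind to the inner sum, giving a factor $\frac{1}{j!}(\log((1-x)^{-1}))^j(1-x)^{-r}$, and pull $(1-x)^{-r}$ out of the outer sum.

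At this stage the outer sum becomes
\[
(1-x)^{-r}\sum_{j\geq k}\stirlingsec{j}{k}_s \frac{(t\log((1-x)^{-1}))^j}{j!}.
\]
The key step is then to substitute $y := t\log((1-x)^{-1})$ into the generating function~\eqref{eq:def_r_stir_second} of the $r$-Stirling numbers of the second kind. Since $\eee^y = (1-x)^{-t}$ and $\eee^{sy}=(1-x)^{-st}$, this substitution produces
\[
\sum_{j\geq k}\stirlingsec{j}{k}_s \frac{y^j}{j!} = \frac{1}{k!}\bigl((1-x)^{-t}-1\bigr)^k (1-x)^{-st}.
\]
Combining the factors collapses the expression to $\frac{1}{k!}\bigl((1-x)^{-t}-1\bigr)^k (1-x)^{-(st+r)}$. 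Extracting the coefficient of $x^n$ and multiplying by $n!$ yields the claimed identity.

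I do not expect any real obstacle: everything is a formal manipulation of two well-known exponential generating functions, with the only subtlety being the legality of interchanging the order of summation and of substituting $y=t\log((1-x)^{-1})$ into a power series in $y$; both are justified at the level of formal power series in $x$ because $t\log((1-x)^{-1})$ has zero constant term, so arbitrary powers of it make sense as formal power series in $x$.
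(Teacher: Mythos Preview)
Your proof is correct and follows essentially the same approach as the paper: both arguments compose the exponential generating functions~\eqref{eq:def_r_stir_first} and~\eqref{eq:def_r_stir_second} via the substitution $y=t\log((1-x)^{-1})$ and then extract the coefficient of $x^n$. The only cosmetic difference is direction---the paper starts from the right-hand side and expands, while you start from the left-hand side and collapse.
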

\begin{proof}
We compute the Taylor series using the exponential generating functions~\eqref{eq:def_r_stir_first} and~\eqref{eq:def_r_stir_second}:
\begin{align*}
&\hspace{-1cm}\frac{1}{k!} \left((1-x)^{-t}-1\right)^k (1-x)^{-(st+r)}
=
(1-x)^{-r} \left( \frac{1}{k!} \left( \eee^{- t \log (1-x) }-1 \right)^k \eee^{-s t \log (1-x) } \right)
\notag \\
&\stackrel{\eqref{eq:def_r_stir_second}}{=}
(1-x)^{-r} \sum_{j=k}^{\infty} \left(\stirlingsec{j}{k}_s  \frac{\bigl(- t \log (1-x) \bigr)^j}{j!} \right)= \sum_{j=k}^{\infty} \stirlingsec{j}{k}_s t^j \left( \frac{ \left(-\log (1-x)\right)^j}{j!} (1-x)^{-r} \right)
\notag \\
&\stackrel{\eqref{eq:def_r_stir_first}}{=}
\sum_{j=k}^{\infty} \stirlingsec{j}{k}_s t^j \left( \sum_{n=j}^{\infty} \stirling{n}{j}_r \frac{x^n}{n!} \right) =
\sum_{j=k}^{\infty} \sum_{n=j}^{\infty} \frac{1}{n!} \stirling{n}{j}_r \stirlingsec{j}{k}_s t^j x^n.
\notag
\end{align*}
Taking the coefficient of $x^n$ gives the claimed identity.
\end{proof}

\begin{corollary}\label{cor:r_stirling_identity_general_orthogonality}
Fix some  $n\in \N$ and  $k\in \{0,\dots, n\}$. Let $r,s\in \N_0$ and $m\in \N_0$ satisfy 
$m < (n+r)/(k+s)$. Then,
$$
\sum_{j=k}^{n}   \stirling{n}{j}_r \stirlingsec{j}{k}_s (-m)^j = 0.
$$
\end{corollary}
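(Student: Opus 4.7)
The plan is to apply Lemma~\ref{lem:r_lah_generating_poly} at the specialization $t = -m$, which rewrites
\[
\sum_{j=k}^n \stirling{n}{j}_r \stirlingsec{j}{k}_s (-m)^j
= \frac{n!}{k!}\,[x^n]\Bigl(\bigl((1-x)^m-1\bigr)^k (1-x)^{sm-r}\Bigr),
\]
reducing the statement to showing that this coefficient of $x^n$ vanishes under the hypothesis $m(k+s) < n+r$.

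The core of the argument is a polynomial-degree count. Since $m\in\N_0$, the expression $(1-x)^m-1$ is a polynomial that vanishes at $x=0$: for $m\geq 1$ it has degree exactly $m$ and is divisible by $x$, while for $m=0$ it is identically zero (in which case $k\geq 1$ makes the claim trivial, and the remaining degenerate subcase $k=0$, $m=0$ is handled by direct computation using $\stirlingsec{j}{0}_s = s^j$ and $\stirling{n}{0}_r = r^{n\uparrow}$). Consequently $\bigl((1-x)^m-1\bigr)^k$ is a polynomial of degree $mk$ that is divisible by $x^k$. In the principal case $sm\geq r$, the remaining factor $(1-x)^{sm-r}$ is also a polynomial, of degree $sm-r\geq 0$, so the full product is a polynomial of degree at most $mk+(sm-r)=m(k+s)-r$. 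The hypothesis $m<(n+r)/(k+s)$ rearranges, using integrality of $m,k,s,r$, to $m(k+s)-r<n$; therefore $[x^n]$ of a polynomial of degree strictly less than $n$ is automatically zero, as required.

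The main technical point to handle carefully is the remaining regime $sm<r$, where $(1-x)^{sm-r}$ is no longer a polynomial but a formal power series. My plan for this case is to expand $\bigl((1-x)^m-1\bigr)^k = \sum_{i=0}^k \binom{k}{i}(-1)^{k-i}(1-x)^{mi}$, extract $[x^n]$ termwise using $[x^n](1-x)^{a} = (-1)^n\binom{a}{n}$, and identify the resulting sum with the $k$-th forward difference $(-1)^{n+k}\Delta^k f(0)$ of the function $f(i):=\binom{mi+sm-r}{n}$. The hoped-for vanishing would then follow either from a combinatorial identity for these iterated differences or from reducing via a change of index to the previously settled polynomial case; this is the portion of the argument I expect to require the most care.
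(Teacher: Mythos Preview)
Your argument in the principal case $sm \geq r$ is correct and is precisely the paper's proof: the product $\bigl((1-x)^m-1\bigr)^k(1-x)^{sm-r}$ is then a polynomial of degree $mk+sm-r = m(k+s)-r < n$, so its $x^n$-coefficient vanishes.

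You are also right to single out the regime $sm<r$ as requiring a separate argument. However, your hope that this case can be rescued is mistaken, and your claimed handling of the degenerate subcase $k=0$, $m=0$ is already incorrect: there the sum equals $\stirling{n}{0}_r\stirlingsec{0}{0}_s\cdot 0^0=r^{n\uparrow}$, which is nonzero whenever $r\geq 1$ (and the hypothesis $0<(n+r)/s$ is satisfied for any $s\geq 1$). More generally, the identity simply fails when $sm<r$. For a concrete counterexample with $k,s,m\geq 1$ take $n=3$, $k=1$, $r=2$, $s=1$, $m=1$: the hypothesis $1<(3+2)/(1+1)=5/2$ holds and $sm=1<2=r$, yet
\[
\sum_{j=1}^3 \stirling{3}{j}_2\stirlingsec{j}{1}_1(-1)^j
= -26\cdot 1 + 9\cdot 3 - 1\cdot 7 = -6 \neq 0,
\]
using $(x+2)(x+3)(x+4)=x^3+9x^2+26x+24$ and $\stirlingsec{j}{1}_1=2^j-1$. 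Equivalently, $\tfrac{3!}{1!}\,[x^3]\bigl((-x)(1-x)^{-1}\bigr)=-6$. Thus the forward-difference argument you sketch cannot succeed: the quantity you want to show vanishes is in fact nonzero.

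The upshot is that the corollary as stated is missing the hypothesis $sm\geq r$; the paper's own one-line proof tacitly assumes it (otherwise $(1-x)^{sm-r}$ is not a polynomial and the degree count breaks down). With that extra hypothesis added, your argument and the paper's coincide and are complete.
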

\begin{proof}
We apply Lemma~\ref{lem:r_lah_generating_poly} with $t:=-m$.
The degree of the polynomial $(1-x)^{m}-1)^k (1-x)^{sm-r}$ is $km + sm-r < n$ under our assumption on $m$. Hence, 
the coefficient of $x^n$ vanishes.
\end{proof}

\begin{corollary}
Fix some  $n\in \N$, $k\in \{0,\dots, n\}$ and $r\in \R$.  Let $m\in \N_0$ satisfy 
$m < n/k$. Then,
$$
\sum_{j=k}^{n}   \stirling{n}{j}_r \stirlingsec{j}{k}_r (-m)^j = 0.
$$
\end{corollary}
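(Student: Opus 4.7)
The identity should be derivable as the diagonal ($s=r$) specialization of Corollary~\ref{cor:r_stirling_identity_general_orthogonality}, extended from integer to real $r$ by a polynomial-identity argument. My first step is to apply Lemma~\ref{lem:r_lah_generating_poly} with $s=r$ and $t=-m$, which rewrites the sum as a coefficient extraction:
\[
\sum_{j=k}^{n}\stirling{n}{j}_r\stirlingsec{j}{k}_r(-m)^j \;=\; \frac{n!}{k!}\,[x^n]\bigl((1-x)^{m}-1\bigr)^{k}(1-x)^{r(m-1)}.
\]
Using the elementary factorization $(1-x)^m-1=-x\sum_{i=0}^{m-1}(1-x)^i$ I write $((1-x)^m-1)^k=x^k g(x)$, where $g$ is a polynomial in $x$ of degree exactly $k(m-1)$. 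The hypothesis $m<n/k$ is precisely $k(m-1)<n-k$, so $g$ is by itself of too low a degree to contribute to $[x^{n-k}]$. This already settles the case $r=0$, where the second factor $(1-x)^{r(m-1)}$ is identically $1$.

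\textbf{Extension to real $r$.} Since $\stirling{n}{j}_r$ and $\stirlingsec{j}{k}_r$ are polynomials in $r$ by \eqref{eq:stir1_as_poly}-\eqref{eq:stir2_as_poly}, the left-hand side $S(r)$ is a polynomial in $r$. Expanding $((1-x)^m-1)^k$ binomially and using $[x^n](1-x)^\alpha=(-1)^n\binom{\alpha}{n}$, the identity becomes
\[
\sum_{i=0}^{k}(-1)^{k-i}\binom{k}{i}\binom{m(i+r)-r}{n}=0,
\]
i.e.\ the $k$-th forward difference (in $i$) at $i=0$ of the degree-$n$ polynomial $P(i):=\binom{m(i+r)-r}{n}$ vanishes. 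Regarded as a polynomial in $r$, the quantity $\Delta^k P(0)$ has degree at most $n-k$. Corollary~\ref{cor:r_stirling_identity_general_orthogonality} with $s=r$ furnishes zeros of $S(r)$ for those $r\in\N_0$ with $(k+r)(m-1)<n-k$, i.e.\ $m<(n+r)/(k+r)$, and my plan is to supplement these by zeros arising from varying $s\ne r$ in Lemma~\ref{lem:r_lah_generating_poly} in order to exhibit at least $n-k+1$ roots and conclude $S(r)\equiv 0$.

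\textbf{Main obstacle.} The restriction $m<(n+r)/(k+r)$ in the prior corollary yields only finitely many integer values of $r$ when $m\ge 2$, so the polynomial identity step is not automatic. The crux will be either (i) a direct evaluation of the coefficient $[x^{n-k}]g(x)(1-x)^{r(m-1)}$ via the expansion $(1-x)^{r(m-1)}=\sum_{j\ge 0}\binom{r(m-1)}{j}(-x)^j$, showing algebraically that the contributions from the polynomial tail of $g$ and the infinite tail of the series cancel under $m<n/k$, or (ii) a reduction using $\stirlingsec{j}{k}_r=\sum_{q=k}^{j}\binom{j}{q}\stirlingsec{q}{k}\,r^{j-q}$ that expresses the left-hand side as a finite combination of sums of the type treated in Corollary~\ref{cor:r_stirling_identity_general_orthogonality} with $s=0$ (for which every $r\in\N_0$ is admissible under $m<n/k$). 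I would pursue (ii) first, because fixing $s=0$ and varying $r$ does give infinitely many zeros of the relevant polynomial in $r$, which should then extend the identity to all $r\in\R$.
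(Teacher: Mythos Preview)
Your instinct about the obstacle is exactly right---and it is fatal, not merely technical. The paper's own proof is the two-line argument you anticipated: it asserts that for $r\in\N$ sufficiently large one has $m<(n+r)/(k+r)$, then applies Corollary~\ref{cor:r_stirling_identity_general_orthogonality} with $r=s$ and extends by polynomiality. But the first step is simply wrong for $m\ge 2$: since $(n+r)/(k+r)\to 1$ as $r\to\infty$, the inequality $m<(n+r)/(k+r)$ fails for all large $r$ once $m\ge 2$, exactly as you observed.

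In fact the stated corollary is \emph{false}, so neither of your approaches (i) or (ii) can succeed. Take $n=5$, $k=2$, $m=2$ (so $m<n/k=5/2$) and $r=1$. Using your coefficient-extraction formula,
\[
\sum_{j=2}^{5}\stirling{5}{j}_1\stirlingsec{j}{2}_1(-2)^j
=\frac{5!}{2!}\,[x^5]\bigl((1-x)^2-1\bigr)^2(1-x)^{1}
=60\,[x^3]\,(x^2-4x+4)(1-x)
=-60\neq 0.
\]
(One can also check this directly: $\stirling{5}{j}_1=225,85,15,1$ and $\stirlingsec{j}{2}_1=1,6,25,90$ for $j=2,3,4,5$, giving $900-4080+6000-2880=-60$.) More generally, with $s=r$ and $t=-m$ the second factor in Lemma~\ref{lem:r_lah_generating_poly} is $(1-x)^{r(m-1)}$, a genuine power series when $r(m-1)\notin\N_0$, so the degree argument that works in Corollary~\ref{cor:r_stirling_identity_general_orthogonality} does not extend.

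So you should abandon both (i) and (ii): the identity holds for $m\in\{0,1\}$ (where $(1-x)^{r(m-1)}$ is constant) and for all real $r$ with $r(m-1)\in\N_0$ small enough that $km+r(m-1)<n$, but not for arbitrary real $r$ under the sole hypothesis $m<n/k$.
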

\begin{proof}
If $r\in\N$ is sufficiently large, 
then $m < (n+r)/(k+r)$ and we can apply Corollary~\ref{cor:r_stirling_identity_general_orthogonality} with $r=s$.  Since $\stirling{n}{j}_r$ and $\stirlingsec{j}{k}_r$ are polynomials in $r$, the identity extends to all real $r$.
\end{proof}

Now we 
derive recurrence relations for the probability mass function and the generating polynomial of $\Lah{n}{k}{r}{s}$. The triangles of generalized Stirling numbers satisfy the recursions~\cite[p.~1661]{nyul:2015}:
$$
\stirling{n}{j}_r=(n+r-1)\stirling{n-1}{j}_r+\stirling{n-1}{j-1}_r,\quad \stirlingsec{j}{k}_s=\stirlingsec{j-1}{k-1}_s+(k+s)\stirlingsec{j-1}{k}_s.
$$
Upon multiplying these identities we conclude that 
\begin{equation}\label{eq:trivariate_rec}
\stirling{n}{j}_r\stirlingsec{j}{k}_s=(n+r-1)\stirling{n-1}{j}_r\stirlingsec{j}{k}_s+\stirling{n-1}{j-1}_r\stirlingsec{j-1}{k-1}_s+(k+s)\stirling{n-1}{j-1}_r\stirlingsec{j-1}{k}_s.
\end{equation}
After simple manipulations with binomial coefficients we obtain 
from~\eqref{eq:trivariate_rec} the following trivariate recursion for the probability mass function of the $(r,s)$-Lah distribution.
Put
$$
p_{n,k}(j)=\P[\Lah{n}{k}{r}{s}=j].
$$
\begin{proposition}
For $j\in\{k,k+1,\dots,n\}$, the following trivariate recursive formula holds true
\begin{multline}\label{eq:trivariate_rec1}
p_{n,k}(j)=\frac{(n+r-1)(n-k)}{n(n+r+s-1)}p_{n-1,k}(j)+\frac{(k+s)(n-k)}{n(n+r+s-1)}p_{n-1,k}(j-1)\\
+\frac{k(k+r+s-1)}{n(n+r+s-1)}p_{n-1,k-1}(j-1).
\end{multline}
\end{proposition}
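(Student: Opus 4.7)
The plan is to derive the recursion~\eqref{eq:trivariate_rec1} directly from~\eqref{eq:trivariate_rec} by dividing both sides by $L(n,k)_{(r+s)/2}$ and computing the resulting ratios of $r$-Lah numbers. By Definition~\ref{DfnrLah} we have $p_{n,k}(j) = \stirling{n}{j}_r \stirlingsec{j}{k}_s / L(n,k)_{(r+s)/2}$, and by~\eqref{eq:r_Lah_def1},
$$
L(n,k)_{(r+s)/2} = \binom{n+r+s-1}{k+r+s-1} \frac{n!}{k!}.
$$

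First I would compute the two ratios
$$
\frac{L(n-1,k)_{(r+s)/2}}{L(n,k)_{(r+s)/2}} = \frac{1}{n} \cdot \frac{\binom{n+r+s-2}{k+r+s-1}}{\binom{n+r+s-1}{k+r+s-1}} = \frac{n-k}{n(n+r+s-1)},
$$
$$
\frac{L(n-1,k-1)_{(r+s)/2}}{L(n,k)_{(r+s)/2}} = \frac{k}{n} \cdot \frac{\binom{n+r+s-2}{k+r+s-2}}{\binom{n+r+s-1}{k+r+s-1}} = \frac{k(k+r+s-1)}{n(n+r+s-1)},
$$
which are straightforward manipulations with factorials.

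Next, I would divide~\eqref{eq:trivariate_rec} by $L(n,k)_{(r+s)/2}$. The first term on the right produces
$$
(n+r-1) \cdot p_{n-1,k}(j) \cdot \frac{L(n-1,k)_{(r+s)/2}}{L(n,k)_{(r+s)/2}} = \frac{(n+r-1)(n-k)}{n(n+r+s-1)} p_{n-1,k}(j).
$$
The second term produces $p_{n-1,k-1}(j-1) \cdot L(n-1,k-1)_{(r+s)/2}/L(n,k)_{(r+s)/2} = k(k+r+s-1)/(n(n+r+s-1)) \cdot p_{n-1,k-1}(j-1)$, and the third term produces $(k+s)(n-k)/(n(n+r+s-1)) \cdot p_{n-1,k}(j-1)$. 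Summing yields~\eqref{eq:trivariate_rec1}.

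There is no serious obstacle here: the only thing to verify is the purely mechanical simplification of the two binomial ratios above. Boundary cases ($k=0$, or $j=k$) cause no issue since the offending term carries a factor $k$ or $p_{n-1,k}(k-1)=0$ that vanishes automatically.
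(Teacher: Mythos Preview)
Your proposal is correct and follows exactly the approach indicated in the paper, which simply states that \eqref{eq:trivariate_rec1} is obtained from \eqref{eq:trivariate_rec} ``after simple manipulations with binomial coefficients.'' You have spelled out those manipulations explicitly and correctly.
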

Multiplying both sides of~\eqref{eq:trivariate_rec1} by $t^j$ and summing over $j$ yields 
a bivariate recursion for the generating functions
$$
G_{n,k}(t):=\sum_{j=k}^{n}p_{n,k}(j) t^j = \E [t^{\Lah{n}{k}{r}{s}}],
$$
which is,
\begin{equation}\label{eq:lah_distr_gen_funct_recurrence_rel}
G_{n,k}(t)=G_{n-1,k}(t)\left(\frac{(n+r-1)(n-k)}{n(n+r+s-1)}+\frac{(k+s)(n-k)}{n(n+r+s-1)}t\right)+\frac{k(k+r+s-1)}{n(n+r+s-1)}tG_{n-1,k-1}(t).
\end{equation}

\subsection{Stochastic representation in terms of random recursive trees} \label{subsec:lah_distr_rep_using_rrt}
It is 
known, see~\cite[Theorem O1]{devroye_records} 
or \cite{smythe_mahmoud_survey_RRT}, that in $\RRT{n}$ both the degree of the root and the distance from the root to node $n$ are $\Stirone{n}{1}$-distributed. The next theorem unifies and generalizes both claims.

\begin{theorem}\label{theo:lah_distr_rep_using_rrt}
Consider a random tree $\Hoppe{n}{r+s}$ with root $0$ of weight $r+s$ and nodes $1,\dots, n$. Think of the root as being divided into two components with weights $r$ and $s$ which means that if some node is attached to the root, it chooses one of these components with probabilities $r/(r+s)$ and $s/(r+s)$. Let $\mathfrak{A}_{n,k}$ be a random subset of $\{1,\dots, n\}$ consisting of $k$ nodes sampled uniformly without replacement. Let $\mathfrak{C}_{n,k}$ be the set of  all nodes which are attached directly to the weight $s$ component of the root.
If $\mathfrak{T}_{n,k}$ denotes the minimal subtree rooted at $0$ and containing all nodes from $\mathfrak{A}_{n,k}\cup \mathfrak{C}_{n,k}$,  then the number of edges in $\mathfrak{T}_{n,k}$ is distributed according to $\Lah{n}{k}{r}{s}$.
\end{theorem}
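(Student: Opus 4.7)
My plan is to verify by induction on $n$ that the probability mass function $p_{n,k}(j):=\P[J_{n,k}=j]$ of the number of edges $J_{n,k}$ in $\mathfrak{T}_{n,k}$ satisfies the trivariate recursion~\eqref{eq:trivariate_rec1} of the $(r,s)$-Lah distribution, together with the correct boundary values; this suffices to identify $J_{n,k}$ with $\Lah{n}{k}{r}{s}$.

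The boundary cases are immediate. For $k=0$, we have $\mathfrak{A}_{n,0}=\varnothing$, so $\mathfrak{T}_{n,0}$ is the star joining root $0$ to $\mathfrak{C}_{n,0}$, and $|\mathfrak{C}_{n,0}|\sim\rStirone{n}{s}{r}$ by Proposition~\ref{prop:root_degree_Hoppe} applied to the bicomponent Hoppe forest with weights $s$ and $r$, matching $\Lah{n}{0}{r}{s}$ via Example~\ref{ex:lah_distr_simple_special_cases}; the case $k=n$ is trivial.

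For the inductive step I condition on the outcome of the last step of the Hoppe construction: the leaf~$n$ attaches to the $s$-component with probability $\tfrac{s}{n+r+s-1}$, to the $r$-component with probability $\tfrac{r}{n+r+s-1}$, or to a uniformly chosen internal vertex $v\in\{1,\dots,n-1\}$ (total probability $\tfrac{n-1}{n+r+s-1}$), and independently $n\in\mathfrak{A}_{n,k}$ with probability $k/n$. Writing $\mathfrak{T}'$ for the minimal subtree built from the Hoppe tree on $\{0,\dots,n-1\}$ with the inherited marked sets, in the five subcases where either $n$ is attached to the root or $n\notin\mathfrak{A}_{n,k}$, the increment $J_{n,k}-J(\mathfrak{T}')$ is a deterministic element of $\{0,1\}$, and these contribute in aggregate
\[
\tfrac{(n+r-1)(n-k)}{n(n+r+s-1)}p_{n-1,k}(j) + \tfrac{s(n-k)}{n(n+r+s-1)}p_{n-1,k}(j-1) + \tfrac{(r+s)k}{n(n+r+s-1)}p_{n-1,k-1}(j-1)
\]
to $p_{n,k}(j)$.

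The critical --- and hardest --- subcase is $n\in\mathfrak{A}_{n,k}$ with $n$ attached to an internal vertex $v$, for which $J_{n,k}-J(\mathfrak{T}')$ is the genuinely random quantity $1+\mathrm{dist}(v,\mathfrak{T}')$. I plan to resolve it by further conditioning on whether $v\in\mathfrak{A}_{n,k}\setminus\{n\}$. If yes (conditional probability $(k-1)/(n-1)$), then $v\in\mathfrak{T}'$ already and $J_{n,k}=J(\mathfrak{T}')+1$, yielding the contribution $\tfrac{k(k-1)}{n(n+r+s-1)}p_{n-1,k-1}(j-1)$. If no (conditional probability $(n-k)/(n-1)$), an elementary check shows that the augmented marked set $\mathfrak{A}'':=(\mathfrak{A}_{n,k}\setminus\{n\})\cup\{v\}$ is a uniform random $k$-subset of $\{1,\dots,n-1\}$ independent of the $(n-1)$-node Hoppe tree, while $\mathfrak{T}_{n,k}\setminus\{n\}$ equals the minimal subtree $\mathfrak{T}''$ associated with $(\mathfrak{A}'',\mathfrak{C}_{n,k})$; hence $J_{n,k}=J(\mathfrak{T}'')+1$ where $J(\mathfrak{T}'')\sim p_{n-1,k}$ by the induction hypothesis, yielding the contribution $\tfrac{k(n-k)}{n(n+r+s-1)}p_{n-1,k}(j-1)$. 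Summing all six contributions and using $(r+s)k+k(k-1)=k(k+r+s-1)$ exactly reproduces the three coefficients of~\eqref{eq:trivariate_rec1}, completing the induction. The decisive and most delicate step is the re-labeling trick in this final subcase, by which the random path from $v$ to the root is absorbed into standard $(n-1,k)$-data through augmentation of the marked set.
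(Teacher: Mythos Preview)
Your proof is correct and follows essentially the same route as the paper's: both arguments verify the recurrence~\eqref{eq:trivariate_rec1} (the paper works with its generating-function form~\eqref{eq:lah_distr_gen_funct_recurrence_rel}) by conditioning on the attachment of the last node and on its membership in the marked set, and both resolve the only nontrivial subcase via the same re-labeling device, namely replacing the marked set $\mathfrak{A}_{n,k}\setminus\{n\}$ by $(\mathfrak{A}_{n,k}\setminus\{n\})\cup\{v\}$ when $n$ is attached to an unmarked internal vertex $v$. Your case split is slightly finer (you separate attachment to the $r$- and $s$-components of the root, whereas the paper merges them in its Case~1), but the bookkeeping and the key idea are identical.
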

Before proving the theorem we 
consider some special cases.

\begin{corollary}\label{cor:hoppe_tree_random_subtree_lah}
The minimal subtree of a random tree $\Hoppe{n}{r}$ spanned by a random uniform subset of $k$ nodes (and the root) has $\Lah{n}{k}{r}{0}$ edges.
\end{corollary}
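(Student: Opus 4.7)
The plan is to obtain this corollary as the $s=0$ specialization of Theorem~\ref{theo:lah_distr_rep_using_rrt}, so that the work reduces to verifying that the construction in that theorem collapses, when $s=0$, to the one in the statement of the corollary. Concretely, I would start from a random tree $\Hoppe{n}{r+0}=\Hoppe{n}{r}$ in which the root $0$ is formally split into two components of weights $r$ and $0$. Since the attachment probability of any incoming node to the weight-$0$ part of the root equals $0/(r+\ell-1)=0$ at every step $\ell$, no node is ever attached there, and therefore $\mathfrak{C}_{n,k}=\varnothing$ almost surely.

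With this observation in place, the minimal subtree $\mathfrak{T}_{n,k}$ defined in Theorem~\ref{theo:lah_distr_rep_using_rrt} as the minimal subtree rooted at $0$ containing $\mathfrak{A}_{n,k}\cup\mathfrak{C}_{n,k}$ coincides almost surely with the minimal subtree rooted at $0$ that is spanned by the uniformly chosen $k$-subset $\mathfrak{A}_{n,k}$ of $\{1,\dots,n\}$ (together with the root). Since $\Lah{n}{k}{r}{s}$ with $s=0$ is by Definition~\ref{DfnrLah} still an admissible and well-defined distribution whenever $(n,k,r)$ is not the excluded degenerate triple (and in the edge case $k=0$ both sides collapse trivially to the point mass at $0$, compare Example~\ref{ex:lah_distr_simple_special_cases}), Theorem~\ref{theo:lah_distr_rep_using_rrt} then directly yields that the number of edges of this subtree has the $\Lah{n}{k}{r}{0}$ distribution.

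I do not anticipate any serious obstacle: the proof is a one-line specialization once Theorem~\ref{theo:lah_distr_rep_using_rrt} is established. The only mildly delicate point is justifying that assigning zero weight to one of the two root components in the multinomial Hoppe construction yields a process that is distributionally identical to $\Hoppe{n}{r}$; this is immediate from the sequential description of the multinomial Hoppe forest in Section~\ref{sec:multi_hoppe}, in which a zero-weight component never receives any node.
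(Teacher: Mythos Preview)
The proposal is correct and takes essentially the same approach as the paper: set $s=0$ in Theorem~\ref{theo:lah_distr_rep_using_rrt} and observe that the weight-zero component of the root attracts no nodes, so $\mathfrak{C}_{n,k}=\varnothing$ and the minimal subtree is spanned by $\mathfrak{A}_{n,k}$ alone. Your write-up is a bit more detailed about the edge cases, but the argument is identical.
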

\begin{proof}
Choose $s=0$ in Theorem~\ref{theo:lah_distr_rep_using_rrt}  (meaning that the set $\mathfrak C_{n,k}$ is empty).
\end{proof}
In the case 
$r=1$ and $s=0$, Theorem~\ref{theo:lah_distr_rep_using_rrt} recovers Proposition~3.1 in~\cite{van_der_hofstad_etal_shortest_path_trees}. Note that in~\cite{van_der_hofstad_etal_shortest_path_trees} the results are stated in terms of the so-called shortest path trees; these can be reduced to the RRT's, as explained in~\cite[Sections~16.2, 16.3]{van_mieghem_book}, \cite[Sections~1,2]{van_der_hofstad_first_passage} and~\cite[Section~6.2]{van_mieghem_scaling_hopcount_report}; see also~\cite{van_der_hofstad_etal_shortest_path_trees,van2001stochastic}.

In the case where 
$r>0$ is arbitrary and $k=1$, 
recalling a characterization of  $\Lah{n}{1}{r}{0}$ given in  Example~\ref{ex:lah_distr_simple_special_cases_k=1} we 
arrive at the following

\begin{corollary}
The distance from the root of a random tree $\Hoppe{n}{r}$ to a node $V_n$ picked uniformly at random from the set of nodes $\{1,\dots, n\}$ has the same distribution as $Z\sim \rStirone{n}{1}{r}$ conditioned on $Z\neq 0$.
\end{corollary}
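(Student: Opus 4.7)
The plan is to deduce this corollary as an immediate consequence of two results that have already been established in the text: Corollary~\ref{cor:hoppe_tree_random_subtree_lah} (specialized to $k=1$) together with the computation carried out in Example~\ref{ex:lah_distr_simple_special_cases_k=1}. No new machinery is needed.

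First, I would invoke the elementary tree-theoretic fact that in any tree, the minimal subtree rooted at $0$ and containing a single additional node $V_n$ is precisely the unique path from $0$ to $V_n$. Consequently, the number of edges in this minimal subtree equals the graph distance $d(0,V_n)$. Applying Corollary~\ref{cor:hoppe_tree_random_subtree_lah} with $k=1$ (and with $V_n$ sampled uniformly from $\{1,\dots,n\}$ independently of the tree) therefore gives $d(0,V_n)\sim \Lah{n}{1}{r}{0}$.

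Next, Example~\ref{ex:lah_distr_simple_special_cases_k=1} already identifies the distribution $\Lah{n}{1}{r}{0}$ as the conditional law of $Z\sim \rStirone{n}{1}{r}$ given $\{Z>0\}$. Since $\rStirone{n}{1}{r}$ is supported on $\{0,1,\dots,n\}$, the events $\{Z>0\}$ and $\{Z\neq 0\}$ coincide, so this conditional distribution is exactly the one appearing in the statement. Chaining the two distributional identities concludes the proof.

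There is no real obstacle: the one item worth double-checking is the trivial observation that, conditionally on $V_n$, the minimal subtree $\mathfrak{T}_{n,1}$ is just the root-to-$V_n$ path, but this is immediate from the tree property. Everything else has already been done in the preceding results.
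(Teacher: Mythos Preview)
Your proposal is correct and follows exactly the same route as the paper: specialize Corollary~\ref{cor:hoppe_tree_random_subtree_lah} to $k=1$, observe that the minimal subtree spanned by the root and a single node $V_n$ is just the root-to-$V_n$ path (so its edge count is the distance), and then invoke Example~\ref{ex:lah_distr_simple_special_cases_k=1} to identify $\Lah{n}{1}{r}{0}$ with the conditional law of $Z\sim\rStirone{n}{1}{r}$ given $Z\neq 0$.
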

In the next corollary we provide a simple representation of the \emph{standard} $(r = s = 0)$ Lah distribution.

\begin{corollary}\label{cor:Lah_standard_in_RRT}
Consider a random recursive tree with root denoted for the time being by $1$ and nodes $2,\dots, n$. Then, the number of \textbf{nodes} 
in a subtree generated by the root and a uniform random subset of $k$ nodes from $\{1,\dots, n\}$ (note that $1$ may be in the subset) 
is distributed according to $\Lah{n}{k}{0}{0}$.
\end{corollary}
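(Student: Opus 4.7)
The plan is to derive the statement from Corollary~\ref{cor:hoppe_tree_random_subtree_lah} (the case $s=0$ of Theorem~\ref{theo:lah_distr_rep_using_rrt}) by conditioning on whether the root belongs to the random subset. First I would identify the RRT on the nodes $\{1,\dots,n\}$ with root $1$ with a Hoppe tree $\Hoppe{n-1}{1}$, thinking of node $1$ of the RRT as the root of the Hoppe tree and of $\{2,\dots,n\}$ as its $n-1$ additional nodes; this identification is legitimate because the sequential attachment rule of the RRT coincides with the $\Hoppe{n-1}{1}$-dynamics on $\{2,\dots,n\}$. Let $\mathfrak{A}$ be the uniform random $k$-subset of $\{1,\dots,n\}$ and denote by $X_{n,k}$ the number of nodes of the minimal subtree containing the root and $\mathfrak{A}$. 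Conditionally on $\{1\in\mathfrak{A}\}$ (which happens with probability $k/n$), the remaining $k-1$ elements are uniformly sampled from $\{2,\dots,n\}$, while on the complementary event $\mathfrak{A}$ is a uniform $k$-subset of $\{2,\dots,n\}$.

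Applying Corollary~\ref{cor:hoppe_tree_random_subtree_lah} to each case (with $r=1$) and observing that the number of nodes exceeds the number of edges by one, we obtain
\begin{equation*}
\P[X_{n,k}=j]=\frac{k}{n}\,\Lah{n-1}{k-1}{1}{0}(\{j-1\})+\frac{n-k}{n}\,\Lah{n-1}{k}{1}{0}(\{j-1\}).
\end{equation*}
Using the identity $\stirling{n-1}{j-1}_{1}=\stirling{n}{j}$ (immediate from the generating function~\eqref{eq:def_r_stir_first} with $r=1$, since $(1-x)^{-1}\cdot(-\log(1-x))^k/k!$ is the partial sum/quotient of the ordinary Stirling generating function), both factors acquire the common factor $\stirling{n}{j}$. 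The Lah normalizations~\eqref{eq:r_Lah_def1} evaluate to $L(n-1,k-1)_{1/2}=\binom{n-1}{k-1}(n-1)!/(k-1)!$, $L(n-1,k)_{1/2}=\binom{n-1}{k}(n-1)!/k!$, and $L(n,k)_{0}=\binom{n-1}{k-1}n!/k!$, and a direct computation yields
\begin{equation*}
\frac{k/n}{L(n-1,k-1)_{1/2}}=\frac{1}{L(n,k)_{0}},\qquad \frac{(n-k)/n}{L(n-1,k)_{1/2}}=\frac{k}{L(n,k)_{0}}.
\end{equation*}

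Substituting these ratios gives
\begin{equation*}
\P[X_{n,k}=j]=\frac{\stirling{n}{j}}{L(n,k)_{0}}\left(\stirlingsec{j-1}{k-1}+k\,\stirlingsec{j-1}{k}\right)=\frac{\stirling{n}{j}\stirlingsec{j}{k}}{L(n,k)_{0}},
\end{equation*}
where the final equality is the classical recurrence $\stirlingsec{j}{k}=\stirlingsec{j-1}{k-1}+k\stirlingsec{j-1}{k}$. The right-hand side is precisely $\Lah{n}{k}{0}{0}(\{j\})$, which proves the claim. The only genuine bookkeeping obstacle is verifying the two Lah-ratio identities together with keeping the ``edges vs.\ nodes'' shift by one straight; everything else is an automatic consequence of Corollary~\ref{cor:hoppe_tree_random_subtree_lah} and the standard Stirling recurrence.
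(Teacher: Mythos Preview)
Your argument is correct. The identification of the RRT on $\{1,\dots,n\}$ rooted at $1$ with $\Hoppe{n-1}{1}$, the conditioning on $\{1\in\mathfrak A\}$, the identity $\stirling{n-1}{j-1}_{1}=\stirling{n}{j}$ (which follows at once from $x^{n\uparrow}=x\,(x+1)^{(n-1)\uparrow}$ and~\eqref{eq:stirling_r_rising_factorials}), and the two Lah-ratio identities all check out. The final appeal to the recurrence $\stirlingsec{j}{k}=\stirlingsec{j-1}{k-1}+k\stirlingsec{j-1}{k}$ closes the computation cleanly. The boundary cases $k=1$ (where the first term involves $\Lah{n-1}{0}{1}{0}$, the Dirac mass at $0$) and $k=n$ (where the second term has vanishing weight) are implicitly handled by your formula.

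The paper's proof is genuinely different and shorter: it applies Corollary~\ref{cor:hoppe_tree_random_subtree_lah} to $\Hoppe{n}{r}$ directly and lets $r\to 0$. As $r\downarrow 0$, node~$1$ is always attached to the auxiliary root~$0$ while nodes $2,\dots,n$ avoid $0$ with probability tending to~$1$; removing~$0$ and the edge $\{0,1\}$ therefore leaves precisely the RRT of the statement, and the edge count $\Lah{n}{k}{r}{0}$ converges in distribution to $\Lah{n}{k}{0}{0}$ (the probability mass function is polynomial in~$r$). The node count exceeds the edge count by one, accounting for the removed edge. Your route trades this limiting argument for an explicit algebraic verification via the Stirling recurrence; it is slightly longer but entirely self-contained and avoids any continuity considerations in~$r$.
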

\begin{proof}
We consider a $\Hoppe{n}{r}$-tree with  $r$ tending to zero and apply Corollary~\ref{cor:hoppe_tree_random_subtree_lah}. Note that node $1$ is always attached to $0$, while nodes $2,\dots, n$ are attached to $0$ with probability tending to zero as $r\to 0$. Hence, if we remove $0$ and the edge between $0$ and $1$, then with probability tending to one we obtain the random recursive tree described in Corollary~\ref{cor:Lah_standard_in_RRT}. It remains to apply Corollary~\ref{cor:hoppe_tree_random_subtree_lah} and observe that the number of nodes in the RRT is the number of edges plus the removed edge between $0$ and $1$.
\end{proof}

In the case $k=1$, Corollary \ref{cor:Lah_standard_in_RRT} 
recovers  a  known result on the insertion depth in the RRT; see~\cite[Theorem~2]{smythe_mahmoud_survey_RRT} or~\cite{szymanski_max_degree}.

\begin{example}\label{example:hoppe_lag_boundary_k=0}
The case $k=0$ of Theorem~\ref{theo:lah_distr_rep_using_rrt} is straightforward. In this scenario $\mathfrak{A}_{n,k}$ is empty and $\mathfrak{C}_{n,k}$ has the mixed binomial distribution $\Bin{\Stirone{n}{r+s}}{s/(r+s)}$, where $\Stirone{n}{r+s}$ is the degree distribution of the root in $\Hoppe{n}{r+s}$. By part (ii) of Proposition~\ref{prop:r_stirl_repres} $\Bin{\Stirone{n}{r+s}}{s/(r+s)}=\rStirone{n}{s}{r}$ and by Example~\ref{ex:lah_distr_simple_special_cases} $\rStirone{n}{s}{r}=\Lah{n}{0}{r}{s}$ 
in full accordance with the $k=0$ case of Theorem~\ref{theo:lah_distr_rep_using_rrt}. 
\end{example}

\begin{proof}[Proof of Theorem~\ref{theo:lah_distr_rep_using_rrt}.]
Replacing $n$ by $n+1$, we can write 
recurrence relation~\eqref{eq:lah_distr_gen_funct_recurrence_rel} for $G_{n,k}(t)= \E [t^{\Lah{n}{k}{r}{s}}]$ in the form
\begin{align}
G_{n+1,k}(t)
&
=
\left(1 - \frac{k}{n+1}\right) \left(G_{n,k}(t) \frac{n+r}{n+r+s} + G_{n,k}(t)\frac{st}{n+r+s}\right) \label{eq:lah_distr_gen_funct_recurr_1}
\\
&+
t \cdot \frac{k}{n+1} \left(G_{n,k-1}(t) \frac{k-1+r+s}{n+r+s} + G_{n,k}(t) \frac{n-k+1}{n+r+s}\right).  \label{eq:lah_distr_gen_funct_recurr_2}
\end{align}
We show that the generating function of the number of edges in $\mathfrak{T}_{n,k}$ satisfies the same recurrence relation as $G_{n,k}$. Our argument  generalizes the one in~\cite{van_der_hofstad_etal_shortest_path_trees}. Consider the tree $\Hoppe{n+1}{r+s}$. With probability $k/(n+1)$, node $n+1$ belongs to $\mathfrak{A}_{n+1,k}$. Denote this event by $\mathfrak{Q}$. Given $\mathfrak{Q}$, there two possibilities:

\vspace{2mm}
\noindent
\emph{Case 1.} If the node $n+1$ is attached to a node from the set $\mathfrak{A}_{n,k-1}$ or to the root with weight $r+s$, then the tree $\mathfrak{T}_{n+1,k}$ differs from $\mathfrak{T}_{n,k-1}$ by one edge containing the node $n+1$. This additional edge accounts for a factor $t$ in~\eqref{eq:lah_distr_gen_funct_recurr_2}. The conditional (given $\mathfrak{Q}$) probability of the event occurring in Case 1 is $(k-1+r+s)/(n+r+s)$.

\vspace{2mm}
\noindent
\emph{Case 2.} If the node $n+1$ is attached to some node $v$ from the set $\{1,\dots, n\}\setminus \mathfrak{A}_{n,k}$, then the tree $\mathfrak{T}_{n+1,k}$ can be constructed as follows. Construct the minimal tree containing the root and the set $\mathfrak{A}_{n,k-1} \cup\{v\} \cup \mathfrak{C}_{n,k}$ and add to this tree the edge connecting $n+1$ and $v$. Then, 
$\mathfrak{T}_{n+1,k}$ differs by one edge from the tree having the same law as $\mathfrak{T}_{n,k}$, which again gives a factor $t$. The conditional (given $\mathfrak{Q}$) probability of the event occurring in Case 2 is $(n-k+1)/(n+r+s)$. Altogether, these considerations explain the term~\eqref{eq:lah_distr_gen_funct_recurr_2}.

\vspace{2mm}

Now, it is possible that the node $n+1$ does not belong to $\mathfrak{A}_{n+1,k}$, which happens with probability $\P[\mathfrak{Q}^c]=1 - k/(n+1)$. Given $\mathfrak{Q}^c$, there are again two possibilities:

\vspace{2mm}
\noindent
\emph{Case 3.} If the node $n+1$ is attached to the weight $s$ component of the root, then it belongs to $\mathfrak{C}_{n,k}$ and the tree $\mathfrak{T}_{n+1,k}$ differs from $\mathfrak{T}_{n,k}$ by one edge. The conditional probability (given $\mathfrak{Q}^c$) of this event is $s/(n+r+s)$.

\vspace{2mm}
\noindent
\emph{Case 4:} If the node $n+1$ is attached to any node in $\{1,\dots, n\}$ or to the weight $r$ component of the root, then $\mathfrak{T}_{n+1,k} = \mathfrak{T}_{n,k}$. The conditional probability (given $\mathfrak{Q}^c$) of this event is $(n+r)/(n+r+s)$. Altogether, Cases 3 and 4 explain~\eqref{eq:lah_distr_gen_funct_recurr_1}.

It remains to note that on the boundary $k=0$ the generating function of the number of edges in $\mathfrak{T}_{n,0}$ also coincides with $G_{n,0}(t)=\E [t^{\Lah{n}{0}{r}{s}}]$ as we have seen in Example~\ref{example:hoppe_lag_boundary_k=0}.
\end{proof}

\subsection{Stochastic representation in terms of random compositions}

The next proposition provides a representation of the $(r,s)$-Lah distribution as a sum of conditionally independent random variables. It will be the starting point in our asymptotic analysis of the $(r,s)$-Lah distribution.
\begin{proposition}\label{prop:r_lah_representation}
Let $(b_0^{(n,k)}, b_1^{(n,k)}, \dots, b_k^{(n,k)})$ be a random $(r+s)$-composition of $n$. Further, let $(Z^{(n,0)}_{r,s}, Z^{(n,1)}_{r,s}, \dots, Z^{(n,k)}_{r,s})$ be a random vector such that, conditionally on the event $\{b_0^{(n,k)} = b_0, b_1^{(n,k)}= b_1, \dots, b_k^{(n,k)}= b_k\}$, where $(b_0,b_1,\dots, b_k)$ is an arbitrary fixed incomplete composition of $n$, the random variables
$$
Z^{(n,0)}_{r,s}\sim  \rStirone{b_0}{s}{r},
\quad
Z^{(n,1)}_{r,s}\sim \Stirone{b_1}{1}, \quad \dots,\quad  Z^{(n,k)}_{r,s}\sim \Stirone{b_k}{1}
$$
are independent, that is,
$$
\P\left[ Z^{(n,j)}_{r,s} = \ell \,\Big|\, b_0^{(n,k)} = b_0, b_1^{(n,k)}= b_1, \dots, b_k^{(n,k)}= b_k\right] = \P\left[\sum_{m=1}^{b_j} \Bern{1/m} = \ell\right]
$$
for all $j = 1,\dots, k$ and 
$$
\P\left[ Z^{(n,0)}_{r,s} = \ell \,\Big|\, b_0^{(n,k)} = b_0, b_1^{(n,k)}= b_1, \dots, b_k^{(n,k)}= b_k\right] = \P\left[\sum_{m=1}^{b_0} \Bern{s/(r+s+m-1)} = \ell\right].
$$
Here, all Bernoulli random variables are assumed 
independent. Then,
$$
Z^{(n,0)}_{r,s} + Z^{(n,1)}_{r,s} + \dots + Z^{(n,k)}_{r,s}~\sim~\Lah{n}{k}{r}{s}.
$$
\end{proposition}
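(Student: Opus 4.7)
My plan is to prove the identity at the level of probability generating functions (PGFs). Writing $\varphi_n(t) := \E[t^{Z^{(n,0)}_{r,s} + \dots + Z^{(n,k)}_{r,s}}]$, the strategy is to compute $\varphi_n(t)$ by conditioning on the random $(r+s)$-composition, recognize a product of rising-factorial generating functions, and then invoke Lemma~\ref{lem:r_lah_generating_poly} to identify the result with the PGF of $\Lah{n}{k}{r}{s}$.

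Conditionally on $(b_0,b_1,\dots,b_k)$, the summands are independent, so the conditional PGF factorizes. Using~\eqref{eq:stirling_r_rising_factorials} with the parameter $\theta=s$, the PGF of $\rStirone{b_0}{s}{r}$ equals $(r+st)^{b_0\uparrow}/(r+s)^{b_0\uparrow}$, while the PGF of $\Stirone{b_j}{1}$ is $t^{b_j\uparrow}/b_j!$ (since $\sum_\ell \stirling{b_j}{\ell} t^\ell = t^{b_j\uparrow}$). Combining with the distribution of the $(r+s)$-composition from Definition~\ref{def:r_composition} in which the factor $(r+s)^{b_0\uparrow}/b_0!$ appears, the factor $(r+s)^{b_0\uparrow}$ cancels, yielding
\begin{equation*}
\varphi_n(t) = \frac{1}{\binom{n+r+s-1}{k+r+s-1}}\sum_{\substack{b_0\ge 0,\,b_1,\dots,b_k\ge 1\\ b_0+b_1+\dots+b_k=n}} \frac{(r+st)^{b_0\uparrow}}{b_0!}\prod_{j=1}^{k}\frac{t^{b_j\uparrow}}{b_j!}.
\end{equation*}

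The next step is to recognize the right-hand side as a coefficient extraction. The binomial theorem for rising factorials gives $\sum_{b\ge 0}(r+st)^{b\uparrow}x^b/b! = (1-x)^{-(r+st)}$ and $\sum_{b\ge 1}t^{b\uparrow}x^b/b! = (1-x)^{-t}-1$, so the inner sum above equals $[x^n]\bigl((1-x)^{-(r+st)}\bigl((1-x)^{-t}-1\bigr)^k\bigr)$. Lemma~\ref{lem:r_lah_generating_poly} identifies this coefficient as $(k!/n!)\sum_{j=k}^n \stirling{n}{j}_r\stirlingsec{j}{k}_s t^j$. Combined with~\eqref{eq:r_Lah_def1}, which gives $\binom{n+r+s-1}{k+r+s-1}=L(n,k)_{(r+s)/2}\cdot k!/n!$, we obtain
\begin{equation*}
\varphi_n(t) = \frac{1}{L(n,k)_{(r+s)/2}}\sum_{j=k}^{n}\stirling{n}{j}_r\stirlingsec{j}{k}_s\,t^j,
\end{equation*}
which is exactly the PGF of $\Lah{n}{k}{r}{s}$ by Definition~\ref{DfnrLah}, completing the proof.

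The argument is largely bookkeeping with no serious obstacle; the key conceptual step is simply to notice that the composition weight $(r+s)^{b_0\uparrow}/b_0!$ from Definition~\ref{def:r_composition} is engineered to cancel against the denominator of the $\rStirone{b_0}{s}{r}$ PGF, so that what remains assembles into precisely the generating function identity of Lemma~\ref{lem:r_lah_generating_poly}. The choice of parameter $r+s$ (rather than $r$ or $s$) for the random composition, and the asymmetric roles of $Z^{(n,0)}_{r,s}$ versus $Z^{(n,j)}_{r,s}$ ($j\ge 1$), are explained in retrospect by this cancellation.
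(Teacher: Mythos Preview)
Your proof is correct and follows essentially the same approach as the paper's own proof: both compute the PGF by conditioning on the $(r+s)$-composition, use the rising-factorial expressions $(st+r)^{b_0\uparrow}/(r+s)^{b_0\uparrow}$ and $t^{b_j\uparrow}/b_j!$ for the conditional PGFs, observe the cancellation of $(r+s)^{b_0\uparrow}$ against the composition weight, and identify the resulting sum with the coefficient extraction $[x^n]\bigl(((1-x)^{-t}-1)^k(1-x)^{-(st+r)}\bigr)$ via Lemma~\ref{lem:r_lah_generating_poly} and~\eqref{eq:r_Lah_def1}. The only cosmetic difference is that the paper starts from $G_{n,k}(t)$ and expands it into the sum over compositions, whereas you start from $\varphi_n(t)$ and collapse it; the two arguments are mirror images of each other.
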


\begin{example}
Taking $k=0$ gives $\Lah{n}{0}{r}{s}=\rStirone{n}{s}{r}$.
\end{example}

\begin{example}
For $r=s=0$, Proposition~\ref{prop:r_lah_representation} reduces to the representation of the Lah distribution given in~\cite[Proposition 2.3]{kabluchko_marynych_lah}. 
\end{example}

\begin{example}[Cumulative degree of roots in the multinomial Hoppe tree]
Consider a random forest $\MultiHoppe{n-k}{\theta}{(r+s)/\theta, 1/\theta,\dots, 1/\theta}$ with $\rt_0$ having weight $r+s$ and 
$k$ roots $\rt_1,\dots, \rt_k$ having weight $1$ each. The total weight of all roots is $\theta = k+r+s$. According to Proposition~\ref{prop:coupling1} and Proposition~\ref{prop:multihoppe_sizes}, the vector encoding the sizes of connected components (not counting $\rt_0$ but counting $\rt_1,\dots, \rt_k$) has the same distribution as a random $(r+s)$-composition of $n$. Let $D_{n;j}$ be the degree of $\rt_j$, for 
$j=0,\dots, k$. Then, Proposition~\ref{prop:r_lah_representation} implies that
$$
D_{n;1} + \dots +  D_{n;k} + \Bin{D_{n;0}}{s/(r+s)} \sim \Lah{n}{k}{r}{s}.
$$
For $r=0$ the right-hand side simplifies to $D_{n;0} + D_{n;1} + \dots + D_{n,k}$.
\end{example}

\begin{example}
For $r=s=1/2$, the conditional distribution (as in Proposition \ref{prop:r_lah_representation}) of  $Z^{(n,j)}_{r,s}$ with $j\in \{1,\dots, k\}$ is the same as $\sum_{m=1}^{b_j} \Bern{1/m}$, which is the distribution of the number of records in $b_j$ i.i.d.\ observations from a continuous distribution, while the conditional distribution of $Z^{(n,0)}_{r,s}$ is the same as $\sum_{m=1}^{b_0} \Bern{1/(2m)}$. The distribution of the random $1$-composition is uniform on the set of all incomplete compositions of $n$. 
Motivated by the analysis of positive hulls of random walks carried out in~\cite{godland:2022} (see also~\cite{godlandschlaefli:2022}) the case $r=s$ has already been investigated in~\cite{kabluchko_steigenberger_r_lah}.
\end{example}

\begin{proof}[Proof of Proposition~\ref{prop:r_lah_representation}]
Recall from Lemma~\ref{lem:r_lah_generating_poly} and~\eqref{eq:r_Lah_def1} that
\begin{equation}
G_{n,k}(t)=\E [t^{\Lah{n}{k}{r}{s}}] =
\frac{1}{\binom{n+r+s-1}{k+r+s-1}} [x^n]\left(\bigl((1-x)^{-t}-1\bigr)^k (1-x)^{-(st+r)}\right).
\end{equation}
From the definition of the random variables $Z^{(n,0)}_{r,s}, Z^{(n,1)}_{r,s}, \dots, Z^{(n,k)}_{r,s}$ it follows that
$$
\E\left[ t^{Z^{(n,j)}_{r,s}}\,\Big|\, b_0^{(n,k)} = b_0, b_1^{(n,k)}= b_1, \dots, b_k^{(n,k)}= b_k\right] = \frac{t^{b_j\uparrow}}{b_j!},
\qquad
j\in \{1,\dots, k\} 
$$
and 
$$
\E\left[ t^{Z^{(n,0)}_{r,s}}\,\Big|\, b_0^{(n,k)} = b_0, b_1^{(n,k)}= b_1, \dots, b_k^{(n,k)}= b_k\right] = \frac{(st+r)^{b_0 \uparrow}}{(r+s)^{b_0 \uparrow}}.
$$
By conditional independence, this gives
\begin{equation}\label{eq:proof_repr_cond_expect}
\E\left[ t^{Z^{(n,0)}_{r,s} + Z^{(n,1)}_{r,s}+ \dots + Z^{(n,k)}_{r,s}}\,\Big|\, b_0^{(n,k)} = b_0, b_1^{(n,k)}= b_1, \dots, b_k^{(n,k)}= b_k\right]
=
\frac{t^{b_1\uparrow}}{b_1!} \cdot \dots \cdot\frac{t^{b_k\uparrow}}{b_k!}\cdot  \frac{(st+r)^{b_0 \uparrow}}{(r+s)^{b_0 \uparrow}}.
\end{equation}
Also, by the definition of the random $(r+s)$-composition (Definition \ref{def:r_composition}),
\begin{equation}\label{eq:proof_repr_probab_comp}
\P[b_0^{(n,k)} = b_0, b_1^{(n,k)}= b_1, \dots, b_k^{(n,k)}= b_k] = \frac{(r+s)^{b_0\uparrow}/b_0!}{\binom{n+r+s-1}{k+r+s-1}}.
\end{equation}
Using the formulas
$$
(1-x)^{-t}-1 = \sum_{b=1}^\infty \frac {x^b}{b!} t^{b\uparrow}
\quad
\text{ and }
\quad
(1-x)^{-(st+r)} = \sum_{b_0=0}^\infty \frac {x^{b_0}}{b_0!} (st + r)^{b_0\uparrow},
$$
we obtain
\begin{align*}
G_{n,k}(t)
&=
\frac{1}{\binom{n+r+s-1}{k+r+s-1}} \sum_{(b_0,b_1,\dots, b_k)} \frac{t^{b_1\uparrow}}{b_1!} \dots \frac{t^{b_k\uparrow}}{b_k!} \frac{(st+r)^{b_0 \uparrow}}{b_0!}\\
&= \sum_{(b_0,b_1,\dots, b_k)} \frac{t^{b_1\uparrow}}{b_1!}\cdot \dots \cdot\frac{t^{b_k\uparrow}}{b_k!} \frac{(st+r)^{b_0 \uparrow}}{(r+s)^{b_0 \uparrow}} \cdot \frac{(r+s)^{b_0\uparrow}/b_0!}{\binom{n+r+s-1}{k+r+s-1}},
\end{align*}
where both sums are taken over all incomplete compositions of $n$.  Recognizing on the right-hand side the conditional expectation of $t^{Z^{(n,0)}_{r,s} + Z^{(n,1)}_{r,s}+ \dots + Z^{(n,k)}_{r,s}}$ given $\{b_0^{(n,k)} = b_0, b_1^{(n,k)}= b_1, \dots, b_k^{(n,k)}= b_k\}$, see~\eqref{eq:proof_repr_cond_expect}, and the probability of this event, see~\eqref{eq:proof_repr_probab_comp},  we 
apply the total expectation formula to obtain 
$$
G_{n,k}(t)=\E [t^{\Lah{n}{k}{r}{s}}] = \E [t^{Z^{(n,0)}_{r,s} + Z^{(n,1)}_{r,s}+ \dots + Z^{(n,k)}_{r,s}}].
$$
The proof is complete. 
\end{proof}

Using Corollary~\ref{cor:without_b0_uniform} and recalling the representation of the standard ($r=s=0$) Lah distribution given in~\cite[Proposition 2.3]{kabluchko_marynych_lah} 
we obtain the following.

\begin{corollary}\label{cor:reduction_to_r=s=0}
Let $(b_0^{(n,k)},b_1^{(n,k)},\dots,b_k^{(n,k)})$ be a random $(r+s)$-composition and $(\Lah{m}{k}{0}{0})_{m\geq k}$ 
a sequence of random variables having the standard Lah distributions, which is independent of $(Z^{(n,0)}_{r,s},b_0^{(n,k)})$, where $Z^{(n,0)}_{r,s}$ is as defined in Proposition~\ref{prop:r_lah_representation}. Then,
$$
\Lah{n}{k}{r}{s}~~~\text{has the same distribution as}~~~
Z^{(n,0)}_{r,s}+\Lah{n-b_0^{(n,k)}}{k}{0}{0}.
$$

\end{corollary}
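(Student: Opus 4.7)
The plan is to derive the claim directly from Proposition~\ref{prop:r_lah_representation} by conditioning on $b_0^{(n,k)}$ and invoking both Corollary~\ref{cor:without_b0_uniform} and the standard Lah representation of~\cite[Proposition~2.3]{kabluchko_marynych_lah}. First I would write, using Proposition~\ref{prop:r_lah_representation},
$$
\Lah{n}{k}{r}{s}~\stackrel{\rm d}{=}~Z^{(n,0)}_{r,s} + \bigl(Z^{(n,1)}_{r,s} + \dots + Z^{(n,k)}_{r,s}\bigr),
$$
and then condition on the value $b_0^{(n,k)} = b_0$. The construction of $Z^{(n,0)}_{r,s}$ in Proposition~\ref{prop:r_lah_representation} uses only $b_0^{(n,k)}$ together with an auxiliary family of independent Bernoulli variables, so $Z^{(n,0)}_{r,s}$ is conditionally independent of $(b_1^{(n,k)}, \dots, b_k^{(n,k)})$ and of $(Z^{(n,1)}_{r,s}, \dots, Z^{(n,k)}_{r,s})$ given $b_0^{(n,k)}$.

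Next, by Corollary~\ref{cor:without_b0_uniform}, the conditional distribution of $(b_1^{(n,k)}, \dots, b_k^{(n,k)})$ given $b_0^{(n,k)} = b_0$ is uniform on the set of compositions of $n-b_0$ into $k$ positive parts, and given this composition, $Z^{(n,1)}_{r,s}, \dots, Z^{(n,k)}_{r,s}$ are independent with $Z^{(n,j)}_{r,s} \sim \Stirone{b_j^{(n,k)}}{1}$. The key observation is that this is precisely the two-stage construction which~\cite[Proposition~2.3]{kabluchko_marynych_lah} shows to yield the standard Lah distribution $\Lah{n-b_0}{k}{0}{0}$. Consequently, the conditional distribution of $Z^{(n,1)}_{r,s} + \dots + Z^{(n,k)}_{r,s}$ given $b_0^{(n,k)} = b_0$ equals that of $\Lah{n-b_0}{k}{0}{0}$ and depends on the conditioning only through $b_0^{(n,k)}$.

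Combining these two observations, one can realize $Z^{(n,1)}_{r,s} + \dots + Z^{(n,k)}_{r,s}$ as $\Lah{n - b_0^{(n,k)}}{k}{0}{0}$ picked from a sequence $(\Lah{m}{k}{0}{0})_{m \geq k}$ which is independent of $(Z^{(n,0)}_{r,s}, b_0^{(n,k)})$, and the corollary follows. The only care needed is in bookkeeping the conditional independence: the Bernoulli randomness generating $\Lah{n - b_0^{(n,k)}}{k}{0}{0}$ must be disjoint from that used to construct $Z^{(n,0)}_{r,s}$, which is automatic since Proposition~\ref{prop:r_lah_representation} employs disjoint auxiliary Bernoulli families for the two blocks of summands. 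There is no substantive obstacle here; the argument is essentially routine conditioning together with recognition of the standard Lah construction within the conditional description of Proposition~\ref{prop:r_lah_representation}.
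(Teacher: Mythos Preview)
Your argument is correct and follows precisely the route the paper intends: condition on $b_0^{(n,k)}$, use Corollary~\ref{cor:without_b0_uniform} to see that $(b_1^{(n,k)},\dots,b_k^{(n,k)})$ is conditionally a uniform composition of $n-b_0^{(n,k)}$, and then recognize the sum $Z^{(n,1)}_{r,s}+\dots+Z^{(n,k)}_{r,s}$ as the standard Lah construction of~\cite[Proposition~2.3]{kabluchko_marynych_lah}. The paper states this as a one-line corollary; you have simply spelled out the conditional-independence bookkeeping that makes it work.
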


Corollary~\ref{cor:reduction_to_r=s=0} is designed to derive 
limit theorems for $\Lah{n}{k}{r}{s}$ from the known limit theorems for $\Lah{n}{k}{0}{0}$.

\subsection{Expectation and variance}
We employ the 
notation 
$$
L 
(n,k,r,s):=\E [\Lah{n}{k}{r}{s}].
$$
The following explicit formula can be found in 
Theorem 3.1 of~\cite{kabluchko_steigenberger_r_lah}
\begin{equation}\label{eq:expectation_r=s=0}
L 
(n,k,0,0) = \E [\Lah{n}{k}{0}{0}] = \frac{nk}{n-(k-1)} (H_{n} - H_{k-1}),
\end{equation}
where $H_n:=1+1/2+\dots+1/n$ is the $n$-th harmonic number. For $r,s\geq 0$, put $H_0^{(r,s)}:=0$ and, for $n\in\N$, $H_n^{(r,s)}:=\sum_{m=1}^n s(r+s+m-1)^{-1}$. Plainly, $H_n^{(r,0)}=0$ for all $n\in\N_0$. Corollary~\ref{cor:reduction_to_r=s=0} immediately yields
$$
L
(n,k,r,s)=\E [H_{b_0^{(n,k)}}^{(r,s)}]+\E [L(n-b_0^{(n,k)},k,0,0)],
$$
where $(b_0^{(n,k)},\dots,b_k^{(n,k)})$ is a random $(r+s)$-composition. However, this formula is of little use for practical purposes. Note that Proposition~\ref{prop:r_lah_representation} implies that 
$$
L(n,k,r,s)=\E [H_{b_0^{(n,k)}}^{(r,s)}]+k\E [H_{b_j^{(n,k)}}].
$$
We do 
not derive exact formulas for either $L(n,k,r,s)$ or the variance, although it can be done relatively straightforwardly by generalizing the proofs given in~\cite[Theorems~3.1, 3.7]{kabluchko_steigenberger_r_lah}. Instead, our focus is 
on asymptotic estimates, which are 
crucial for proving limit theorems for the Lah distributions in subsequent sections.
\begin{lemma}
For $r,s\geq 0$, the following estimate holds true
\begin{equation}\label{eq:estimateH}
\Big|H_n^{(r,s)}-s\log\Big(\frac{r+s+(n-1)_+}{r+s}\Big)\Big|\leq \frac{s}{r+s},\quad n\in\N_0.
\end{equation}
\end{lemma}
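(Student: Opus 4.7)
The plan is a classical integral/Riemann sum comparison, so there is essentially no deep obstacle; the whole argument is two lines of sandwiching.

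For $n=0$ the inequality is trivial since $H_0^{(r,s)}=0$ and $(n-1)_+=0$, so $\log((r+s)/(r+s))=0$. So I will assume $n\geq 1$ and (harmlessly) $r+s>0$; the degenerate case $s=0$ also gives $0\leq 0$.

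Next, I rewrite both terms in a form suitable for comparison. By definition
\[
H_n^{(r,s)} \;=\; s\sum_{j=0}^{n-1}\frac{1}{r+s+j},
\qquad
s\log\frac{r+s+n-1}{r+s} \;=\; s\int_{0}^{n-1}\frac{\dd t}{r+s+t}.
\]
Since the function $t\mapsto 1/(r+s+t)$ is strictly decreasing on $[0,\infty)$, on each subinterval $[j,j+1]$ (with $0\leq j\leq n-2$) the integrand lies between $1/(r+s+j+1)$ and $1/(r+s+j)$. Summing these bounds telescopes to
\[
\sum_{j=1}^{n-1}\frac{1}{r+s+j}
\;\leq\;
\int_{0}^{n-1}\frac{\dd t}{r+s+t}
\;\leq\;
\sum_{j=0}^{n-2}\frac{1}{r+s+j}.
\]
Subtracting from $\sum_{j=0}^{n-1}1/(r+s+j)$ gives
\[
\frac{1}{r+s+n-1}
\;\leq\;
\sum_{j=0}^{n-1}\frac{1}{r+s+j}-\int_{0}^{n-1}\frac{\dd t}{r+s+t}
\;\leq\;
\frac{1}{r+s},
\]
so the difference is non-negative and bounded by $1/(r+s)$. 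Multiplying by $s\geq 0$ yields
\[
0\;\leq\; H_n^{(r,s)}-s\log\frac{r+s+n-1}{r+s}\;\leq\; \frac{s}{r+s},
\]
which implies the claimed absolute-value bound~\eqref{eq:estimateH}. The only minor care is in splitting off the case $n=0$ (to avoid $\log(\tfrac{r+s-1}{r+s})$) via the $(n-1)_+$ convention, and noting that the case $r+s=0$ forces $s=0$, under which both sides are zero.
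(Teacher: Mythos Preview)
Your proof is correct and follows essentially the same integral-comparison approach as the paper: both bound the harmonic-type sum above and below by the integral $\int_0^{(n-1)_+}\frac{s\,\dd x}{r+s+x}$ using monotonicity of $t\mapsto 1/(r+s+t)$. You are slightly more explicit than the paper (which simply writes the upper bound and says ``an analogous estimate from below'' for the other direction), and you even obtain the sharper lower bound $s/(r+s+n-1)$, but the idea is identical.
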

\begin{proof}
For $n\in\N_0$, 
$$
H_n^{(r,s)}=\sum_{m=1}^n \frac{s}{r+s+m-1}\leq 
\frac{s}{r+s}+\int_0^{(n-1)_+}\frac{s\,{\rm d}x}{r+s+x} 
=\frac{s}{r+s}+s\log\Big(\frac{r+s+(n-1)_+}{r+s}\Big).
$$
This together with an analogous estimate from below completes the proof of~\eqref{eq:estimateH}.
\end{proof}

\begin{lemma}\label{lem:meanvar}
Fix some $r,s\geq 0$ and let $(b_0^{(n,k)},\dots,b_k^{(n,k)})$ be a random $(r+s)$-composition. Assume that $n,k\to\infty$ such that $k/n \to 0$. Then, for each 
fixed $a\geq 1$,
\begin{equation}\label{lem:meanvar_a}
\E\Big[\big(H_{b_0^{(n,k)}}^{(r,s)}\big)^a+\sum_{j=1}^k \big(H_{b_j^{(n,k)}}\big)^a \Big]~\simeq~\E\Big[\sum_{j=1}^k \big(H_{b_j^{(n,k)}}\big)^a \Big]~\simeq~k(\log (n/k))^a,\quad n,k\to\infty.
\end{equation}
In particular, 
\begin{equation}\label{lem:meanvar_a1}
L(n,k,r,s)\simeq \E\Big[\sum_{j=1}^k H_{b_j^{(n,k)}}\Big]\simeq k\log (n/k),\quad n\to\infty.
\end{equation}
Furthermore,
\begin{equation}\label{lem:meanvar_b}
{\rm Var}\,\Big[\sum_{j=1}^k H_{b_j^{(n,k)}}\Big]=O(k),\quad n\to\infty
\end{equation}
and
\begin{equation}\label{lem:meanvar_c}
{\rm Var}\,\Big[H_{b_0^{(n,k)}}^{(r,s)}+\sum_{j=1}^k H_{b_j^{(n,k)}}\Big]=O(k),\quad n\to\infty.
\end{equation}
\end{lemma}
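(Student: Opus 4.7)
The plan is to handle the three claims \eqref{lem:meanvar_a}, \eqref{lem:meanvar_b} and \eqref{lem:meanvar_c} in turn, relying on the following ingredients: the exchangeability of $b_1^{(n,k)},\dots,b_k^{(n,k)}$ coming from the Dirichlet--multinomial representation in Proposition~\ref{prop:coupling1}; the weak limits $b_1^{(n,k)}\,k/n\todistrk\Exp{1}$ and $b_0^{(n,k)}\,k/n\todistrk\Gammadistr{r+s}{1}$ provided by Proposition~\ref{prop:r_composition_marginal_distr_asymptotic_alpha_0}; the comparison estimate~\eqref{eq:estimateH} between $H_m^{(r,s)}$ and $s\log m$; and the negative association of the random $(r+s)$-composition from Corollary~\ref{cor:assoc}.

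For~\eqref{lem:meanvar_a} and~\eqref{lem:meanvar_a1}, I would first use exchangeability to write $\E\bigl[\sum_{j=1}^k (H_{b_j^{(n,k)}})^a\bigr]=k\,\E\bigl[(H_{b_1^{(n,k)}})^a\bigr]$. The weak limit $b_1^{(n,k)}\,k/n\todistrk\Exp{1}$ combined with $H_m=\log m+\gamma+o(1)$ gives $H_{b_1^{(n,k)}}/\log(n/k)\toprobab 1$, and I would upgrade this to convergence of the $a$-th moment by establishing uniform integrability of $(H_{b_1^{(n,k)}}/\log(n/k))^{a+\delta}$ for some $\delta>0$. The required polynomial tail bounds on $b_1^{(n,k)}/(n/k)$, uniform in $(n,k)$, come from the beta-binomial representation~\eqref{eq:b0_and_bj_via_betabinomial} and Chebyshev's inequality applied both to the outer beta factor and to the inner conditional binomial. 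For the $b_0$-term, the estimate~\eqref{eq:estimateH} together with the weak limit $b_0^{(n,k)}\,k/n\todistrk\Gammadistr{r+s}{1}$ (again upgraded via uniform integrability) gives $\E\bigl[(H_{b_0^{(n,k)}}^{(r,s)})^a\bigr]=O((\log(n/k))^a)$, which is negligible compared with the $k$-term sum $k(\log(n/k))^a$.

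For~\eqref{lem:meanvar_b}, the monotonicity of $m\mapsto H_m$ combined with Corollary~\ref{cor:assoc} yields
\[
\Var\Big[\sum_{j=1}^k H_{b_j^{(n,k)}}\Big]\leq\sum_{j=1}^k\Var[H_{b_j^{(n,k)}}]=k\,\Var[H_{b_1^{(n,k)}}],
\]
so it suffices to verify $\Var[H_{b_1^{(n,k)}}]=O(1)$, which follows from the uniform integrability established above since the limiting random variable $\log\Exp{1}+\gamma$ has finite variance. For~\eqref{lem:meanvar_c}, expand
\[
\Var\Big[H_{b_0^{(n,k)}}^{(r,s)}+\sum_{j=1}^k H_{b_j^{(n,k)}}\Big]=\Var[H_{b_0^{(n,k)}}^{(r,s)}]+\Var\Big[\sum_{j=1}^k H_{b_j^{(n,k)}}\Big]+2\Cov\Big[H_{b_0^{(n,k)}}^{(r,s)},\sum_{j=1}^k H_{b_j^{(n,k)}}\Big].
\]
The covariance is non-positive by Corollary~\ref{cor:assoc} applied to the disjoint index subsets $\{0\}$ and $\{1,\dots,k\}$ with monotone test functions; the middle term is $O(k)$ by~\eqref{lem:meanvar_b}; and $\Var[H_{b_0^{(n,k)}}^{(r,s)}]=O(1)$ by a third instance of the uniform integrability argument combined with~\eqref{eq:estimateH}.

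The hard part will be making rigorous the uniform integrability estimates driving all three steps: one needs tail bounds on $b_0^{(n,k)}/(n/k)$ and $b_1^{(n,k)}/(n/k)$, on both ends of the distribution, that hold uniformly in the pair $(n,k)$ with $k/n\to 0$ rather than merely in the limit. The lower-tail control for $b_1^{(n,k)}$ is the most delicate point, as $b_1^{(n,k)}\geq 1$ deterministically and a non-negligible mass can concentrate near $b_1^{(n,k)}=O(1)$ whenever the outer beta factor in~\eqref{eq:b0_and_bj_via_betabinomial} happens to be small; this is where the detailed two-step Chebyshev estimate on the beta-binomial representation will be needed.
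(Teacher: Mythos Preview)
Your overall strategy matches the paper's: reduce to a single marginal by exchangeability, use negative association (Corollary~\ref{cor:assoc}) to bound the variance by the sum of individual variances, and control the marginals $b_0^{(n,k)},b_1^{(n,k)}$ via their explicit beta-binomial laws. The paper proceeds exactly this way.

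There are two technical differences worth noting. For~\eqref{lem:meanvar_a} the paper avoids uniform integrability altogether: it gets the $\liminf$ from Fatou and the $\limsup$ directly from Jensen's inequality, using that $x\mapsto(\log x+C)^a$ is concave on $[1,\infty)$ once $C>a-1$, together with $\E[b_1^{(n,k)}]\le n/k$. This is lighter than your route and does not touch the lower tail of $b_1^{(n,k)}$ at all. For the crucial bound $\Var[H_{b_1^{(n,k)}}]=O(1)$ feeding into~\eqref{lem:meanvar_b}--\eqref{lem:meanvar_c}, the paper establishes $\sup_{n,k}\E|\log b_1^{(n,k)}-\log(n/k)|^p<\infty$ by observing from~\eqref{eq:r_composition_marginal_distr_b_j} that $j\mapsto\P[b_1^{(n,k)}=j]$ is monotone decreasing, bounding $\P[b_1^{(n,k)}=j]$ by $\P[b_1^{(n,k)}=1]=(k+r+s-1)/(n+r+s-1)$, and recognising the resulting sum as a Riemann approximation of $\int_0^1|\log x|^p\,\dd x$. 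Your two-step Chebyshev on the beta-binomial does work (splitting on $\{B\le c e^{-t}/k\}$ gives a contribution $O(e^{-t})$, and on the complement Chebyshev on the binomial gives $O((k/n)(t+1))$, which integrates against $t^{p-1}$ to $O(1)+O((k/n)\log^{p+1}(n/k))=O(1)$), but it is messier than the monotonicity trick.

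One point of your write-up needs sharpening: the uniform integrability you set up for~\eqref{lem:meanvar_a} is of $(H_{b_1^{(n,k)}}/\log(n/k))^{a+\delta}$, and this by itself only yields $\Var[H_{b_1^{(n,k)}}]=o((\log(n/k))^2)$, not $O(1)$. What you actually need for~\eqref{lem:meanvar_b} is uniform $L^2$-boundedness of the \emph{centered} quantity $H_{b_1^{(n,k)}}-\log(n/k)$. Your two-sided tail bounds on $b_1^{(n,k)}k/n$ do deliver this (they give $\sup\E|\log(b_1^{(n,k)}k/n)|^p<\infty$, which is exactly the paper's estimate~\eqref{eq:p_moment_bounded}), but you should state that explicitly rather than appealing back to the UI of the ratio.
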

\begin{proof}
When dealing with $H_{b_0^{(n,k)}}^{(r,s)}$ we always assume that $s>0$, for $H_{b_0^{(n,k)}}^{(r,0)}=0$ a.s.

For a proof of~\eqref{lem:meanvar_a} it suffices to check that $\E [(H_{b_1^{(n,k)}})^a]\simeq (\log (n/k))^a$ and $\E [(H_{b_0^{(n,k)}}^{(r,s)})^a]=O(\log (n/k))$ as $n,k\to\infty$. In view of~\eqref{eq:r_composition_marginal_distr_asymptotic_alpha_0},
$$
\frac{\log b_1^{(n,k)}}{\log(n/k)}~\overset{\P}{\to}~1,\quad n,k\to\infty.
$$
This in combination with $|H_n-\log n|\leq C$ for all $n\in \N$ and a constant $C>0$ entails
$$
\frac{(H_{b_1^{(n,k)}})^a}{(\log(n/k))^a}~\overset{\P}{\to}~1,\quad n,k\to\infty
$$
and thereupon by Fatou's lemma ${\liminf}_{n,k\to\infty}(\E [(H_{b_1^{(n,k)}})^a]/(\log(n/k))^a)\geq 1$. To prove the converse inequality for the upper limit, note that $x\mapsto (\log x + C)^{a}$ is concave on $[1,\infty)$ for each 
$C>a-1$. In view of this we take 
$C$ sufficiently large and write with the help of Jensen's inequality
\begin{multline*}
\E [(H_{b_1^{(n,k)}})^a]\leq \E [(\log b_1^{(n,k)}+C)^a]\leq (\log \E [b_1^{(n,k)}]+C)^a\\
=(\log \E [(n-b_0^{(n,k)})/k]+C)^a\leq (\log (n/k)+C)^a.
\end{multline*}
Using~\eqref{eq:estimateH}, we conclude that
\begin{equation*}
(H_{n}^{(r,s)})^a\leq 2^{a-1}\left(\left(\frac{s}{r+s}\right)^a+s^a\log^a\Big(\frac{r+s+(n-1)_+}{r+s}\Big)\right).
\end{equation*}
Further, Jensen's inequality and formula~\eqref{eq:r_composition_expectation} with $r$ replaced by $r+s$ yields
\begin{align}
\E [(H_{b_0^{(n,k)}}^{(r,s)})^a]-2^{a-1}\left(\frac{s}{r+s}\right)^a &\leq 2^{a-1}s^a\E\left[\Big(\log\Big(\frac{r+s+(b_0^{(n,k)}-1)_+}{r+s}\Big)+C\Big)^a\right]\notag\\
&\leq 2^{a-1}s^a\left[\Big(\log\Big(\frac{r+s+\E [b_0^{(n,k)}]}{r+s}\Big)+C\Big)^a\right]\notag\\
&=2^{a-1}s^a\left[\Big(\log\Big(1+\frac{n-k}{k+r+s}\Big)+C\Big)^a\right].\label{eq:Z_n_0_mean_estimate}
\end{align}
The right-hand side is $O((\log (n/k))^a)$. The proof of~\eqref{lem:meanvar_a} is complete.

We now turn to the proof of~\eqref{lem:meanvar_b} and~\eqref{lem:meanvar_c}. The functions $x\mapsto H_{\lfloor x\rfloor}^{(r,s)}$ and $x\mapsto H_{\lfloor x\rfloor}$ are nondecreasing on $[0,\infty)$ and $[1,\infty)$, respectively. An application of Corollary~\ref{cor:assoc}, with $r$ replaced by $r+s$, then yields, for $j\in\{1,\dots, k\}$,
$$
\E\big[H_{b_0^{(n,k)}}^{(r,s)}H_{b_j^{(n,k)}}\big]\leq \E \big[H_{b_0^{(n,k)}}^{(r,s)}\big]\E \big[H_{b_j^{(n,k)}}\big]
$$
and, for $i,j\in\{1,\dots,k\}$, $i\neq j$,
$$
\E\big[H_{b_i^{(n,k)}}H_{b_j^{(n,k)}}\big]\leq \E \big[H_{b_i^{(n,k)}}\big]\E \big[H_{b_j^{(n,k)}}\big].
$$
Thus, both~\eqref{lem:meanvar_b} and~\eqref{lem:meanvar_c} follow provided we can prove
\begin{equation}\label{eq:uniform}
{\rm Var}\,H_{b_0^{(n,k)}}^{(r,s)}=O(1)\quad\text{and}\quad {\rm Var}\,H_{b_1^{(n,k)}}=O(1),\quad n,k\to\infty.
\end{equation}
To check this, we show that, for each 
$p\geq 1$,
\begin{multline}\label{eq:p_moment_bounded}
\sup_{n,k\in \N, k\leq n} \E \left|\log \big(r+s+(b_0^{(n,k)}-1)_+\big)- \log(n/k)\right|^p <\infty\quad\text{and}\\
\sup_{n,k\in \N, k\leq n} \E \left|\log b_1^{(n,k)}- \log(n/k)\right|^p <\infty.
\end{multline}
Our argument 
is a slight adaptation of the proof of formula (5.33) in~\cite{kabluchko_marynych_lah}. We start by proving the second inequality in~\eqref{eq:p_moment_bounded}. In view of $|x|=x_++x_-$ for $x\in\mathbb{R}$ and $(x+y)^p\leq 2^{p-1}(x^p+y^p)$ for $x,y\geq 0$ it suffices to show that
$$
\sup_{n,k\in\N,k\leq n}\E[\log_+((k/n)b_1^{(n,k)})]^p<\infty\quad\text{and}\quad\sup_{n,k\in\N, k\leq n}\E[\log_-((k/n)b_1^{(n,k)})]^p<\infty.
$$
The former inequality follows from $[\log_+ x]^p=O(x)$ as $x\to\infty$ and the estimate $(k/n)\E[b_1^{(n,k)}]=(n-\E[b_0^{(n,k)}])/n\leq 1$. To prove the latter inequality, we first conclude with the help of~\eqref{eq:r_composition_marginal_distr_b_j} that
$$
\frac{\P\big[b_1^{(n,k)} = j\big]}{\P\big[b_1^{(n,k)} =j+1\big]}=1+\frac{k+r+s-2}{n-j-k+1}>1
$$
for all $k> (2-r-s)_{+}$ and all $j\in\{1,\dots, n-k\}$. Thus, the sequence $j\mapsto \P\big[b_1^{(n,k)} = j\big]$ is strictly decreasing. This entails
\begin{multline*}
\E [\log_-((k/n)b_1^{(n,k)})]^p=\sum_{j=1}^{\lfloor n/k\rfloor}[\log_-(jk/n)]^p\P[b_1^{(n,k)}=j]\leq \P[b_1^{(n,k)}=1]\sum_{j=1}^{\lfloor n/k \rfloor}[\log_-(jk/n)]^p\\=\frac{k+q-1}{n+q-1}\sum_{j=1}^{\lfloor n/k\rfloor}
[\log_-(jk/n)]^p~\to~\int_0^1 |\log x|^p{\rm d}x<\infty,\quad n,k\to\infty.
\end{multline*}
The proof of the second inequality in~\eqref{eq:p_moment_bounded} is complete.

Turning to the first inequality in~\eqref{eq:p_moment_bounded}, we conclude that it is enough to show that
\begin{multline*}
\sup_{n,k\in\N,k\leq n}\E[\log_+((k/n)(r+s+(b_0^{(n,k)}-1)_+)]^p<\infty\quad \text{and}\\
\sup_{n,k\in\N, k\leq n}\E[\log_-((k/n)(r+s+(b_0^{(n,k)}-1)_+))]^p<\infty.
\end{multline*}
The first of these is secured by
\begin{multline*}
(k/n)\E[r+s+(b_0^{(n,k)}-1)_+]\leq r+s+ (k/n)\E[b_0^{(n,k)}]\\
=r+s+(k/n)((r+s)(n-k)/(k+r+s))\leq 4(r+s).
\end{multline*}
We have used~\eqref{eq:r_composition_expectation} for the equality. As for the second, we write
\begin{multline*}
\E[\log_-((k/n)(r+s+(b_0^{(n,k)}-1)_+))]^p=[\log_-((r+s)(k/n))]^p\P[(b_0^{(n,k)}-1)_+=0]\\+\sum_{j=1}^{\lfloor n/k-(r+s)\rfloor}[\log_-((k/n)(r+s+j))]^p\P[b_0^{(n,k)}=j+1]=:I_1(n,k)+I_2(n,k).
\end{multline*}
In view of~\eqref{eq:r_composition_marginal_distr_b_0}, $\P[(b_0^{(n,k)}-1)_+=0]\simeq (r+s+1)(k/n)^{r+s}$ as $n,k\to\infty$. This implies that the first summand $I_1$ vanishes. Now we analyze $I_2$. Since
$$
\frac{(n-j-1)!}{(n-j-k)!}\frac{(n-j-1-k)!}{(n-j-2)!}=\frac{n-j-1}{n-j-k}>1
$$
for all $k\geq 2$ and all $j,n\in\N$ satisfying $n-j-k\geq 1$, we conclude that the sequence $j\mapsto \binom{n-j-1}{k-1}$ is decreasing. Thus, for $j\geq 1$,
$$
\frac{\binom{n-j-2}{k-1}}{\binom{n+r+s-1}{k+r+s-1}}\leq \frac{\binom{n-3}{k-1}}{\binom{n+r+s-1}{k+r+s-1}}~\simeq~\Big(\frac{k}{n}\Big)^{r+s},\quad n,k\to\infty.
$$
By a standard estimate for the gamma function, there exists a constant $c_{r,s}>0$ such that, for all $j\geq 1$, $$\binom{r+s+j}{1+j}\leq c_{r,s} j^{r+s-1}.
$$
Combining fragments together and recalling~\eqref{eq:r_composition_marginal_distr_b_0} we infer
$$
I_2(n,k)=O\Big(\frac{k}{n}\Big)\sum_{j=1}^{\lfloor n/k-(r+s)\rfloor}[\log_-((k/n)(r+s+j))]^p \Big(\frac{kj}{n}\Big)^{r+s-1}=O(1),\quad n,k\to\infty.
$$
The last equality is justified by a Riemann approximation $$\lim_{n,k\to\infty}\frac{k}{n}\sum_{j=1}^{\lfloor n/k-(r+s)\rfloor}[\log_-((k/n)(r+s+j))]^p \Big(\frac{kj}{n}\Big)^{r+s-1}=\int_0^1 |\log x|^p x^{r+s-1}{\rm d}x<\infty.
$$
This completes the proof of~\eqref{eq:p_moment_bounded}.

We 
only show how to obtain the first relation in~\eqref{eq:uniform}, the argument for the second is analogous. In view of~\eqref{eq:estimateH} and the first inequality in~\eqref{eq:p_moment_bounded}, $$
\big|\E [H_{b_0^{(n,k)}}^{(r,s)}]- s\log(n/k)\big|=O(1)\quad\text{and}\quad \E\big[H_{b_0^{(n,k)}}^{(r,s)}- s\log (n/k)\big]^2=O(1),\quad n,k\to\infty.
$$
The latter entails
$$
\E\big[H_{b_0^{(n,k)}}^{(r,s)}\big]^2=2 s\log(n/k)\E [H_{b_0^{(n,k)}}^{(r,s)}]-(s\log(n/k))^2+O(1),\quad n,k\to\infty,
$$
whence
$$
{\rm Var}\,[H_{b_0^{(n,k)}}^{(r,s)}]=\E\big[H_{b_0^{(n,k)}}^{(r,s)}\big]^2-\big(\E\big[H_{b_0^{(n,k)}}^{(r,s)}\big]\big)^2=-(\E H_{b_0^{(n,k)}}^{(r,s)}- s\log (n/k))^2+O(1)=O(1),\quad n,k\to\infty.$$
The proof is complete.
\end{proof}
\begin{remark}\label{rem:expect}
A perusal of the proof of~\eqref{lem:meanvar_a} reveals that, for fixed $k$,
$$
\E\Big[H_{b_0^{(n,k)}}\Big]~=~s\log n+O(1),\quad n\to\infty,
$$
whence
$$
L(n,k,r,s)~=~(k+s)\log n +O(1),\quad n\to\infty.
$$
\end{remark}

\section{Limit theorems for the \texorpdfstring{$(r,s)$}{(r,s)}-Lah distribution}\label{sec:Lah_limits}
In this concluding section we prove central limit theorems for the $(r,s)$-Lah distribution. Not only do these theorems generalize the special cases $r=s=0$ and $r=s$ investigated 
in~\cite{kabluchko_marynych_lah} and~\cite{kabluchko_steigenberger_r_lah}, but they also treat asymptotic regimes not covered in those 
papers. Let  $\Normal{0}{\sigma^2}$ denote a normal random variable with zero mean 
and variance $\sigma^2>0$. Throughout this section $(b_0^{(n,k)},b_1^{(n,k)},\dots,b_k^{(n,k)})$ denotes the random $(r+s)$-composition.
\begin{theorem}[CLT in the constant $k$ regime]
\label{thm:clt_lah_r_k_const}
Let $k\in\N_0$ and $r,s \in [0, \infty)$ be fixed and such that $\max\{k,s\} > 0$. Then,
\begin{align*}
\frac{\Lah{n}{k}{r}{s} - (k+s)\log n}{\sqrt{(k+s)\log n}} \todistr \Normal{0}{1}.
\end{align*}
\end{theorem}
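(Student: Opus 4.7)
The strategy is to leverage the stochastic decomposition from Proposition~\ref{prop:r_lah_representation}, $\Lah{n}{k}{r}{s}\stackrel{d}{=} Z^{(n,0)}_{r,s}+\sum_{j=1}^{k}Z^{(n,j)}_{r,s}$, in which the summands are conditionally independent sums of independent Bernoulli variables given the random $(r+s)$-composition $\bxi_n:=(b_0^{(n,k)},\dots,b_k^{(n,k)})$. Since $k$ is fixed, Proposition~\ref{prop:r_composition_marginal_convergence_fixed_k} gives $\bxi_n/n\todistr(D_0,\dots,D_k)\sim\Dir{r+s,1,\dots,1}$, whose components are a.s.\ strictly positive, so $b_j^{(n,k)}\overset{\P}{\to}\infty$ and $\log b_j^{(n,k)}=\log n+O_\P(1)$ for every $j$. (The boundary case $k=0$ is already covered by the corollary to Proposition~\ref{prop:r_stirl_repres}, so we may assume $k\geq 1$.)

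Next I would compute conditional means and variances. Using $\E[Z^{(n,j)}_{r,s}\mid\bxi_n]=H_{b_j^{(n,k)}}$ and $\Var[Z^{(n,j)}_{r,s}\mid\bxi_n]=\sum_{m=1}^{b_j^{(n,k)}}m^{-1}(1-m^{-1})$ for $j\geq 1$, and the analogous expressions for $j=0$ (with bound~\eqref{eq:estimateH}), the first step yields
\begin{equation*}
\mu_n:=\sum_{j=0}^{k}\E[Z^{(n,j)}_{r,s}\mid\bxi_n]=(k+s)\log n+O_\P(1),\qquad \sigma_n^2:=\sum_{j=0}^{k}\Var[Z^{(n,j)}_{r,s}\mid\bxi_n]=(k+s)\log n\bigl(1+o_\P(1)\bigr).
\end{equation*}
Thus the deterministic centering $(k+s)\log n$ and scaling $\sqrt{(k+s)\log n}$ will differ from the conditional ones only by $o_\P(\sqrt{\log n})$ and a factor $1+o_\P(1)$ respectively, which are harmless after Slutsky.

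The core step is a conditional CLT via characteristic functions. Writing $\phi_{n,j}(u):=\E[\eee^{\ii u Z^{(n,j)}_{r,s}}\mid\bxi_n]$, Goncharov's theorem (for $j\geq 1$) and Lindeberg--Feller (for $j=0$, when $s>0$) yield, on the high-probability event $\{b_j^{(n,k)}\geq\sqrt{n}\text{ for every }j\}$, the Taylor expansion
\begin{equation*}
\phi_{n,j}\!\left(\frac{t}{\sqrt{(k+s)\log n}}\right)=\exp\!\left(\frac{\ii t\,\E[Z^{(n,j)}_{r,s}\mid\bxi_n]}{\sqrt{(k+s)\log n}}-\frac{t^2\,\Var[Z^{(n,j)}_{r,s}\mid\bxi_n]}{2(k+s)\log n}+o(1)\right),
\end{equation*}
uniformly on compact sets in $t$. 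Taking the product over $j=0,\dots,k$ and invoking the conditional independence and the estimates for $\mu_n,\sigma_n^2$ above yields
\begin{equation*}
\exp\!\bigl(-\ii t\sqrt{(k+s)\log n}\bigr)\prod_{j=0}^{k}\phi_{n,j}\!\left(\frac{t}{\sqrt{(k+s)\log n}}\right)\overset{\P}{\longrightarrow}\eee^{-t^2/2}.
\end{equation*}
Since the left-hand side is bounded by $1$ in absolute value, bounded convergence upgrades this to convergence of the unconditional characteristic function of $(\Lah{n}{k}{r}{s}-(k+s)\log n)/\sqrt{(k+s)\log n}$ to $\eee^{-t^2/2}$, and L\'evy's continuity theorem concludes the proof.

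The main technical obstacle is the uniform Taylor expansion of $\phi_{n,j}$ in the last step despite $b_j^{(n,k)}$ being random: one must ensure the error terms are $o_\P(1)$ uniformly. This is handled by Berry--Esseen-type bounds for the sum of $b$ independent Bernoullis whose rate depends only on $b$ and the sum of third moments, both of which are controlled on the event $\{b_j^{(n,k)}\geq\sqrt n\}$ by the explicit form of the success probabilities (harmonic for $j\geq 1$, of order $1/m$ for $j=0$); everything is routine once one restricts to this event and uses that its complement has probability tending to $0$ by Step~1.
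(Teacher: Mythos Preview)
Your argument is correct, but it follows a different route from the paper's proof. The paper invokes Corollary~\ref{cor:reduction_to_r=s=0} to write $\Lah{n}{k}{r}{s}\stackrel{d}{=}Z^{(n,0)}_{r,s}+\Lah{n-b_0^{(n,k)}}{k}{0}{0}$, then imports the CLT for the $(0,0)$-Lah distribution from~\cite{kabluchko_marynych_lah} together with a \emph{functional} CLT for the partial sums $\sum_{m\le n^t}\Bern{s/(r+s+m-1)}$, so that after Slutsky and the composition theorem the two pieces contribute independent Gaussian limits with variances $k$ and $s$. Your proof instead works directly with the full decomposition of Proposition~\ref{prop:r_lah_representation}, conditioning on the entire composition $(b_0^{(n,k)},\dots,b_k^{(n,k)})$ and expanding the conditional characteristic function of each $Z^{(n,j)}_{r,s}$ to second order.

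The trade-off: the paper's approach is modular (it reuses the $r=s=0$ case rather than reproving it) but pays for this with a functional limit theorem. Your approach is self-contained and more elementary, and is in fact the same conditioning-plus-Berry--Esseen strategy the paper itself adopts in the \emph{intermediate} regime (Theorem~\ref{thm:clt_lah_r_interm}); you simply observe that for fixed $k$ the number of summands is bounded, so a direct second-order Taylor expansion of each $\log\phi_{n,j}$ suffices in place of the full Berry--Esseen machinery. One small remark: your reference to ``the corollary to Proposition~\ref{prop:r_stirl_repres}'' for the case $k=0$ should point to the CLT stated just \emph{before} that proposition (the paper's Corollary following the three proofs of Proposition~\ref{prop:rEwens_cycles}).
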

\begin{proof}
We assume that $r+s>0$. The proof hinges on Corollary~\ref{cor:reduction_to_r=s=0} and a known CLT in the case of constant $k$ and $r=s=0$, see Theorem 4.2 in~\cite{kabluchko_marynych_lah}. A classic functional limit theorem for independent Bernoulli variables together with Theorem 4.2 in~\cite{kabluchko_marynych_lah} imply
\begin{multline}\label{eq:thm:clt_lah_r_k_const_proof1}
\left(\left(\frac{\sum_{m=1}^{\lfloor n^t\rfloor}\Bern{s/(r+s+m-1)}-ts\log n}{\sqrt{s\log n}}\right)_{t\geq 0},\frac{\Lah{n-b_0^{(n,k)}}{k}{0}{0}-k\log (n-b_0^{(n,k)})}{\sqrt{k\log (n-b_0^{(n,k)})}}\right)\\
\todistr~\left((B(t))_{t\geq 0},\Normal{0}{1}\right)
\end{multline}
with the components on both sides being independent and $(B(t))_{t\geq 0}$ being a 
standard Brownian motion. According to 
Proposition~\ref{prop:r_composition_marginal_convergence_fixed_k}, $b_0^{(n,k)}/n \todistr {\rm Beta}\,[r+s; k]$, whence 
\begin{equation}\label{eq:thm:clt_lah_r_k_const_proof2}
\frac{\log_{+} b_0^{(n,k)}}{\log n}~\overset{\P}{\to}~1\quad \text{and}\quad\frac{\log (n-b_0^{(n,k)})-\log n}{\sqrt{\log n}}~\overset{\P}{\to}~0,\quad n\to\infty.
\end{equation}
Using continuity of the composition operation and Slutsky's lemma we conclude from~\eqref{eq:thm:clt_lah_r_k_const_proof1} and~\eqref{eq:thm:clt_lah_r_k_const_proof2} that
\begin{equation}\label{eq:thm:clt_lah_r_k_const_proof3}
\left(\frac{Z^{(n,0)}_{r,s}-s\log_{+} b_0^{(n,k)}}{\sqrt{\log n}}, 
\frac{\Lah{n-b_0^{(n,k)}}{k}{0}{0}-k\log n}{\sqrt{\log n}}\right)~\todistr~\left(\sqrt{s}B(1),\sqrt{k}\Normal{0}{1}\right).
\end{equation}
By adding the coordinates and using that, see Proposition~\ref{prop:r_composition_marginal_convergence_fixed_k},
$$
\frac{\log_{+} b_0^{(n,k)}-\log n}{\sqrt{\log n}}~\overset{\P}{\to}~0,\quad n\to\infty,
$$
we obtain the desired statement. The proof is complete.
\end{proof}

Theorem~\ref{thm:clt_lah_r_k_const} was previously known in case $r=s>0$  from~\cite{kabluchko_steigenberger_r_lah} and in case $r=s=0$ from~\cite{kabluchko_marynych_lah}. The next theorem is new even in the standard scenario $r=s=0$.

\begin{theorem}[CLT in the intermediate regime]\label{thm:clt_lah_r_interm}
Fix some $r,s\geq 0$ and let $n\to\infty$, $k\to\infty$ such that $k/n \to 0$. Then,
\begin{align*}
\frac{\Lah{n}{k}{r}{s} - L(n,k,r,s)}{\sqrt{k\log (n/k)}}~\todistrk~\Normal{0}{1}.
\end{align*}
\end{theorem}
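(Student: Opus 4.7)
My strategy is to exploit the conditional Bernoulli representation of $\Lah{n}{k}{r}{s}$ provided by Proposition~\ref{prop:r_lah_representation}. Let $\mathcal{G}_n$ be the $\sigma$-algebra generated by the $(r+s)$-composition $(b_0^{(n,k)},\dots,b_k^{(n,k)})$. Given $\mathcal{G}_n$, the variable $\Lah{n}{k}{r}{s}$ is distributed as a sum of independent Bernoullis with parameters $s/(r+s+m-1)$ for $m=1,\dots,b_0^{(n,k)}$ and $1/m$ for $m=1,\dots,b_j^{(n,k)}$, $j=1,\dots,k$; its conditional mean equals $M_n:=H_{b_0^{(n,k)}}^{(r,s)}+\sum_{j=1}^k H_{b_j^{(n,k)}}$ and its conditional variance $V_n=M_n-R_n$ satisfies $0\leq R_n\leq H_{b_0^{(n,k)}}^{(r,s)}+(\pi^2/6)k=O_P(k)$ by Lemma~\ref{lem:meanvar}.

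Setting $v_n:=k\log(n/k)$, $u:=t/\sqrt{v_n}$, and using the uniform Taylor expansion $\log(1+p(e^{iu}-1))=ipu-p(1-p)u^2/2+O(p|u|^3)$ in each Bernoulli factor, the conditional characteristic function of $\Lah{n}{k}{r}{s}-M_n$ at $u$ equals $\exp(-u^2V_n/2+\Delta_n)$ with $|\Delta_n|\leq C|t|^3M_n/v_n^{3/2}$. Lemma~\ref{lem:meanvar} yields $\E M_n=L(n,k,r,s)\simeq v_n$ and $\Var(M_n)=O(k)$, hence $M_n/v_n\to 1$ in $L^2$, which forces $\Delta_n\to 0$ in probability and $(M_n-L(n,k,r,s))/\sqrt{v_n}\to 0$ in $L^2$. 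The relation $V_n/v_n\to 1$ in probability then follows from $R_n=O_P(k)=o_P(v_n)$.

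Combining these facts, the conditional characteristic function of $(\Lah{n}{k}{r}{s}-L(n,k,r,s))/\sqrt{v_n}$ given $\mathcal{G}_n$ equals
\[
\exp\!\bigl(it(M_n-L(n,k,r,s))/\sqrt{v_n}-t^2V_n/(2v_n)\bigr)\,(1+o_P(1)),
\]
which tends to $e^{-t^2/2}$ in probability. Since conditional characteristic functions are bounded by $1$ in modulus, bounded convergence combined with L\'evy's continuity theorem completes the proof. The main technical point where care is required is the interplay between the cubic Taylor remainder $\Delta_n$ and the random summation ranges: one needs in-probability (rather than merely expectational) control on $M_n$ and $V_n$, which is precisely what the variance estimates~\eqref{lem:meanvar_b} and~\eqref{lem:meanvar_c} of Lemma~\ref{lem:meanvar} provide.
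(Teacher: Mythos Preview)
Your proof follows the same overall strategy as the paper's: condition on the $(r+s)$-composition via Proposition~\ref{prop:r_lah_representation}, establish a conditional CLT for the resulting Bernoulli sum, and then decondition using the mean and variance estimates of Lemma~\ref{lem:meanvar}. The only technical difference is in the conditional CLT step: the paper invokes the Berry--Esseen inequality and controls the third absolute moments through Rosenthal's inequality, whereas you expand the conditional characteristic function directly and bound the cubic Taylor remainder by a constant times $M_n|u|^3$. Your route is marginally leaner, since it replaces the Rosenthal bound by the simpler observation that $\sum_j p_j = M_n$.

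One small slip: the claim $R_n\leq H_{b_0^{(n,k)}}^{(r,s)}+(\pi^2/6)k=O_P(k)$ is not accurate in general, because $H_{b_0^{(n,k)}}^{(r,s)}=s\log(n/k)+O_P(1)$ by~\eqref{eq:uniform}, and $\log(n/k)$ need not be $O(k)$. What you actually use, however, is only $R_n=o_P(v_n)$, and this does hold since $k/v_n=1/\log(n/k)\to 0$ and $\log(n/k)/v_n=1/k\to 0$; so the argument is unaffected.
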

\begin{proof}[Proof of Theorem~\ref{thm:clt_lah_r_interm}]
According to Proposition~\ref{prop:r_lah_representation}, it is sufficient to show that $$\frac{Z_{r,s}^{(n,0)}+\dots+Z_{r,s}^{(n,k)}-L(n,k,r,s)}{\sqrt{k\log (n/k)}}~\todistrk~\Normal{0}{1}. 
$$
For each $n\in\N$ and $k\in\N$, denote by $\mathcal{G}_{n,k}$ the $\sigma$-algebra generated by $b_0^{(n,k)}$, $b_1^{(n,k)},\dots, b_k^{(n,k)}$. We first prove that, conditionally on $\mathcal{G}_{n,k}$,
\begin{equation}\label{eq:conditional}
\frac{\sum_{j=0}^k \big(Z_{r,s}^{(n,j)}-\E \big[Z_{r,s}^{(n,j)}\big|\mathcal{G}_{n,k}\big]\big)}{\sqrt{{\rm Var}\,[\sum_{j=0}^k Z_{r,s}^{(n,j)}|\mathcal{G}_{n,k}]}}~\todistrk~\Normal{0}{1}. 
\end{equation}
Conditionally on $\mathcal{G}_{n,k}$, the left-hand side is a sum of independent centered random variables with finite third absolute moments. In view of this, we will prove the last limit relation by an application of the Berry--Esseen inequality:
\begin{multline*}
I_{n,k}:=\sup_{x\in\R}\left|\P\left[\frac{\sum_{j=0}^k \big(Z_{r,s}^{(n,j)}-\E \big[Z_{r,s}^{(n,j)}\big|\mathcal{G}_{n,k}\big]\big)}{\sqrt{{\rm Var}\,[\sum_{j=0}^k Z_{r,s}^{(n,j)}|\mathcal{G}_{n,k}]}} \leq x\right]-\P[\Normal{0}{1}\leq x]\right|\\
\leq A\frac{\sum_{j=0}^k \E [|Z_{r,s}^{(n,j)}-\E \big[Z_{r,s}^{(n,j)}\big|\mathcal{G}_{n,k}\big]|^3|\mathcal{G}_{n,k}]}{({\rm Var}\,[\sum_{j=0}^k Z_{r,s}^{(n,j)}|\mathcal{G}_{n,k}])^{3/2}}
\end{multline*}
for a deterministic constant $A>0$ which does not depend on $n$, nor $k$. To complete the proof of~\eqref{eq:conditional} it is enough to show that
\begin{equation}\label{eq:inter1}
\frac{\sum_{j=0}^k \E [|Z_{r,s}^{(n,j)}-\E \big[Z_{r,s}^{(n,j)}\big|\mathcal{G}_{n,k}\big]|^3|\mathcal{G}_{n,k}]}{({\rm Var}\,[\sum_{j=0}^k Z_{r,s}^{(n,j)}|\mathcal{G}_{n,k}])^{3/2}}~\overset{\P}{\to}~0,\quad n,k\to\infty.
\end{equation}
As a preparation, we prove that
\begin{equation}\label{eq:inter}
\frac{{\rm Var}\,\big[\sum_{j=0}^k Z_{r,s}^{(n,j)}\big|\mathcal{G}_{n,k}\big]}{k\log(n/k)}~\overset{\P}{\to}~1,\quad n,k\to\infty.
\end{equation}
In what follows we assume that $s>0$, for otherwise $Z_{r,s}^{(n,0)}=0$ a.s. In view of \eqref{eq:estimateH}, 
\begin{multline*}
{\rm Var}\,\big[Z_{r,s}^{(n,0)}\big|\mathcal{G}_{n,k}\big]=\sum_{m=1}^{b_0^{(n,k)}}\frac{s}{r+s+m-1}\Big(1-\frac{s}{r+s+m-1}\Big)\leq H_{b_0^{(n,k)}}^{(r,s)}\\\leq 
s\log \Big(\frac{r+s+(b_0^{(n,k)}-1)_+}{r+s}\Big)+\frac{s}{r+s}.
\end{multline*}
As a consequence of~\eqref{eq:r_composition_marginal_distr_asymptotic_alpha_0}, $\log (b_0^{(n,k)}-1)_+/\log (n/k)\overset{\P}{\to} 1$ as $n,k\to\infty$. This shows that $$\frac{{\rm Var}\,\big[Z_{r,s}^{(n,0)}\big|\mathcal{G}_{n,k}\big]}{k\log(n/k)}~\overset{\P}{\to}~0,\quad n,k\to\infty.$$ Since, for $j\in\N$, $${\rm Var}\,\big[Z_{r,s}^{(n,j)}\big|\mathcal{G}_{n,k}\big]=\sum_{m=1}^{b_j^{(n,k)}}\frac{1}{m}\Big(1-\frac{1}{m}\Big)\in [H_{b_j^{(n,k)}}-\pi^2/6, H_{b_j^{(n,k)}}],$$ relation \eqref{eq:inter} is equivalent to $$\frac{\sum_{j=1}^k H_{b_j^{(n,k)}}}{k\log(n/k)}~\overset{\P}{\to}~1,\quad n,k\to\infty.$$ The latter is secured by 
Lemma \ref{lem:meanvar} and Chebyshev's inequality. Thus, for the proof of~\eqref{eq:inter1} it remains to check
\begin{equation}\label{eq:inter11}
\frac{\sum_{j=0}^k \E [|Z_{r,s}^{(n,j)}-\E \big[Z_{r,s}^{(n,j)}\big|\mathcal{G}_{n,k}\big]|^3|\mathcal{G}_{n,k}]}{(k\log (n/k))^{3/2}}~\overset{\P}{\to}~0,\quad n,k\to\infty.
\end{equation}
Invoking once again Proposition~\ref{prop:r_lah_representation} and using Rosenthal's inequality, see Theorem 9.1 on p.~152 in~\cite{Gut:2005}, we obtain for an absolute constant $B>0$ which does not depend on $n$ and $k$
\begin{multline*}
\E\big[\big|Z_{r,s}^{(n,j)}-\E \big[Z_{r,s}^{(n,j)}\big|\mathcal{G}_{n,k}\big]\big|^3\big|\mathcal{G}_{n,k}\big]\leq B \Big(\Big(\sum_{m=1}^{b_j^{(n,k)}}\E\left[\Bern{m^{-1}}-m^{-1}\right]^2\Big)^{3/2}\\+\sum_{m=1}^{b_j^{(n,k)}}\E\left[\big|\Bern{m^{-1}}-m^{-1}|\right]^3\Big)\leq B\big(\big(H_{b_j^{(n,k)}}\big)^{3/2}+H_{b_j^{(n,k)}}\big)\quad\text{a.s.}
\end{multline*}
for $j\geq 1$ and analogously
\begin{equation*}
\E\big[\big|Z_{r,s}^{(n,0)}-\E \big[Z_{r,s}^{(n,0)}\big|\mathcal{G}_{n,k}\big]\big|^3\big|\mathcal{G}_{n,k}\big]\leq B\big(\big(H_{b_0^{(n,k)}}^{(r,s)}\big)^{3/2}+H_{b_0^{(n,k)}}^{(r,s)}\big)\quad\text{a.s.}
\end{equation*}
Therefore,~\eqref{eq:inter11} is secured by Eq.~\eqref{lem:meanvar_a} with $a=3/2$ and Markov's inequality.

We have proved that \eqref{eq:conditional} holds conditionally on $\mathcal{G}_{n,k}$. By Lebesgue's dominated convergence theorem, it also holds unconditionally. Furthermore, in view of \eqref{eq:inter},
\begin{equation*}
\frac{\sum_{j=0}^k \big(Z_{r,s}^{(n,j)}-\E \big[Z_{r,s}^{(n,j)}\big|\mathcal{G}_{n,k}\big]\big)}{\sqrt{k\log(n/k)}}~\todistrk~\Normal{0}{1}.
\end{equation*}
Observe that
$$
L(n,k,r,s)= \E\Big[H^{(r,s)}_{b_0^{(n,k)}}+\sum_{j=1}^k H_{b_j^{(n,k)}}\Big],\quad \sum_{j=0}^k \E \big[Z_{r,s}^{(n,j)}\big|\mathcal{G}_{n,k}\big]=H_{b_0^{(n,k)}}^{(r,s)}+\sum_{j=1}^k H_{b_j^{(n,k)}}.
$$
The proof finishes by an appeal to Eq.~\eqref{lem:meanvar_c} in Lemma~\ref{lem:meanvar}, which ensures that $$\frac{\sum_{j=0}^k \E \big[Z_{r,s}^{(n,j)}\big|\mathcal{G}_{n,k}\big]-L(n,k,r,s)}{\sqrt{k\log (n/k)}}~\overset{\P}{\to}~0,\quad n,k\to\infty.$$
The proof is complete.
\end{proof}

\begin{theorem}[CLT in the central regime]\label{thm:clt_central_regime}
Fix some $r,s\geq 0$ and let $n\to\infty$, $k\to\infty$ such that $k/n \to \alpha$ for some $\alpha \in (0,1)$. Then,
$$
\frac{\Lah{n}{k}{r}{s} - L(n,k,0,0)}{\sqrt n}~\todistrk~\Normal{0}{-\left(\frac{\alpha}{1-\alpha} + \frac{\alpha(\alpha+1)\log \alpha}{(1-\alpha)^2} + \frac{\alpha^2 \log^2 \alpha}{(1-\alpha)^3}\right)}.
$$
\end{theorem}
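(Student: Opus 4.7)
The proof will reduce to the standard case $r=s=0$ via Corollary~\ref{cor:reduction_to_r=s=0}, which yields the distributional identity $\Lah{n}{k}{r}{s} \eqdistr Z^{(n,0)}_{r,s} + \Lah{n-b_0^{(n,k)}}{k}{0}{0}$. In the central regime, Proposition~\ref{prop:r_composition_marginal_distr_asymptotic} gives $b_0^{(n,k)} \todistr \NBin{r+s}{\alpha}$, and since $Z^{(n,0)}_{r,s}$ is conditionally on $b_0^{(n,k)}$ a sum of at most $b_0^{(n,k)}$ Bernoulli indicators (Proposition~\ref{prop:r_lah_representation}), both quantities are $O_P(1)$. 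Combined with the estimate $L(n-b_0^{(n,k)},k,0,0)-L(n,k,0,0)=O_P(\log n)=o_P(\sqrt n)$ obtained from the explicit formula~\eqref{eq:expectation_r=s=0}, it suffices to establish the CLT for $\Lah{n}{k}{0}{0}$ under the same centering $L(n,k,0,0)$ and scaling $\sqrt n$.

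For the case $r=s=0$ I apply Proposition~\ref{prop:r_lah_representation} and decompose
\begin{equation*}
\Lah{n}{k}{0}{0} - L(n,k,0,0) = U_n + V_n,
\end{equation*}
where $U_n=\sum_{j=1}^k (Z^{(n,j)}-H_{b_j^{(n,k)}})$ has vanishing conditional mean given $\mathcal{G}_{n,k}$ and $V_n=\sum_{j=1}^k H_{b_j^{(n,k)}}-L(n,k,0,0)$ is $\mathcal{G}_{n,k}$-measurable. Here $(b_1^{(n,k)},\dots,b_k^{(n,k)})$ is uniform on compositions of $n$ into $k$ parts. For $U_n$, a conditional Berry--Esseen bound analogous to that used in the proof of Theorem~\ref{thm:clt_lah_r_interm}, combined with a law of large numbers for the conditional variance $\sum_{j=1}^k (H_{b_j^{(n,k)}}-H_{b_j^{(n,k)}}^{(2)})/n$ (justified by the negative association of Corollary~\ref{cor:assoc} together with the marginal asymptotics of Proposition~\ref{prop:r_composition_marginal_distr_asymptotic}), yields
\begin{equation*}
U_n/\sqrt n \todistrk \Normal{0}{\sigma_U^2},\qquad \sigma_U^2=\frac{\alpha(-\log\alpha-\mathrm{Li}_2(1-\alpha))}{1-\alpha},
\end{equation*}
where $H_m^{(2)}=\sum_{j=1}^m 1/j^2$ and $\mathrm{Li}_2(x)=\sum_{j\geq 1}x^j/j^2$ is the dilogarithm.

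For $V_n$, I exploit the classical identity that a uniform composition of $n$ into $k$ parts has the same law as $(G_1,\dots,G_k)$ conditioned on $\sum G_j=n$, where $G_1,\dots,G_k$ are i.i.d.\ $\Geo{\alpha}$ (so the conditioning event is at the mean $k/\alpha$). A bivariate CLT for $\sum_{j=1}^k(H_{G_j},G_j)$ combined with a local CLT for $\sum G_j$ at its mean gives
\begin{equation*}
V_n/\sqrt k \todistrk \Normal{0}{\tilde\sigma^2},\qquad \tilde\sigma^2=\Var(H_G)-\frac{\Cov(H_G,G)^2}{\Var(G)},
\end{equation*}
where the moments are computed via generating functions: $\E[H_G]=-\log\alpha/(1-\alpha)$, $\E[H_G^2]=(\log^2\alpha+\mathrm{Li}_2(1-\alpha))/(1-\alpha)$, $\E[G H_G]=(1-\log\alpha)/\alpha$, $\Var(G)=(1-\alpha)/\alpha^2$. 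Since $V_n$ is $\mathcal{G}_{n,k}$-measurable while $U_n$ obeys a conditional CLT given $\mathcal{G}_{n,k}$, the pair $(U_n/\sqrt n,V_n/\sqrt n)$ converges jointly to independent Gaussians, so the total variance is $\sigma^2(\alpha)=\sigma_U^2+\alpha\tilde\sigma^2$. A routine simplification shows that the $\mathrm{Li}_2(1-\alpha)$ contributions from $\sigma_U^2$ and $\alpha\tilde\sigma^2$ cancel exactly, and rewriting $\alpha(1-\alpha+\alpha\log\alpha)^2/(1-\alpha)^3$ in closed form produces precisely the three-term expression stated in the theorem.

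The main obstacle is the CLT for $V_n$: it requires a careful local limit argument for sums of i.i.d.\ geometrics and a bivariate conditioning technique to identify the correct limiting variance. A secondary technicality is the cancellation of the dilogarithm terms in $\sigma_U^2+\alpha\tilde\sigma^2$, which is the reason the limit admits the closed polynomial/logarithmic form in $\alpha$ claimed in the statement.
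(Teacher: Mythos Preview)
Your reduction to the $r=s=0$ case via Corollary~\ref{cor:reduction_to_r=s=0}, the tightness of $b_0^{(n,k)}$ from Proposition~\ref{prop:r_composition_marginal_distr_asymptotic}, and the estimate $L(n-b_0^{(n,k)},k,0,0)-L(n,k,0,0)=o_P(\sqrt n)$ is exactly the paper's approach. The paper then simply \emph{cites} \cite[Theorem~5.1]{kabluchko_marynych_lah} for the CLT in the case $r=s=0$, whereas you provide an independent sketch of that result.

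Your route for the $(0,0)$ case---the decomposition $U_n+V_n$ into the conditionally centered piece and the $\mathcal G_{n,k}$-measurable piece, a conditional Berry--Esseen argument for $U_n$, and the geometric-conditioning/local-CLT argument for $V_n$---is a correct self-contained approach. In particular, the dilogarithm cancellation does go through: with $\sigma_U^2=\alpha(-\log\alpha-\mathrm{Li}_2(1-\alpha))/(1-\alpha)$ and $\alpha\tilde\sigma^2$ computed from $\E[H_G^2]=(\log^2\alpha+\mathrm{Li}_2(1-\alpha))/(1-\alpha)$, $\Cov(H_G,G)=(1-\alpha+\alpha\log\alpha)/(\alpha(1-\alpha))$, one indeed obtains
\[
\sigma_U^2+\alpha\tilde\sigma^2=-\frac{\alpha}{1-\alpha}-\frac{\alpha(1+\alpha)\log\alpha}{(1-\alpha)^2}-\frac{\alpha^2\log^2\alpha}{(1-\alpha)^3}.
\]
One small point worth making explicit: to pass from ``CLT for $\Lah{n-b_0^{(n,k)}}{k}{0}{0}$'' to ``CLT for $\Lah{n}{k}{0}{0}$'' you are implicitly using that $b_0^{(n,k)}=O_P(1)$ together with the fact that the $(0,0)$-CLT holds along any sequence with $k/n\to\alpha$. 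The paper makes this bridge differently, invoking a monotone coupling (\cite[Proposition~2.4]{kabluchko_marynych_lah}) to show directly that $(\Lah{n}{k}{0}{0}-\Lah{n-b_0^{(n,k)}}{k}{0}{0})/\sqrt n\overset{\P}{\to}0$.
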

\begin{proof}
Our proof relies on Corollary~\ref{cor:reduction_to_r=s=0} in conjunction with the fact that in the central regime $b_0^{(n,k)}$ converges in distribution to the negative binomial distribution; see Proposition~\ref{prop:r_composition_marginal_distr_asymptotic}.

We first check that
\begin{equation}\label{eq:thm:clt_central_regime_proof1}
\frac{\Lah{n}{k}{0}{0}-\Lah{n-b_0^{(n,k)}}{k}{0}{0}}{\sqrt{n}}~\overset{\P}{\to}~0,\quad n, k\to\infty.
\end{equation}
In view of Proposition 2.4 in~\cite{kabluchko_marynych_lah} (or by using the couplings from Section~\ref{subsec:couplings}) and conditional Markov's inequality it is sufficient to show that
\begin{equation}\label{eq:thm:clt_central_regime_proof11}
\frac{L(n,k,0,0)-L(n-b_0^{(n,k)},k,0,0)}{\sqrt{n}}~\overset{\P}{\to}~0,\quad n, k\to\infty.
\end{equation}
Indeed,~\eqref{eq:thm:clt_central_regime_proof11} implies that~\eqref{eq:thm:clt_central_regime_proof1} holds conditionally on $b_0^{(n,k)}$ and, by the Lebesgue dominated convergence theorem, also unconditionally.

Recall that $L(n,k,0,0)$ is given explicitly in Eq.~\eqref{eq:expectation_r=s=0}. Convergence~\eqref{eq:thm:clt_central_regime_proof11} follows immediately from the estimates, with $a\in\N_0$ fixed,
\begin{align*}\label{eq:thm:clt_central_regime_proof2}
0\leq L(n,k,0,0)-L(n-a,k,0,0)&\leq \frac{k}{n-k+1}\left(n(H_n-H_{k-1})-(n-a)(H_{n-a}-H_{k-1})\right)\\
&\leq\frac{k}{n-k+1}\left(2a H_n+n(H_n-H_{n-a})\right)\\
&\leq\frac{k}{n-k+1}\left(2a H_n+n\min\left(H_n,\frac{a}{n-a}\right)\right),
\end{align*}
and the fact that $b_0^{(n,k)}$ converges in distribution. The latter fact also ensures that
$$
\frac{Z^{(n,0)}_{r,s}}{\sqrt{n}}~\overset{\P}{\to}~0,\quad n\to\infty,
$$
see Proposition~\ref{prop:r_lah_representation}. Combining this with~\eqref{eq:thm:clt_central_regime_proof1} and appealing to Corollary~\ref{cor:reduction_to_r=s=0} shows that the claim of the theorem is equivalent to
$$
\frac{\Lah{n}{k}{0}{0} - L(n,k,0,0)}{\sqrt n}~\todistrk~\Normal{0}{-\left(\frac{\alpha}{1-\alpha} + \frac{\alpha(\alpha+1)\log \alpha}{(1-\alpha)^2} + \frac{\alpha^2 \log^2 \alpha}{(1-\alpha)^3}\right)}.
$$
But this is secured by Theorem 5.1 in~\cite{kabluchko_marynych_lah}. The proof is complete.
\end{proof}

The next result extends Theorem~\ref{thm:clt_central_regime} to the border case $\alpha=1$ in which the limiting variance in Theorem~\ref{thm:clt_central_regime} vanishes.

\begin{theorem}\label{thm:clt large k}
Fix some $r,s\geq 0$ and let $n\to\infty$, $k\to\infty$ such that $k/n \to 1$ and $n-k\to\infty$. Then,
\begin{align*}
\frac{\Lah{n}{k}{r}{s} - L(n,k,r,s)}{\sqrt{n-k}}~\todistrnk~\Normal{0}{1/4}.
\end{align*}
\end{theorem}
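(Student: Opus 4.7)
The approach will mirror the proof of Theorem~\ref{thm:clt_lah_r_interm}: apply Proposition~\ref{prop:r_lah_representation} to decompose $\Lah{n}{k}{r}{s}$ as a conditionally independent sum given the random $(r+s)$-composition, then run a conditional CLT. Let $\mathcal{G}_{n,k}$ be the $\sigma$-algebra generated by $(b_0^{(n,k)},\dots,b_k^{(n,k)})$, set $V_j:=Z^{(n,j)}_{r,s}-\E[Z^{(n,j)}_{r,s}\mid\mathcal{G}_{n,k}]$, $N_j:=b_j^{(n,k)}-1$, and $X_k^{(b)}:=\#\{j\in[k]:b_j^{(n,k)}=b\}$, and write
\[
\Lah{n}{k}{r}{s}-L(n,k,r,s)=\sum_{j=0}^k V_j+R_{n,k},\qquad R_{n,k}:=H_{b_0^{(n,k)}}^{(r,s)}+\sum_{j=1}^k H_{b_j^{(n,k)}}-L(n,k,r,s).
\]
In the present regime, formulas~\eqref{eq:r_composition_marginal_distr_b_0}--\eqref{eq:r_composition_marginal_distr_b_j} give $\P[b_j^{(n,k)}=1]\to 1$ and $\E[b_0^{(n,k)}]=(r+s)(n-k)/(k+r+s)\to 0$, while the beta-binomial representation~\eqref{eq:b0_and_bj_via_betabinomial} yields $\E[\sum_jN_j(N_j-1)]=O((n-k)^2/k)=o(n-k)$. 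Consequently $\E[X_k^{(2)}]=(n-k)(1+o(1))$, $\sum_{b\ge 3}bX_k^{(b)}=o_P(n-k)$, and the ratio of consecutive probabilities in~\eqref{eq:r_composition_marginal_distr_b_j}, asymptotic to $(n-k)/n\to 0$, forces $\max_jN_j=o_P(\sqrt{n-k})$.

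Given these facts, a conditional Lyapunov CLT exactly as in Theorem~\ref{thm:clt_lah_r_interm} applies to $\sum_jV_j$: with $\psi(b):=\sum_{m=2}^b(m-1)/m^2\le H_b$, the conditional variance
\[
\sigma_n^2:=\Var\Big[\sum_{j=0}^k V_j\,\Big|\,\mathcal{G}_{n,k}\Big]=\Var[V_0\mid\mathcal{G}_{n,k}]+\tfrac14 X_k^{(2)}+\sum_{b\ge 3}\psi(b)X_k^{(b)}
\]
satisfies $\sigma_n^2/(n-k)\toprobab 1/4$ (since $\Var[V_0\mid\mathcal{G}_{n,k}]\le H_{b_0^{(n,k)}}^{(r,s)}=O_P(1)$ by tightness of $b_0^{(n,k)}$, $\psi(2)=1/4$, and the $b\ge 3$ tail is negligible). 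The Lindeberg condition follows from $|V_j|\le N_j$ and $\max_jN_j=o_P(\sqrt{n-k})$. Hence conditionally, and therefore unconditionally, $\sum_jV_j/\sqrt{n-k}\to \Normal{0}{1/4}$ in distribution.

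The delicate step will be $R_{n,k}=o_P(\sqrt{n-k})$. I exploit the deterministic identity
\[
\sum_{j=1}^k(H_{b_j^{(n,k)}}-1)=\tfrac12(n-k-b_0^{(n,k)})+\sum_{j=1}^kg(b_j^{(n,k)}),\qquad g(b):=H_b-1-\tfrac{b-1}{2},
\]
coming from $\sum_{j=1}^k(b_j^{(n,k)}-1)=n-k-b_0^{(n,k)}$. Since $g(1)=g(2)=0$ and $|g(b)|\le(b-1)(b-2)/2$ for $b\ge 3$, the random part is supported only on rare blocks of size $\ge 3$. Together with $H_{b_0^{(n,k)}}^{(r,s)}=O_P(1)$ and tightness of $b_0^{(n,k)}$, centring reduces $R_{n,k}$ to $\sum_jg(b_j^{(n,k)})-\E[\sum_jg(b_j^{(n,k)})]+O_P(1)$. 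A second-moment analysis using the Dirichlet--multinomial structure~\eqref{eq:mdir_rcompositions} of the $(r+s)$-composition (equivalently, using~\eqref{eq:r_composition_joint distr_b_j} directly) then yields $\Var(\sum_jg(b_j^{(n,k)}))=O((n-k)^2/n)=o(n-k)$, so Chebyshev gives $R_{n,k}=o_P(\sqrt{n-k})$. Slutsky's lemma then combines the two pieces to conclude the stated $\Normal{0}{1/4}$ limit.

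The main obstacle is precisely this last variance estimate: the naive bound $|\sum_jg(b_j^{(n,k)})|\le\tfrac12\sum_jN_j(N_j-1)$ only produces $\Var=O((n-k)^4/k^2)$, which is insufficient when $n-k$ is not much smaller than $n^{2/3}$. Extracting the genuine $O((n-k)^2/n)$ bound requires a careful moment computation exploiting the fact that the linear part $(b-1)/2$ of $H_b-1$ cancels deterministically via the composition constraint, so that the fluctuations of $R_{n,k}$ are driven only by the rare blocks of size $\ge 3$ and the correspondingly small variances of $X_k^{(b)}$ for $b\ge 3$.
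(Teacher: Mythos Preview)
Your proposal is correct and follows the same overall strategy as the paper: use Proposition~\ref{prop:r_lah_representation} to write $\Lah{n}{k}{r}{s}$ as a conditionally independent sum over the $(r+s)$-composition, establish a conditional CLT for the centered sum $\sum_j V_j$ (whose variance is driven by the blocks of size $2$), and show that the fluctuation $R_{n,k}$ of the conditional mean is $o_P(\sqrt{n-k})$.

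The execution differs in a useful way. The paper slices by block size and proves a four-part auxiliary lemma (Lemma~\ref{lem:aux}) giving separate variance estimates for $\rho_1(n,k)$, $\rho_2(n,k)$, and the weighted tail $\sum_{m\ge 3}H_m\rho_m(n,k)$. Your deterministic identity $\sum_j(H_{b_j}-1)=\tfrac12(n-k-b_0)+\sum_j g(b_j)$ with $g(1)=g(2)=0$ is a genuine simplification: it absorbs the $m=1$ and $m=2$ centering contributions into a deterministic linear term plus the tight $b_0$, so you never need the paper's parts (a) and (b) on $\Var[\rho_1]$ and $\Var[\rho_2]$ directly; the relation $X_k^{(2)}=(n-k-b_0)-\sum_{b\ge 3}(b-1)X_k^{(b)}$ together with $\E[\sum_j N_j(N_j-1)]=O((n-k)^2/k)$ already gives $X_k^{(2)}/(n-k)\toprobab 1$. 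The remaining obstacle you identify, $\Var(\sum_j g(b_j))=O((n-k)^2/n)$, is precisely the analogue of the paper's Lemma~\ref{lem:aux}(d), and your proposed route via the bivariate formula~\eqref{eq:r_composition_joint distr_b_j} works; alternatively, since $g$ is nonincreasing on $\N$, negative association (Corollary~\ref{cor:assoc}) gives $\Cov(g(b_i),g(b_j))\le 0$ and reduces the bound to $k\,\E[g(b_1)^2]\le \tfrac{k}{4}\,\E[N_1^2\1\{N_1\ge 2\}]\le \tfrac{k}{4}\,\E[N_1^2(N_1-1)]=O((n-k)^2/k)$ via beta-binomial factorial moments. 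One small point: your argument for $\max_j N_j=o_P(\sqrt{n-k})$ via the ratio of marginals is looser than needed; the cleaner justification is that $\{\max_j N_j\ge \delta\sqrt{n-k}\}\subseteq\{\sum_j N_j(N_j-1)\ge \tfrac{\delta^2}{2}(n-k)\}$, which has vanishing probability by the same moment bound.
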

Before delving into a 
proof we briefly explain the idea. In the regime $k/n\to 1$, the number of blocks in the incomplete composition $(b_0^{(n,k)},b_1^{(n,k)},\dots,b_k^{(n,k)})$ is of the same magnitude as $n$. As we will see, this means that the variability of the number of blocks of sizes $>2$ or of size $1$ is negligible in the sense 
that they only contribute to $\Lah{n}{k}{r}{s}\sim Z^{(n,0)}_{r,s} + Z^{(n,1)}_{r,s} + \dots + Z^{(n,k)}_{r,s}$ 
through the expectations. The principal contribution to the variance of $\Lah{n}{k}{r}{s}$ comes from the blocks of size $2$, and the number of these blocks 
turns out to be sufficiently close to $n-k$ in probability. Independently for each block of size $2$ the corresponding sum $Z^{(n,j)}_{r,s}$ takes values $1$ or $2$ with probability $1/2$ resulting in the limiting variance $1/4$. 
We formalize and summarize the above heuristics in a lemma.

\begin{lemma}\label{lem:aux}
For $m\in\N$, $m\leq n-k+1$ and a random $(r+s)$-composition $(b_0^{(n,k)},b_1^{(n,k)},\dots,b_k^{(n,k)})$, let $\rho_m(n,k)$ be the number of its (usual)
blocks of size $m$, that is,
$$
\rho_m(n,k)=\sum_{j=1}^k \1[b_j^{(n,k)}=m].
$$
Let $n\to\infty$, $k\to\infty$ such that $k/n \to 1$ and $n-k\to\infty$. Then

\noindent (a) ${\rm Var}\,[\rho_1(n,k)]~\simeq~n^{-1}(n-k)^2=o(n-k)$ as $n,k\to\infty$.

\noindent (b) ${\rm Var}\,[\rho_2(n,k)]~\simeq~4n^{-1}(n-k)^2=o(n-k)$ and $\rho_2(n,k)/(n-k)~\overset{\P}{\to} 1$ as $n,k\to\infty$.

\noindent (c) $\sum_{m=3}^{n-k+1} H_m \E[\rho_m(n,k)]=O(n^{-1}(n-k)^2)=o(n-k)$ as $n,k\to\infty$.

\noindent (d) $\E\Big[\sum_{m=3}^{n-k+1} H_m(\rho_m(n,k)-\E[\rho_m(n,k)])\Big]^2=O(n^{-1}(n-k)^2)=o(n-k)$ as $n,k\to\infty$.
\end{lemma}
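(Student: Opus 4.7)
Write $q := r+s$ and $\ell := n-k$ throughout. The starting point is to extract from~\eqref{eq:r_composition_marginal_distr_b_j} and~\eqref{eq:r_composition_joint distr_b_j} the closed forms
$$
p_m := \P[b_1^{(n,k)}=m] = \frac{(k+q-1)\,\ell(\ell-1)\cdots(\ell-m+2)}{(n+q-1)(n+q-2)\cdots(n+q-m)}
$$
and the analogous expression for $p_{m,m'}:=\P[b_1^{(n,k)}=m,\,b_2^{(n,k)}=m']$. Since $\rho_m(n,k)=\sum_{j=1}^k\1[b_j^{(n,k)}=m]$ and the coordinates of the composition are exchangeable, $\E[\rho_m(n,k)]=kp_m$, $\Var[\rho_m(n,k)]=kp_m(1-p_m)+k(k-1)(p_{m,m}-p_m^2)$, and for $m\neq m'$, $\Cov(\rho_m,\rho_{m'})=k(k-1)p_{m,m'}-k^2p_mp_{m'}$.

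For part (a), routine algebraic manipulation starting from the explicit formulas for $p_1$ and $p_{1,1}$ leads to the exact identity
$$
\Var[\rho_1(n,k)]=\frac{k(k+q-1)\,\ell\,(\ell+q-1)}{(n+q-1)^2(n+q-2)},
$$
which is equivalent to $n^{-1}\ell^2$ under the regime $k/n\to 1$, $\ell\to\infty$. For part (b), $\E[\rho_2(n,k)]=k(k+q-1)\ell/[(n+q-1)(n+q-2)]\simeq \ell$, and convergence in probability of $\rho_2(n,k)/\ell$ to $1$ will follow from Chebyshev's inequality once the variance estimate $\Var[\rho_2(n,k)]\simeq 4n^{-1}\ell^2=o(\ell)$ is established. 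This variance computation is the main obstacle: both summands $kp_2(1-p_2)$ and $k(k-1)(p_{2,2}-p_2^2)$ are individually of order $\ell$ with opposite signs, so the leading contributions cancel, and the correct order is only visible after tracking second-order terms. Concretely, I will expand
$$
p_{2,2}-p_2^2 = p_2\Bigl(\frac{(k+q-2)(\ell-1)}{(n+q-3)(n+q-4)}-\frac{(k+q-1)\ell}{(n+q-1)(n+q-2)}\Bigr),
$$
clear denominators, substitute $k=n-\ell$, and group the resulting polynomial in $(n+q)$ and $\ell$ to isolate the $n^{-1}$-coefficient. This arithmetic (which is the only place where the constant $4$ enters) is tedious but purely mechanical.

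For parts (c) and (d), the starting point is the identity
$$
\sum_{m=3}^{\ell+1}H_m\,\rho_m(n,k)=\sum_{j=1}^k H_{b_j^{(n,k)}}\,\1[b_j^{(n,k)}\geq 3].
$$
For (c), exchangeability and the crude bounds $H_m\leq m$ and $m\,\1[m\geq 3]\leq 2(m-1)(m-2)$ reduce matters to controlling the factorial moment $k\,\E[(b_1^{(n,k)}-1)(b_1^{(n,k)}-2)]$. Via Proposition~\ref{prop:coupling1}, $b_1^{(n,k)}-1$ is conditionally $\Bin{\ell}{P}$ with $P\sim\Betadistr{1}{k+q-1}$, and $\E[P^2]=2/[(k+q)(k+q+1)]$ gives $\E[(b_1^{(n,k)}-1)(b_1^{(n,k)}-2)]=\ell(\ell-1)\,\E[P^2]\simeq 2\ell^2/n^2$, yielding the desired $O(n^{-1}\ell^2)$ bound. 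For (d), I will exploit Corollary~\ref{cor:assoc}: since $b\mapsto H_b\,\1[b\geq 3]$ is non-decreasing, negative association forces all cross-covariances to be non-positive, so
$$
\Var\Bigl[\sum_{m=3}^{\ell+1}H_m\,\rho_m(n,k)\Bigr]\leq k\,\E\bigl[H_{b_1^{(n,k)}}^2\,\1[b_1^{(n,k)}\geq 3]\bigr].
$$
Bounding $H_m^2\leq m^2$ together with $m^2\,\1[m\geq 3]\leq 3\,m(m-1)(m-2)$ reduces the right-hand side to a third factorial moment of the Dirichlet multinomial, computable as $\ell(\ell-1)(\ell-2)\,\E[P^3]+$ lower-order terms, which is again $O(n^{-1}\ell^2)$.

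Of the four parts, (b) concentrates essentially all the analytic difficulty: the leading-order cancellation in $\Var[\rho_2(n,k)]$ must be treated exactly rather than by the kind of crude combinatorial majorization that suffices for (c) and (d), and the bookkeeping is sensitive because $\ell$ may grow almost as fast as $n$. Negative association (Corollary~\ref{cor:assoc}) is the key structural input that keeps (d) from being similarly delicate, since it lets us discard the bulk of the covariance terms outright.
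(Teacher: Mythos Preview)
Your proposal is correct. Parts~(a) and~(b) proceed exactly as in the paper: direct substitution of $m=1,2$ into the explicit variance formula $\Var[\rho_m]=k p_m + k(k-1)p_{m,m}-k^2 p_m^2$, followed by algebraic simplification and (for~(b)) Chebyshev's inequality.

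Parts~(c) and~(d) take a different route from the paper. The paper handles~(c) by the geometric bound $\E[\rho_m(n,k)]\le k\bigl((n-k)/(n+q-2)\bigr)^{m-1}$ and a dominated-convergence argument on the series $\sum_m H_m x^{m-1}$; for~(d) it uses the $L_2$ triangle inequality $\Var[\sum_m H_m\rho_m]\le \bigl(\sum_m H_m\sqrt{\Var[\rho_m]}\bigr)^2$, splits $\Var[\rho_m]$ into $\E[\rho_m]$ and $k(k-1)p_{m,m}-(\E[\rho_m])^2$, and then bounds the second piece by a careful explicit computation of $p_{m,m}-p_m^2$ for each $m\ge 3$. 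Your approach instead converts both sums to $\sum_{j=1}^k \phi(b_j^{(n,k)})$ for a monotone $\phi$, bounds $H_m$ by a polynomial in $m$, and reduces everything to low-order factorial moments of $b_1^{(n,k)}-1\sim\Bin{\ell}{\Betadistr{1}{k+q-1}}$; for~(d) you invoke Corollary~\ref{cor:assoc} to discard all cross-covariances at once. This is shorter and avoids the termwise analysis of $p_{m,m}-p_m^2$, at the cost of requiring negative association as structural input (which the paper has available but uses elsewhere, in Lemma~\ref{lem:meanvar}, rather than here). The paper's approach, by contrast, is entirely self-contained in the explicit marginal and bivariate formulas and would still work in a setting where negative association is unavailable.
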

\begin{proof}
According to~\eqref{eq:r_composition_marginal_distr_b_j}, for $m\in\N$,
$$
\E [\rho_m(n,k)]=k\P[b_1^{(n,k)}=m]=k\frac{\binom{n-m+r+s-1}{k+r+s-2}}{\binom{n+r+s-1}{k+r+s-1}}.
$$
According to~\eqref{eq:r_composition_joint distr_b_j}, for $1\leq i<j\leq k$,
$$
\P[b_i^{(n,k)}=m, b_j^{(n,k)}=m]=\frac{\binom{n+r+s-2m-1}{k+r+s-3}}{\binom{n+r+s-1}{k+r+s-1}}.
$$
This yields
\begin{align*}
{\rm Var}\, [\rho_m(n,k)]&=\E [\rho_m(n,k)]+2\sum_{1\leq i<j\leq k}\P[b_i^{(n,k)}=m, b_j^{(n,k)}=m]-(\E [\rho_m(n,k)])^2\\
&=\E [\rho_m(n,k)]+k(k-1)\P[b_1^{(n,k)}=m, b_2^{(n,k)}=m]-(\E [\rho_m(n,k)])^2\\
&=k\frac{\binom{n-m+r+s-1}{k+r+s-2}}{\binom{n+r+s-1}{k+r+s-1}}+k(k-1)\frac{\binom{n+r+s-2m-1}{k+r+s-3}}{\binom{n+r+s-1}{k+r+s-1}}-\left(k\frac{\binom{n-m+r+s-1}{k+r+s-2}}{\binom{n+r+s-1}{k+r+s-1}}\right)^2.
\end{align*}
We will make a repeated use of these formulas without further reference and in conjunction with the identities
$$
\binom{\alpha}{\beta}=\frac{\alpha}{\beta}\binom{\alpha-1}{\beta-1}\quad\text{and}\quad \binom{\alpha}{\beta}=\frac{\alpha-\beta+1}{\beta}\binom{\alpha}{\beta-1}.
$$

\noindent (a) Plugging $m=1$ yields
\begin{multline*}
{\rm Var}\,[\rho_1(n,k)]=\frac{k(k+r+s-1)}{n+r+s-1}+\frac{k(k-1)(k+r+s-1)(k+r+s-2)}{(n+r+s-1)(n+r+s-2)}-\frac{k^2(k+r+s-1)^2}{(n+r+s-1)^2}\\
=\frac{k(n-k)(n-k+r+s-1)(k+r+s-1)}{(n+r+s-2)(n+r+s-1)^2}~\simeq~\frac{(n-k)^2}{n}=o(n-k),\quad n,k\to\infty.
\end{multline*}

\noindent (b) Plugging $m=2$ yields
\begin{align*}
{\rm Var}\, [\rho_2(n,k)]&= k\frac{(k+r+s-1)(n-k)}{(n+r+s-2)(n+r+s-1)}\\
&+ k(k-1)\frac{(k+r+s-1)(k+r+s-2)(n-k-1)(n-k)}{(n+r+s-4)(n+r+s-3)(n+r+s-2)(n+r+s-1)}\\
&-\left(k\frac{(k+r+s-1)(n-k)}{(n+r+s-2)(n+r+s-1)}\right)^2\\
&\simeq~4n^{-1}(n-k)^2=o(n-k),\quad n,k\to\infty.
\end{align*}
Noting that 
$$\E [\rho_2(n,k)]=
k\frac{(k+r+s-1)(n-k)}{(n+r+s-2)(n+r+s-1)}~\sim~n-k,\quad n,k\to\infty,$$ the claim about the convergence in probability follows with the help of Chebyshev's inequality.

\noindent (c) For $m\geq 3$,
$$
\E[\rho_m(n,k)]=\frac{k(k+r+s-1)}{n+r+s-1}\prod_{i=0}^{m-2}\frac{n-k-i}{n+r+s-2-i}\leq k\Big(\frac{n-k}{n+r+s-2}\Big)^{m-1}.
$$
It remains to note that
$$
k\sum_{m=3}^{n-k+1} H_m\Big(\frac{n-k}{n+r+s-2}\Big)^{m-1}~\sim~H_3\frac{(n-k)^2}{n},\quad n,k\to\infty,
$$
by the Lebesgue 
dominated convergence theorem.

\noindent (d) By the triangle inequality in $L_2$, subadditivity of $x\mapsto x^{1/2}$ on $[0,\infty)$ and the inequality $(x+y)^2\leq 2x^2+2y^2$ for $x,y\geq 0$
\begin{align*}
\E\Big[\sum_{m=3}^{n-k+1} H_s(\rho_m(n,k)-\E[\rho_m(n,k)])\Big]^2&\leq \Big(\sum_{m=3}^{n-k+1}H_m ({\rm Var}\,[\rho_m(n,k)])^{1/2}\Big)^2\\
&\hspace{-4cm}\leq 2 \Big(\sum_{m=3}^{n-k+1} H_m (k(k-1)\P\{b_1^{(n,k)}=m, b_2^{(n,k)}=m\}-(\E [\rho_m(n,k)])^2)^{1/2}\Big)^2\\
&\hspace{-4cm}+2\Big(\sum_{m=3}^{n-k+1} H_m (\E[\rho_m(n,k)])^{1/2}\Big)^2.
\end{align*}
As in part (c) we infer $\Big(\sum_{m=3}^{n-k+1} H_m (\E[\rho_m(n,k)])^{1/2}\Big)^{2}=O(n^{-1}(n-k)^2)=o(n-k)$ as $n,k\to\infty$. Further, for $m\geq 3$
\begin{align*}
&k(k-1)\P[b_1^{(n,k)}=m, b_2^{(n,k)}=m]-(\E [\rho_m(n,k)])^2\\
&=\frac{k(k+r+s-1)}{n+r+s-1}\prod_{j=0}^{m-2}\frac{n-k-j}{n+r+s-2-j}\\
&\times\left(\frac{(k-1)(k+r+s-2)}{n+r+s-2m}\prod_{j=0}^{m-2}\frac{n-k-m+1-j}{n+r+s-m-1-j}-\frac{k(k+r+s-1)}{n+r+s-1}\prod_{j=0}^{m-2}\frac{n-k-j}{n+r+s-2-j}\right)\\
&\leq \frac{k(k+r+s-1)}{n+r+s-1}\Big(\frac{n-k}{n+r+s-2}\Big)^{m-1}\Big(\frac{n-k-m+2}{n+r+s-m}\Big)^{m-1}\\
&\hspace{5cm}\times\Big(\frac{(k-1)(k+r+s-2)}{n+r+s-2m}-\frac{k(k+r+s-1)}{n+r+s-1}\Big)\\
&\leq \frac{k^2(k+r+s-1)^2}{n+r+s-1}\Big(\frac{n-k}{n+r+s-2}\Big)^{2m-2}\left(\frac{1}{n+r+s-2m}-\frac{1}{n+r+s-1}\right)\\
&= \frac{(2m-1)k^2}{n+r+s-2m}\frac{(k+r+s-1)^2}{(n+r+s-1)^2}\Big(\frac{n-k}{n+r+s-2}\Big)^{2m-2}\\
&\leq \frac{(2m-1)k^2}{n+r+s-2m}\Big(\frac{n-k}{n+r+s-2}\Big)^{2m-2}.
\end{align*}
Observe that $n/(n-2m)\to 1$ as $n,k\to\infty$ uniformly in $m\in\N$, $m\leq n-k+1$. This in combination with Lebesgue's dominated convergence theorem enables us to conclude that
\begin{multline*}
k^2\Big(\sum_{m=3}^{n-k+1}H_m \Big(\frac{2m-1}{n+r+s-2m}\Big)^{1/2}\Big(\frac{n-k}{n+r+s-2}\Big)^{m-1}\Big)^2\\
~\sim~5H_3^2\frac{(n-k)^4}{n^3}=o\Big(\frac{(n-k)^2}{n}\Big),\quad n,k\to\infty.
\end{multline*}
\end{proof}
\begin{proof}[Proof of Theorem \ref{thm:clt large k}]
We first show that
\begin{equation}\label{eq:rs0}
\frac{\sum_{m=1}^{n-k+1} \sum_{j=1}^k Z_{r,s}^{(n,j)}\1[b_j^{(n,k)}=m]-\E\Big[\sum_{m=1}^{n-k+1} \sum_{j=1}^k Z_{r,s}^{(n,j)}\1[b_j^{(n,k)}=m]\Big] }{2^{-1}\sqrt{n-k}}~\todistrnk~\Normal{0}{1}.
\end{equation}
By Lemma~\ref{lem:aux}(a) and Chebyshev's inequality,
$$
\frac{\sum_{j=1}^k Z_{r,s}^{(n,j)}\1[b_j^{(n,k)}=1]-\E\Big[\sum_{j=1}^k Z_{r,s}^{(n,j)}\1[b_j^{(n,k)}=1]\Big] }{\sqrt{n-k}}=\frac{\rho_1(n,k)-\E[\rho_1(n,k)]}{\sqrt{n-k}}~\overset{\P}{\to}~0,\quad n,k\to\infty.
$$
The $m=2$ term of the sum gives a principal contribution. Indeed, write
\begin{multline*}
\frac{\sum_{j=1}^k Z_{r,s}^{(n,j)}\1[b_j^{(n,k)}=2]-\E\Big[\sum_{j=1}^k Z_{r,s}^{(n,j)}\1[b_j^{(n,k)}=2]\Big]}{2^{-1}\sqrt{n-k}}\\=\frac{\sum_{j=1}^k (2Z_{r,s}^{(n,j)}-3)\1[b_j^{(n,k)}=2]}{\sqrt{\rho_2(n,k)}}\sqrt{\frac{\rho_2(n,k)}{n-k}}+3\frac{\rho_2(n,k)-\E [\rho_2(n,k)]}{\sqrt{n-k}}.
\end{multline*}
By Lemma~\ref{lem:aux}(b) and Chebyshev's inequality, the second term converges to $0$ in probability. Conditionally on $\mathcal{G}_{n,k}$, $\sum_{j=1}^k (2Z_{r,s}^{(n,j)}-3)\1[b_j^{(n,k)}=2]$ has the same distribution as the sum of $\rho_2(n,k)$ independent random variables taking values $\pm 1$ with probability $1/2$. Hence, by the central limit theorem for random walks the first factor converges in distribution to $\Normal{0}{1}$ conditionally on $\mathcal{G}_{n,k}$, hence also unconditionally. By Lemma~\ref{lem:aux} (b),
$$
\frac{\rho_2(n,k)}{n-k}~\overset{\P}{\to}~1,\quad n,k\to\infty.
$$
This secures the distributional convergence of the first term and the sum of two terms to $\Normal{0}{1}$.

Now we prove 
that the contributions of the counts $\rho_m(n,k)$, $3\leq m\leq n-k+1$ are negligible. Write
\begin{multline*}
\sum_{m=3}^{n-k+1} \sum_{j=1}^k Z_{r,s}^{(n,j)}\1[b_j^{(n,k)}=m]-\E\Big[\sum_{m=3}^{n-k+1} \sum_{j=1}^k Z_{r,s}^{(n,j)}\1[b_j^{(n,k)}=m]\Big]\\=\Big(\sum_{m=3}^{n-k+1} \sum_{j=1}^k Z_{r,s}^{(n,j)}\1[b_j^{(n,k)}=m]-\sum_{m=3}^{n-k+1}H_m \rho_m(n,k)\Big)+\Big(\sum_{m=3}^{n-k+1} H_m (\rho_m(n,k)-\E[\rho_m(n,k)])\Big)\\=:I(n,k)+J(n,k).
\end{multline*}
We first calculate
$$
\E[(I(n,k))^2|\mathcal{G}_{n,k}]=\sum_{m=3}^{n-k+1}\Big( \sum_{\ell=1}^m \ell^{-1}(1-\ell^{-1})\Big)\rho_m(n,k)\leq \sum_{m=3}^{n-k+1} H_m \rho_m(n,k).
$$
Using now Lemma~\ref{lem:aux}(c) we obtain $\E [I(n,k)^2]=o(n-k)$ as $n,k\to\infty$. 
Finally, by Lemma \ref{lem:aux}(d) $\E [J(n,k)^2]=o(n-k)$ as $n,k\to\infty$. The proof of~\eqref{eq:rs0} is complete.

In view of~\eqref{eq:rs0} and according to Proposition~\ref{prop:r_lah_representation}, it remains to show that $\E[Z_{r,s}^{(n,0)}]=O(1)$. But this 
follows immediately from~\eqref{eq:Z_n_0_mean_estimate}.
\end{proof}

Theorems~\ref{thm:clt_lah_r_k_const},~\ref{thm:clt_lah_r_interm},~\ref{thm:clt_central_regime} and~\ref{thm:clt large k} prove Conjecture 2.4 in~\cite{van_der_hofstad_etal_shortest_path_trees} in a more general form.  Hsien-Kuei Hwang has kindly informed us that the asymptotic normality of the distribution appearing in~\cite{van_der_hofstad_etal_shortest_path_trees} was proved by him   (unpublished notes) using singularity analysis and the saddle point method.

\subsection*{Acknowledgement}
We are grateful to Hsien-Kuei Hwang for pointing out the article~\cite{van_der_hofstad_etal_shortest_path_trees}.
ZK has been supported by the German Research Foundation under Germany's Excellence Strategy EXC 2044 - 390685587, Mathematics M\"unster: Dynamics - Geometry - Structure and by the DFG priority program SPP 2265 Random Geometric Systems. AM was supported by the Alexander von Humboldt Foundation.

\bibliographystyle{plain}
\bibliography{refs}

\begin{thebibliography}{10}

\bibitem{aguiar_mahajan_book_hyperplane_arrangements}
M.~Aguiar and S.~Mahajan.
\newblock {\em Topics in hyperplane arrangements}.
\newblock American Mathematical Society, Providence, RI, 2017.

\bibitem{Arratia+Barbour+Tavare:2003}
R.~Arratia, A.~D. Barbour, and S.~Tavar\'{e}.
\newblock {\em Logarithmic combinatorial structures: a probabilistic approach}.
\newblock European Mathematical Society (EMS), Z\"{u}rich, 2003.

\bibitem{belbachir:2013}
H.~Belbachir and A.~Belkhir.
\newblock Cross recurrence relations for r-{L}ah numbers.
\newblock {\em Ars Combinatoria}, 110:199--203, 2013.

\bibitem{belkhir_multivar_lah}
A.~Belkhir.
\newblock The multivariate {L}ah and {S}tirling numbers.
\newblock {\em J. Integer Seq.}, 23(4):Art. 20.4.5, 13, 2020.

\bibitem{broder:1984}
A.~Broder.
\newblock The $r$-{S}tirling numbers.
\newblock {\em Discrete Mathematics}, 49(3):241--259, 1984.

\bibitem{Carlitz:1980-1}
L.~Carlitz.
\newblock Weighted {S}tirling numbers of the first and second kind. {I}.
\newblock {\em Fibonacci Quart.}, 18(2):147--162, 1980.

\bibitem{Carlitz:1980-2}
L.~Carlitz.
\newblock Weighted {S}tirling numbers of the first and second kind. {II}.
\newblock {\em Fibonacci Quart.}, 18(3):242--257, 1980.

\bibitem{Carlitz+Scoville:1974}
L.~Carlitz and R.~Scoville.
\newblock Generalized {E}ulerian numbers: combinatorial applications.
\newblock {\em J. Reine Angew. Math.}, 265:110--137, 1974.

\bibitem{charalambides_eulerian_generalized}
Ch.~A. Charalambides.
\newblock On a generalized {E}ulerian distribution.
\newblock {\em Ann. Inst. Statist. Math.}, 43(1):197--206, 1991.

\bibitem{charalambides_book_enum_combin}
Ch.~A. Charalambides.
\newblock {\em Enumerative combinatorics}.
\newblock Chapman \& Hall/CRC, 2002.

\bibitem{charalambides_book_combinatorial_methods}
Ch.~A. Charalambides.
\newblock {\em Combinatorial methods in discrete distributions}.
\newblock Wiley Ser. Probab. Stat. John Wiley \& Sons, 2005.

\bibitem{charalambides_koutras_diff_gen_factorials}
Ch.~A. Charalambides and M.~Koutras.
\newblock On the differences of the generalized factorials at an arbitrary
  point and their combinatorial applications.
\newblock {\em Discrete Math.}, 47:183--201, 1983.

\bibitem{charalambides_singh_stirling_review}
Ch.~A. Charalambides and J.~Singh.
\newblock A review of the {S}tirling numbers, their generalizations and
  statistical applications.
\newblock {\em Comm. Statist. Theory Methods}, 17(8):2533--2595, 1988.

\bibitem{cheon:2012}
G.-S. Cheon and J.-H. Jung.
\newblock $r$-{W}hitney numbers of {D}owling lattices.
\newblock {\em Discrete Mathematics}, 312:2337--2348, 2012.

\bibitem{crane_ewens_formula}
H.~Crane.
\newblock The ubiquitous {E}wens sampling formula.
\newblock {\em Statist. Sci.}, 31(1):1--19, 2016.

\bibitem{devroye_records}
L.~Devroye.
\newblock Applications of the theory of records in the study of random trees.
\newblock {\em Acta Inform.}, 26(1-2):123--130, 1988.

\bibitem{Dondajewski+Szymanski:1982}
M.~Dondajewski and J.~Szyma\'{n}ski.
\newblock On the distribution of vertex-degrees in a strata of a random
  recursive tree.
\newblock {\em Bull. Acad. Polon. Sci. S\'{e}r. Sci. Math.}, 30(5--6):205--209,
  1982.

\bibitem{Drmota_book}
M.~Drmota.
\newblock {\em Random trees: An interplay between combinatorics and
  probability}.
\newblock Springer-Verlag, Vienna, 2009.

\bibitem{Gnedin+Pitman:2005}
A.~Gnedin and J.~Pitman.
\newblock Exchangeable {G}ibbs partitions and {S}tirling triangles.
\newblock {\em Zap. Nauchn. Sem. S.-Peterburg. Otdel. Mat. Inst. Steklov.
  (POMI)}, 325:83--102, 244--245, 2005.

\bibitem{godlandschlaefli:2022}
T.~Godland and Z.~Kabluchko.
\newblock Angle sums of {S}chl{\"a}fli orthoschemes.
\newblock {\em Discrete and Computational Geometry}, 68:125--164, 2022.

\bibitem{godland:2022}
T.~Godland and Z.~Kabluchko.
\newblock Positive hulls of random walks and bridges.
\newblock {\em Stochastic Processes and their Applications}, 147:327--362,
  2022.

\bibitem{graham_etal_book}
R.~L. Graham, D.~E. Knuth, and O.~Patashnik.
\newblock {\em Concrete mathematics}.
\newblock Addison-Wesley Publishing Company, second edition, 1994.

\bibitem{Gut:2005}
A.~Gut.
\newblock {\em Probability: a graduate course}.
\newblock Springer, New York, 2005.

\bibitem{van_der_hofstad_etal_shortest_path_trees}
R.~van~der Hofstad, G.~Hooghiemstra, and P.~Van~Mieghem.
\newblock Size and weight of shortest path trees with exponential link weights.
\newblock {\em Combin. Probab. Comput.}, 15(6):903--926, 2006.

\bibitem{hoppe_urns_ewens}
F.~Hoppe.
\newblock P\'{o}lya-like urns and the {E}wens' sampling formula.
\newblock {\em J. Math. Biol.}, 20(1):91--94, 1984.

\bibitem{Hoppe:1986}
F.~Hoppe.
\newblock Size-biased filtering of {P}oisson-{D}irichlet samples with an
  application to partition structures in genetics.
\newblock {\em J. Appl. Probab.}, 23(4):1008--1012, 1986.

\bibitem{hoppe_sampling}
F.~Hoppe.
\newblock The sampling theory of neutral alleles and an urn model in population
  genetics.
\newblock {\em J. Math. Biol.}, 25(2):123--159, 1987.

\bibitem{huillet_occupancy_problems_generalized_stirling}
T.~Huillet.
\newblock Occupancy problems related to the generalized {S}tirling numbers.
\newblock {\em J. Stat. Phys.}, 191(1):Paper No. 5, 2024.

\bibitem{huillet_moehle_bernoulli}
T.~Huillet and M.~M{\"o}hle.
\newblock On {B}ernoulli trials with unequal harmonic success probabilities.
\newblock {\em Metrika}, 2023.
\newblock to appear.

\bibitem{Janardan:1988}
K.~G. Janardan.
\newblock Relationship between {M}orisita's model for estimating the
  environmental density and the generalized {E}ulerian numbers.
\newblock {\em Ann. Inst. Statist. Math.}, 40(3):439--450, 1988.

\bibitem{janardan:1993}
K.~G. Janardan.
\newblock Some properties of the generalized {E}ulerian distribution.
\newblock {\em J. Statist. Plann. Inference}, 34(2):159--169, 1993.

\bibitem{janson_euler_frobenius_rounding}
S.~Janson.
\newblock Euler-{F}robenius numbers and rounding.
\newblock {\em Online J. Anal. Comb.}, 8:34, 2013.

\bibitem{JoagDev+Proschan:1983}
K.~Joag-Dev and F.~Proschan.
\newblock Negative association of random variables, with applications.
\newblock {\em Ann. Statist.}, 11(1):286--295, 1983.

\bibitem{Johnson+Kotz:UrnModels}
N.~L. Johnson and S.~Kotz.
\newblock {\em Urn models and their application: an approach to modern discrete
  probability theory}.
\newblock John Wiley \& Sons, Inc., New York, 1977.

\bibitem{Johnson+Kotz+Balakrishnan}
N.~L. Johnson, S.~Kotz, and N.~Balakrishnan.
\newblock {\em Discrete multivariate distributions}.
\newblock John Wiley \& Sons, Inc., New York, 1997.

\bibitem{kabluchko_marynych_lah}
Z.~Kabluchko and A.~Marynych.
\newblock Lah distribution: Stirling numbers, records on compositions, and
  convex hulls of high-dimensional random walks.
\newblock {\em Probability Theory and Related Fields}, 184:969--1028, 2022.

\bibitem{Kabluchko+Marynych+Pitters:2024}
Z.~Kabluchko, A.~Marynych, and H.~Pitters.
\newblock Mod-{$\varphi$} convergence of {S}tirling distributions and limit
  theorems for zeros of their generating functions.
\newblock {\em J. Math. Anal. Appl.}, 529(1):Paper No. 127571, 27, 2024.

\bibitem{kabluchko_steigenberger_r_lah}
Z.~Kabluchko and D.~A. Steigenberger.
\newblock {$r$}-{L}ah distribution: properties, limit theorems and an
  application to compressed sensing.
\newblock {\em Adv. in Appl. Math.}, 150:Paper No. 102575, 23, 2023.

\bibitem{KVZ17}
Z.~Kabluchko, V.~Vysotsky, and D.~Zaporozhets.
\newblock Convex hulls of random walks: expected number of faces and face
  probabilities.
\newblock {\em Adv. Math.}, 320:595--629, 2017.

\bibitem{kabluchko:2016}
Z.~Kabluchko, V.~Vysotsky, and D.~Zaporozhets.
\newblock Convex hulls of random walks, hyperplane arrangements, and {W}eyl
  chambers.
\newblock {\em Geometric and Functional Analysis}, 27(4):880--918, 2017.

\bibitem{Kingman}
J.~F.~C. Kingman.
\newblock {\em Poisson processes}.
\newblock The Clarendon Press, Oxford University Press, New York, 1993.

\bibitem{koutras}
M.~Koutras.
\newblock Noncentral {S}tirling numbers and some applications.
\newblock {\em Discrete Math.}, 42(1):73--89, 1982.

\bibitem{Leckey+Neininger:2013}
K.~Leckey and R.~Neininger.
\newblock Asymptotic analysis of {H}oppe trees.
\newblock {\em J. Appl. Probab.}, 50(1):228--238, 2013.

\bibitem{mahmoud_smythe}
H.~M. Mahmoud and R.~T. Smythe.
\newblock On the distribution of leaves in rooted subtrees of recursive trees.
\newblock {\em Ann. Appl. Probab.}, 1(3):406--418, 1991.

\bibitem{maier_triangular_recurrences}
R.~S. Maier.
\newblock Triangular recurrences, generalized {E}ulerian numbers, and related
  number triangles.
\newblock {\em Adv. in Appl. Math.}, 146:Paper No. 102485, 62, 2023.

\bibitem{mansour_book}
T.~Mansour.
\newblock {\em Combinatorics of set partitions}.
\newblock CRC Press, Boca Raton, FL, 2013.

\bibitem{mansour_schork_book_commutation_relations}
T.~Mansour and M.~Schork.
\newblock {\em Commutation relations, normal ordering, and {S}tirling numbers}.
\newblock CRC Press, Boca Raton, FL, 2016.

\bibitem{mezoe_r_bell}
I.~Mez\H{o}.
\newblock The {$r$}-{B}ell numbers.
\newblock {\em J. Integer Seq.}, 14(1):Article 11.1.1, 14, 2011.

\bibitem{mezo_recent_developments_stirling}
I.~Mez\H{o}.
\newblock Recent developments in the theory of {S}tirling numbers.
\newblock {\em RIMS K\^{o}ky\^{u}roku}, 2013:68--80, 2013.

\bibitem{mezo_book}
I.~Mez{\H{o}}.
\newblock {\em Combinatorics and number theory of counting sequences}.
\newblock CRC Press, 2020.

\bibitem{Morisita:1971}
M.~Morisita.
\newblock Measuring of habitat value by environmental density method.
\newblock In G.~P. et~al. Patil, editor, {\em Statistical Ecology}, pages
  379--401. The Pennsylvania State University Press, 1971.

\bibitem{najock_heyde}
D.~Najock and C.~C. Heyde.
\newblock On the number of terminal vertices in certain random trees with an
  application to stemma construction in philology.
\newblock {\em J. Appl. Probab.}, 19(3):675--680, 1982.

\bibitem{nishimura_sibuya}
K.~Nishimura and M.~Sibuya.
\newblock Extended {S}tirling family of discrete probability distributions.
\newblock {\em Comm. Statist. Theory Methods}, 26(7):1727--1744, 1997.

\bibitem{nyul:2015}
G.~Nyul and G.~R\'acz.
\newblock The $r$-{L}ah numbers.
\newblock {\em Discrete Mathematics}, 338(10):1660--1666, 2015.

\bibitem{petersen_book_eulerian_numbers}
T.~Petersen.
\newblock {\em Eulerian numbers}.
\newblock Birkh\"{a}user/Springer, New York, 2015.

\bibitem{pinsky_a_view}
R.~Pinsky.
\newblock A view from the bridge spanning combinatorics and probability.
\newblock {\em Enumer. Comb. Appl.}, 1(3):Paper No. S2S3, 31, 2021.

\bibitem{Pitman_book}
J.~Pitman.
\newblock {\em Combinatorial stochastic processes}.
\newblock Springer-Verlag, Berlin, 2006.

\bibitem{shanmugam_r_stirling}
R.~Shanmugam.
\newblock On central versus factorial moments.
\newblock {\em South African Statist. J.}, 18(2):97--110, 1984.

\bibitem{shattuck:2016}
M.~Shattuck.
\newblock Generalized {$r$}-{L}ah numbers.
\newblock {\em Proc. Indian Acad. Sci. Math. Sci.}, 126(4):461--478, 2016.

\bibitem{sibuya_stirling_family}
M.~Sibuya.
\newblock {\em Stirling family of distributions}.
\newblock John Wiley \& Sons, 2014.

\bibitem{sibuya_nishimura_breaking}
M.~Sibuya and K.~Nishimura.
\newblock Prediction of record-breakings.
\newblock {\em Statist. Sinica}, 7(4):893--906, 1997.

\bibitem{smythe_mahmoud_survey_RRT}
R.~T. Smythe and H.~M. Mahmoud.
\newblock A survey of recursive trees.
\newblock {\em Teor. \u{I}mov\={\i}r. Mat. Stat.}, 51:1--29, 1994.

\bibitem{stam_random_partition}
A.~J. Stam.
\newblock Generation of a random partition of a finite set by an urn model.
\newblock {\em J. Combin. Theory Ser. A}, 35(2):231--240, 1983.

\bibitem{Stanley_book}
R.~Stanley.
\newblock {\em Enumerative combinatorics. {V}ol. 1}.
\newblock Cambridge University Press, 1997.

\bibitem{szymanski_max_degree}
J.~Szyma\'{n}ski.
\newblock On the maximum degree and the height of a random recursive tree.
\newblock In {\em Random graphs '87 ({P}ozna\'{n}, 1987)}, pages 313--324.
  Wiley, Chichester, 1990.

\bibitem{van_der_hofstad_first_passage}
R.~van~der Hofstad, G.~Hooghiemstra, and P.~Van~Mieghem.
\newblock First-passage percolation on the random graph.
\newblock {\em Probab. Engrg. Inform. Sci.}, 15(2):225--237, 2001.

\bibitem{van_mieghem_book}
P.~Van~Mieghem.
\newblock {\em Performance analysis of communications networks and systems}.
\newblock Cambridge University Press, 2006.

\bibitem{van_mieghem_scaling_hopcount_report}
P.~Van~Mieghem, G.~Hooghiemstra, and R.~van~der Hofstad.
\newblock A scaling law for the hopcount in internet.
\newblock {\em Delft University of Technology, report}, 2000125, 2000.

\bibitem{van2001stochastic}
P.~Van~Mieghem, G.~Hooghiemstra, and R.~van~der Hofstad.
\newblock Stochastic model for the number of traversed routers in internet.
\newblock {\em Proceedings of Passive and Active Measurement (PAM), RIPE NCC
  (Amsterdam, The Netherlands, 2001)}, 2001.

\end{thebibliography}

\end{document}